\newcommand{\mc}[1]{{\mathcal{#1}}}
\newcommand{\mb}[1]{{\mathbb{#1}}}
\newcommand{\mf}[1]{{\mathfrak{#1}}}
\newcommand{\ms}[1]{{\mathcal{#1}}}
\newcommand{\mrm}[1]{{\mathrm{#1}}}
\newcommand{\mit}[1]{{\mathit{#1}}}
\newcommand{\mbf}[1]{{\mathbf{#1}}}
\newcommand{\bslash}{
  \mathchoice
    {\reflectbox{$\displaystyle{/}$}}
    {\reflectbox{$\textstyle{/}$}}
    {\reflectbox{$\scriptstyle{/}$}}
    {\reflectbox{$\scriptscriptstyle{/}$}}}
\newcommand{\Db}{{\ms{D}\mit{b}}}
\newcommand{\Mod}{\mathrm{Mod}}
\newcommand{\id}{\mathrm{id}}
\newcommand{\mhm}{\mrm{MHM}}
\renewcommand{\hom}{\mrm{Hom}}
\newcommand{\shom}{{\ms{H}\mit{om}}}
\DeclareMathOperator{\spec}{Spec}
\DeclareMathOperator{\coker}{coker}
\DeclareMathOperator*{\Res}{Res}
\DeclareMathOperator{\codim}{codim}
\newcommand{\contract}{\mathrel{\lrcorner}}
\newcommand{\longrightleftarrows}{\mathrel{\ooalign{\raise.2em\hbox{$\longrightarrow$}\cr\raise-.2em\hbox{$\longleftarrow$}}}}
\newcommand{\saito}[1]{{{#1}^{\mathrm{Saito}}}}
\theoremstyle{plain}
\newtheorem{thm}{Theorem}[section]
\newtheorem{prop}[thm]{Proposition}
\newtheorem{lem}[thm]{Lemma}
\newtheorem{cor}[thm]{Corollary}
\theoremstyle{definition}
\newtheorem{defn}[thm]{Definition}
\newtheorem{rmk}[thm]{Remark}
\numberwithin{equation}{section}
\newcommand{\nc}{\newcommand}
\nc{\bA}{{\mathbb A}}
\nc{\bB}{{\mathbb B}}
\nc{\bC}{{\mathbb C}}
\nc{\bD}{{\mathbb D}}
\nc{\bE}{{\mathbb E}}
\nc{\bF}{{\mathbb F}}
\nc{\bG}{{\mathbb G}}
\nc{\bH}{{\mathbb H}}
\nc{\bI}{{\mathbb I}}
\nc{\bJ}{{\mathbb J}}
\nc{\bK}{{\mathbb K}}
\nc{\bL}{{\mathbb L}}
\nc{\bM}{{\mathbb M}}
\nc{\bN}{{\mathbb N}}
\nc{\bO}{{\mathbb O}}
\nc{\bP}{{\mathbb P}}
\nc{\bQ}{{\mathbb Q}}
\nc{\bR}{{\mathbb R}}
\nc{\bS}{{\mathbb S}}
\nc{\bT}{{\mathbb T}}
\nc{\bU}{{\mathbb U}}
\nc{\bV}{{\mathbb V}}
\nc{\bW}{{\mathbb W}}
\nc{\bZ}{{\mathbb Z}}
\nc{\bX}{{\mathbb X}}
\nc{\bY}{{\mathbb Y}}
\nc{\cA}{{\mathcal A}}
\nc{\cB}{{\mathcal B}}
\nc{\cC}{{\mathcal C}}
\nc{\cD}{{\mathcal D}}
\nc{\cE}{{\mathcal E}}
\nc{\cF}{{\mathcal F}}
\nc{\cG}{{\mathcal G}}
\nc{\cH}{{\mathcal H}}
\nc{\cI}{{\mathcal I}}
\nc{\cJ}{{\mathcal J}}
\nc{\cK}{{\mathcal K}}
\nc{\cL}{{\mathcal L}}
\nc{\cM}{{\mathcal M}}
\nc{\cN}{{\mathcal N}}
\nc{\cO}{{\mathcal O}}
\nc{\cP}{{\mathcal P}}
\nc{\cQ}{{\mathcal Q}}
\nc{\cR}{{\mathcal R}}
\nc{\cS}{{\mathcal S}}
\nc{\cT}{{\mathcal T}}
\nc{\cU}{{\mathcal U}}
\nc{\cV}{{\mathcal V}}
\nc{\cW}{{\mathcal W}}
\nc{\cZ}{{\mathcal Z}}
\nc{\cX}{{\mathcal X}}
\nc{\cY}{{\mathcal Y}}
\nc{\fA}{{\mathfrak A}}
\nc{\fB}{{\mathfrak B}}
\nc{\fC}{{\mathfrak C}}
\nc{\fD}{{\mathfrak D}}
\nc{\fE}{{\mathfrak E}}
\nc{\fF}{{\mathfrak F}}
\nc{\fG}{{\mathfrak G}}
\nc{\fH}{{\mathfrak H}}
\nc{\fI}{{\mathfrak I}}
\nc{\fJ}{{\mathfrak J}}
\nc{\fK}{{\mathfrak K}}
\nc{\fL}{{\mathfrak L}}
\nc{\fM}{{\mathfrak M}}
\nc{\fN}{{\mathfrak N}}
\nc{\fO}{{\mathfrak O}}
\nc{\fP}{{\mathfrak P}}
\nc{\fQ}{{\mathfrak Q}}
\nc{\fR}{{\mathfrak R}}
\nc{\fS}{{\mathfrak S}}
\nc{\fT}{{\mathfrak T}}
\nc{\fU}{{\mathfrak U}}
\nc{\fV}{{\mathfrak V}}
\nc{\fW}{{\mathfrak W}}
\nc{\fZ}{{\mathfrak Z}}
\nc{\fX}{{\mathfrak X}}
\nc{\fY}{{\mathfrak Y}}
\nc{\fa}{{\mathfrak a}}
\nc{\fb}{{\mathfrak b}}
\nc{\fc}{{\mathfrak c}}
\nc{\fd}{{\mathfrak d}}
\nc{\fe}{{\mathfrak e}}
\nc{\ff}{{\mathfrak f}}
\nc{\fg}{{\mathfrak g}}
\nc{\fh}{{\mathfrak h}}
\nc{\fiI}{{\mathfrak i}}  %!!!
\nc{\ffi}{{\mathfrak i}}  %!!!
\nc{\fj}{{\mathfrak j}}
\nc{\fk}{{\mathfrak k}}
\nc{\fl}{{\mathfrak{l}}}
\nc{\fm}{{\mathfrak m}}
\nc{\fn}{{\mathfrak n}}
\nc{\fo}{{\mathfrak o}}
\nc{\fp}{{\mathfrak p}}
\nc{\fq}{{\mathfrak q}}
\nc{\fr}{{\mathfrak r}}
\nc{\fs}{{\mathfrak s}}
\nc{\ft}{{\mathfrak t}}
\nc{\fu}{{\mathfrak u}}
\nc{\fv}{{\mathfrak v}}
\nc{\fw}{{\mathfrak w}}
\nc{\fz}{{\mathfrak z}}
\nc{\fx}{{\mathfrak x}}
\nc{\fy}{{\mathfrak y}}
\nc{\al}{{\alpha }}
\nc{\be}{{\beta }}
\nc{\ga}{{\gamma }}
\nc{\de}{{\delta }}
\nc{\del}{{\partial }}
\nc{\ep}{{\varepsilon }}
\nc{\vap}{{\epsilon }}
\nc{\ze}{{\zeta }}
\nc{\et}{{\eta }}
\nc{\oh}{{\operatorname {H}}}
\nc{\ucd}{{(\fh^{univ})^*}}
\nc{\std}[3]{{\cM(#1,#2,#3)_* }}
\nc{\stdd}[3]{{\cM(#1,#2,#3)_! }}
\nc{\irr}[3]{{\cI(#1,#2,#3)}}
\nc{\uc}{{\fh^{univ}}}
\nc{\od}{{\operatorname {D}}}
\nc{\oD}{{\operatorname {D}}}
\nc{\oK}{{\operatorname {K}}}
\nc{\la}{{\lambda }}
\nc{\Gr}{{\operatorname{Gr}}}
\nc{\LVM}{{\mathbf Q}}
\nc{\DR}{{\operatorname{DR}}}
\nc{\mon}{{\mathit{mon}}}
\newenvironment{rouge}
{\relax\color{red}}
{\hspace*{.3ex}\relax}
\newcommand{\ber}{\begin{rouge}}
\newcommand{\er}{\end{rouge}}
\newcommand*\bigcdot{\mathpalette\bigcdot@{.5}}
\newcommand*\bigcdot@[2]{\mathbin{\vcenter{\hbox{\scalebox{#2}{$\m@th#1\bullet$}}}}}
\title[Mixed Hodge modules and real groups]{Mixed Hodge modules and real groups}
\author{Dougal Davis}\address[DD, Corresponding author]{School of Mathematics, University of Edinburgh, James Clerk Maxwell Building, Peter Guthrie Tait Road, Edinburgh EH9 3FD, United Kingdom} \address[Present address]{School of Mathematics and Statistics, University of Melbourne, VIC 3010, Australia}\email{dougal.davis1@unimelb.edu.au}
\thanks{DD was supported by the EPSRC programme grant EP/R034826/1 and the starter grant ``Categorified Donaldson-Thomas theory'' No.~759967 of the European Research Council.}
\author{Kari Vilonen}\address[KV]{School of Mathematics and Statistics, University of Melbourne, VIC 3010, Australia, also Department of Mathematics and Statistics, University of Helsinki, Helsinki, Finland}
\email{kari.vilonen@unimelb.edu.au, kari.vilonen@helsinki.fi}
\thanks{KV was supported in part by the ARC grants DP180101445,  FL200100141 and the Academy of Finland}
\keywords{Mixed Hodge module, real reductive group}
\begin{document}

\begin{abstract}
Let $G$ be a complex reductive group, $\theta \colon G \to G$ an involution, and $K = G^\theta$. In \cite{schmid-vilonen}, W. Schmid and the second named author proposed a program to study unitary representations of the corresponding real form $G_\mb{R}$ using $K$-equivariant twisted mixed Hodge modules on the flag variety of $G$ and their polarizations. In this paper, we make the first significant steps towards implementing this program. Our first main result gives an explicit combinatorial formula for the Hodge numbers appearing in the composition series of a standard module in terms of the Lusztig-Vogan polynomials. Our second main result is a polarized version of the Jantzen conjecture, stating that the Jantzen forms on the composition factors are polarizations of the underlying Hodge modules. Our third main result states that, for regular Beilinson-Bernstein data, the minimal $K$-types of an irreducible Harish-Chandra module lie in the lowest piece of the Hodge filtration of the corresponding Hodge module. An immediate consequence of our results is a Hodge-theoretic proof of the signature multiplicity formula of \cite{ALTV}, which was the inspiration for this work.
\end{abstract}
\maketitle

\tableofcontents
\section{Introduction}

In this paper, we study mixed Hodge modules on flag varieties, their polarizations, and their applications to the representation theory of reductive Lie groups. Our main results are a mixed Hodge module version of the character identities of Lusztig and Vogan \cite{LV}, an extension of the work of Beilinson and Bernstein on Jantzen filtrations \cite{beilinson-bernstein} to polarized Hodge modules, and a result linking polarizations with the $c$-forms studied in the representation theory of real groups.

The study of representations of real reductive groups via mixed Hodge modules has been proposed by Schmid and the second author~\cite{schmid-vilonen}, who outlined a program using this theory to determine the unitary dual of a reductive Lie group, i.e., the set of its irreducible unitary representations. This program is based on a series of very general conjectures, beyond the scope of representation theory, concerning global sections of twisted mixed Hodge modules on flag varieties. These conjectures reduce the problem of determining unitarity of a representation to the problem of computing the Hodge filtration on the corresponding mixed Hodge module. The latter is expected to be much more amenable to general conceptual arguments, thanks to the deep functoriality properties enjoyed by mixed Hodge modules.

In this paper, we take the first major steps in implementing this program and link it with the work of Adams, van Leeuwen, Trapa and Vogan~\cite{ALTV}, who have made significant progress in understanding the unitary dual using different methods. Those authors present an explicit algorithm, implemented in the ``Atlas of Lie Groups and Representations'' software package \cite{atlas}, that decides whether a given irreducible representation is unitary. A key innovation is the reduction of the question of unitarity to the computation of the signatures of the so-called $c$-forms (particular Hermitian forms introduced by the authors) on the Harish-Chandra modules of irreducible representations with real infinitesimal character. The algorithm computes these signatures by deforming the infinitesimal character to a region where the signatures are known, keeping track of changes along the way using a remarkable signature character formula \cite[Theorem 20.6]{ALTV}. Our results allow the $c$-forms to be deduced from the polarizations of the corresponding Hodge modules, and imply the signature character formula as a corollary. Although in the end we prove much more, our results may be motivated as the minimal package required to deduce these crucial ingredients using Hodge theory. We build on this in the companion paper \cite{DV3} to prove the conjectures in \cite{schmid-vilonen} pertaining to real groups.

We now summarize our main results: a more detailed introduction is given in \S\S\ref{sec:main theorem 1}--\ref{sec:main theorem 3} below. We fix throughout a complex reductive group $G$ and an involution $\theta \colon G \to G$. We write $K = G^\theta$ for the subgroup of fixed points and $\mc{B}$ for the flag manifold of $G$. 

Our first main theorem is a Hodge-theoretic upgrade of the classical Lusztig-Vogan theory, which calculates a change of basis matrix in the Grothendieck group of $K$-equivariant (twisted) $\mc{D}$-modules on $\ms{B}$ \cite{LV}. The formula we obtain can be thought of as a Hodge version of \cite[Theorem 20.6]{ALTV}. For now this is a purely geometric problem; we recall the relevance to representation theory after the statement.

As is well-known, the group $K$ acts on $\mc{B}$ with finitely many orbits. Thus, the Grothendieck group $\mrm{K}(\Mod_K(\mc{D}_{\mc{B}}))$ is a free abelian group of finite rank, with three distinguished bases: a ``standard'' basis $\{[j_!\gamma]\}$, a ``costandard'' basis $\{[j_*\gamma]\}$ and an ``irreducible'' basis $\{[j_{!*}\gamma]\}$, where $\gamma$ runs over the finite set of irreducible $K$-equivariant local systems on $K$-orbits in $\mc{B}$ and $j_!\gamma$ (resp., $j_*\gamma$, $j_{!*}\gamma$) denotes the $!$ (resp., $*$, IC) extension to $\mc{B}$. The original Lusztig-Vogan theory calculates the change of basis between the bases $\{[j_!\gamma]\}$ (or equivalently, $\{[j_*\gamma]\}$) and $\{[j_{!*}\gamma]\}$, a matrix with integer entries given by the multiplicities of the composition factors of the reducible $\mc{D}$-modules $j_!\gamma$. The calculation proceeds (as in Kazhdan-Lusztig theory, which is a special case) by passing to the analogous problem for mixed perverse sheaves over a finite field: in this more structured setting, each composition factor in $j_!\gamma$ appears with a certain \emph{weight}. Recording these weights yields a ``mixed'' change of basis matrix, whose entries are polynomials, which may be calculated using Hecke algebra combinatorics.
 
We perform an analogous calculation over $\mb{C}$ with a category of \emph{mixed Hodge modules} in place of mixed perverse sheaves. More precisely, let $H$ be the abstract Cartan of $G$, $\mf{h} = \mrm{Lie}(H)$ its Lie algebra, and $\lambda \in \mf{h}^*_\mb{R} = \mb{X}^*(H) \otimes_{\mb{Z}} \mb{R}$. Then there is a category $\mhm_\lambda(K \bslash \mc{B})$ of $K$-equivariant $\lambda$-twisted complex\footnote{Unless otherwise specified, we always work with the complex mixed Hodge modules of \cite{SS} and \cite{schmid-vilonen} instead of the more standard rational or real ones.} mixed Hodge modules on $\mc{B}$ (denoted $\mrm{CMHM}(\mc{B})_\lambda$ in \cite{schmid-vilonen}). The objects are equivariant modules $\mc{M}$ over a sheaf $\mc{D}_\lambda$ of twisted differential operators on $\mc{B}$ (see \S\ref{subsec:twisted D-modules}) equipped with extra structures such as a weight filtration $W_{\bigcdot} \mc{M}$ and a Hodge filtration $F_{\bigcdot} \mc{M}$; see \S\ref{subsec:modules on K mod B} for more details. As above, we have Hodge versions of the standard, costandard and irreducible objects $j_!\gamma$, $j_*\gamma$ and $j_{!*}\gamma$, parametrized by twisted local systems on $K$-orbits. (The objects $j_{!*}\gamma$ and $j_*\gamma$ are denoted by $\mc{I}(Q, \lambda, \gamma)$ and $\mc{M}(Q, \lambda, \gamma)$ respectively in \cite{schmid-vilonen}, where $Q$ is the underlying $K$-orbit.) Each composition factor $j_{!*}\gamma'$ of $j_!\gamma$ now appears with some Hodge structure, which may differ from the standard one by tensoring with a $1$-dimensional complex Hodge structure $\mb{C}^{p, q}$ of weight $p + q$. Recording this with a coefficient $t_1^pt_2^q$ defines a Hodge multiplicity polynomial
\[ \LVM^h_{\gamma', \gamma}(t_1, t_2) \in \mb{Z}[t_1^{\pm 1}, t_2^{\pm 1}] \]
characterized by
\[
[j_!\gamma] = \sum_{\gamma'} \LVM^h_{\gamma', \gamma}(t_1, t_2)[j_{!*}\gamma'] \in \oK(\mhm_\lambda(K \bslash \ms{B})),
\]
where the variables $t_1$ and $t_2$ correspond to tensoring with $\mb{C}^{1, 0}$ and $\mb{C}^{0, 1}$ respectively. We also write
\[ \LVM^m_{\gamma', \gamma}(u) := \LVM^h_{\gamma', \gamma}(u^{\frac{1}{2}}, u^{\frac{1}{2}}) \]
for the ``mixed'' polynomials remembering only the weights and forgetting the Hodge structures. Unlike $\LVM^h$, the polynomials $\LVM^m$ are well known: since weight filtrations satisfy the same formal properties for mixed Hodge modules as for mixed sheaves over finite fields, $\LVM^m$ is equal (up to an explicit power of $u$) to the Lusztig-Vogan (aka Kazhdan-Lusztig) multiplicity polynomial $\LVM$ of \cite{LV, ic3, ic4}, see Proposition \ref{prop:mixed to LV}. Our first main theorem explains how to recover $\LVM^h$ from $\LVM^m$.

\begin{thm} \label{thm:intro theorem 1}
We have
\[ \LVM^h_{\gamma', \gamma}(t_1, t_2) = (t_1t_2^{-1})^{\frac{1}{2}(\ell_H(\gamma') - \ell_H(\gamma))}\LVM^m_{\gamma', \gamma}(t_1t_2).\]
Here $\ell_H(\gamma)$ and $\ell_H(\gamma')$ are the explicit Hodge shifts of Definition \ref{defn:hodge shift}.
\end{thm}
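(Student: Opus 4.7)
My plan is to deduce the Hodge-level identity from the known weight-level identity for $\LVM^m$ by establishing a refined purity property: each standard $[j_!\gamma]$ and each simple $[j_{!*}\gamma]$ in $\mhm_\lambda(K\bslash\ms{B})$, once corrected by a Tate twist of ``half-weight'' $\ell_H(\gamma)/2$, has Hodge and conjugate Hodge filtrations whose gradeds coincide in bidegree. Setting $t_1 = t_2 = u^{1/2}$ in the claimed identity reduces to the tautology $\LVM^m = \LVM^m$, and $\LVM^m_{\gamma', \gamma}(u)$ is, up to an explicit power of $u$, the Lusztig--Vogan polynomial by \cite{LV, ic3, ic4}; the content of the theorem is therefore the assertion that the Hodge/conjugate-Hodge asymmetry enters only through the factor $(t_1 t_2^{-1})^{(\ell_H(\gamma') - \ell_H(\gamma))/2}$.

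For each orbit $j \colon Q \hookrightarrow \ms{B}$ and each irreducible $K$-equivariant $\lambda$-twisted local system $\gamma$ on $Q$, the tautological complex Hodge module structure on $\gamma$ is pure of a single Hodge type $(p,q)$ with $p - q = \ell_H(\gamma)$. This amounts to a direct computation: $\gamma$ is controlled by a character of $\mrm{Stab}_K(x)$ at a chosen base point $x \in Q$ whose continuous part is pinned down by $\lambda \in \mf{h}^*_\mb{R}$, and the resulting polarised variation on $Q$ is Hodge--Tate after the shift $\ell_H(\gamma)$ of Definition \ref{defn:hodge shift}.

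This single-bidegree property is then preserved under $j_!$ (which increases $\ell_H$ by a term depending on the codimension of $Q$ in $\ms{B}$) and under intermediate extension $j_{!*}$, the latter because a pure polarised Hodge module on $\ms{B}$ is determined by its restriction to the smooth locus of its support and the single-bidegree condition is inherited by gradeds of the weight filtration. Hence $(t_1 t_2^{-1})^{-\ell_H(\gamma)/2}[j_!\gamma]$ and $(t_1 t_2^{-1})^{-\ell_H(\gamma')/2}[j_{!*}\gamma']$ both lie in the subring $\mb{Z}[t_1 t_2, (t_1 t_2)^{-1}] \subset \oK(\mhm_\lambda(K\bslash\ms{B}))$. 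Restricted to this subring, the change-of-basis polynomial between standards and simples is a polynomial in $t_1 t_2$, and its $t_1 t_2 = u$ specialisation must agree with $\LVM^m_{\gamma', \gamma}(u)$; reintroducing the Tate twist yields the claimed formula.

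The main obstacle will be the second step: verifying that the tautological complex Hodge structure on an irreducible $K$-equivariant $\lambda$-twisted local system is pure of a single Hodge type, and matching that type exactly with the combinatorial shift $\ell_H(\gamma)$. This requires careful bookkeeping of characters of $\mrm{Stab}_K(x)$, of how their Hodge types interact with the real parameter $\lambda$ and the chosen polarisation, and of the contribution of the codimension of $Q$ in $\ms{B}$ coming from $j_!$. Once this is established, the remainder of the argument is formal, combining standard properties of polarised mixed Hodge modules on flag varieties with the known Lusztig--Vogan computation for the weight filtration.
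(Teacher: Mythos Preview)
Your proposal has a genuine gap at its central step. The claim that an irreducible $K$-equivariant $\lambda$-twisted local system $\gamma$ on $Q$ carries a ``tautological'' complex Hodge structure of a single type $(p,q)$ with $p-q=\ell_H(\gamma)$ is not correct. In the paper's conventions (see \S\ref{subsec:modules on K mod B}) every such rank-one $\gamma$ is taken to have Hodge type $(0,0)$, and indeed any rank-one unitary local system admits a polarized VHS structure of any chosen single type; there is no intrinsic mechanism producing the specific half-integer $\ell_H(\gamma)$ of Definition~\ref{defn:hodge shift} from the character data alone. The formula for $\ell_H(\gamma)$ is a sum over \emph{real non-integral positive roots}, which are features of how $Q$ sits inside $\ms{B}$ and of how $\lambda$ interacts with the root system, not features of the local system on $Q$ itself.

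More seriously, the assertion that the ``single-bidegree property is preserved under $j_!$'' is precisely the content of the theorem and does not follow from general properties of mixed Hodge modules. The functor $j_!$ does not, in general, send objects of a single Hodge bidegree to objects whose weight-gradeds are Tate twists of simples. The key place where non-Tate Hodge structures enter is exactly the situation of a real non-integral simple root: Proposition~\ref{prop:hecke action}\,\eqref{itm:hecke action 7} and Lemma~\ref{lem:real hodge calculation} show that applying the Hecke operator $\mb{T}_{s_\alpha}$ to $[j_!\gamma]$ produces a coefficient $t_1$ or $t_2$ (not a power of $u=t_1t_2$) depending on the sign $(-1)^{\lfloor\langle\lambda,\check\alpha\rangle\rfloor}\Lambda(m_\alpha)$. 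This is an honest Hodge-theoretic computation on $\mb{P}^1$ with three punctures, and it is exactly what forces the particular combinatorial formula for $\ell_H$.

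The paper's actual proof runs as follows: compute all Hecke operators $\mb{T}_{s_\alpha}$ in the $!$-basis (Proposition~\ref{prop:hecke action}, with Lemma~\ref{lem:real hodge calculation} as the only non-formal input), then verify case-by-case (Proposition~\ref{prop:mixed to hodge}, with Lemma~\ref{lem:mixed to hodge} handling the delicate real integral case) that the rescaling $[j_!\gamma]\mapsto (t_1t_2^{-1})^{\ell_H(\gamma)/2}[j_!\gamma]$ intertwines these Hecke operators with their mixed (weight-only) counterparts. Once that compatibility is established, Kazhdan--Lusztig uniqueness (Propositions~\ref{prop:our duality} and~\ref{prop:our poly}) forces the change-of-basis matrix to be the Lusztig--Vogan one. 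Your outline omits precisely this Hecke-operator computation and the verification that $\ell_H$ is the correct correction; without it there is no argument.
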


Theorem \ref{thm:intro theorem 1} is restated as Theorem \ref{thm:main theorem 1} in the text; we direct the reader to \S\ref{sec:main theorem 1} for a more detailed explanation of the statement and to \S\ref{proof:main theorem 1} for the proof. As well as being an effective computation, the result is striking for the following reason. As a rule, most Hodge structures arising in geometric representation theory are \emph{Tate}: that is, they are extensions of the diagonal Hodge structures $\mb{C}^{n, n}$ for $n \in \mb{Z}$. This property is quite useful: for example, it is used to construct graded lifts in the study of Koszul duality \cite[\S 4.5]{BGS}. If this were the case here, we would have
\[ \LVM^h_{\gamma', \gamma}(t_1, t_2) = \LVM^m_{\gamma', \gamma}(t_1t_2).\]
The extra factor in Theorem \ref{thm:intro theorem 1} indicates that our Hodge structures are \emph{not} Tate, but in a mild way that can be removed with a suitable change of normalization for the Hodge structure on each local system. This observation may be of significance for the development of a Hodge-theoretic approach to Koszul duality for real groups as outlined in \cite[\S 6]{BV}.

Let us now explain what Theorem \ref{thm:intro theorem 1} has to do with representation theory and the program of \cite{schmid-vilonen}. Writing $\mf{g} = \mrm{Lie}(G)$, the Beilinson-Bernstein localization theory defines a global sections functor
\[ \Gamma \colon \Mod_K(\mc{D}_\lambda) \to \Mod(\mf{g}, K)_\lambda\]
from the category of $K$-equivariant $\mc{D}_\lambda$-modules on $\mc{B}$ to the category of $(\mf{g}, K)$-modules on which the center $Z(U(\mf{g}))$ of the universal enveloping algebra acts by a fixed character $\chi_\lambda$ determined by $\lambda \in \mf{h}^*$. We fix our conventions so that $\mc{D}_0 = \mc{D}_{\mc{B}}$ is the sheaf of ordinary (untwisted) differential operators, which corresponds to the infinitesimal character of the trivial representation. Then $\Gamma$ is an exact quotient functor (resp., an equivalence) if $\lambda + \rho$ is integrally dominant (resp., integrally dominant and regular), where $\rho$ is half the sum of the positive roots of $G$. In this case, the global sections of the irreducible $\mc{D}_\lambda$-modules $j_{!*}\gamma$ are either irreducible or zero, and the global sections of the $j_!\gamma$ are the standard $(\mf{g}, K)$-modules arising in the Langlands classification \cite{HMSW}.

Now, if we fix a \emph{real} infinitesimal character $\chi$ then there is a unique $\lambda \in \mf{h}^*_\mb{R}$ such that $\lambda + \rho$ is dominant and $\chi = \chi_\lambda$. We call an irreducible twisted local system $\gamma$ \emph{relevant} if the twist $\lambda$ satisfies this condition and $\Gamma(j_{!*}\gamma) \neq 0$. The global sections functor defines a bijection between the set of relevant local systems and the set of irreducible $(\mf{g}, K)$-modules with real infinitesimal character. Moreover, for $\gamma$ relevant, the Hodge filtration on $j_{!*}\gamma$ coming from its lift to an object in $\mhm_\lambda(K \bslash \mc{B})$ determines a canonical Hodge filtration on the irreducible $(\mf{g}, K)$-module $\Gamma(j_{!*}\gamma)$. The main conjecture in \cite{schmid-vilonen} is that this Hodge filtration controls the signature of a natural Hermitian form coming from the Hodge-theoretic notion of \emph{polarization}.

In classical Hodge theory, a polarization on a pure Hodge structure (i.e., one with a single weight) is a Hermitian form satisfying an explicit signature condition with respect to the Hodge decomposition. Similarly, there is a notion of polarization on a pure Hodge module: a Hermitian form on the underlying $\mc{D}$-module satisfying an analogous sign condition with respect to the Hodge module structure. We refer the reader to \S\ref{subsec: phm} for a more detailed explanation, including the precise definition of a Hermitian form on a $\mc{D}$-module. For now, suffice it to say that an irreducible Hodge module $\mc{M}$ has a unique polarization (up to multiplying by a positive real scalar), and that if $S$ is a polarization on $\mc{M}$ then $(-1)^q S$ is a polarization on $\mc{M} \otimes \mb{C}^{p, q}$. The authors of \cite{schmid-vilonen} observed that the polarization on $j_{!*}\gamma$ can be integrated to a Hermitian form on the associated $(\mf{g}, K)$-module and conjectured that this form also satisfies a natural sign condition with respect to the global sections of the Hodge filtration.

Theorem \ref{thm:intro theorem 1} describes in particular how the Hodge filtrations on irreducible and standard modules are related. In view of the above conjecture, one would hope that this might have consequences for Hermitian forms. Our second main theorem shows that this is indeed the case. To formulate the theorem, we first recall how to define Hermitian forms on composition factors of standard modules.

Suppose that $\gamma$ is an equivariant $\lambda$-twisted local system on a $K$-orbit $Q \subset \ms{B}$ and $f \in \mrm{H}^0(\bar{Q}, \mc{L}_\varphi)^K$ is an equation for the boundary of $Q$. Here $\mc{L}_\varphi \in \mrm{Pic}(\mc{B})$ is the line bundle corresponding to a character $\varphi \in \mb{X}^*(H)$. Then there is an associated $(\lambda + s \varphi)$-twisted deformation $\gamma_{s\varphi}$ of $\gamma$ for all $s \in \mb{R}$ (\S\ref{subsec:mhm deform}), equipped with a morphism
\begin{equation} \label{eq:intro jantzen}
 j_!\gamma_{s\varphi} \to j_*\gamma_{s\varphi}
 \end{equation}
with image $j_{!*}\gamma_{s\varphi}$. The morphism \eqref{eq:intro jantzen} is an isomorphism for generic $s$, so filtering by its order of vanishing at $s = 0$ determines a Jantzen filtration $J_{\bigcdot} j_!\gamma$ indexed in negative degrees. The polarization $S$ on $j_{!*}\gamma$ deforms to a perfect pairing between $j_!\gamma_{s\varphi}$ and $j_*\gamma_{s\varphi}$ for all $s$ and hence determines a Jantzen form $s^{-n}\mrm{Gr}_{-n}^J(S)$ on $\mrm{Gr}_{-n}^Jj_!\gamma$ for all $n \geq 0$ (see \S\ref{subsec:mhm jantzen}).

\begin{thm} \label{thm:intro theorem 2}
For all $n \geq 0$, the object $\mrm{Gr}_{-n}^J j_!\gamma$ is a pure Hodge module, and the Jantzen form $s^{-n}\mrm{Gr}_{-n}^J(S)$ is a polarization.
\end{thm}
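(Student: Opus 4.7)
The plan is to identify the Jantzen filtration on $j_!\gamma$ with (a shift of) the monodromy weight filtration arising from a nearby cycles construction on a one-parameter family, and then to invoke Saito's theorems on polarized pure Hodge modules.

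First, I would realize the deformation $\{\gamma_{s\varphi}\}$ as a single $K$-equivariant twisted mixed Hodge module $\widetilde{\gamma}$ living over $\ms{B} \times S$, where $S$ is an appropriate parameter space (for instance, an analytic or formal neighbourhood of $0$ in $\bA^1_s$), such that its restriction to $\ms{B} \times \{s\}$ recovers $\gamma_{s\varphi}$. Pushing forward along $j \times \mrm{id}$ produces $\widetilde{j_!\gamma}$ and $\widetilde{j_*\gamma}$ together with a canonical map between them which is an isomorphism over $S \setminus \{0\}$, and the Jantzen filtration $J_{\bigcdot}j_!\gamma$ is by construction induced from the $s$-adic filtration via this map. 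A standard computation in the spirit of \cite{beilinson-bernstein} then identifies $\mrm{Gr}^J_{\bigcdot}j_!\gamma$ with (graded pieces of) the monodromy filtration on the nearby cycles $\psi_s \widetilde{j_{!*}\gamma}$ of the intermediate extension over the family.

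Second, I would invoke Saito's theorem on polarized nearby cycles. Applied to a pure polarized Hodge module, the nearby cycles functor yields a mixed Hodge module whose weight filtration coincides with the monodromy filtration, each graded piece is pure, and the primitive parts carry canonical polarizations obtained by composing the original polarization with $N^n$, where $N$ is the logarithm of unipotent monodromy. Applied to $\widetilde{j_{!*}\gamma}$ equipped with a polarization extending $S$ on $j_{!*}\gamma$, this at once yields the purity of $\mrm{Gr}^J_{-n}j_!\gamma$ and a polarization on each of its primitive parts. The final step is to match this intrinsic Saito polarization with the concrete Jantzen form $s^{-n}\mrm{Gr}^J_{-n}(S)$: in the family picture the nilpotent $N$ is implemented by multiplication by $s$ on the deformation, so dividing by $s^n$ on the $n$-th Jantzen level translates directly into composition with $N^n$ on the associated graded, and positivity on primitives becomes the polarized Hodge--Riemann relations.

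The main obstacle, I expect, is producing the deformation $\widetilde{\gamma}$ and the extended polarization as genuine mixed Hodge module data (not merely as twisted $\cD$-module data), and verifying that Saito's polarized nearby cycles formalism applies cleanly in the $K$-equivariant, twisted setting used here. A secondary delicate point is the integrality issue: when $s\varphi$ assumes integer values the fibre $\gamma_{s\varphi}$ can become reducible, but it suffices to work in a neighbourhood of $s=0$ where the family has unipotent monodromy around the fibre of interest, which is exactly what the nearby cycles argument requires. The sign and Tate-twist bookkeeping in the matching step is technical but routine once the framework is in place.
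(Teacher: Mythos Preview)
Your overall strategy---relate the Jantzen filtration to a monodromy filtration on nearby cycles, then invoke the polarization on primitive parts---is correct and is exactly what the paper does. But the family $\widetilde{\gamma}$ you propose does not exist as a mixed Hodge module (or even a regular holonomic $\ms{D}$-module) on $\ms{B}\times S$: on $Q\times S$ it would have to be a rank one local system whose monodromy around loops linking $\partial Q$ varies continuously with $s$, which is impossible since the monodromy of a local system on a product is constant in the second factor. Equivalently, the connection $d + s\,df_\varphi/f_\varphi$ has curvature $ds\wedge df_\varphi/f_\varphi\neq 0$, and making it flat by adding $(\log f_\varphi)\,ds$ destroys regularity in $s$. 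So $\psi_s$ is not available here, and the obstacle you flagged is fatal rather than merely technical.

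The paper's fix is to take nearby cycles with respect to the boundary equation $f=f_\varphi$ itself, treating $s$ only as a formal variable. The pro-object $f^s\gamma[[s]]$ lives on $Q$ (no $\partial_s$), and Beilinson's functor $\pi_f^0(\gamma)=\coker(j_!f^s\gamma[[s]]\to j_*f^s\gamma[[s]])$ is identified with $\psi_f^{un}\gamma$, the formal variable $s$ becoming the nilpotent $\mrm{N}=-t\partial_t$. The bridge to the Jantzen filtration goes through Beilinson's maximal extension $\Xi_f(\gamma)=\pi_f^1(\gamma)$: one shows that $J_{\bigcdot}j_!\gamma$ is the restriction to $\ker(s)\subset\Xi_f(\gamma)$ of the monodromy filtration, and that the primitive parts $\mrm{P}_n\Xi_f(\gamma)\cong\mrm{Gr}_n^Jj_*\gamma$ map (for $n>0$) isomorphically onto $\mrm{P}_{n-1}\pi_f^0(\gamma)$, carrying the Jantzen form to the Sabbah--Schnell polarization on $\psi_f^{un}$. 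Your anticipated matching ``$N\leftrightarrow s$'' is exactly right---it just takes place inside $\psi_f^{un}$ rather than inside a nonexistent $\psi_s$.
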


This is stated as Theorem \ref{thm:main theorem 2} in the text. The theorem is a special case of a completely general result about extensions of polarized Hodge modules across principal divisors (see Theorem \ref{thm:jantzen polarization}). We give a full explanation of the result in \S\ref{sec:main theorem 2}. The proof is given in \S\ref{proof:main theorem 2}.

Since polarizations change sign by $(-1)^q$ under tensoring with $\mb{C}^{p, q}$, Theorems \ref{thm:intro theorem 1} and \ref{thm:intro theorem 2} together give a formula for the signature of the Jantzen forms on $\Gamma(\Gr^J_{-n} j_!\gamma) = \Gr^J_{-n}\Gamma(j_!\gamma)$ in terms of the signatures of the polarizations on the $\Gamma(j_{!*}\gamma')$. To turn this into a purely representation-theoretic statement, however, we need to identify the integral of the polarization in terms of representation theory. Our final main result lets us do so by linking the Hodge filtration with the structure of the Harish-Chandra module.

\begin{thm} \label{thm:intro theorem 3}
Assume the twisted local system $\gamma$ is relevant. Then all minimal $K$-types of $\Gamma(j_{!*}\gamma)$ lie in the lowest piece of the Hodge filtration.
\end{thm}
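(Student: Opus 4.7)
The plan is to reduce to the standard module $j_!\gamma$, identify the lowest piece of its Hodge filtration in geometric terms, and then match this description with Vogan's bottom layer theorem for minimal $K$-types. Under the regularity hypothesis, $\Gamma$ is exact, so the canonical surjection $j_!\gamma \twoheadrightarrow j_{!*}\gamma$ of mixed Hodge modules descends to a surjection $\pi\colon \Gamma(j_!\gamma) \twoheadrightarrow \Gamma(j_{!*}\gamma)$. Because morphisms of mixed Hodge modules are strict with respect to the Hodge filtration, so is $\pi$, and in particular $\pi(F_{p_0}\Gamma(j_!\gamma)) = F_{p_0}\Gamma(j_{!*}\gamma)$, where $F_{p_0}$ denotes the lowest nonzero piece. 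Vogan's classical theorem that each minimal $K$-type of an irreducible Harish-Chandra module occurs with the same multiplicity in any standard module of which it is the Langlands quotient then shows that the $\mu$-isotypic component of $\ker \pi$ vanishes for every minimal $K$-type $\mu$ of $\Gamma(j_{!*}\gamma)$. Thus it suffices to show that the minimal $K$-types of $\Gamma(j_!\gamma)$ already lie in $F_{p_0}\Gamma(j_!\gamma)$.

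Next I would compute $F_{p_0} j_!\gamma$ geometrically. Since $\gamma$ is a twisted local system, its Hodge filtration is concentrated in a single bidegree, and general Saito theory identifies $F_{p_0} j_!\gamma$ with the coherent $\cO_{\ms{B}}$-module direct image of $\gamma$ from $Q$, twisted by a specific $K$-equivariant line bundle $L$ on $Q$ arising from the canonical/conormal data of the locally closed embedding $j\colon Q \hookrightarrow \ms{B}$. Taking global sections on $\ms{B}$ and using that $K$ acts transitively on $Q$, this should yield a natural $K$-equivariant isomorphism
\[ F_{p_0}\Gamma(j_!\gamma) \;\cong\; \Gamma(Q, \gamma \otimes L). \]

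By Vogan's bottom layer theorem, the minimal $K$-types of the standard module $\Gamma(j_!\gamma)$ are precisely the irreducible $K$-constituents of $\Gamma(Q, \gamma \otimes L)$, so they live inside $F_{p_0}\Gamma(j_!\gamma)$, completing the proof modulo the reduction of the first paragraph. The hard part will be the geometric identification in the middle paragraph: pinning down all normalizations (Tate twists, the $\rho$-shift absorbed into $\lambda$, and twists by the canonical or conormal bundles of $Q$) so that $F_{p_0}j_!\gamma$ matches Vogan's bottom layer on the nose, particularly in the presence of nontrivial monodromy of $\gamma$. A secondary subtlety in the reduction step is to check that Vogan's multiplicity-preservation statement applies in the $K$-equivariant, complex-twisted setting used here, but this is expected to be routine.
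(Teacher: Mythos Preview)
Your reduction in the first paragraph is fine: morphisms of mixed Hodge modules are strict for the Hodge filtration, and the fact that minimal $K$-types occur with the same multiplicity in $\Gamma(j_!\gamma)$ and $\Gamma(j_{!*}\gamma)$ is essentially the Vogan result the paper also uses (in dual form, Theorem~\ref{thm:minimal K-type in irreducible}, working with $j_*\gamma$ rather than $j_!\gamma$).

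The genuine gap is your second paragraph. The assertion that ``general Saito theory identifies $F_{p_0} j_!\gamma$ with the coherent direct image of $\gamma$ twisted by a line bundle'' is \emph{false} when $Q$ is not closed. The Hodge filtration on $j_!\gamma$ and $j_*\gamma$ is governed by the $V$-filtration along the boundary via \eqref{open embedding formula}, and the lowest piece $F_c$ imposes a nontrivial order-of-pole condition that cuts out a strict subsheaf of $j_{\bigcdot}(\gamma \otimes \omega_{Q/\ms{B}})$ in general. Indeed, Proposition~\ref{prop:minimal K-type support} already places every minimal $K$-type inside $\Gamma(\ms{B}, j_{\bigcdot}(\gamma \otimes \omega_{Q/\ms{B}}))$; if your identification held, the theorem would be immediate for all $Q$, but the paper explicitly notes this only works when $Q$ is closed. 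The ``hard part'' you flag as a matter of normalizations is in fact a structural obstruction, not a bookkeeping issue.

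The paper's actual argument characterizes membership in $F_c j_*\gamma$ via nearby cycles: a section of $j_{\bigcdot}(\gamma \otimes \omega_{Q/\ms{B}})$ lies in $F_c$ if and only if its images in $\pi_{f_\varphi}^0(f_\varphi^a\gamma)$ vanish for all $a > 0$ (Lemma~\ref{lem:minimal K-types 1}). One then deforms along $s \mapsto \lambda + s\varphi$ and applies Vogan's theorem at the \emph{deformed} regular parameter to force this vanishing on the dominant interval (Lemma~\ref{lem:minimal K-types 2}); a final inductive step via intertwining functors (Lemma~\ref{lem:intertwining}) handles the remaining wall-crossing when no dominant boundary equation exists. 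So the same Vogan input you invoke is used, but at a family of parameters and through a much less direct route than your proposal envisages.
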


This is stated as Theorem \ref{thm:main theorem 3} in the text. We refer to \S\ref{sec:main theorem 3} for further details, including a recollection of Vogan's notion of minimal $K$-type. The proof, which is given in \S\ref{mk}, proceeds by characterizing sections in the lowest piece of the Hodge filtration in terms of their behaviour under deformations and nearby cycles and reducing to known properties of minimal $K$-types. This result removes a long-standing sticking point in implementing the program of \cite{schmid-vilonen}: previously, there was not even a known bound on the minimal $p$ such that $\Gamma(F_p j_{!*}\gamma) \neq 0$.

Our results have the following consequences concerning \cite{ALTV}. First, Theorem \ref{thm:intro theorem 3}, together with a general result on positivity of polarizations (Proposition \ref{prop:hodge positivity}) implies the following corollary, which was obtained up to sign in \cite[Proposition 5.10]{schmid-vilonen}.

\begin{cor} \label{cor:intro 1}
Let $\gamma$ be a relevant twisted local system. Then the $c$-form on $\Gamma(j_{!*}\gamma)$ coincides with the integral of the polarization on $j_{!*}\gamma$.
\end{cor}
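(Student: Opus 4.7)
The plan is to leverage the fact that both forms are invariant Hermitian forms on an irreducible Harish-Chandra module, reducing the corollary to a determination of a single real sign. More precisely, both the $c$-form and the integral of the polarization are $(\mf{g},K)$-invariant Hermitian forms on the irreducible module $\Gamma(j_{!*}\gamma)$. By a standard consequence of Schur's lemma for Hermitian pairings on irreducible $(\mf{g},K)$-modules, any two such nonzero forms are proportional by a nonzero real scalar. The result of Schmid-Vilonen cited in the introduction already supplies this proportionality (``up to sign''), so the task is reduced to showing that the scalar is positive.

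To pin down the sign, I would test both forms on a minimal $K$-type $\mu$ of $\Gamma(j_{!*}\gamma)$. By Theorem \ref{thm:intro theorem 3}, the entire $\mu$-isotypic component of $\Gamma(j_{!*}\gamma)$ lies in the lowest nonzero piece of the Hodge filtration of $j_{!*}\gamma$. Applying Proposition \ref{prop:hodge positivity} (the general positivity statement for polarizations), the integrated polarization restricts to a positive-definite Hermitian form on this lowest piece, and in particular on the $\mu$-isotypic component.

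On the other side, a defining normalization of the $c$-form in \cite{ALTV} is that it is positive-definite on the minimal $K$-types of an irreducible Harish-Chandra module; this is precisely the feature that makes $c$-forms useful for detecting unitarity by deformation. Since both forms are positive-definite on the same nonzero $\mu$-isotypic component, the real scalar relating them must be positive, which gives the corollary (up to the usual positive-scalar ambiguity inherent in the choice of polarization).

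The main obstacle I anticipate is administrative rather than deep: one must match the normalizations of the $c$-form from \cite{ALTV}, the Hermitian pairing obtained by integrating a polarization in the sense of complex mixed Hodge modules \cite{SS,schmid-vilonen}, and the sign conventions of the prior Schmid-Vilonen comparison, so that ``positive on the minimal $K$-type'' on both sides really does refer to the same inequality. Once this bookkeeping is set up, the argument is a clean combination of Theorem \ref{thm:intro theorem 3}, Proposition \ref{prop:hodge positivity}, and the known positivity of $c$-forms on minimal $K$-types.
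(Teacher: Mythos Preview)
Your proposal is correct and follows essentially the same argument as the paper: both forms are $\mf{u}_\mb{R}$-invariant Hermitian forms on the irreducible module $\Gamma(j_{!*}\gamma)$, hence proportional by a real scalar, and the sign is fixed by combining Theorem~\ref{thm:intro theorem 3} (minimal $K$-types lie in the lowest Hodge piece), Proposition~\ref{prop:hodge positivity} (the integrated polarization is positive there), and the defining normalization of the $c$-form as positive on minimal $K$-types. The paper presents this as an immediate corollary of Proposition~\ref{prop:hodge positivity} and Theorem~\ref{thm:main theorem 3} in \S\ref{subsec:ALTV}, and your anticipated ``administrative'' concerns about sign conventions are handled by Proposition~\ref{sv-form} and the citation of \cite[Proposition 10.7]{ALTV}.
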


This is restated in the text as Corollary \ref{forms coincide}. The new part of the corollary is that the sign of the polarization (which is normalized using Hodge theory) agrees with the sign of the $c$-form (which is normalized using the minimal $K$-types). Combined with Theorem \ref{thm:intro theorem 2}, this implies the following.

\begin{cor} \label{cor:intro 2}
If $\gamma$ and $\gamma'$ are relevant twisted local systems, then
\[ \LVM^c_{\gamma', \gamma}(u, \zeta) = u^{\frac{1}{2}(\ell(\gamma') - \ell_I(\gamma') - (\ell(\gamma) - \ell_I(\gamma)))} \LVM^h_{\gamma', \gamma}(u^{\frac{1}{2}}, \zeta u^{\frac{1}{2}}).\]
\end{cor}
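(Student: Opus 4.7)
The plan is to deduce the identity by unpacking both sides as weighted counts of composition factors in the Jantzen filtration of a standard module and then matching them via Hodge theory. First, I would recall from \cite{ALTV} that $\LVM^c_{\gamma', \gamma}(u, \zeta)$ is defined so that its coefficients record the dimensions and Hermitian signatures of the $c$-form restricted to the graded pieces of the Jantzen filtration of the standard Harish-Chandra module $\Gamma(j_!\gamma)$, expressed in the basis of the $c$-forms on the irreducibles $\Gamma(j_{!*}\gamma')$; the variable $u$ tracks the Jantzen degree and $\zeta$ tracks the complex Hermitian sign.

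Next I would apply the global sections functor $\Gamma$ to Theorem \ref{thm:intro theorem 2}. Under the regularity hypothesis this functor is exact, so it transports the Jantzen filtration on $j_!\gamma$ and its Jantzen forms to $\Gamma(j_!\gamma)$, and identifies the Jantzen form on each graded piece with the integral of a polarization on the pure Hodge module $\mrm{Gr}^J_{-n}j_!\gamma$. By Corollary \ref{cor:intro 1}, the $c$-form basis appearing in the definition of $\LVM^c$ consists of the integrals of polarizations on the irreducibles $j_{!*}\gamma'$. Hence the coefficients of $\LVM^c_{\gamma', \gamma}(u, \zeta)$ can be read off directly from the decomposition of the pure Hodge modules $\mrm{Gr}^J_{-n} j_!\gamma$ into polarized simples, weighted by the Hodge bigrading.

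The key Hodge-theoretic input is the standard fact that on a polarized pure complex Hodge module of weight $w$ with Hodge decomposition $\bigoplus_{p+q=w} V^{p,q}$, the polarization $S$ satisfies $\sqrt{-1}^{\,p-q}\,S(v, \bar v) > 0$ on $V^{p,q}$; in particular a vector of Hodge bidegree $(p, q)$ contributes with sign $\zeta^{q}u^{(p+q)/2}$ to the signature character under the substitution $t_1 = u^{1/2}$, $t_2 = \zeta u^{1/2}$. This is exactly the substitution appearing on the right hand side of the claimed identity, so combining this computation with Theorem \ref{thm:intro theorem 2} and the matching of Jantzen degrees on the two sides produces the signature character of the Jantzen form predicted by Hodge theory.

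The main obstacle I anticipate is reconciling the two normalization conventions: $\LVM^h$ is written (as in Theorem \ref{thm:intro theorem 1}) using the Hodge shifts $\ell_H$ of Definition \ref{defn:hodge shift}, while $\LVM^c$ is defined in \cite{ALTV} using the shift $\tfrac{1}{2}(\ell - \ell_I)$. The explicit prefactor $u^{\frac{1}{2}(\ell(\gamma') - \ell_I(\gamma') - (\ell(\gamma) - \ell_I(\gamma)))}$ is exactly the correction needed to convert between these two normalizations, and I would verify this by comparing $\ell_H$ against $\tfrac{1}{2}(\ell - \ell_I)$ directly from the definitions. Once the normalizations are aligned, the formula follows by assembling the Hodge calculation above with Theorem \ref{thm:intro theorem 2} and Corollary \ref{cor:intro 1}.
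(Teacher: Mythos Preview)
Your overall strategy matches the paper's proof of Corollary~\ref{cor:hodge signature} exactly: apply exactness of $\Gamma$, use Theorem~\ref{thm:intro theorem 2} to identify the Jantzen forms with polarizations, invoke Corollary~\ref{cor:intro 1} to match the $c$-form with the integral of the polarization, and then use the sign rule \eqref{eq:polarization normalization} to see that a summand $\mb{C}_{-p,-q}\otimes j_{!*}\gamma'$ contributes $\zeta^q$ times the $c$-form on $\Gamma(j_{!*}\gamma')$. (Note that the paper's convention is $(-1)^q S|_{V_{p,q}}>0$, not $i^{p-q}$; your conclusion $\zeta^q$ is still correct.)

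The gap is in your final paragraph. The Hodge shift $\ell_H$ plays no role in this corollary and you should not try to match it against $\tfrac12(\ell-\ell_I)$. The prefactor $u^{\frac12(\ell(\gamma')-\ell_I(\gamma')-(\ell(\gamma)-\ell_I(\gamma)))}$ arises from two bookkeeping sources only: (i) the definition of $\LVM^c$ in \cite{ALTV} carries a built-in normalization $u^{\frac12(\ell_I(\gamma)-\ell_I(\gamma'))}$, and (ii) the purity statement in Theorem~\ref{thm:intro theorem 2} forces the weight constraint $p+q=\ell(\gamma)-\ell(\gamma')-n$ for each summand $\mb{C}_{-p,-q}\otimes j_{!*}\gamma'$ of $\mrm{Gr}^J_{-n}j_!\gamma$ (cf.\ Remark~\ref{rmk:weights}). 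Substituting $n=\ell(\gamma)-\ell(\gamma')-(p+q)$ into the sum $\sum_n w^{c,n}u^{-n/2}$ and combining with (i) gives the prefactor directly. The quantity $\ell_H$ only enters at the next step, Corollary~\ref{cor:intro 3}, where Theorem~\ref{thm:intro theorem 1} is applied to convert $\LVM^h$ into $\LVM^m$ and then $\LVM$.
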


This is also stated as Corollary \ref{cor:hodge signature}. The left hand side is the signature multiplicity polynomial of \cite{ALTV}, which is a generalization of the Lusztig-Vogan multiplicity polynomial $\LVM$ involving an additional parameter $\zeta$ with $\zeta^2 = 1$, keeping track of the signature of the Jantzen forms. The power of $u$ in Corollary \ref{cor:intro 2} is not integral to the theory: it is merely a consequence of the conventions used in the definition of $\LVM^c$. Finally, applying Theorem \ref{thm:intro theorem 1} yields:

\begin{cor}[{\cite[Theorem 20.6]{ALTV}}] \label{cor:intro 3}
We have
\[ \LVM_{\gamma', \gamma}^c(u, \zeta) = \zeta^{\frac{1}{2}(\ell_o(\gamma) - \ell_o(\gamma'))}\LVM_{\gamma', \gamma}(\zeta u),\]
where $\ell_o(\gamma)$ is the orientation number of \cite{ALTV}.
\end{cor}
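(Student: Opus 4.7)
The plan is a direct substitution argument. Setting $t_1 = u^{1/2}$ and $t_2 = \zeta u^{1/2}$ in Theorem~\ref{thm:intro theorem 1}, and noting that $t_1 t_2^{-1} = \zeta^{-1}$ and $t_1 t_2 = \zeta u$, one obtains
\[ \LVM^h_{\gamma', \gamma}(u^{1/2}, \zeta u^{1/2}) = \zeta^{-\frac{1}{2}(\ell_H(\gamma') - \ell_H(\gamma))}\, \LVM^m_{\gamma', \gamma}(\zeta u).\]
Plugging this into Corollary~\ref{cor:intro 2} yields
\[ \LVM^c_{\gamma', \gamma}(u, \zeta) = u^{\frac{1}{2}(\ell(\gamma') - \ell_I(\gamma') - \ell(\gamma) + \ell_I(\gamma))}\,\zeta^{-\frac{1}{2}(\ell_H(\gamma') - \ell_H(\gamma))}\,\LVM^m_{\gamma', \gamma}(\zeta u).\]

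The next step is to substitute in the explicit monomial relation between $\LVM^m$ and the Lusztig--Vogan multiplicity polynomial $\LVM$ recalled in the introduction, where $\LVM^m_{\gamma',\gamma}(u)$ differs from $\LVM_{\gamma',\gamma}(u)$ by a power of $u$ depending only on $\gamma$ and $\gamma'$. Writing $\LVM^m_{\gamma',\gamma}(\zeta u) = (\zeta u)^{n(\gamma',\gamma)} \LVM_{\gamma',\gamma}(\zeta u)$ splits off one factor of $u$ and one of $\zeta$, which can be absorbed into the prefactors already present. Collecting exponents, the statement of Corollary~\ref{cor:intro 3} reduces to two purely numerical claims: (i) the total exponent of $u$ vanishes, so that the right-hand side depends only on the product $\zeta u$; and (ii) the remaining exponent of $\zeta$ equals $\frac{1}{2}(\ell_o(\gamma) - \ell_o(\gamma'))$. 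Equivalently, since both sides of each claim depend only on a difference between $\gamma$ and $\gamma'$, both (i) and (ii) amount to the assertion that the orbit-invariant $\ell_H(\gamma) + \ell(\gamma) - \ell_I(\gamma) - \ell_o(\gamma)$ agrees with the $\LVM^m$-to-$\LVM$ shift up to a global additive constant independent of $\gamma$.

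The only non-formal ingredient is the verification of this combinatorial identity, matching the orientation number $\ell_o$ of \cite{ALTV} against the Hodge shift $\ell_H$ of Definition~\ref{defn:hodge shift}, the lengths $\ell$ and $\ell_I$, and the $\LVM^m$-to-$\LVM$ normalization. I expect this to be the main obstacle -- not because of any conceptual depth, but because it is a careful book-keeping match of four different conventions for length-like invariants of $K$-orbits. Each of $\ell_H$, $\ell$, $\ell_I$, and $\ell_o$ admits an explicit description as a sum over the various root types (compact and noncompact imaginary, complex, and real) attached to an orbit, and the natural approach is to tabulate the contribution of each root type to each invariant and verify that the four sums cancel in the required way. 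The most delicate points are likely to be the correct treatment of half-integer shifts and the identification of the global additive constant, which should vanish for the normalizations fixed earlier in the paper but requires unpacking the definitions to confirm.
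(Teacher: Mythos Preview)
Your approach is correct and matches the paper's proof: substitute Theorem~\ref{thm:intro theorem 1} into Corollary~\ref{cor:intro 2}, then use the explicit relation between $\LVM^m$ and $\LVM$ (Proposition~\ref{prop:mixed to LV}), and finish with a combinatorial identity relating $\ell_H$, $\ell$, $\ell_I$, $\ell_o$ (Proposition~\ref{prop:orientation vs hodge}).

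Two clarifications where your write-up is slightly muddled. First, your claim (i) that the total $u$-exponent vanishes is \emph{trivial}: the $u$-prefactor in Corollary~\ref{cor:intro 2} is $u^{\frac{1}{2}((\ell(\gamma')-\ell_I(\gamma'))-(\ell(\gamma)-\ell_I(\gamma)))}$, and the $\LVM^m$-to-$\LVM$ shift from Proposition~\ref{prop:mixed to LV} contributes exactly the reciprocal power of $u$ (and the same power of $\zeta$). No root-counting is required; it is literally the same expression with opposite sign. Second, your formulation of (ii) is not quite right: the needed identity is not that $\ell_H + \ell - \ell_I - \ell_o$ ``agrees with the $\LVM^m$-to-$\LVM$ shift'', but simply that $\ell_H(\gamma) + \ell(\gamma) - \ell_I(\gamma) - \ell_o(\gamma)$ is independent of $\gamma$. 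This is exactly Proposition~\ref{prop:orientation vs hodge}, which gives
\[
\ell_H(\gamma) = \ell_o(\gamma) + \ell_I(\gamma) - \ell(\gamma) + \tfrac{1}{2}\#\{\alpha\in\Phi_+ \text{ non-integral}\},
\]
and the last term depends only on $\lambda$, not on $\gamma$. The proof of that proposition is indeed the root-type tabulation you describe, but it is short and requires no delicate half-integer bookkeeping.
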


This is restated as Corollary \ref{ALTV}. We explain these corollaries and their proofs in \S\ref{subsec:ALTV}.

Beyond \cite{schmid-vilonen}, the idea of studying real group representations using Hodge theory has also been considered by Adams, Trapa and Vogan in the draft \cite{ATV}, where a number of conjectures are stated that would enable the Hodge filtrations of irreducible modules $\Gamma(j_{!*}\gamma)$ to be calculated by a version of the unitarity algorithm. (This conjectural algorithm has been implemented in ``Atlas''.) Our results imply some of these conjectures. In particular Theorem \ref{thm:intro theorem 1} is essentially \cite[Conjecture 6.8]{ATV} and Corollary \ref{cor:intro 2} is essentially \cite[Conjecture 7.2 (c)]{ATV}.

Throughout this paper, we will work with algebraic left complex mixed Hodge modules on smooth quasi-projective algebraic varieties, following the treatment in \cite{schmid-vilonen} and \cite{SS}. Where conventions clash, we will generally follow the more complete reference \cite{SS}. We will recall aspects of the theory as needed; the relevant sections are \S\S\ref{subsec:modules on K mod B}, \ref{subsec: phm}, \ref{subsec:hodge filtration}, \ref{subsec:SS MHM} and \ref{subsec:SS polarization}.

The reader may object that, at the time of writing, there is no complete reference for the theory of complex mixed Hodge modules in the literature; \cite{schmid-vilonen} contains only a very rough sketch, and the more comprehensive book project \cite{SS} is still unfinished. To assuage any such concerns, we have included an appendix explaining how to reconstruct the complex theory from Saito's original \cite{S1, S2} by fleshing out the trick outlined in the appendix to \cite{schmid-vilonen}, and providing proofs of the non-trivial facts we require in this setting\footnote{Strictly speaking, only the case of rational $\lambda$ can be accessed directly from Saito's theory, but the modification required for all real $\lambda$ is relatively minor. See \S\ref{subsec:Saito MHM}.}.

The outline of the paper is as follows. In \S\S\ref{sec:main theorem 1}, \ref{sec:main theorem 2} and \ref{sec:main theorem 3}, we recall some background and give the statements of our main results Theorems \ref{thm:intro theorem 1}, \ref{thm:intro theorem 2} and \ref{thm:intro theorem 3} (=\ref{thm:main theorem 1}, \ref{thm:main theorem 2} and \ref{thm:main theorem 3}) respectively. We also discuss some consequences of our results and the connection with \cite{ALTV} in \S\ref{sec:main theorem 3}. We give the proof of Theorem \ref{thm:intro theorem 1} in \S\ref{proof:main theorem 1}, the proof of Theorem \ref{thm:intro theorem 3} in \S\ref{proof:main theorem 3} and the proof of the more technical Theorem \ref{thm:intro theorem 2} in \S\ref{proof:main theorem 2}. Finally, in Appendix \ref{appendix}, we explain how the theory of complex mixed Hodge modules we use here is related to the more well-established theory of real mixed Hodge modules.

\subsection{Acknowledgements}

The authors would like to thank Jeff Adams and David Vogan for an email exchange with the second author in 2019 that planted the idea for this project, and for answering our many questions about real groups since. We also thank Claude Sabbah for help with the correct sign conventions for polarized Hodge modules and the anonymous referees for their valuable comments.

We began working on this project together following a learning seminar at the University of Melbourne, and would therefore like to thank all participants for their involvement.

\section{The first main theorem}
\label{sec:main theorem 1}

In this section, we explain our first main theorem (Theorem \ref{thm:main theorem 1}) in more detail.

We begin by recalling the notions of twisted and monodromic $\ms{D}$-modules in general in \S\ref{subsec:twisted D-modules}, and our conventions in the case of the flag variety in \S\ref{subsec:twisted on flag}. In \S\ref{subsec:modules on K mod B} we review some aspects of the theory of twisted mixed Hodge modules, and the $K$-equivariant ones on the flag variety in particular, and write down the standard and irreducible bases for the Grothendieck group in terms of twisted local systems on $K$-orbits. We recall how these local systems are parametrized in \S\ref{subsec:parametrization}. With the parametrization in hand, we write down the various explicit numbers (length, integral length, orientation number and Hodge shift) associated with these local systems in \S\ref{subsec:numerics}. Finally, in \S\ref{subsec:first main theorem}, we introduce the Hodge, mixed and Lusztig-Vogan multiplicity polynomials, and state Theorem \ref{thm:main theorem 1}.

\subsection{Twisted \texorpdfstring{$\ms{D}$-modules}{D-modules}} 
\label{subsec:twisted D-modules}

In this subsection, we recall briefly the general definition of twisted $\ms{D}$-modules and the monodromic construction of the rings of twisted differential operators of \cite[\S 2.5]{beilinson-bernstein}.

Let $X$ be a smooth complex variety, $H$ an algebraic torus with Lie algebra $\mf{h}$, and $\pi \colon \tilde{X} \to X$ a left $H$-torsor. The sheaf $\ms{D}_{\tilde{X}}$ of differential operators on $\tilde{X}$ is naturally $H$-equivariant; we let $\tilde{\ms{D}}_X$ be the sheaf of algebras on $X$ given by
\[ 
\tilde{\ms{D}}_X = \pi_*(\ms{D}_{\tilde{X}})^H.
\]
Differentiating the action $(h \cdot f)(x) = f(h^{-1}x)$ for $h \in H$ and $f \in \mc{O}_{\tilde{X}}$ gives a map $a \colon \mf{h} \to \ms{T}_{\tilde{X}}$, where $\ms{T}_{\tilde{X}} \subset \ms{D}_{\tilde{X}}$ is the tangent sheaf. As $\fh$ is abelian we will use the {\it inverse} $-a$ to induce a map  $U(\mf{h}) \to \ms{D}_{\tilde{X}}$, which then descends to an algebra homomorphism
\[ 
U(\mf{h}) \to \tilde{\ms{D}}_X \subseteq \pi_*\ms{D}_{\tilde{X}}. 
\]
Given $\lambda \in \mf{h}^*$, we set
\[
 \ms{D}_{X, \lambda} = \tilde{\ms{D}}_X \otimes_{U(\mf{h})} \mb{C}_\lambda,
\]
where $h \in \mf{h}$ acts on $1 \in \mb{C}_\lambda$ by $h \cdot 1 = \lambda(h)$. Note that since $U(\mf{h})$ is central in $\tilde{\ms{D}}_X$ and $U(\mf{h}) \to \mb{C}_\lambda$ is an algebra homomorphism, $\ms{D}_{X, \lambda}$ is a sheaf of algebras on $X$. A $\lambda$-twisted $\ms{D}$-module on $X$ is by definition a $\ms{D}_{X, \lambda}$-module such that the underlying sheaf of $\mc{O}_X$-modules is quasi-coherent.

If $\ms{M}$ is a $\ms{D}_{X, \lambda}$-module, then its $\cO$-module pullback $\pi^\circ\ms{M}$ is naturally a $\ms{D}_{\tilde{X}}$-module equipped with a weak $H$-action (i.e., an action of $H$ by $\ms{D}_{\tilde{X}}$-linear maps) such that
\begin{equation} \label{eq:twisted condition}
h \cdot m + a(h)m + \lambda(h)m = 0
\end{equation}
for $h \in \mf{h}$ and $m \in \pi^\circ\ms{M}$. The category $\Mod_{\ms{D}_{X, \lambda}}$ is naturally identified with such weakly $H$-equivariant $\ms{D}_{\tilde{X}}$-modules; we will pass freely between these two descriptions where convenient.

One may also consider the larger category $\Mod^\mon(\ms{D}_{X, \lambda})$ of \emph{$\lambda$-monodromic $\ms{D}$-modules}; these are defined to be weakly $H$-equivariant $\ms{D}_{\tilde{X}}$-modules such that the operator defined by the left hand side of \eqref{eq:twisted condition} is nilpotent rather than zero. Every monodromic $\ms{D}$-module is an iterated extension of twisted ones, so the two notions are indistinguishable at the level of Grothendieck groups.

Forgetting the $H$-action defines a full faithful functor \cite[Lemma 2.5.4 (i)]{beilinson-bernstein}
\begin{equation} \label{eq:twisted to monodromic}
 \od^b(\Mod^\mon(\ms{D}_{X, \lambda})) \to \od^b(\Mod(\ms{D}_{\tilde{X}})).
\end{equation}
Where convenient, we will identify $\Mod(\ms{D}_{X, \lambda})$ and $\Mod^\mon(\ms{D}_{X, \lambda})$ with their images under \eqref{eq:twisted to monodromic}.

\subsection{Twisted \texorpdfstring{$\ms{D}$-modules}{D-modules} on the flag variety}
\label{subsec:twisted on flag}

The flag variety $\ms{B}$ of $G$ is defined to be the variety parametrizing Borel subgroups of $G$. Given a point $x \in \ms{B}$, write $B_x$ for the corresponding Borel subgroup. There is a canonical $G$-action on $\ms{B}$ given by $B_{gx} = gB_xg^{-1}$ for $g \in G$ and $x \in \ms{B}$, which has the property that $B_x = \mrm{Stab}_G(x)$ for all $x \in \ms{B}$. The $G$-action on $\ms{B}$ is transitive, so for any choice of base point $x$ corresponding to a choice of Borel $B = B_{x}$, the map
\begin{align*}
G/B &\to \ms{B} \\
gB &\to g \cdot x
\end{align*}
is an isomorphism. The universal Borel subgroup is the group scheme over $\ms{B}$ given by
\[ B^{uni} = \{(g, x) \in G \times \ms{B} \mid gx = x\} \to \ms{B}. \]
If we write $N^{uni} \subset B^{uni}$ for the unipotent radical, then the universal Cartan $H$ is the unique algebraic torus such that $B^{uni}/N^{uni} \cong H \times \ms{B}$ as group schemes over $\ms{B}$. In particular, $H$ is equipped with a canonical isomorphism $H \cong B/N$ for every Borel subgroup $B \subset G$ with unipotent radical $N$. The torus $H$ comes equipped with a canonical root datum $\Phi \subset \mb{X}^*(H)$, $\Phi^\vee \subset \mb{X}_*(H)$ and a choice $\Phi = \Phi_+ \cup \Phi_-$ of positive and negative roots as follows. For any maximal torus $T$ and choice of a fixed point $x$ of the $T$-action on $\ms{B}$ we obtain an isomorphism $\tau_x\colon T \to H$. The roots are the non-zero weights of $T$ acting on $\mf{g} = \mrm{Lie}(G)$, and the negative roots are those appearing in $\mf{b}_x = \mrm{Lie}(B_x)$.

If $\ms{L}$ is a $G$-equivariant line bundle on $\ms{B}$, then the group scheme $B^{uni}$ acts on $\ms{L}$ by restriction of the $G$-action. Since $\ms{L}$ is a line bundle, the universal unipotent radical $N^{uni}$ acts trivially, so the $B^{uni}$-action factors through an $H$-action on $\ms{L}$ over $\ms{B}$. This construction defines an isomorphism of abelian groups
\[ \mrm{Pic}^G(\ms{B}) \overset{\sim}\to \mb{X}^*(H) \]
sending a $G$-equivariant line bundle $\ms{L}$ to the character of $H$ acting on $\ms{L}$. Write $\lambda \mapsto \ms{L}_\lambda$ for the inverse of this isomorphism. Then there is a canonical $G$-equivariant right $H$-torsor $\tilde{\ms{B}} \to \ms{B}$ defined by the property that $\ms{L}_\lambda = \tilde{\ms{B}} \times^H \mb{C}_\lambda$. If we choose a Borel subgroup $B \subset G$ then we can write $\ms{L}_\lambda$ explicitly as
\[ \ms{L}_\lambda = G \times^B \mb{C}_\lambda = G/N \times^H \mb{C}_\lambda \to G/B \cong \ms{B},\]
so $\tilde{\ms{B}} \cong G/N$. The $\tilde{\ms{B}}$ is often called the base affine space or the enhanced flag variety. 

Let us write $\mf{h}$ for the Lie algebra of $H$ and
\[ \mf{h}^*_\mb{R} = \mb{X}^*(H) \otimes_\bZ \mb{R} \subset \mf{h}^* = \mb{X}^*(H) \otimes_\bZ \mb{C}. \]
The $H$-torsor $\tilde{\ms{B}} \to \ms{B}$ (with left $H$-action given by $h \cdot x := xh^{-1}$) defines a sheaf of twisted differential operators $\ms{D}_{\ms{B}, \lambda}$ on $\ms{B}$ for every $\lambda \in \mf{h}^*$. According to our conventions, the sheaves $\ms{D}_{\ms{B}, \lambda}$ are defined so that $\ms{L}_\lambda = \pi_*(\mc{O}_{\tilde{\ms{B}}} \otimes \mb{C}_\lambda)^H$ is a $\ms{D}_{\ms{B}, \lambda}$ module for all $\lambda \in \mb{X}^*(H)$. Note that this convention differs from the one in \cite{schmid-vilonen} by a $\rho$-shift: the sheaf $\ms{D}_{\ms{B}, \lambda}$ in our notation is the sheaf $\ms{D}_{\lambda + \rho}$ in the notation of that paper. With our conventions the $\ms{L}_\lambda$ are ample for $\la \in \mb{X}^*(H)$ regular dominant. 

\subsection{Twisted mixed Hodge modules on the flag variety}
\label{subsec:modules on K mod B}

We next review the equivariant twisted mixed Hodge modules on $\ms{B}$.

Recall that we work always with complex mixed Hodge modules. A complex mixed Hodge module on a point is a complex mixed Hodge structure, i.e., a finite dimensional complex vector space $V$ equipped with three finite increasing filtrations $(W_{\bigcdot}, F_{\bigcdot}, \bar{F}_{\bigcdot})$ satisfying
\[ \Gr^W_n V = \bigoplus_{p + q = -n} F_p \Gr^W_n V \cap \bar{F}_q \Gr^W_n V.\]
As in the more traditional case of real or rational Hodge structures, in case $V$ is pure of weight $w$, i.e., $\mrm{Gr}^W_w V = V$, the Hodge structure is equivalent to a decomposition
\[ V = \bigoplus_{p + q = -w} V_{p, q} \quad \text{where} \quad V_{p, q} = V^{-p, -q} = F_p V \cap \bar{F}_q V.\]
We write $h_{p,q}=\dim V_{p,q}$ for the Hodge numbers and $\bC_{p,q} = \bC^{-p, -q}$ for the one dimensional Hodge structure with $h_{p,q}=1$. We write $(1)$ for the Tate twists, i.e., for the operation of tensoring by $\bC_{1,1}$ and $(m)$ for its iterates.

For a general smooth quasi-projective variety $X$, a complex mixed Hodge module on $X$ consists of an algebraic regular holonomic $\ms{D}_X$-module $\ms{M}$ (in the sense of Beilinson-Bernstein, see, for example, the book~\cite{borel:1987}) equipped with extra data including a weight filtration $W_{\bigcdot}$ by $\ms{D}_X$-submodules and a Hodge filtration $F_{\bigcdot}$ compatible with the filtration on $\ms{D}_X$ by order of differential operator. There is also a more subtle conjugate Hodge filtration $\bar{F}_{\bigcdot}$, the precise discussion of which we defer to \S\ref{subsec:SS MHM} (as the weight and Hodge filtrations are sufficient for $\oK$-group calculations). The category $\mhm(X)$ of mixed Hodge modules on $X$ is naturally tensored over the category $\mhm(\spec \mb{C})$ of mixed Hodge structures.

Now suppose $\tilde{X} \to X$ is an $H$-torsor and $\lambda \in \mf{h}^*_\mb{R}$. Then we have a category $\mhm_\lambda(X) \subset \mhm(\tilde{X})$ of $\lambda$-twisted mixed Hodge modules on $X$ defined as the full subcategory of $\mhm(\tilde{X})$ consisting of objects whose underlying $\ms{D}$-module is the pullback of a $\lambda$-twisted $\ms{D}$-module on $X$. (If $\lambda \not\in \mf{h}^*_\mb{R}$, this category is zero, since the monodromy of a mixed Hodge module around a copy of $\mb{C}^\times$ always has eigenvalues of absolute value $1$---this is an immediate consequence, for example, of the ``quasi-unipotent and regular'' condition \cite[(5.1.5.1)]{S1} in Saito's formulation, or the ``$\mb{R}$-specializable'' condition \cite[Definition 14.2.2 (2)]{SS} in Sabbah and Schnell's.) Note that when the torsor $\tilde{X}$ is trivial, we have $\mhm_\lambda(X) \cong \mhm(X)$ by results of T. Saito \cite{S3}, so this definition is sensible. If $K$ acts on $\tilde{X} \to X$, then we also have corresponding categories
\[ \mhm_\lambda(K \bslash X) \subset \mhm(K \bslash \tilde{X}) \]
of $K$-equivariant (twisted) mixed Hodge modules.

Return now to the setting of a reductive group $G$, $\theta \colon G \to G$ an involution, and $K = G^\theta$ the subgroup of fixed points. Assuming that $\lambda \in \mf{h}^*_\mb{R}$ is real, let us consider the category 
\[ 
\mhm_\lambda(K \bslash \ms{B})
 \]
of $K$-equivariant $\lambda$-twisted mixed Hodge modules on $\ms{B}$. Unless otherwise specified, we will always assume $\lambda \in \mf{h}^*_\mb{R}$ from now on.

Given a $K$-orbit $Q \subset \ms{B}$ and a $K$-equivariant $\lambda$-twisted local system (i.e., a vector bundle with a $\ms{D}_\lambda$-structure) $\gamma$ on $Q$, the corresponding local system on the preimage $\tilde Q$ in $\tilde{\mc{B}}$ is necessarily unitary. Indeed, if we fix $\tilde{x} \in {\tilde Q}$ then the pullback of $\gamma$ along the action map $K \times H \to \tilde Q$ is unitary since it is $K$-equivariant and $\lambda \in \mf{h}^*_\mb{R}$. Any flat inner product automatically descends to $(K \times H)/\mrm{Stab}_{K \times H}(\tilde{x})^\circ$, so averaging with respect to the finite group $\pi_0(\mrm{Stab}_{K \times H}(\tilde x))$ produces a flat inner product on $\gamma$. We may therefore regard $\gamma$ as a Hodge module so that the pointwise Hodge numbers satisfy $h_{p, q} = 0$ for $(p, q) \neq (0, 0)$; in particular, $F_{-1}\gamma = 0$ and $F_0 \gamma = \gamma$. We will follow this convention throughout the paper. 

\begin{rmk} \label{rmk:weights}
As an artefact of our decision to define twisted mixed Hodge modules as mixed Hodge modules on an $H$-torsor, the object $\gamma \in \mhm_\lambda(Q)$ as defined above has weight $\dim \tilde{Q} = \dim Q + \dim H$.
\end{rmk}

For each $K$-orbit $Q$, write $j_Q \colon Q \to \ms{B}$ for the inclusion. The general theory of mixed Hodge modules equips us with functors of $!$, $*$ and intermediate pushforward
\[ 
j_{Q !}, j_{Q *}, j_{Q !*} \colon \mhm_\lambda(K \bslash Q) \to \mhm_\lambda(K \bslash \ms{B}),
\]
where $j_{Q !*}(\ms{M}) := \mrm{im}(j_{Q!}\ms{M} \to j_{Q*}\ms{M})$. (Really, these are defined as pushforwards along the corresponding map of $H$-torsors.) Recall  the inclusion $Q \hookrightarrow \ms{B}$ is affine and so the functor $j_{Q*}$, and hence $j_{Q!}$, do not require passage to the derived category. To shorten the notation we will from now on simply write $j_{!}\gamma$ for $j_{Q !}\gamma$, and similarly for the other functors if $\gamma$ is a twisted local system on $Q$.

Observe that the Grothendieck group of mixed Hodge structures is given by
\[
 \oK(\mhm(\spec \mb{C})) = \mb{Z}[t_1^{\pm 1}, t_2^{\pm 1}],
 \]
where we have written $t_1 = [\mb{C}_{-1, 0}]$ and $t_2 = [\mb{C}_{0, -1}]$.

\begin{prop} \label{prop:bases}
The Grothendieck group $\oK(\mhm_\lambda(K \bslash \ms{B}))$ is a free module over $\mb{Z}[t_1^{\pm 1}, t_2^{\pm 1}]$. It has three distinct bases
\[ 
\{[j_{!} \gamma]\} , \quad \{[j_{*}\gamma]\}, \quad \text{and} \quad \{[j_{!*}\gamma]\},
\]
where in each case $\gamma$ ranges over irreducible $\lambda$-twisted local systems on $K$-orbits of $\ms{B}$.
\end{prop}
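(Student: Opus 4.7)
The plan is to reduce the statement to the classification of simple objects of $\mhm_\lambda(K \bslash \ms{B})$ and to a triangular change-of-basis argument. First I would observe that every object of $\mhm_\lambda(K \bslash \ms{B})$ has a finite composition series: the weight filtration is bounded by definition of ``mixed'', and each pure weight-graded piece decomposes as a finite direct sum of simples via strict support decomposition combined with the semisimplicity of polarizable pure Hodge modules. Consequently $\oK(\mhm_\lambda(K \bslash \ms{B}))$ is the free abelian group on isomorphism classes of simple objects.

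Next I would classify the simples. By strict support decomposition, any simple $\ms{M}$ has support some $K$-orbit closure $\overline{Q}$, and $\ms{M} \cong j_{!*}(\ms{M}|_Q)$ where $\ms{M}|_Q$ is a simple pure $K$-equivariant $\lambda$-twisted Hodge module on the smooth locally closed subvariety $Q$. Such an object is a $K$-equivariant $\lambda$-twisted local system endowed with a pure Hodge module structure; the underlying local system must be irreducible, and any two pure Hodge module structures on a fixed irreducible local system differ only by tensoring with a one-dimensional pure Hodge structure $\bC_{p,q}$. Therefore the simple objects are exactly $j_{!*}\gamma \otimes \bC_{p,q}$ as $\gamma$ ranges over irreducible $K$-equivariant $\lambda$-twisted local systems on $K$-orbits and $(p,q) \in \bZ^2$. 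Since $[\bC_{p,q}] = t_1^{-p}t_2^{-q}$, the set $\{[j_{!*}\gamma]\}$ is a $\mb{Z}[t_1^{\pm 1}, t_2^{\pm 1}]$-basis of $\oK(\mhm_\lambda(K \bslash \ms{B}))$.

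For the remaining two bases I would use the short exact sequences
\[ 0 \to \ker \to j_!\gamma \to j_{!*}\gamma \to 0, \qquad 0 \to j_{!*}\gamma \to j_*\gamma \to \coker \to 0, \]
whose extreme terms are supported on the boundary $\overline{Q} \setminus Q$. By Noetherian induction on the closure order of $K$-orbits, these boundary objects admit composition series whose factors are of the form $j_{!*}\gamma' \otimes \bC_{p,q}$ with $\overline{Q'} \subsetneq \overline{Q}$. This produces upper-triangular relations
\[ [j_!\gamma] = [j_{!*}\gamma] + \sum_{\overline{Q'} \subsetneq \overline{Q}} a_{\gamma', \gamma}(t_1, t_2)\,[j_{!*}\gamma'] \]
with unit diagonal entries (and symmetrically for $[j_*\gamma]$), so the induced change of basis from $\{[j_{!*}\gamma']\}$ is invertible over $\mb{Z}[t_1^{\pm 1}, t_2^{\pm 1}]$ and $\{[j_!\gamma]\}$, $\{[j_*\gamma]\}$ are bases as well.

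The only nontrivial input is the strict support decomposition and semisimplicity of polarizable pure complex Hodge modules; provided this foundational property is available in the framework of \cite{SS} (as the expected analogue of Saito's theorem in the rational case), the rest of the argument is formal manipulation of Grothendieck groups. I expect no further obstacles.
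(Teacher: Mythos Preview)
The paper does not include a proof of this proposition; it is stated as a standard fact about Grothendieck groups of categories of mixed Hodge modules and then used without further comment. Your argument is exactly the expected one: finite length via the weight filtration and semisimplicity of pure pieces, classification of simples as $j_{!*}\gamma$ up to Hodge--Tate twist via strict support decomposition, and then an upper-triangular change of basis using the closure order on $K$-orbits to pass to the standard and costandard bases. This is correct and is presumably what the authors had in mind.
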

We will write $\LVM^h$ for the change of basis matrix expressing the basis $\{[j_{!}\gamma]\}$ in terms of the basis $ \{[j_{!*}\gamma]\}$, i.e., we have
\[
[j_!\gamma] = \sum_{\gamma'} \LVM^h_{\gamma',\gamma} [j_{!*}\gamma']\,.
\]
The explicit determination of this matrix is the main result of this section.

\subsection{Parametrization of the local systems} \label{subsec:parametrization}

Let us recall the following combinatorial parametrization of $K$-equivariant $\la$-twisted local systems on orbits in $\ms{B}$. Consider a $K$-orbit $Q$ and a $K$-equivariant $\la$-twisted local system $\gamma$ on it. Choose a point $x\in Q$ and a $\theta$-stable torus $T\subset B_x$. The identification $\tau_x \colon T \to H$ defines an involution
\[ \theta_Q = \theta_x := \tau_x\theta\tau_x^{-1} \colon H \to H,\]
independent of $x \in Q$. Consider the Harish-Chandra pair $(\mf{h}, H^{\theta_Q}) \cong (\mf{h}, T^\theta)$. The morphism $\tau_x$ induces an isomorphism between $(B_x \cap K)/(N_x \cap K)^\circ$ and $H^{\theta_Q}$, so $\gamma$ is determined by the representation $(\lambda,\Lambda)$ of $(\fh, H^{\theta_Q})$ given by the action of $T^\theta$ on the fiber of $\gamma$ over $x$. Thus

\begin{prop} \label{prop:orbit Hodge modules}
The category $K$-equivariant $\lambda$-twisted local systems on the $K$-orbit $Kx$ is equivalent to the category of $(\mf{h}, H^{\theta_x})$-modules where $\mf{h}$ acts by $\lambda$.
\end{prop}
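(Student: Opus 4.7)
The plan is to apply equivariant descent for the $H$-torsor $\tilde{Q} \to Q$, where $\tilde{Q} \subset \tilde{\ms{B}}$ is the preimage of $Q = Kx$. Under the identification of $\la$-twisted $K$-equivariant local systems on $Q$ with weakly $H$-equivariant $K$-equivariant local systems on $\tilde{Q}$ whose $\fh$-action is controlled by the twist, the problem reduces to describing $(K \times H)$-equivariant local systems on $\tilde{Q}$, where $K$ acts on the left and $H$ on the right.

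First I would identify $\tilde{Q}$ as a transitive $(K \times H)$-space and compute the stabilizer of a lift $\tilde{x}$ of $x$ adapted to $T$. Using the Levi decomposition $B_x = T \ltimes N_x$ together with the $\theta$-stability of $T$, the stabilizer of $\tilde{x}$ in $K \times H$ works out to be
\[
\{(tn, \tau_x(t)) \,:\, t \in T^\theta,\ n \in K \cap N_x\} \;\cong\; (K \cap N_x) \rtimes T^\theta,
\]
consistent with the isomorphism $(B_x \cap K)/(N_x \cap K)^\circ \cong H^{\theta_Q}$ noted just above the proposition.

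Next, standard equivariant descent identifies $(K \times H)$-equivariant $\ms{O}$-coherent sheaves on $\tilde{Q}$ with finite-dimensional representations of this stabilizer. The local-system condition forces the nilpotent Lie subalgebra $\fk \cap \fn_x$ to act trivially on the fiber, since its action via the flat connection integrates to a representation of the connected unipotent group $(K \cap N_x)^\circ$, which must be trivial. The twist condition \eqref{eq:twisted condition}, combined with the $\fh$-invariance of the fiber over $\tilde{x}$, then pins down the $\fh$-action on the fiber to be exactly scalar multiplication by $\la$. What remains is a compatible pair: an $H^{\theta_x}$-action (transported from the $T^\theta$-action via $\tau_x$) and the $\fh$-action by $\la$, which is precisely the data of an $(\fh, H^{\theta_x})$-module with $\fh$ acting by $\la$. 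Morphisms match on both sides since equivariant descent is an equivalence of categories.

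The main obstacle I expect is bookkeeping with signs and component groups: one must check that the sign convention in \eqref{eq:twisted condition} (the $-a(h)$ part) really produces $\la$ rather than $-\la$, and that the finite part of the stabilizer coming from $K \cap N_x$ collapses correctly to $\pi_0(T^\theta) \cong \pi_0(H^{\theta_x})$ after imposing the local-system condition. Neither step is deep, but both are easy to get wrong, and they are the reason the answer involves the pair $(\fh, H^{\theta_x})$ on the nose rather than a more complicated Harish-Chandra pair built from $K_x$.
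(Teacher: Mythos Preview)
Your proposal is correct and follows essentially the same line as the paper's own argument, which is the short discussion immediately preceding the proposition: identify the stabilizer of $x$ in $K$ as $B_x \cap K$, note that the flat-connection condition kills the connected unipotent part, and use $\tau_x$ to transport $(B_x \cap K)/(N_x \cap K)^\circ$ to $H^{\theta_Q}$. Your version unpacks this on the $H$-torsor $\tilde{Q}$ with the $(K\times H)$-action, but the content is the same.

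One small over-caution: your worry about a ``finite part of the stabilizer coming from $K \cap N_x$'' is unnecessary. Over $\mb{C}$, any closed subgroup of a unipotent group is again unipotent and hence connected, so $(K\cap N_x)^\circ = K\cap N_x$ and there is nothing left over to collapse.
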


Note that since $H^{\theta_x}$ is commutative, Proposition \ref{prop:orbit Hodge modules} implies that all irreducible $\gamma$ are of rank $1$. When a twisted local system $\gamma$ arises from a character $(\lambda, \Lambda)$ in this way, we write
\[
\gamma = \cO_Q(\la, \Lambda) = \cO(\la,\Lambda,x)\,.
\]
Note that when we regard $\Lambda$ as a character of the subgroup $H^{\theta_Q}$ of the universal Cartan, $\mc{O}(\lambda, \Lambda, x)$ depends on $x$ only through $Q = Kx$.

\subsection{Lengths, orientation numbers and Hodge shifts} \label{subsec:numerics}

In this subsection, we recall the definitions of the length $\ell(\gamma)$, the integral length $\ell_I(\gamma)$, and the orientation number $\ell_o(\gamma)$ of \cite{ALTV} associated with a twisted local system $\gamma$. We also give the definition of the closely related quantity $\ell_H(\gamma)$ appearing in Theorem \ref{thm:main theorem 1}.

Recall first the following classification of roots according to the behaviour of $\theta$ with respect to a fixed orbit. 

\begin{defn} \label{defn:root classification}
Let $Q \subset \ms{B}$ be a $K$-orbit. Fix $x \in Q$, a $\theta$-stable torus $T$ fixing $x$, identified with $H$ via $\tau_x$, and write $\mf{g} = \mf{h} \oplus \bigoplus_{\alpha \in \Phi} \mf{g}_\alpha$ for the corresponding root space decomposition. For $\alpha \in \Phi$, we say that
\begin{enumerate}
\item $\alpha$ is \emph{real} if $\theta\alpha = -\alpha$,
\item $\alpha$ is \emph{compact imaginary} if $\theta\alpha = \alpha$ and $\theta|_{\mf{g}_\alpha} = +1$,
\item $\alpha$ is \emph{non-compact imaginary} if $\theta\alpha = \alpha$ and $\theta|_{\mf{g}_\alpha} = -1$, and
\item $\alpha$ is \emph{complex} if $\theta\alpha \not\in \{\pm \alpha\}$.
\end{enumerate}
These notions depend only on $Q$ (and not on $x$). We will sometimes say that $\alpha$ is \emph{$Q$-real} etc to emphasize the dependence on the orbit.
\end{defn}

In the definitions below, for a fixed $\lambda \in \mf{h}^*$, we say that a root $\alpha \in \Phi$ is \emph{$\lambda$-integral} (or simply \emph{integral}) if $\langle \lambda, \check\alpha \rangle \in \mb{Z}$.

\begin{defn} \label{defn:hodge shift}
Let $Q \subset \ms{B}$ be a $K$-orbit and $\gamma = \mc{O}_Q(\lambda, \Lambda)$ an equivariant twisted local system on $Q$ with $\lambda \in \mf{h}^*_\mb{R}$.
\begin{enumerate}
\item The \emph{length} of $\gamma$ is
\[ \ell(\gamma) = \dim Q.\]
\item The \emph{integral length} of $\gamma$ is
\begin{align*}
\ell_I(\gamma) &= \#\{\mbox{$\alpha \in \Phi_+$ $\lambda$-integral}\} - \#\left\{\begin{matrix}\mbox{$\alpha \in \Phi_+$ non-compact} \\ \mbox{imaginary}\end{matrix}\right\} \\
& \quad - \frac{1}{2}\#\left\{\begin{matrix}\mbox{$\alpha \in \Phi_+$ complex $\lambda$-integral}\\\mbox{such that $\theta_Q \alpha \in \Phi_+$}\end{matrix}\right\}.
\end{align*}
\item The \emph{orientation number} \cite[Definition 20.5]{ALTV} of $\gamma$ is
\begin{align*}
 \ell_o(\gamma) &= \#\left\{\begin{matrix}\mbox{$\alpha \in \Phi_+$ real non-integral such that} \\ (-1)^{\lfloor \langle \lambda + \rho_{\mb{R}}, \alpha^\vee\rangle\rfloor} = \Lambda(m_\alpha)\end{matrix}\right\} \\
&\qquad \qquad + \frac{1}{2}\#\left\{\begin{matrix}\mbox{$\alpha \in \Phi_+$ complex non-integral} \\ \mbox{such that $-\theta_Q \alpha \in \Phi_+$}\end{matrix}\right\},
\end{align*}
where $m_\alpha = \check{\alpha}(-1) \in H^{\theta_Q}$, $\rho_{\mb{R}} = \rho_{Q, \mb{R}}$ is half the sum of the positive $Q$-real roots, and $\lfloor a \rfloor$ denotes the largest integer less than or equal to the real number $a$, as usual. 
\item The \emph{Hodge shift} of $\gamma$ is
\[ \ell_H(\gamma) = \frac{1}{2}\sum_{\substack{\alpha \in \Phi_+ \\ \text{real non-integral}}} (-1)^{\lfloor \langle \lambda + \rho_\mb{R}, \check\alpha\rangle\rfloor}\Lambda(m_\alpha). \]
\end{enumerate}
\end{defn}

\begin{rmk}
Regarding integral length, if $\lambda$ is integral (i.e., if all roots are $\lambda$-integral), then $\ell_I(\gamma) = \ell(\gamma) = \dim Q$. The formula for $\ell_I(\gamma)$ is obtained by writing down a formula for $\dim Q$ in terms of root counts and discarding all non-integral roots. Note also that our definition of integral length differs from the one in~\cite[Definition 18.1]{ALTV} by a constant. Only differences of integral lengths appear in our statements, however, so the constant does not play any role. 
\end{rmk}

The quantities of Definition \ref{defn:hodge shift} are related as follows.

\begin{prop} \label{prop:orientation vs hodge}
We have
\[ \ell_H(\gamma) = \ell_o(\gamma) + \ell_I(\gamma) - \ell(\gamma) + \frac{1}{2}\#\{\alpha \in \Phi_+ \text{non-integral}\}.\]
\end{prop}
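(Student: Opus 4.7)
The identity is essentially combinatorial once one observes that imaginary roots are automatically $\lambda$-integral. I classify each positive root by its $\theta_Q$-type (real, compact imaginary, non-compact imaginary, or complex) and $\lambda$-integrality, denoting the counts by $r^I_+, r^N_+, i^c_+, i^{nc}_+, c^I, c^N, (c')^I, (c')^N$ in the obvious way, where primes indicate complex $\alpha \in \Phi_+$ with $\theta_Q\alpha \in \Phi_-$. Splitting the real non-integral positive roots by the sign of $\chi_\alpha := (-1)^{\lfloor\langle\lambda+\rho_\mb{R},\check\alpha\rangle\rfloor}\Lambda(m_\alpha)$ into groups of sizes $a_\pm$ with $a_+ + a_- = r^N_+$, one reads off $\ell_H(\gamma) = \tfrac{1}{2}(a_+ - a_-)$ and $\ell_o(\gamma) = a_+ + \tfrac{1}{2}(c')^N$ directly from Definition \ref{defn:hodge shift}. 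Hence $\ell_H(\gamma) - \ell_o(\gamma) = -\tfrac{1}{2}(r^N_+ + (c')^N)$, and the proposition reduces to
\[ \ell(\gamma) - \ell_I(\gamma) = \tfrac{1}{2}\bigl(|\Phi_+|^N + r^N_+ + (c')^N\bigr). \]

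The key non-combinatorial input is that every imaginary root is $\lambda$-integral. Indeed, $\theta_Q\check\alpha = \check\alpha$ implies $\check\alpha \colon \mb{G}_m \to H$ factors through $(H^{\theta_Q})^\circ$. By Proposition \ref{prop:orbit Hodge modules}, $\gamma$ is specified by a character $\Lambda$ of $H^{\theta_Q}$ whose differential agrees with $\lambda$ on $\mf{h}^{\theta_Q}$, so $\langle\lambda,\check\alpha\rangle = d\Lambda(\check\alpha) \in \mb{Z}$, the right-hand side being an integer because $\Lambda\circ\check\alpha$ is an algebraic character of $\mb{G}_m$. Consequently $(i^c_+)^N = (i^{nc}_+)^N = 0$, and $|\Phi_+|^N = r^N_+ + c^N + (c')^N$ has no imaginary contribution.

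The rest is bookkeeping. Using $T_x Q \cong \mf{k}/(\mf{k}\cap\mf{b}_x)$, a root-by-root count of $\theta$-fixed vectors (compact imaginary root spaces lie entirely in $\mf{k}$; each real or complex root pair contributes a $1$-dimensional fixed line, and only the complex pairs with both members in $\Phi_-$ survive intersection with $\mf{n}_x = \bigoplus_{\alpha \in \Phi_-}\mf{g}_\alpha$) yields
\[ \dim Q = i^c_+ + r_+ + \tfrac{c}{2} + c'. \]
Substituting this and the formula for $\ell_I$, simplified using the integrality of imaginary roots, gives $\ell(\gamma) - \ell_I(\gamma) = r^N_+ + \tfrac{c^N}{2} + (c')^N$, which matches $\tfrac{1}{2}(|\Phi_+|^N + r^N_+ + (c')^N)$ on the nose. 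The only subtle point is the integrality of imaginary roots; without it the stated identity would fail, since $|\Phi_+|^N$ carries a uniform $\tfrac{1}{2}$ weight on the right while $\ell(\gamma) - \ell_I(\gamma)$ weights different root types unequally.
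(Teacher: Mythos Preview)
Your proof is correct and follows exactly the approach the paper indicates: ``Write out each term of the right hand side in terms of root counting and cancel.'' You have spelled this out in full, including the one point the paper leaves implicit---that imaginary roots are automatically $\lambda$-integral whenever a local system $\gamma = \mc{O}_Q(\lambda,\Lambda)$ exists---which is indeed necessary for the cancellation to go through.
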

\begin{proof}
Write out each term of the right hand side in terms of root counting and cancel.
\end{proof}

\subsection{The first main theorem} \label{subsec:first main theorem}

Let us consider the change of basis matrix $\LVM^h$ between the bases $\{[j_{!}\gamma]\}$ and $\{[j_{!*}\gamma]\}$ in $\oK(\mhm_\lambda(K \bslash \ms{B}))$. If we disregard the Hodge structure and only consider the weights we obtain a ``mixed'' change of basis matrix
\[ \LVM^m_{\gamma', \gamma}(u) := \LVM^h_{\gamma', \gamma}(u^{\frac{1}{2}}, u^{\frac{1}{2}}) \in \mb{Z}[u^{\pm \frac{1}{2}}, u^{\pm \frac{1}{2}}].\]
Such a change of basis matrix has been calculated explicitly by Lusztig and Vogan~\cite{LV} for $\lambda \in \mf{h}^*_\mb{Z}$ integral (working with mixed sheaves over a finite field instead of mixed Hodge modules) and implicitly by Adams, Barbasch and Vogan \cite{ABV} for general $\lambda$.

\begin{prop}[{cf., \cite[Theorem 16.22]{ABV}}] \label{prop:mixed to LV}
The matrix $\LVM^m$ is given by
\[
\LVM^m_{\gamma', \gamma}(u)=
u^{\frac{1}{2}(\ell_I(\gamma') - \ell(\gamma') - (\ell_I(\gamma) - \ell(\gamma)))}\LVM_{\gamma', \gamma}(u) \,,
\]
where $\LVM \in \mb{Z}[u]$ are the Lusztig-Vogan multiplicity polynomials.
\end{prop}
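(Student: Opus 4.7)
The strategy is to adapt the analogous change-of-basis calculation in \cite{LV} (for integral $\lambda$) and \cite[Theorem 16.22]{ABV} (in general) to the setting of $K$-equivariant twisted mixed Hodge modules on $\ms{B}$. Since those results are by now well established, the essential content of the proof is a comparison of weight normalizations.

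I would first observe that $\LVM^m_{\gamma', \gamma}(u)$ depends only on the weight filtration, and that the formal properties of mixed Hodge modules which drive the LV/ABV argument --- the existence and functoriality of $W_\bigcdot$, purity of the intermediate extensions $j_{!*}\gamma$, strictness of $W_\bigcdot$ under the affine pushforwards $j_!$ and $j_*$, and the characterization of $j_{!*}\gamma$ as the unique submodule of $j_*\gamma$ of maximal weight --- mirror exactly those in \cite{LV, ABV}. Consequently $\LVM^m_{\gamma',\gamma}$ satisfies the same recursion on $K$-orbit closures as $\LVM_{\gamma',\gamma}$, and therefore agrees with it up to an overall Tate twist determined by the initial normalization of the local systems $\mc{O}_Q(\lambda,\Lambda)$.

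The main step is then to pin down that initial normalization. In our convention (Remark \ref{rmk:weights}), the object $\mc{O}_Q(\lambda,\Lambda) \in \mhm_\lambda(K \bslash Q)$ is placed in weight $\dim Q + \dim H$, arising from the definition of twisted mixed Hodge modules in terms of the $H$-torsor $\tilde Q$. In the LV/ABV normalization, the corresponding IC object sits in weight $\ell_I(\gamma)$, up to an overall additive constant that is independent of $\gamma$ and hence cancels in every matrix coefficient; the appearance of $\ell_I$ in place of $\ell$ reflects the fact that the LV recursion is driven only by the integral root subsystem, as spelled out in \cite[\S 16]{ABV}. The two conventions therefore differ by a Tate twist of $u^{\frac{1}{2}(\ell(\gamma) - \ell_I(\gamma))}$ per object; applied to the source $\gamma$ and target $\gamma'$ and combined, this produces exactly the factor $u^{\frac{1}{2}((\ell_I(\gamma') - \ell(\gamma')) - (\ell_I(\gamma) - \ell(\gamma)))}$ appearing in the proposition.

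The main obstacle is the normalization bookkeeping: one must carefully match our conventions (including the $\rho$-shift of \S\ref{subsec:twisted on flag} in the sheaf of twisted differential operators, the Tate-twist conventions, and the extra $\dim H$ contribution from passing to the torsor $\tilde{\ms{B}}$) with those of \cite{ABV}, and confirm that all signs and shifts line up. Once this is done, the formula follows directly from \cite[Theorem 16.22]{ABV}.
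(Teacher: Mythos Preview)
Your proposal is correct and takes essentially the same approach as the paper. The paper does not give a separate proof of this proposition where it is stated; instead it effectively proves it in \S\ref{subsec:lusztig-vogan} via Propositions \ref{prop:our duality} and \ref{prop:our poly}, where the Kazhdan--Lusztig yoga is rerun in the mixed Hodge module setting using the normalized bases $B_\gamma = u^{\frac{1}{2}(\ell_I(\gamma)-\ell(\gamma)-\dim H)}[j_!\gamma]$ and $C_\gamma = u^{\frac{1}{2}(\ell_I(\gamma)-\ell(\gamma)-\dim H)}[j_{!*}\gamma]$, and the formula \eqref{eq:non-integral mixed} drops out exactly as you describe, by comparing weight normalizations.

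One point of detail you gloss over but the paper makes explicit: the duality used to drive the KL recursion is the \emph{Hermitian} dual $(-)^h$ rather than the Verdier dual $\mb{D}$ (see the opening of \S\ref{proof:main theorem 1}), which is convenient because it preserves the real twisting parameter $\lambda$. Also, the paper verifies concretely (Proposition \ref{prop:hecke action}, Lemma \ref{lem:our duality}, and the proof of Proposition \ref{prop:our duality}) that the non-integral simple reflections act in the expected ``shift by $u^{1/2}$ but otherwise trivial'' way, which is what justifies your assertion that the recursion is governed by the integral root subsystem. These are the places where your phrase ``normalization bookkeeping'' hides genuine work, but the outline is right.
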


The matrix $\LVM$ of Lusztig-Vogan multiplicity polynomials is by definition the inverse to the matrix of Lusztig-Vogan polynomials defined by Lusztig and Vogan in \cite{LV, ic3, ic4}.\footnote{Vogan uses the term Kazhdan-Lusztig polynomials, by analogy with the case of a complex group.} The matrix $\LVM$ can be regarded as a mixed change of basis matrix for the endoscopic group determined by the $\lambda$-integral coroots. Note that for non-integral $\lambda$, $\LVM^m$ need only be a polynomial in $u^{\frac{1}{2}}$.

Our main result tells us how to pass from $\LVM^m$  to $\LVM^h$:
\begin{thm} 
\label{thm:main theorem 1}
For any $\lambda \in \mf{h}^*_\mb{R}$, we have
\[
\LVM^h_{\gamma', \gamma}(t_1, t_2) =  (t_1t_2^{-1})^{\frac{1}{2}(\ell_H(\gamma')-\ell_H(\gamma))}\LVM^m_{\gamma', \gamma}(t_1t_2) \,.
\]
\end{thm}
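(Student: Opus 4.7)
The plan is to rephrase the theorem as a symmetry statement and then induct. Define renormalized classes
\[ \widetilde{[j_!\gamma]} := (t_1t_2^{-1})^{-\frac{1}{2}\ell_H(\gamma)}[j_!\gamma], \qquad \widetilde{[j_{!*}\gamma]} := (t_1t_2^{-1})^{-\frac{1}{2}\ell_H(\gamma)}[j_{!*}\gamma], \]
and let $\widetilde{\LVM}^h_{\gamma',\gamma}(t_1, t_2)$ be the corresponding change-of-basis coefficients. Then Theorem~\ref{thm:main theorem 1} becomes the assertion that $\widetilde{\LVM}^h_{\gamma',\gamma}$ is a Laurent polynomial in $u^{1/2}:= (t_1t_2)^{1/2}$ alone, with value $\LVM^m_{\gamma',\gamma}(u)$. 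The specialization statement at $t_1 = t_2 = u^{1/2}$ is automatic from the definition of $\LVM^m$, so the entire content is the $t_1 \leftrightarrow t_2$ symmetry of $\widetilde{\LVM}^h$. My goal is therefore to prove this symmetry directly.

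\textbf{The inductive setup.} I would follow the Lusztig--Vogan algorithm \cite{LV,ic3} in the Hodge-module setting. Order the $K$-orbits by dimension and induct on $\dim Q$. The base case $\dim Q = 0$ (closed orbits) is trivial: $j_! = j_{!*} = j_*$ and $\widetilde{\LVM}^h_{\gamma,\gamma} = 1$. For the inductive step, pick a simple root $\alpha$ for which the corresponding Grothendieck--Springer map $\pi_\alpha \colon \ms{B} \to \ms{B}_\alpha$ (quotient by the standard parabolic $P_\alpha$) sends the closure of $Q$ onto the closure of a smaller orbit. The pullback-pushforward along $\pi_\alpha$ defines a Hecke-type operator on $\oK(\mhm_\lambda(K \bslash \ms{B}))$ whose action on standards is computable orbit-by-orbit by analysing the fiber $\mb{P}^1$-bundle. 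Writing out the relation this induces between $\widetilde{[j_!\gamma]}$ and simpler classes (plus the known action on the $\widetilde{[j_{!*}\gamma']}$, which corresponds to convolution with a Soergel bimodule) reduces everything, case-by-case in the root classification of Definition~\ref{defn:root classification}, to a local calculation on $\mb{P}^1$.

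\textbf{The main obstacle: the real non-integral case.} For complex and imaginary roots, as well as for integral real roots, the $\mb{P}^1$-fiber contribution is a Tate twist and its associated Hodge structure is manifestly symmetric in $(t_1, t_2)$, so these steps preserve the symmetry of $\widetilde{\LVM}^h$ and contribute nothing beyond what is already visible mixed-theoretically. The genuinely asymmetric step is the real non-integral case. Here the fiber computation involves a rank-one $\la$-twisted local system on $\mb{G}_m \subset \mb{P}^1$ whose monodromy at the two punctures is recorded by the pair $(\langle \la + \rho_\mb{R}, \check\alpha\rangle \bmod \mb{Z}, \Lambda(m_\alpha))$. The pushforward acquires a rank-one Hodge summand which, depending on the sign $(-1)^{\lfloor \langle \la+\rho_\mb{R}, \check\alpha\rangle\rfloor}\Lambda(m_\alpha) = \pm 1$, is either Tate (symmetric) or of type $\bC_{-1,0}$ or $\bC_{0,-1}$ (carrying one of the asymmetric monomials $t_1$ or $t_2$). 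This is exactly the non-trivial input \emph{Lemma \ref{lem:real hodge calculation}} that I would prove directly from Saito's definition by computing extensions across a puncture. The factor of $\tfrac{1}{2}$ in the definition of $\ell_H$ reflects that this asymmetry is contributed once per wall crossing but recorded on both orbits involved.

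\textbf{Combining.} Granted the $\mb{P}^1$-calculation, a bookkeeping exercise shows that the change in $\tfrac12(\ell_H(\gamma') - \ell_H(\gamma))$ across each simple reflection exactly cancels the asymmetric monomial appearing in the fiber contribution. Hence the renormalized matrix $\widetilde{\LVM}^h$ is preserved, step by step, within the subring $\mb{Z}[u^{\pm 1/2}] \subset \mb{Z}[t_1^{\pm 1}, t_2^{\pm 1}]$. By the symmetry argument and the specialization observation above, this forces $\widetilde{\LVM}^h = \LVM^m$, yielding the theorem. The main work is the local $\mb{P}^1$-computation; everything else is formal properties of mixed Hodge modules (purity of intermediate extensions, decomposition theorem for smooth proper maps, weight behaviour under $j_!$) and root-combinatorial bookkeeping of the kind already familiar from \cite{ic3,ABV}.
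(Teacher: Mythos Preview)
Your approach is essentially the paper's: renormalize the standard basis by $(t_1t_2^{-1})^{\ell_H/2}$, check that the Hecke operators $\mb{T}_{s_\alpha}$ have coefficients in $\mb{Z}[u^{\pm 1/2}]$ on the renormalized basis (this is the paper's Proposition~\ref{prop:mixed to hodge}, proved case-by-case from Proposition~\ref{prop:hecke action} with the real non-integral case resting on Lemma~\ref{lem:real hodge calculation}), and then invoke the Kazhdan--Lusztig uniqueness machinery (Propositions~\ref{prop:our duality} and~\ref{prop:our poly}) to conclude.

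Two small corrections are worth flagging. First, your claim that ``the entire content is the $t_1 \leftrightarrow t_2$ symmetry of $\widetilde{\LVM}^h$'' is false as stated: symmetry in $t_1, t_2$ is strictly weaker than lying in $\mb{Z}[(t_1t_2)^{\pm 1/2}]$ (e.g.\ $t_1 + t_2$ is symmetric but not a polynomial in $u$). Fortunately your actual argument establishes the stronger statement directly, so this is only a mis-framing, not a gap. Second, in the real non-integral $\mb{P}^1$ calculation the Hodge structure is \emph{never} Tate: Lemma~\ref{lem:real hodge calculation} gives $\mb{C}_{-1,0}$ or $\mb{C}_{0,-1}$ according to the sign, and this asymmetry is precisely what the shift $\ell_H$ is designed to absorb (cf.\ the case~\eqref{itm:hecke action 7} computation in the proof of Proposition~\ref{prop:mixed to hodge}).
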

We defer the proof of this theorem to \S\ref{proof:main theorem 1}.

\begin{rmk}
As suggested by Theorem \ref{thm:main theorem 1}, we will often identify the parameter $u$ in $\LVM^m$ with the class of the inverse Tate structure
\[ u = [\mb{C}_{-1, -1}] = t_1t_2 \in \oK(\mhm(\spec \mb{C})).\]
\end{rmk}

\section{The second main theorem}
\label{sec:main theorem 2}

In this section, we discuss our second main theorem (Theorem \ref{thm:main theorem 2}) in some detail.
We begin in \S\ref{subsec: phm} by recalling the notion of polarization on a complex Hodge module. In \S\ref{subsec:mhm deform} we briefly review the deformations of twisted local systems. We then recall Beilinson and Bernstein's result on the Jantzen filtration for the deformation associated with a boundary equation, and state our Theorem \ref{thm:main theorem 2} extending this to polarized Hodge modules in \S\ref{subsec:mhm jantzen}.

\subsection{Polarized Hodge modules} \label{subsec: phm}

In this subsection, we briefly summarize the theory of polarizations on complex mixed Hodge modules, and spell out explicitly the structure of the polarized Hodge modules $j_{!*}\gamma$ on the flag variety.

Let $X$ be a smooth variety. Recall that the analytification functor $\ms{M} \mapsto \ms{M}^{an}$ realizes the algebraic regular holonomic $\ms{D}_X$-modules as the full subcategory of analytic regular holonomic $\ms{D}$-modules $\ms{M}$ such that the perverse sheaf $\DR(\ms{M})$ has algebraically constructible cohomology (see for example \cite[Proposition 7.8]{brylinski}). Given a regular holonomic $\ms{D}_X$-module $\ms{M}$, its Hermitian dual $\ms{M}^h$ is the unique regular holonomic $\ms{D}_X$-module with analytification
\[ (\ms{M}^h)^{an} = \shom_{\ms{D}_{\bar{X}}}(\overline{\ms{M}}, \ms{D}\mit{b}_{X}). \]
Here we write $\Db_X$ for the sheaf of complex-valued distributions (defined as duals of compactly supported top degree differential forms) on $X$ viewed as a real manifold, and $\bar{X}$ for the conjugate complex structure on $X$. Note that $\ms{M}^h$ is indeed a regular holonomic $\ms{D}_X$-module, satisfying $\mb{D}\overline{\DR(\cM)} = \DR(\cM^h)$ by~\cite{kashiwara2}. If $\ms{M}$ and $\ms{M}'$ are regular holonomic $\ms{D}_X$-modules, then a sesquilinear pairing between $\ms{M}$ and $\ms{M}'$ is a morphism $\ms{M} \to (\ms{M}')^h$, or equivalently, a $\ms{D}_X \otimes \ms{D}_{\bar{X}}$-linear map
\[ \ms{M} \otimes \overline{\ms{M}'} \to \Db_X.\]

The operation of Hermitian duality lifts to complex mixed Hodge modules, sends pure Hodge modules of weight $w$ to pure Hodge modules of weight $-w$, and interchanges the $!$ and $*$ functors. If $\ms{M}$ is a pure Hodge module of weight $w$ then a Hermitian isomorphism $S \colon \ms{M} \to \ms{M}^h(-w)$ (i.e., one satisfying $S^h = S$) is called a polarization if the pair $(\ms{M}, S)$ 
satisfies the inductive conditions of \cite[Definition 14.2.2]{SS}. When $X$ is a point and $\ms{M} = V$ is a Hodge structure, $S$ may be identified with a Hermitian form on $V$ with respect to which the decomposition $V= \oplus  V_{p,q}$ is orthogonal, and the condition for $S$ to be a polarization is
\begin{equation} \label{eq:polarization normalization}
(-1)^q S|_{V_{p,q}} \qquad \text{is positive definite}\,.
\end{equation}
More generally, if $\ms{M} = \ms{V}$ is a variation of Hodge structure on $X$, then the sesquilinear pairing underlying $S$ is necessarily valued in smooth functions on $X$, and $S$ is a polarization if and only if the pairing
\[ S_x \colon \ms{V}_x \otimes \overline{\ms{V}_x} \to \mb{C} \]
is a polarization of the Hodge structure $\ms{V}_x$ for all $x \in X$.

More generally still, suppose that $j \colon Q \to X$ is a locally closed immersion and $\ms{V}$ is a variation of Hodge structure on $Q$. Then the polarizations on the pure Hodge module $j_{!*}\ms{V}$ are in bijection with the polarizations on $\ms{V}$ as follows. Fix a polarization $S_Q$ of $\ms{V}$ (valued in smooth functions on $Q$ as above), and factor $j \colon Q \to X$ as $j' \circ i$, where $i \colon Q \to U$ is a closed immersion and $j' \colon U \to X$ is an open immersion, which we will assume for simplicity is the complement of a divisor.

First, form the closed pushforward $i_*\ms{V}$, which is defined by
\[ i_*\ms{V} = i_{\bigcdot}(\ms{D}_{U \leftarrow Q} \otimes_{\ms{D}_Q} \ms{V}),\]
where the pushforward on the right is the usual sheaf-theoretic pushforward, and $\ms{D}_{U \leftarrow Q}$ is the $(i^{-1}\ms{D}_U, \ms{D}_Q)$-bimodule
\[ \ms{D}_{U \leftarrow Q} = i^{-1}\ms{D}_U \otimes_{i^{-1}\ms{O}_U} \omega_{Q/U} \]
for $\omega_{Q/U} = \det N_{Q/U}$ the determinant of the normal bundle. The polarization $S_Q$ induces a polarization $i_*(S_Q)$ on $i_*\ms{V}$ determined explicitly by the formula
\begin{equation} \label{eq:polarization closed pushforward}
\begin{aligned}
\langle \eta, i_*S_Q(\xi \otimes m, \overline{\xi' \otimes m'})\rangle &= (-1)^{c(c - 1)/2}(2\pi i)^c\langle  (\xi \wedge \overline{\xi'})\contract \eta |_Q, S_Q(m, \overline{m'}) \rangle \\
& := (-1)^{c(c - 1)/2}(2 \pi i)^c \int_{\tilde{Q}} (\xi \wedge \overline{\xi'}) \contract \eta|_Q \cdot S_Q(m, \overline{m'})
\end{aligned}
\end{equation}
for a compactly supported top form $\eta$ on $U$, $\xi, \xi' \in \omega_{Q/U}$, $m, m' \in \ms{V}$ and $c = \codim Q$. Here we define the contraction with exterior products of vector fields by
\[ (\xi_1 \wedge \xi_2 \wedge \cdots \wedge \xi_{2k}) \contract \eta = \xi_1 \contract \xi_2 \contract \cdots \contract \xi_{2k} \contract \eta \]
for vector fields $\xi_i$, where
\[ \xi_i \contract (\eta_1 \wedge \eta_2 \wedge \cdots \wedge \eta_m) := \sum_j (-1)^{j - 1} \langle \xi_i, \eta_j \rangle \eta_1 \wedge \cdots \wedge \widehat{\eta_j} \wedge \cdots \wedge \eta_m\]
for $1$-forms $\eta_j$. The sign in \eqref{eq:polarization closed pushforward} comes from unpacking \cite[\S\S 0.2, 12.3.3, 12.4.a]{SS}; it is arranged so that if $\eta$ is a volume form on $U$ and $\xi \in \omega_{Q/U}$ is a holomorphic section, then $(-1)^{c(c - 1)/2}(2\pi i)^c(\xi \wedge \overline{\xi})\contract \eta |_Q$ is a volume form on $Q$.

The induced polarization $S = j_{!*}S_Q$ of the Hodge module $j_{!*}\ms{V} = j'_{!*}i_*\ms{V}$ is given implicitly as the unique Hermitian form restricting to $i_*S_Q$ on $U$. Note that the existence and uniqueness of such a form follows formally from Kashiwara's theorem \cite{kashiwara2} that Hermitian duality is an anti-equivalence of the category of regular holonomic $\mc{D}$-modules commuting with restriction to opens. To compute it explicitly, we work locally and assume $U = f^{-1}(\mb{C}^\times)$ for some regular function $f \colon X \to \mb{C}$. For a top form $\eta$ and $m \otimes \xi, m' \otimes \xi' \in j_{!*}\ms{V} \subset j_*\ms{V}$, we may then consider the function
\begin{equation} \label{eq:analytic continuation}
 s \mapsto \langle \eta |f|^{2s}, (i_*S)(m \otimes \xi, \overline{m' \otimes \xi'}) \rangle,
\end{equation}
which is a well-defined analytic function for $\operatorname{Re} s \gg 0$. By \cite[Proposition 12.5.4]{SS}, \eqref{eq:analytic continuation} may be analytically continued to a meromorphic function of $s \in \mb{C}$; the unique extension $S = j_{!*}S_Q$ is given by
\begin{equation} \label{eq:intermediate polarization}
\begin{aligned}
\langle \eta, j_{!*}S_Q(m \otimes \xi, &\overline{m' \otimes \xi'})\rangle = \Res_{s = 0}s^{-1}\langle \eta |f|^{2s}, i_*S_Q(m \otimes \xi, \overline{m' \otimes \xi'})\rangle  \\
&= (-1)^{c(c - 1)}(2 \pi i)^c \Res_{s = 0} s^{-1} \int_{\tilde{Q}} (\xi \wedge \overline{\xi'}) \contract \eta|_Q \cdot |f|^{2s}S_Q(m, \overline{m'}).
\end{aligned}
\end{equation}
This is clearly a Hermitian form extending $i_*S_Q$, so it is the unique such.

In the setting of an $H$-torsor $\tilde{X} \to X$ and $\lambda \in \mf{h}^*_\mb{R}$, we recall that the $\lambda$-twisted mixed Hodge modules on $X$ are defined as a full subcategory of the mixed Hodge modules on $\tilde{X}$, and define polarizations on these accordingly. Note that under our assumption that $\lambda$ is real, the Hermitian dual of a $\lambda$-twisted $\ms{D}$-module is again a $\lambda$-twisted $\ms{D}$-module.

Now consider $\gamma$ a rank one $K$-equivariant $\lambda$-twisted local system on an orbit $Q \subset \cB$. Recall that we regard the rank one local system $\gamma$ on $Q$ as variation of Hodge structure on the $H$-bundle $\tilde{Q} \to Q$ of Hodge type $(0,0)$, i.e., $F_{-1}\gamma = 0$ and $F_0\gamma = \gamma$. As a variation of Hodge structure, it carries a unique polarization $S_Q$ up to a positive scalar, which takes values in smooth functions on $\tilde{Q}$ as above. Locally on a sufficiently small open subset $U$ of $\tilde{Q}$ (in the analytic topology), the restriction $\gamma|_U$ can be identified with the $\cD_U$-module $\cO_U$. Under such an identification the polarization $S_Q$ is given by 
\[
S_Q (f_1,\bar{f}_2)\ = \ f_1\bar f_2 \qquad \text{for} \ \ f_1,f_2\in \cO_U\,.
\]
The polarization of the Hodge module $j_{!*}\gamma$ is $S = j_{!*}S_Q$, as in the general construction above.

\subsection{Deformations of twisted local systems} \label{subsec:mhm deform}

In this subsection we recall the space of deformations of the twisted local systems on $K$-orbits in $\ms{B}$ in terms of both parameters and geometry.

Let us consider a $K$-orbit $Q$ on $\cB$ and recall from \S\ref{subsec:parametrization} the associated involution $\theta = \theta_Q \colon H \to H$ and the bijection between one dimensional Harish-Chandra $(\mf{h}, H^\theta)$-modules and twisted $K$-equivariant rank one $\cD$-modules $\gamma$ on the orbit $Q$. For $\lambda\in \fh^*$ and $\Lambda$ a character of $H^\theta$ we wrote $\gamma \ = \ \cO_Q(\la,\Lambda)$ for the corresponding $\cD_\lambda$-module. Let us write $\fa$ for the $(-1)$-eigenspace of $\theta$ on $\fh$. Then we have $\fh=\fa \oplus \fh^\theta$. Let us now fix the character $\Lambda$. Then the set of the twisted $K$-equivariant rank one $\cD$-modules $\gamma$ with fixed $\Lambda$ is a torsor over $\nu\in\fa^*$. Thus, if we fix a particular $\gamma$ associated to the data $(\lambda,\Lambda)$, then all its deformations are given by $(\lambda+\nu,\Lambda)$; we write $\gamma_\nu = \mc{O}_Q(\lambda + \nu, \Lambda)$ for the corresponding family of local systems.

Geometrically, the deformations $\gamma_\nu$ can be constructed as follows. Suppose first that $\varphi \in \mb{X}^*(H)$ and that there exists a non-zero $K$-invariant element $f_\varphi \in H^0(Q, \mc{L}_\varphi)^K$. Then $f_\varphi$ defines a non-vanishing function $f_\varphi \colon \tilde{Q} \to \mb{C}$ on the preimage $\tilde{Q}$ of $Q$ in $\tilde{\ms{B}}$, on which $H$ acts by $\varphi$. For all $s$, the deformed twisted local system $\gamma_{s\varphi}$ is given by the monodromic $\ms{D}$-module
\[ \gamma_{s\varphi} = f_\varphi^s \gamma,\]
where $f_\varphi^s\gamma = \gamma$ as $\mc{O}_{\tilde{Q}}$-modules and $\ms{D}_{\tilde{Q}}$ acts by
\begin{equation} \label{eq:f to the s action}
 \partial (f_\varphi^s m) = f_\varphi^s\left(\partial m + s\frac{\partial f_\varphi}{f_\varphi} m \right)
\end{equation}
for a vector field $\partial$ on $\tilde{Q}$ and a local section $m$ of $\gamma$. More generally, given $\varphi_1, \ldots, \varphi_n \in \mb{X}^*(H)$ and non-zero sections $f_i \in H^0(Q, \mc{L}_{\varphi_i})^K$, we have
\[ \gamma_{s_1\varphi_1 + \cdots + s_n \varphi_n} = f_1^{s_1} \cdots f_n^{s_n} \gamma.\]
The characters $\varphi$ for which such $f_{\varphi}$ exist form a sublattice in $\mb{X}^*(H) \cap \mf{a}^*$ of full rank, so any $\gamma_\nu$ may be realized in this way.

\subsection{Jantzen conjecture with mixed Hodge modules} \label{subsec:mhm jantzen}

In this subsection we recall the results of Beilinson and Bernstein from~\cite{beilinson-bernstein} about Jantzen filtrations. We work in the context of Hodge modules, which carry more information than the original treatment. At the end of this section we state our second main theorem.

Fix $\lambda \in \mf{h}^*_\mb{R}$, a $K$-orbit $Q \subset \ms{B}$ and an irreducible $K$-equivariant $\lambda$-twisted local system $\gamma$ on $Q$. By \cite[Lemma 3.5.2]{beilinson-bernstein}, there exists $\varphi \in \mb{X}^*(H)$ and a $K$-invariant section $f_\varphi \in H^0(\bar{Q}, \ms{L}_\varphi)$ such that $f_\varphi^{-1}(0) \cap \bar{Q} = {\partial}Q$. (The existence of such an $f_\varphi$ may be regarded as a weak positivity condition on $\varphi \in \mb{X}^*(H) \cap \mf{a}^*$.) Fixing these, we may form the $K$-equivariant $(\lambda + s_0\varphi)$-twisted local system $\gamma_{s_0\varphi} = f_\varphi^{s_0}\gamma$ on $Q$ for any real number $s_0$.

Consider the tautological morphisms
\begin{equation} \label{eq:jantzen taut morphism}
 j_!f_\varphi^{s_0}\gamma \to j_*f_\varphi^{s_0}\gamma.
\end{equation}
To study the behavior near $s_0 = 0$, observe first that the module $j_*f_\varphi^{s_0}\gamma$ is the quotient by $s - s_0$ of the $\mc{D}_{\tilde{\mc{B}}}[s]$-module $j_+f_\varphi^s\gamma[s]$, where $f_\varphi^s\gamma[s]$ given by $\gamma[s]$ as a sheaf of $\mc{O}_{\tilde Q}[s]$-modules with $\mc{D}_{\tilde Q}$-action defined by \eqref{eq:f to the s action} and $j_+$ is the naive $\mc{D}$-module pushforward to $\tilde{\mc{B}}$. Similarly, one can write down a sheaf of $\mc{D}_{\tilde X}[s]$-submodules
\[ j_!f_\varphi^s\gamma[s] \subset j_+f_\varphi^s\gamma[s] \]
such that the quotient by $s - s_0$ recovers \eqref{eq:jantzen taut morphism} for $s_0$ sufficiently close to $0$. Since this is for us only motivation, we will not make a precise definition of $j_!f_\varphi^s\gamma[s]$; one can take, for example, the module $j_!^{(0)}f^s\gamma[s]$ defined in \cite[\S 3.3]{DV3}. This realizes \eqref{eq:jantzen taut morphism} as an algebraic family. Taking the formal completion at $s = 0$, we obtain a morphism
\begin{equation} \label{eq:jantzen map}
 j_!f_\varphi^s\gamma[[s]] \to j_*f_\varphi^s\gamma[[s]].
\end{equation}
Note that both sides are now well-defined by the formulas
\[ j_!f_\varphi^s\gamma[[s]] := \varprojlim_n j_!\frac{f_\varphi^s\gamma[s]}{s^n f_\varphi^s\gamma[s]} \quad \text{and} \quad j_*f_\varphi^s\gamma[[s]] := \varprojlim_n j_*\frac{f_\varphi^s\gamma[s]}{s^n f_\varphi^s\gamma[s]},\]
where we observe that the quotients by $s^n$ are holonomic $\mc{D}$-modules, so the $!$ and $*$ extensions are well-defined for them.

Our assumption on $\varphi$ implies that \eqref{eq:jantzen map} is an isomorphism after inverting $s$. Hence it is an injection and it induces Jantzen filtrations $J_{\bigcdot}$ on the domain and codomain defined by
\begin{equation} \label{eq:jantzen filtration shriek}
 J_n j_!\gamma = (j_!f_\varphi^s\gamma[[s]] \cap s^{-n} j_*f_\varphi^s\gamma[[s]])/(s)
\end{equation}
and
\begin{equation} \label{eq:jantzen filtration star}
J_n j_*\gamma = (s^{-n}j_!f_\varphi^s\gamma[[s]] \cap j_*f_\varphi^s\gamma[[s]])/(s)
\end{equation}
and isomorphisms
\[ s^n \colon \mrm{Gr}^J_{n} j_*\gamma \overset{\sim}\to \mrm{Gr}^J_{-n} j_!\gamma.\]

\begin{thm}(\cite{beilinson-bernstein}) \label{thm:beilinson-bernstein}
The Jantzen filtrations coincide with the weight filtrations up to a shift and the $s$ corresponds to the Tate twist, i.e., 
\[
J_n j_*\gamma \ = \ W_{d+n}  j_*\gamma \ \   J_{-n} j_!\gamma  \ = \ W_{d-n} j_!\gamma\, \ \ \text{and}\ \  \mrm{Gr}^J_{n} j_*\gamma \overset{\sim}\to \mrm{Gr}^J_{-n} j_!\gamma (-n)\,,
\]
where $d=\dim Q + \dim H$ (cf., Remark \ref{rmk:weights}).
\end{thm}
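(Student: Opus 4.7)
The plan is to follow the strategy of Beilinson and Bernstein in \cite{beilinson-bernstein}, upgrading each step from perverse sheaves with a weight theory to the richer category of mixed Hodge modules, and carefully tracking Tate twists throughout.

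First I would reinterpret the formal deformation $f_\varphi^s\gamma[[s]]$ via the V-filtration of Kashiwara along the function $f_\varphi$. The function $f_\varphi$ gives a map $\bar Q \to \bA^1$ vanishing exactly on $\partial Q$, and the formal parameter $s$ corresponds to the shifted Euler operator $-t\del_t$ acting on unipotent nearby cycles. Under this identification, the tautological map \eqref{eq:jantzen map} becomes the canonical variation morphism between the $!$- and $*$-extensions, and the $s$-adic filtrations \eqref{eq:jantzen filtration shriek} and \eqref{eq:jantzen filtration star} become filtrations by powers of the nilpotent log-monodromy operator $N$. A small devissage using the affine inclusion $Q\hookrightarrow \bar Q$ together with an embedded resolution of $\partial Q$ reduces to the smooth-divisor case; this is standard and loses no information, since all operations in sight are compatible with proper pushforward in Saito's theory.

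I would then apply the central result of Saito's theory in the complex form developed in \cite{SS}: for a pure Hodge module $M$ of weight $w$ on a smooth variety and a holomorphic function $f$ defining a smooth divisor of zeros, the (unipotent) nearby cycles $\psi_f M$ carry a canonical mixed Hodge module structure whose weight filtration equals the monodromy filtration of $N$ (with standard centering determined by $w$), and $N$ itself is a morphism of Hodge modules of type $(-1,-1)$, i.e.\ carrying a Tate twist $(-1)$. Combined with Remark \ref{rmk:weights}, which records that $\gamma$ has weight $d=\dim Q + \dim H$, this yields the identifications $J_n j_*\gamma = W_{d+n} j_*\gamma$ and $J_{-n} j_!\gamma = W_{d-n} j_!\gamma$. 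The isomorphism $\mrm{Gr}^J_n j_*\gamma \overset{\sim}\to \mrm{Gr}^J_{-n} j_!\gamma(-n)$ then comes from the classical fact that $N^n$ induces an isomorphism $\mrm{Gr}^M_n \overset{\sim}\to \mrm{Gr}^M_{-n}$ of graded pieces of the monodromy filtration, twisted by $(-n)$ because of the Tate type of $N$.

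The main obstacle will be the careful bookkeeping in the twisted and equivariant setting. Since $\lambda \in \fh^*_\bR$ is allowed to be non-integral and $\gamma$ is a rank-one twisted local system on the $H$-torsor $\tilde Q$, one must work with the full, rather than merely unipotent, nearby cycles, organised according to the characters of the relevant stabilisers, and verify that everything is compatible with $K$-equivariance, the monodromic structure, and the conjugate Hodge filtration $\bar F_{\bigcdot}$ of the complex mixed Hodge module theory of \cite{SS}. A further subtlety is that the V-filtration interpretation of the Jantzen construction uses a formal, rather than geometric, parameter $s$; here Beilinson's trick of embedding the diagram $j_!\gamma \to j_*\gamma$ inside the maximal extension construction for nearby cycles is essential, and the work of \S\ref{proof:main theorem 2} will be largely devoted to verifying that everything lifts cleanly to the Hodge setting.
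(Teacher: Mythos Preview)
Your proposal is essentially correct and follows the same route as the paper: the paper does not prove Theorem \ref{thm:beilinson-bernstein} separately, but recovers it as the purity half of Theorem \ref{thm:main theorem 2}, whose proof in \S\ref{proof:main theorem 2} runs exactly through Beilinson's construction and Saito's identification of the weight filtration on nearby cycles with the monodromy filtration.

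Two small corrections of emphasis are worth making. First, in your second paragraph you pass directly from ``weight filtration on $\psi_f$ equals the monodromy filtration'' to ``$J_n j_*\gamma = W_{d+n} j_*\gamma$'', but $\psi_f\gamma$ is a different object from $j_*\gamma$ and $j_!\gamma$, and the monodromy operator $\mrm N$ does not act on the latter. The bridge is precisely the maximal extension $\Xi_f = \pi_f^1$, which you correctly name at the end but which should be front and center: the paper's Proposition \ref{prop:jantzen monodromy} shows that the Jantzen filtration on $j_!\gamma$ (resp.\ $j_*\gamma$) is the restriction (resp.\ image) of the monodromy filtration on $\Xi_f$, and Proposition \ref{prop:jantzen monodromy 2} then identifies the primitive parts of $\Xi_f$ with those of $\pi_f^0 \cong \psi_f^{un}$, where Saito's theorem applies. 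Second, no embedded resolution of $\partial Q$ is needed or used; the reduction to a smooth divisor is via the graph embedding $i_f \colon X \to X \times \mb C$, as in \S\ref{subsec:nearby cycles}. Finally, your worry about needing full rather than unipotent nearby cycles is misplaced: the $s$-adic construction with $f_\varphi^s\gamma[[s]]$ lands by design in the unipotent part, and this is all that is required.
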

In~\cite{beilinson-bernstein}, this theorem is proved in the context of mixed sheaves over finite base fields. This requires the assumption that $\lambda$ is rational, although this is not a very restrictive hypothesis. 

In this paper we extend Theorem \ref{thm:beilinson-bernstein} to the context of mixed Hodge modules, avoiding the detour through finite fields. A key difference, in addition to being able to work with arbitrary real $\lambda$ is that we are keeping track of more structure: the Hodge filtration and the polarization. This gives us substantially more information.

Recall that the polarization on $j_{!*}\gamma$ is of the form $S = j_{!*}S_Q$, for $S_Q \colon \gamma \to \gamma^h(-d)$ a polarization on $Q$. Since the functor $j_!$ is fully faithful, the polarization extends uniquely to a non-degenerate sesquilinear pairing
\[  j_!\gamma \to j_!\gamma^h(-d) = (j_*\gamma)^h(-d),\]
which for simplicity we also denote by $S$. The Jantzen filtrations are dual under $S$ (this is clear from the formulas in \S\ref{subsec:proof of theorem 2}, for example), so we obtain (dual) perfect pairings
\[ \mrm{Gr}_{-n}^J(S) \colon \mrm{Gr}_{-n}^J(j_!\gamma) \to \mrm{Gr}_n^J(j_*\gamma)^h(-d) \]
and
\[ \mrm{Gr}_n^J(S) \colon \mrm{Gr}_n^J(j_*\gamma) \to \mrm{Gr}_{-n}^J(j_!\gamma)^h(-d).\]
The \emph{Jantzen forms} are the non-degenerate Hermitian forms
\[ s^{-n}\mrm{Gr}_{-n}^J(S) \colon \mrm{Gr}_{-n}^J(j_!\gamma) \to \mrm{Gr}_{-n}^J(j_!\gamma)^h(-d + n)\]
and
\[ s^n\mrm{Gr}_n^J(S) \colon \mrm{Gr}_n^J(j_*\gamma) \to \mrm{Gr}_n^J(j_*\gamma)^h(-d - n).\]
Our second main theorem is the following.

\begin{thm} \label{thm:main theorem 2}
Let $S$ be a polarization of $\gamma$. Then for all $n$, $\mrm{Gr}^J_{-n}j_!\gamma$ is a pure Hodge module of weight $d - n$, and the form
\[ s^{-n}\mrm{Gr}_{-n}^J(S) \colon \mrm{Gr}_{-n}^Jj_!\gamma \overset{\sim}\to (\mrm{Gr}_{-n}^Jj_!\gamma)^h(-d + n) \]
is a polarization.
\end{thm}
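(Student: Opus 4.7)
The plan is to mirror Beilinson and Bernstein's proof of the Jantzen conjecture \cite{beilinson-bernstein}, but to carry it out directly within the category of polarized complex mixed Hodge modules. This has the double advantage of avoiding the finite-field detour (which requires $\lambda \in \mf{h}^*_\mb{Q}$) and of making the polarization available at every stage, so that both conclusions of Theorem \ref{thm:main theorem 2} come out simultaneously.

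The first step is to give a geometric interpretation of the formal family $f_\varphi^s\gamma[[s]]$. Using the non-vanishing function $f_\varphi$ on $\tilde{Q}$, we package it as a monodromic Hodge module $\gamma^{\mrm{tot}}$ on $\tilde{Q} \times \bA^1_s$ whose formal completion along $s = 0$ recovers $f_\varphi^s\gamma[[s]]$ as a $\ms{D}_{\tilde{Q}}[[s]]$-module. Under this identification, the map \eqref{eq:jantzen map} corresponds to the canonical morphism $j_!\gamma^{\mrm{tot}} \to j_*\gamma^{\mrm{tot}}$ (in Hodge modules on $\ms{B} \times \bA^1_s$) restricted to a formal neighbourhood of $\ms{B} \times \{0\}$; and the Jantzen filtrations \eqref{eq:jantzen filtration shriek} and \eqref{eq:jantzen filtration star} match the monodromy weight filtrations on the nearby cycles $\psi_s(j_!\gamma^{\mrm{tot}})$ and $\psi_s(j_*\gamma^{\mrm{tot}})$ along $s = 0$, shifted so that the central piece lies in weight $d$.

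The second step then applies Saito's structure theorem for nearby cycles. For a pure polarized Hodge module $\ms{M}$ of weight $d$, the nearby cycles $\psi_s\ms{M}$ carry the monodromy weight filtration (appropriately shifted), and each primitive piece $P\mrm{Gr}^W_{d-n}\psi_s\ms{M}$ inherits a polarization built from $N^n$ and the polarization of $\ms{M}$, where $N$ is the logarithm of the unipotent monodromy. Under the identification of Step~1, $N$ corresponds to multiplication by $s$, so these polarizations on the primitive parts match the Jantzen forms $s^{-n}\mrm{Gr}^J_{-n}(S)$ on primitives. The remaining graded pieces are then handled by the Lefschetz decomposition and the standard sign conventions for polarizations on Hodge-Lefschetz modules, yielding purity of $\mrm{Gr}^J_{-n}j_!\gamma$ and the asserted polarization property.

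The main obstacle lies in the careful bookkeeping of signs, Tate twists, and factors of $2\pi i$: Saito's normalization for polarizations on nearby cycles (see \cite[Chapter 14]{SS}) must be reconciled with the positivity convention \eqref{eq:polarization normalization} and with the explicit integral formulas \eqref{eq:polarization closed pushforward}, \eqref{eq:intermediate polarization} for $j_{!*}S_Q$. A secondary, more technical, obstacle is that we work throughout with real (not necessarily rational) $\lambda$ and must rely on the complex mixed Hodge module theory of \cite{schmid-vilonen, SS}, rather than Saito's original rational theory; in particular, the nearby cycle functor and its polarization formalism must be set up correctly in the complex setting, and the monodromic Hodge module $\gamma^{\mrm{tot}}$ of Step~1 must be constructed within this framework.
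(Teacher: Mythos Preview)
Your overall strategy---reduce the Jantzen form to the polarization on the primitive parts of a nearby cycle of a \emph{pure} object and then invoke the defining axiom of polarized Hodge modules---is exactly the paper's. The gap is in Step~1: there is no mixed Hodge module $\gamma^{\mrm{tot}}$ on $\tilde{Q}\times\bA^1_s$ with the property you want. Endowing $f_\varphi^s\gamma$ with a $\partial_s$-action forces $\partial_s(f_\varphi^s m)=(\log f_\varphi)\,f_\varphi^s m$, so the candidate is neither algebraic nor regular holonomic in the $s$-direction, and in particular is not an object of $\mhm$. Moreover, even granting such an object, $j_!\gamma^{\mrm{tot}}$ and $j_*\gamma^{\mrm{tot}}$ are not pure, so Saito's polarization theorem on $\psi_s$ does not apply to them; and the interesting degeneration is along the boundary $f_\varphi=0$, not along $s=0$, so $\psi_s$ is looking in the wrong direction.

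The paper repairs this by taking nearby cycles along $f=f_\varphi$ instead, with $s$ appearing only as a formal parameter via Beilinson's construction. One sets $\pi_f^a(\gamma)=\coker\big(s^a\colon j_!f^s\gamma[[s]]\to j_*f^s\gamma[[s]]\big)$, where each $f^s\gamma[s]/s^n$ is an honest mixed Hodge module on $\tilde{Q}$; no product with $\bA^1_s$ is needed. The input $\gamma$ is pure, so the polarized Hodge module axiom applies directly to $\pi_f^0(\gamma)\cong\psi_f^{un}\gamma$ and produces polarizations on its primitive parts (Corollary~\ref{cor:beilinson polarization}). The bridge to the Jantzen filtration is Beilinson's maximal extension $\Xi_f(\gamma)=\pi_f^1(\gamma)$: one has $j_!\gamma=\ker(s\mid\Xi_f)$ and $j_*\gamma=\coker(s\mid\Xi_f)$, the Jantzen filtration is the restriction of the monodromy filtration on $\Xi_f$ (Proposition~\ref{prop:jantzen monodromy}), and for $n>0$ the pieces $\mrm{Gr}^J_n j_*\gamma$ identify with $\mrm{P}_{n-1}\pi_f^0(\gamma)$ carrying the Jantzen form to the polarization just built (Proposition~\ref{prop:jantzen monodromy 2}). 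Your Step~2 is then exactly right once this identification is in place.
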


Section~\ref{proof:main theorem 2} is devoted to the proof of this theorem. 

\begin{rmk}
Although we do not use this here, the Jantzen forms admit the following interpretation in terms of the polynomial families $j_!f_\varphi^s\gamma[s]$ and $j_+f_\varphi^s\gamma[s]$. Consider the $\mb{C}[s]$-bilinear pairing
\[ S_Q \colon f_\varphi^s\gamma[s] \otimes \overline{f_\varphi^s \gamma[s]} \to \Db_{\tilde{Q}}[s]^{\mathit{hol}}, \]
valued in the sheaf $\Db_{\tilde{Q}}[s]^{\mathit{hol}}$ of distributions on $Q$ depending holomorphically on $s$, defined by the formula
\[ S_Q(f_\varphi^s m(s), \overline{f_\varphi^s m'(s)}) = |f_\varphi|^{2s} S_Q(m(s), \overline{m'(s)}),\]
where on the right hand side we adopt the convention $\bar{s} = s$. By the argument of \cite[Proposition 12.5.4]{SS}, this extends further to a pairing
\[ S \colon j_+f_\varphi^s\gamma[s] \otimes \overline{j_+f_\varphi^s\gamma[s]} \to \Db_{\tilde{\mc{B}}}(s)^{\mathit{mer}},\]
where $\Db_{\tilde{\mc{B}}}(s)^{\mathit{mer}}$ now denotes the sheaf of distributions depending meromorphically on $s$. One can show that the restriction to $j_!f_\varphi^s\gamma[s] \otimes \overline{j_+f_\varphi^s\gamma[s]}$ has no poles in a neighborhood of $s = 0$ and that evaluating at sufficiently small non-zero $s_0$ gives a polarization on $j_!f_\varphi^{s_0}\gamma = j_*f_\varphi^{s_0}\gamma = j_{!*}f_\varphi^{s_0}\gamma$. We think of this as a family of polarizations degenerating at $s = 0$. Completing, we get a pairing
\[S \colon j_!f_\varphi^s\gamma[[s]] \otimes \overline{j_*f_\varphi^s\gamma[[s]]} \to \Db_{\tilde{\mc{B}}}[[s]] \]
from which we can recover the Jantzen forms as
\[ s^{-n}\mrm{Gr}_{-n}^J(S)(m(0), \overline{m'(0)}) = S(m(s), s^{-n}\overline{m'(s)})|_{s = 0},\]
for $m, m' \in j_!f_\varphi^s\gamma[[s]] \cap s^n j_*f_\varphi^s\gamma[[s]]$ and
\[ s^n\mrm{Gr}_n^J(S)(m(0), \overline{m'(0)}) = S(m(s), s^n\overline{m'(s)})|_{s = 0}\]
for $m, m' \in s^{-n}j_!f_\varphi^s\gamma[[s]] \cap j_*f_\varphi^s\gamma[[s]]$.
\end{rmk}

\section{The third main theorem}
\label{sec:main theorem 3}

In this section, we make contact between our study of Hodge modules on the flag variety and representations of real reductive groups, via their incarnations as Harish-Chandra modules. We recall the localization theory of Beilinson-Bernstein in \S\ref{subsec:localization}. In \S\ref{subsec:mk statement}, we recall the notion of minimal $K$-types and state our third main theorem linking these to the Hodge filtration. With a view to our applications to real groups, we recall the Schmid-Vilonen construction of Hermitian forms from polarizations in \S\ref{global_polar}. Finally, we explain in \S\ref{subsec:ALTV} how our results imply one of the main theorems of \cite{ALTV}.

\subsection{Beilinson-Bernstein localization} \label{subsec:localization}

In this subsection, we recall the Beilinson-Bernstein localization theory in our context.

We write $\Mod(\fg,K)_\la$ for the category of Harish-Chandra $(\fg,K)$-modules with infinitesimal character (i.e., the action of the center of $U(\mf{g})$) induced by $\la$ via the Harish-Chandra homomorphism. Recall that we have not included a $\rho$-shift in our notation, so, for example, the trivial representation has infinitesimal character $\lambda = 0$.

Assume that $\lambda+\rho$ is dominant as well as real, i.e., that $\langle \lambda+\rho,\check\alpha_i \rangle \geq 0$ for simple positive coroots $\check\alpha_i$. If $\lambda + \rho$ is also regular, then according to Beilinson and Bernstein~\cite{beilinson-bernstein} the global sections functor induces an equivalence of categories
\begin{equation*}
\Gamma \colon \Mod_K(\cD_\lambda)\cong \Mod(\fg,K)_\la\,. 
\end{equation*}
If $\lambda + \rho$ is dominant but not regular, then $\Gamma$ is exact and $\Mod(\fg,K)_\la$ is a Serre quotient of $\Mod_K(\cD_\lambda)$. In particular, each irreducible module in $\Mod(\fg,K)_\la$ arises as global sections of a unique irreducible module in $\Mod_K(\cD_\lambda)$. The above statements also apply in the monodromic version of the theory: for $\lambda + \rho$ dominant, the global sections gives an exact functor
\[ \Gamma \colon \Mod_K^\mon(\ms{D}_\lambda) \to \Mod(\mf{g}, K)_{\widehat{\lambda}},\]
which is an equivalence of categories for $\lambda + \rho$ regular, where the target is the category of Harish-Chandra modules with generalized infinitesimal character $\lambda$.

\begin{rmk}
The global sections $\Gamma$ here are taken over $\ms{B}$. In terms of weakly $H$-equivariant $\ms{D}$-modules on the base affine space, the functor is given by
\[ \Gamma(\ms{M}) = \Gamma(\tilde{\ms{B}}, \ms{M})^H.\]
\end{rmk}

It will be convenient to introduce the following terminology. Let $\gamma$ be a $K$-equivariant $\lambda$-twisted local system of rank $1$ on a $K$-orbit $Q$ for some $\lambda \in \mf{h}^*_\mb{R}$. We say that $\gamma$ is \emph{relevant} if
\begin{enumerate}
\item $\lambda + \rho$ is dominant  and
\item $\Gamma(j_{!*}\gamma) \neq 0$.
\end{enumerate}
This corresponds to the notion of regular Beilinson-Bernstein data of \cite[\S 3]{chang}. The global sections functor defines a bijection between isomorphism classes of irreducible modules in $\Mod(\mf{g}, K)_\lambda$ and relevant local systems $\gamma$. If $\gamma = \mc{O}_Q(\lambda, \Lambda)$, then we have the following criterion for relevance due to Beilinson and Bernstein:

\begin{prop}[{\cite[Theorem 3.15]{chang}}]
The local system $\gamma = \mc{O}_Q(\lambda, \Lambda)$ is relevant if and only if $\lambda+\rho$ is dominant and the following are satisifed.
\begin{enumerate}
\item There is no compact simple root $\alpha$ with $\langle \lambda, \check\alpha \rangle = -1$.
\item There is no complex simple root $\alpha$ with $\theta \alpha \in \Phi_-$ and $\langle \lambda, \check\alpha \rangle = -1$.
\item There is no real simple root $\alpha$ with $\Lambda(m_\alpha) = -1$ and $\langle \lambda, \check\alpha \rangle = -1$.
\end{enumerate}
\end{prop}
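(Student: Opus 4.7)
The plan is to reduce the question, via Beilinson-Bernstein projections to partial flag varieties, to an explicit case-by-case computation on $\mathbb{P}^1$-fibers. For each simple root $\alpha$, let $p_\alpha \colon \ms{B} \to \ms{B}_\alpha$ denote the projection to the variety of parabolics of type $\alpha$, which is a $\mathbb{P}^1$-bundle. Since $\lambda + \rho$ is dominant, the functor $\Gamma$ is exact on $\Mod_K^\mon(\ms{D}_\lambda)$; a standard adjunction argument at each integral wall then gives that $\Gamma(j_{!*}\gamma) = 0$ if and only if there exists a simple root $\alpha$ with $\langle \lambda + \rho, \check\alpha\rangle = 0$ (equivalently, $\langle \lambda, \check\alpha\rangle = -1$) such that $p_{\alpha *}(j_{!*}\gamma) = 0$. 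The latter vanishing can be checked fiberwise over the generic point of each $K$-orbit in $\ms{B}_\alpha$.

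Next, I will analyze the restriction of $j_{!*}\gamma$ to a generic fiber $\mathbb{P}^1_x$ of $p_\alpha$ over $p_\alpha(Q)$ according to the $Q$-type of $\alpha$ (compact imaginary, non-compact imaginary, real, or complex, as in Definition \ref{defn:root classification}), with the complex case further subdivided by the sign of $\theta\alpha$. In each type, the fiber decomposes into $K_x$-orbits in one of a short list of standard configurations, and $(j_{!*}\gamma)|_{\mathbb{P}^1_x}$ is the intermediate extension of a rank one twisted local system from a Zariski open subset of $\mathbb{P}^1_x$, whose monodromy at the boundary is determined by $\lambda$ and $\Lambda$ via Proposition \ref{prop:orbit Hodge modules}.

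It then remains to compute $\Gamma(\mathbb{P}^1_x, (j_{!*}\gamma)|_{\mathbb{P}^1_x})$ in each case, assuming $\langle \lambda + \rho, \check\alpha\rangle = 0$. For compact imaginary $\alpha$ the whole fiber lies in $Q$, and an elementary direct computation on $\mathbb{P}^1$ forces the global sections to vanish. For complex $\alpha$ with $\theta\alpha \in \Phi_-$, where $Q$ is a $\mathbb{P}^1$-bundle over $p_\alpha(Q)$, an analogous configuration yields the same vanishing. For real $\alpha$, the monodromy at the two $K$-fixed points of a generic fiber is governed by $\Lambda(m_\alpha)$, and the intermediate extension has vanishing global sections precisely when $\Lambda(m_\alpha) = -1$. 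In the remaining types (non-compact imaginary, and complex with $\theta\alpha \in \Phi_+$), $(j_{!*}\gamma)|_{\mathbb{P}^1_x}$ always admits a nonzero global section, so no new condition arises. Collecting the cases gives exactly conditions (1)--(3).

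The main obstacle is the geometric classification in the middle step: one must tabulate, for each $Q$-type of a simple root $\alpha$, how $Q$ and $\bar{Q}$ meet the fibers of $p_\alpha$ and how the local system $\gamma$ extends across the fiber boundary. Once this geometry is assembled, the fiberwise $\mathbb{P}^1$ computations are routine residue and monodromy estimates. This is essentially the content of \cite{chang}.
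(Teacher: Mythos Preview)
The paper does not give its own proof of this proposition; it is simply stated with attribution to \cite[Theorem 3.15]{chang}. Your outline is a reasonable sketch of the standard argument, and is indeed essentially Chang's approach: reduce vanishing of $\Gamma$ for an irreducible to vanishing of $p_{\alpha *}$ at an integral wall, then do a case-by-case analysis of the $\mb{P}^1$-fiber geometry according to the $Q$-type of $\alpha$.

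One small correction to your description: in the complex case with $\theta\alpha \in \Phi_-$, the orbit $Q$ is an $\mb{A}^1$-bundle over $p_\alpha(Q)$, not a $\mb{P}^1$-bundle; the generic fiber $Z \cong \mb{P}^1$ meets $Q$ in $Z - \{y\}$ with $y$ lying on the smaller orbit (this is exactly case \eqref{itm:orbit geometries 2} of Proposition \ref{prop:orbit geometries}). The restriction of $j_{!*}\gamma$ to $Z$ is then the intermediate extension from this $\mb{A}^1$, and at $\langle \lambda, \check\alpha\rangle = -1$ one checks that it is the clean extension $\mc{O}_Z(-1)$, whose cohomology vanishes. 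This does not affect your conclusion, only the stated geometry.
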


Here $m_\alpha = \check\alpha(-1) \in H^{\theta_Q}$ and we refer to Definition \ref{defn:root classification} for the notion of compact, complex and real roots.

\subsection{Minimal \texorpdfstring{$K$}{K}-types} \label{subsec:mk statement}

In this subsection we recall the minimal $K$-types of Vogan and formulate our third main theorem. As we work in a geometric context, it will be convenient to refer to~\cite{chang} for standard results on minimal $K$-types.

Let us consider the irreducible Hodge module $j_{!*}\gamma$ associated to a pair $(Q,\gamma)$ and let us write $c = \codim(Q)$. By definition of the Hodge filtration on a pushforward (see \S\ref{subsec:hodge filtration}, especially the formulas~\eqref{closed embedding formula}, \eqref{open embedding formula} and \eqref{dual open embedding formula}), the first non-zero term of the Hodge filtrations of $j_{!}\gamma$, $j_{!*}\gamma$, and $j_{*}\gamma$ is in the filtered degree $c$.

Let us recall the notion of minimal $K$-type due to Vogan~\cite{Vogan thesis}. A \emph{minimal $K$-type of $\Gamma(\cB,j_{*}\gamma)$} is an irreducible representation $V_\mu$ of $K$ of highest weight $\mu$ which is smallest of the irreducible representations of $K$ appearing in $\Gamma(\cB,j_{*}\gamma)$ in the sense that the length of $\mu+2\rho_K$ is minimal. A costandard representation $\Gamma(\cB,j_{*}\gamma)$ can have several minimal $K$-types. The minimal $K$-types have the following simple but remarkable property due to Vogan.

\begin{thm}[{\cite[Theorems 8.15 and 8.19]{chang}}] \label{thm:minimal K-type in irreducible}
If $\gamma$ is relevant, then every minimal $K$-type of $\Gamma(\ms{B}, j_*\gamma)$ lies in $\Gamma(\ms{B}, j_{!*}\gamma)$.
\end{thm}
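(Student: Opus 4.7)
The plan is to use exactness of $\Gamma$ in the dominant case to reduce the problem to a claim about $K$-types in modules supported on strictly smaller $K$-orbits, and then invoke Vogan's orbit-parametrization of minimal $K$-types to handle that claim.

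Since $\gamma$ is regular, $\lambda + \rho$ is dominant, so by Beilinson--Bernstein the functor $\Gamma$ is exact on $\Mod_K(\ms{D}_\lambda)$. Applying $\Gamma$ to the short exact sequence
$$0 \to j_{!*}\gamma \to j_*\gamma \to \ms{Q} \to 0$$
of Hodge modules, where $\ms{Q} := j_*\gamma / j_{!*}\gamma$ is supported on the boundary $\partial Q$, yields the short exact sequence
$$0 \to \Gamma(j_{!*}\gamma) \to \Gamma(j_*\gamma) \to \Gamma(\ms{Q}) \to 0$$
of Harish-Chandra modules. It therefore suffices to show that no minimal $K$-type of $\Gamma(j_*\gamma)$ appears in the cokernel $\Gamma(\ms{Q})$.

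The Jordan--H\"older constituents of $\ms{Q}$ are of the form $j_{Q',!*}\gamma'$ with $Q' \subsetneq \bar{Q}$ a $K$-orbit strictly smaller than $Q$ and $\gamma'$ an irreducible equivariant twisted local system on $Q'$. Consequently, every irreducible $K$-subrepresentation of $\Gamma(\ms{Q})$ occurs inside some $\Gamma(j_{Q',!*}\gamma')$, hence inside the standard module $\Gamma(j_{Q',*}\gamma')$ of which it is the socle. The main representation-theoretic input, due to Vogan~\cite{Vogan thesis}, is that the minimal $K$-types of a standard module are explicitly pinned down by the orbit data $(Q,\gamma)$ and enjoy the following \emph{bottom-layer} property: a minimal $K$-type of the standard module attached to $(Q,\gamma)$ does not appear in any standard module attached to a pair whose orbit is strictly contained in $\bar{Q}$. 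Combined with the exact sequence above, this forces every minimal $K$-type of $\Gamma(j_*\gamma)$ to lie in $\Gamma(j_{!*}\gamma)$.

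The main obstacle is thus the bottom-layer property itself, which is a non-trivial statement in the algebraic Langlands classification; its geometric incarnation in the Beilinson--Bernstein picture is precisely what is proved in \cite[Theorems 8.15 and 8.19]{chang}. Notably, the Hodge-theoretic machinery developed elsewhere in this paper plays no role in this reduction --- the statement is purely about the underlying $(\mf{g},K)$-module structure of $\Gamma(j_*\gamma)$ and $\Gamma(j_{!*}\gamma)$.
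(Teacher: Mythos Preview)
The paper does not supply its own proof of this statement: it is recorded as a result due to Vogan, with \cite[Theorems 8.15 and 8.19]{chang} as the source, and is used as a black box in the proof of Theorem~\ref{thm:main theorem 3}. There is therefore no argument in the paper to compare your proposal against.

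Your reduction via the short exact sequence $0 \to \Gamma(j_{!*}\gamma) \to \Gamma(j_*\gamma) \to \Gamma(\ms{Q}) \to 0$ is correct and is the natural first step. However, the proposal becomes circular at the decisive point: you invoke a ``bottom-layer property'' as a known result of Vogan, and then in the final paragraph explicitly identify its geometric form with \cite[Theorems 8.15 and 8.19]{chang} --- the very citation attached to the theorem you are proving. So the proposal correctly locates where the content lies but does not supply an independent argument for it.

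One further caution about the statement you labelled the bottom-layer property. You assert that a minimal $K$-type $\mu$ of $\Gamma(j_*\gamma)$ does not occur in \emph{any} standard module $\Gamma(j_{Q'*}\gamma')$ with $Q' \subsetneq \bar{Q}$. This is stronger than what is needed (non-occurrence in the particular subquotients $\Gamma(j_{Q'!*}\gamma')$ arising in $\ms{Q}$) and is not the form in which the result is usually stated or proved; there is no general reason why $\mu$ could not appear as a \emph{non}-minimal $K$-type of some $\Gamma(j_{Q'*}\gamma')$. Chang's actual argument proceeds instead through Vogan's explicit formula for the minimal $K$-types in terms of the data $(Q,\gamma)$ and their stability under coherent continuation, rather than through a blanket non-occurrence claim for smaller orbits.
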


We also have the following useful result.

\begin{prop}[{\cite[\S 8]{chang}}] \label{prop:minimal K-type support}
Every minimal $K$-type of $\Gamma(\ms{B}, j_*\gamma)$ lies in the subspace
\[ \Gamma(\ms{B}, j_{\bigcdot}(\gamma \otimes \omega_{Q/\ms{B}})) \subset \Gamma(j_*\gamma).\]
\end{prop}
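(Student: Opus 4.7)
My plan is to exploit the order filtration on the closed $\ms{D}$-module pushforward from $Q$ and to compare minimal $K$-types across its associated graded pieces. Pick a $K$-stable open $U \subseteq \ms{B}$ in which $Q$ is closed, so that $(j_{*}\gamma)|_U = i_{*}\gamma$ for the closed inclusion $i \colon Q \to U$. Kashiwara's description of the closed $\ms{D}$-module pushforward equips $i_{*}\gamma$ with a canonical $K$-equivariant exhaustive increasing filtration $F^{\mrm{ord}}_{\bigcdot}$ by order of normal differentiation, satisfying
\[ \mrm{Gr}_k^{\mrm{ord}} i_{*}\gamma\ \cong\ i_{\bigcdot}\bigl(\gamma \otimes_{\mc{O}_Q} \mrm{Sym}^k N_{Q/\ms{B}} \otimes \omega_{Q/\ms{B}}\bigr). \]
Since $F_0^{\mrm{ord}} i_{*}\gamma = i_{\bigcdot}(\gamma \otimes \omega_{Q/\ms{B}})$, taking global sections identifies $\Gamma(\ms{B}, j_{\bigcdot}(\gamma \otimes \omega_{Q/\ms{B}}))$ with the $K$-submodule $\Gamma(\ms{B}, F_0^{\mrm{ord}} j_{*}\gamma) \subseteq \Gamma(\ms{B}, j_{*}\gamma)$.

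Next I pass to $K$-types. Fix $x \in Q$ with isotropy $K_x$, so that $Q \cong K/K_x$. Each $\mrm{Gr}_k^{\mrm{ord}}$ is a $K$-equivariant $\mc{O}_Q$-module on a homogeneous space, so identifying global sections with the corresponding induced representation gives
\[ \Gamma(\ms{B}, \mrm{Gr}_k^{\mrm{ord}} j_{*}\gamma)\ \cong\ \mrm{Ind}_{K_x}^{K}\bigl(\gamma_x \otimes \mrm{Sym}^k N_{Q/\ms{B},x} \otimes \omega_{Q/\ms{B},x}\bigr) \]
as $K$-representations. If the minimal $K$-type isotypic component of $\Gamma(\ms{B}, j_{*}\gamma)$ were not contained in $\Gamma(\ms{B}, F_0^{\mrm{ord}} j_{*}\gamma)$, its image in the quotient would be non-zero and hence would inject into some $\Gamma(\ms{B}, \mrm{Gr}_k^{\mrm{ord}} j_{*}\gamma)$ with $k \geq 1$.

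The crux is then Vogan's length comparison: the $T_K$-weights of $N_{Q/\ms{B},x}$ (for a maximal torus $T_K \subseteq K_x$) are the restrictions of those roots of $\mf{g}$ whose root spaces do not lie in $\mf{b}_x + \mf{k}$, and adding any non-zero sum of $k$ such weights to a highest weight $\mu_0$ of $\gamma_x \otimes \omega_{Q/\ms{B},x}$ strictly increases $\|\mu + 2\rho_K\|^2$. By Frobenius reciprocity, every highest weight of the induced module above is of this form with $k$ summands, so for $k \geq 1$ no such weight can be minimal, yielding the claim. The main obstacle is this length inequality: when a $\mf{g}$-root restricts to zero on $T_K$ (as can happen for $\theta$-fixed roots), crude positivity fails, and one has to invoke Vogan's bottom-layer theorem \cite{Vogan thesis}, reworked geometrically in \cite[\S 8]{chang}, which handles such degenerate directions using the $\theta$-stable parabolic attached to $Q$.
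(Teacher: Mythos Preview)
The paper does not give its own proof of this proposition; it is stated with a bare citation to \cite[\S 8]{chang} and used as a black box. So there is nothing in the paper to compare your argument against.

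Your structural reduction is sound. The order filtration on $i_{*}\gamma$ over an open $U$ in which $Q$ is closed does globalize to a $K$-stable filtration of $j_{*}\gamma$ on $\ms{B}$ with $F_k = j_{\bigcdot}(\gamma \otimes \omega_{Q/\ms{B}} \otimes \mrm{Sym}^{\leq k} N_{Q/\ms{B}})$, and since $Q$ is affine and $j$ is an affine morphism the passage to global sections preserves the associated graded. So the problem is indeed reduced to showing that no minimal $K$-type of $\Gamma(j_{*}\gamma)$ occurs in $\Gamma\bigl(Q, \gamma \otimes \omega_{Q/\ms{B}} \otimes \mrm{Sym}^{k} N_{Q/\ms{B}}\bigr)$ for $k \geq 1$.

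The gap is precisely where you flag it, and it is more serious than your last paragraph suggests. Your sentence ``by Frobenius reciprocity, every highest weight of the induced module above is of this form with $k$ summands'' is not correct as stated: Frobenius reciprocity says $V_{\mu}$ occurs in $\mrm{Ind}_{K_x}^{K} W$ iff $W$ occurs in $V_{\mu}|_{K_x}$, which constrains the $K_x$-weights of $V_{\mu}$, not $\mu$ itself in the way you claim. One needs a genuine branching argument to convert this into an inequality on $\|\mu + 2\rho_K\|$, and that argument (handling in particular the normal directions whose $T_K$-restriction is zero or short) is exactly the content of \cite[\S 8]{chang} built on Vogan's minimal $K$-type machinery. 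So your proposal is a reasonable unpacking of why the cited result is plausible, but the substantive step remains outsourced to the same reference the paper invokes.
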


Our third main theorem is the following. 

\begin{thm} \label{thm:main theorem 3}
If $\gamma$ is relevant, then the minimal $K$-types of $\Gamma(\cB,j_{!*}\gamma)$ lie in $\Gamma(\cB,F_c j_{!*}\gamma)$. In particular, $\Gamma(\cB,F_c j_{!*}\gamma)\neq 0$.
\end{thm}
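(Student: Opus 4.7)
The plan is to reduce the theorem to the two structural results about minimal $K$-types recalled above (Proposition \ref{prop:minimal K-type support} and Theorem \ref{thm:minimal K-type in irreducible}) via a sheaf-theoretic description of $F_c j_{!*}\gamma$. The key target identification is
\[ F_c j_{!*}\gamma \;=\; j_{!*}\gamma \;\cap\; j_{\bigcdot}(\gamma \otimes \omega_{Q/\ms{B}}) \qquad (\text{inside } j_*\gamma), \]
with the right-hand side making sense because $F_c j_*\gamma = j_{\bigcdot}(\gamma \otimes \omega_{Q/\ms{B}})$ is the first nonzero piece of the Hodge filtration on the costandard extension. Assuming this identification, the theorem follows at once: any minimal $K$-type of $\Gamma(\ms{B}, j_{!*}\gamma)$ is also a minimal $K$-type of $\Gamma(\ms{B}, j_*\gamma)$ by Theorem \ref{thm:minimal K-type in irreducible}, hence lies in $\Gamma(\ms{B}, j_{\bigcdot}(\gamma \otimes \omega_{Q/\ms{B}}))$ by Proposition \ref{prop:minimal K-type support}; combined with its membership in $\Gamma(\ms{B}, j_{!*}\gamma)$, the identification places it in $\Gamma(\ms{B}, F_c j_{!*}\gamma)$. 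Non-vanishing of the latter is then automatic, since regularity forces $\Gamma(\ms{B}, j_{!*}\gamma) \neq 0$ and every nonzero Harish-Chandra module carries at least one minimal $K$-type.

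To establish the identification, I would use the deformation $\gamma_{s\varphi} = f_\varphi^s \gamma$ of \S\ref{subsec:mhm deform} associated to a boundary equation $f_\varphi$ for $Q$ as in \S\ref{subsec:mhm jantzen}. For $s$ in a small punctured real interval around $0$ the three functors coincide, $j_!\gamma_{s\varphi} = j_{!*}\gamma_{s\varphi} = j_*\gamma_{s\varphi}$, and the family carries a flat Hodge filtration whose lowest piece is $j_{\bigcdot}(\gamma_{s\varphi} \otimes \omega_{Q/\ms{B}})$. Beilinson's unipotent nearby cycles construction (specialization at $s=0$) recovers $j_{!*}\gamma$ as the image of $j_!\gamma \to j_*\gamma$, and Saito's compatibility of the Hodge filtration with nearby cycles then identifies $F_c j_{!*}\gamma$ with the sections of $j_{\bigcdot}(\gamma \otimes \omega_{Q/\ms{B}})$ that extend through the family. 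In Saito's language this is essentially the standard description of the lowest $F_p$ of an intermediate extension via the $V$-filtration along the boundary divisor.

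The hard part will be precisely this last identification. The Hodge filtration is not stable under $\ms{D}$, so it cannot be pinned down by purely $\ms{D}$-module manipulations; its interaction with the $V$-filtration along $\partial Q$ must be carefully controlled, and in particular one must rule out the possibility that $F_c j_{!*}\gamma$ acquires extra generators in codimension-one strata of $\bar{Q} \setminus Q$ beyond those forced by $j_{\bigcdot}(\gamma \otimes \omega_{Q/\ms{B}})$. This is where genuine input from Saito's theory of pure Hodge modules enters, in contrast to the more formal arguments used for Theorem \ref{thm:main theorem 1}. Once this control is in place, the reduction to Vogan's minimal $K$-type machinery is purely combinatorial.
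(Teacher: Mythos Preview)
Your key identification
\[ F_c j_{!*}\gamma \;=\; j_{!*}\gamma \;\cap\; j_{\bigcdot}(\gamma \otimes \omega_{Q/\ms{B}}) \]
is false, and this is a genuine gap rather than a missing verification. The auxiliary claim $F_c j_*\gamma = j_{\bigcdot}(\gamma \otimes \omega_{Q/\ms{B}})$ holds only when $Q$ is closed (as the paper itself notes at the start of its proof). For the identification itself, work locally near a point of $\partial Q$ where the monodromy of $\gamma$ around the boundary is non-trivial, so that $j_{!*}\gamma = j_*\gamma$ there. Then your right-hand side collapses to $j_{\bigcdot}(\gamma \otimes \omega_{Q/\ms{B}})$, which imposes no growth condition along $\partial Q$; but $F_c j_{!*}\gamma$ is cut out by $V^{>-1}$ and does impose one. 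In the local model $\gamma = f^a\mc{O}_Q$ with $a \in (-1,0)$ one finds $F_c j_{!*}\gamma = f^a\mc{O}\cdot\omega_{Q/\ms{B}}$ versus $j_{!*}\gamma \cap j_{\bigcdot}(\gamma\otimes\omega_{Q/\ms{B}}) = f^a\mc{O}[f^{-1}]\cdot\omega_{Q/\ms{B}}$. Your ``extend through the family'' heuristic does not save this: for small $s \neq 0$ you again have $j_{!*}\gamma_{s\varphi} = j_*\gamma_{s\varphi}$, so the condition you are imposing is vacuous there.

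The paper's proof uses Proposition~\ref{prop:minimal K-type support} and Theorem~\ref{thm:minimal K-type in irreducible} as you do, but extracts much more from the latter. Membership of a section in $F_c j_*\gamma$ is characterized by a $V$-filtration criterion: it fails exactly when the section has nonzero image in $\mrm{Gr}_V^{-a}$ for some $a>0$ (Lemma~\ref{lem:minimal K-types 1}). Under the identification $\mrm{Gr}_V^{-a}j_*\gamma \cong \psi_{f_\varphi}^{un}(f_\varphi^a\gamma)$, this would force the minimal $K$-type to survive in $\coker(j_!f_\varphi^a\gamma \to j_*f_\varphi^a\gamma)$; for $a$ in the dominant range this is ruled out by applying Theorem~\ref{thm:minimal K-type in irreducible} to the \emph{deformed} local system $f_\varphi^a\gamma$ (Lemma~\ref{lem:minimal K-types 2}), and the remaining values of $a$ are handled by an intertwining-functor induction on $\dim Q$ (Lemma~\ref{lem:intertwining}). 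Thus Theorem~\ref{thm:minimal K-type in irreducible} is invoked across an entire one-parameter family of regular parameters, not just at $\gamma$ itself; using it only once, as your plan does, cannot suffice.
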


We defer the proof of this statement to \S\ref{mk}. Remarkably, Theorem \ref{thm:minimal K-type in irreducible} and Proposition \ref{prop:minimal K-type support} are the only facts about minimal $K$-types required.

\subsection{Hermitian forms on representations}
\label{global_polar}

Let $\ms{M}$ be a pure twisted Hodge module in $\mhm_\lambda(K \bslash \ms{B})$ and $S \colon \ms{M} \to \ms{M}^h(-w)$ a polarization. In this subsection, we recall from \cite{schmid-vilonen} the following induced Hermitian form on $\Gamma(\ms{M})$.

By choosing a Borel, we have a real algebraic embedding $U_\bR \subset \tilde\cB$, so that $\tilde{\ms{B}} \cong U_{\mb{R}} \times H_\mb{R}^\circ$ as real manifolds, where $U_\mb{R} \subset G$ is the compact real form, $H_\mb{R} \subset H$ is the split real form with Lie algebra $\mf{h}_\mb{R}$, and $H_\mb{R}^\circ$ is its identity component. If $m, m' \in \Gamma(\ms{M}) = \Gamma(\tilde{\ms{B}}, \ms{M})^H$, then the distribution $S(m, \overline{m'})$ on $\tilde{\ms{B}}$ is annihilated by the operators $h-\lambda(h)$ and $\bar h-\lambda(\bar h)$ for $h\in\fh$, so we may write
\[ S(m, \overline{m'}) = S(m, \overline{m'})|_{U_\mb{R}} \cdot \exp(2\lambda),\]
for some distribution $S(m, \overline{m'})|_{U_\mb{R}}$ on $U_\mb{R}$. Here we have chosen compatible orientations on $H_\mb{R}^\circ$ and $U_\mb{R}$ in order to identify the function $\exp(2\lambda)$ with a distribution on $H_\mb{R}^\circ$. We now define a pairing
\begin{equation}
\label{sesqui}
 \Gamma(S)  \colon \Gamma(\ms{M}) \otimes \overline{\Gamma(\ms{M})} \to \mb{C}
 \end{equation}
 by the formula
\[ 
\Gamma(S)(m, \overline{m'}) = \int_{U_\bR} S(m, \overline{m'})|_{U_\bR},
\]
where the integral is taken with respect to an invariant volume form on $U_\bR$ compatible with its chosen orientation\footnote{We have chosen to consider distributions as duals of smooth compactly supported forms to be compatible with~\cite{SS}. It would perhaps be better to consider them as duals of smooth compactly supported densities.}. 

\begin{prop} (\cite[Proposition 5.10]{schmid-vilonen})
\label{sv-form}
The pairing in~\eqref{sesqui} is $\fu_\bR$-invariant and Hermitian, where $\mf{u}_\mb{R} = \mrm{Lie}(U_\mb{R})$. It depends on the choices made only up to a positive constant multiple.
\end{prop}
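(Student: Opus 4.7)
The plan is to verify Hermitian symmetry, $\mf{u}_\mb{R}$-invariance, and uniqueness up to a positive scalar as three separate formal steps, each using the definition of $S$ as a polarization together with the geometry of the Iwasawa-type decomposition $\tilde{\ms{B}} \cong U_\mb{R} \times H_\mb{R}^\circ$.

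For the Hermitian property, I would observe that the polarization condition $S^h = S$ translates into the identity $S(m, \overline{m'}) = \overline{S(m', \overline{m})}$ of distributions on $\tilde{\ms{B}}$. Since restriction to the totally real submanifold $U_\mb{R}$ and integration against a real volume form both commute with complex conjugation of distributions, this identity descends directly to $\Gamma(S)(m, \overline{m'}) = \overline{\Gamma(S)(m', \overline{m})}$.

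For $\mf{u}_\mb{R}$-invariance, write $\xi^h$ for the holomorphic vector field on $\tilde{\ms{B}}$ attached to $\xi \in \mf{g}$. The $\ms{D}$-linearity of $S$ in each argument yields
\[ S(\xi\cdot m, \overline{m'}) = \xi^h \cdot S(m, \overline{m'}), \qquad S(m, \overline{\xi \cdot m'}) = \overline{\xi^h} \cdot S(m, \overline{m'}). \]
For $\xi \in \mf{u}_\mb{R} \subset \mf{g}$, the real vector field generating the $\exp(t\xi)$-action on $\tilde{\ms{B}}$ equals $\xi^h + \overline{\xi^h}$; since $U_\mb{R}$ acts by left multiplication on the first factor of the Iwasawa decomposition, this vector field is tangent to $U_\mb{R}$ and acts there as a left-invariant vector field. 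Setting $T := S(m, \overline{m'})$ and $T_0 := T|_{U_\mb{R}}$, I will deduce the identity $((\xi^h + \overline{\xi^h})T)|_{U_\mb{R}} = \xi \cdot T_0$ and then conclude, via Stokes' theorem and the bi-invariance of Haar measure on the compact group $U_\mb{R}$, that $\Gamma(S)(\xi m, \overline{m'}) + \Gamma(S)(m, \overline{\xi m'}) = \int_{U_\mb{R}} \xi \cdot T_0 = 0$.

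Finally, to analyze the dependence on choices I will use that different Borels are $G$-conjugate and hence produce the same form after identification, that requiring compatibility of the orientations on $U_\mb{R}$ and $H_\mb{R}^\circ$ with the canonical complex orientation on $\tilde{\ms{B}}$ removes the sign ambiguity, and that the bi-invariant Haar measure on the compact group $U_\mb{R}$ is unique up to a positive scalar. I expect the main technical obstacle to be the rigorous justification of the factorization $T = T_0 \cdot \exp(2\lambda)$ that defines $T_0$ in the first place: one must check that the $H$-semi-invariance relations $(h - \lambda(h))T = 0 = (\bar h - \lambda(h))T$ for $h \in \mf{h}$ really do force $T$ to be the product of a well-defined distribution on $U_\mb{R}$ with the smooth function $\exp(2\lambda)$ on $H_\mb{R}^\circ$, and that the extraction of $T_0$ is compatible with the $\ms{D}$-module operations invoked above. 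Once this is in place, the remaining arguments are formal manipulations of sesquilinear pairings.
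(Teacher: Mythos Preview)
The paper does not supply a proof of this proposition; it is stated with a citation to \cite[Proposition 5.10]{schmid-vilonen} and the text moves on. So there is no in-paper argument to compare against.

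Your outline is essentially the standard proof and is correct in its main points. The Hermitian step is fine as written. The $\mf{u}_\mb{R}$-invariance step is also correct: the identity $S(\xi m,\overline{m'})+S(m,\overline{\xi m'})=(\xi^h+\overline{\xi^h})S(m,\overline{m'})$ together with tangency of this real vector field to the $U_\mb{R}$-slices and invariance of Haar measure gives the vanishing. One refinement for the dependence on choices: saying ``different Borels are $G$-conjugate'' is a little glib, since $S(m,\overline{m'})$ is not itself $G$-invariant. The cleaner way to see it is that $U_\mb{R}$ acts freely on $\tilde{\ms{B}}$ with orbit space $H_\mb{R}^\circ$, so any two embeddings $U_\mb{R}\hookrightarrow\tilde{\ms{B}}$ differ by left $U_\mb{R}$-translation (harmless by Haar invariance) composed with right translation by some $h\in H_\mb{R}^\circ$; the latter rescales $T_0$ by the positive real number $\exp(2\lambda)(h)$, which is exactly the positive-scalar ambiguity. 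You have correctly identified the one genuine analytic point, namely that the annihilation conditions $(h-\lambda(h))T=(\bar h-\lambda(h))T=0$ for $h\in\mf{h}$ force the product factorization along $U_\mb{R}\times H_\mb{R}^\circ$; this reduces to an elementary statement about distributions on $\mb{R}^n$ annihilated by $\partial_i - c_i$ and $\bar\partial_i - c_i$.
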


The conjectures in~\cite{schmid-vilonen} postulate that the pair $(\Gamma(\ms{M}), \Gamma(S))$, with the natural Hodge filtration on $\Gamma(\ms{M})$, defines an infinite dimensional polarized Hodge structure of weight $w - \dim \tilde{\ms{B}}$.

\subsection{Comparison with ALTV} \label{subsec:ALTV}

We now turn to the connection between the present work and \cite{ALTV}. In particular, we explain why our results imply the signature character formula~\cite[Theorem 20.6]{ALTV}.

Let us say a few words about the history of the unitarity problem addressed in \cite{ALTV}. Harish-Chandra showed that an irreducible representation of the real form $G_\bR \subset G$ associated with $(G, \theta)$ is unitary if and only if the corresponding $(\fg,K)$-module carries a $\fg_\bR$-invariant positive definite form. It is not difficult to see that the classification of unitary representations can be reduced to the case of real infinitesimal character, see, for example, \cite[Theorem 16.10]{knapp1986book} or the discussion in~\cite{ALTV}. Thus, it suffices to work, as we do, under the assumption that the infinitesimal character is real.  It is also not difficult to determine when an irreducible representation carries an invariant Hermitian form. The main difficulty lies in determining when the form is positive (or negative) definite. This has turned out to be a difficult problem. 

The novel idea introduced in~\cite{ALTV} is to consider a different form, which they call a $c$-form. The $c$-form is a $\fu_\bR$-invariant Hermitian form. It is not difficult to show that a $\fu_\bR$-invariant form exists and is unique up a scalar if the Harish-Chandra module is irreducible with real infinitesimal character. The $c$-form on irreducible representations is fixed up to positive scalar by requiring it to be positive definite on the minimal $K$-types \cite[Proposition 10.7]{ALTV}. The problem of unitarity is reduced to determining the signatures of the $c$-forms \cite[Chapter 12]{ALTV}.

Let us consider a relevant local system $\gamma$. From the discussion above, the irreducible module $\Gamma(\cB,j_{!*}\gamma)$ carries a $c$-form $S_\gamma^c$ which is unique up to a positive scalar. On the other hand, by the discussion in \S\ref{global_polar}, the polarization $S$ on $j_{!*}\gamma$ induces another $\fu_\bR$-invariant form $\Gamma(S)$ on $\Gamma(\cB,j_{!*}\gamma)$.

In \S\ref{mk} we will show that 
\begin{prop}
\label{prop:hodge positivity}
The polarization $\Gamma(S)$ restricted to $\Gamma(\cB,F_c j_{!*}\gamma)$ is positive definite. 
\end{prop}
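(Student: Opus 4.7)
The plan is to compute $\Gamma(S)$ directly on $\Gamma(\ms{B}, F_c j_{!*}\gamma)$ using the explicit formulas of \S\ref{subsec: phm}, and to verify positivity by a pointwise residue and integration calculation.

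The first step is to describe $F_c j_{!*}\gamma$ locally near $Q$. By the general structure of the Hodge filtration on intermediate extensions of polarized variations of Hodge structure of type $(0,0)$, the lowest piece $F_c j_{!*}\gamma$ coincides, near $Q$, with the image of $i_*(\gamma \otimes \omega_{Q/\ms{B}})$ in $j_{!*}\gamma$, where $i \colon Q \hookrightarrow \ms{B}$ denotes the inclusion. In particular, any local section of $F_c j_{!*}\gamma$ is of the form $m \otimes \xi$ with $m$ a local section of $\gamma$ and $\xi$ a local frame of the top exterior power of the normal bundle.

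With this description, formulas \eqref{eq:polarization closed pushforward} and \eqref{eq:intermediate polarization} express the distribution $S(m \otimes \xi, \overline{m \otimes \xi})$ on $\tilde{\ms{B}}$ as a residue at $s = 0$. Since the sections in question extend smoothly across the boundary $\partial Q$ (owing to their origin in the closed pushforward), the meromorphic integrand is analytic at $s = 0$, and the residue formula collapses to
\[
\langle \eta, S(m \otimes \xi, \overline{m \otimes \xi}) \rangle = (-1)^{c(c-1)/2} (2\pi i)^c \int_{\tilde Q} (\xi \wedge \overline{\xi}) \contract \eta|_{\tilde Q} \cdot S_Q(m, \overline m)
\]
for any test form $\eta$ on $\tilde{\ms{B}}$. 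Specializing $\eta$ to a $U_\bR$-invariant positive volume form on $\tilde{\ms{B}}$, factoring out the abelian $\exp(2\lambda)$-contribution from $H_\bR^\circ$, and applying the definition of $\Gamma(S)$ in \S\ref{global_polar}, we obtain an expression for $\Gamma(S)(m \otimes \xi, \overline{m \otimes \xi})$ as a positive real multiple of $\int_{U_\bR \cap \tilde Q} S_Q(m, \overline m) \, dV$, where $dV$ is a positive real volume form on $U_\bR \cap \tilde Q$ induced by $(\xi \wedge \overline \xi) \contract \eta|_{U_\bR}$. Positivity follows immediately from the positive definiteness of the polarization $S_Q$ on $\gamma$.

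The main obstacle is the sign computation in the second step: the factor $(-1)^{c(c-1)/2}(2\pi i)^c$ must be shown to combine with the Hermitian orientation of $\xi \wedge \overline \xi$ on $\det N_{Q/\ms{B}} \otimes \overline{\det N_{Q/\ms{B}}}$ to yield a positive real volume form when paired with a positive $U_\bR$-invariant top form. This is ultimately a consequence of the standard polarization axiom $(-1)^q S|_{V_{p,q}} > 0$ applied to sections of $\omega_{Q/\ms{B}}$, which under our conventions carry Hodge bidegree $(c, c)$, but the explicit verification requires a careful local coordinate computation in terms of $\partial_{z_k} \wedge \partial_{\overline z_k}$ and $dz_k \wedge d\overline z_k$ that reconciles the $i^c$ from $(2\pi i)^c$ with the factor $(-1)^c$ predicted by the polarization axiom.
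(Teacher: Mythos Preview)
Your overall strategy matches the paper's: both reduce $\Gamma(S)(n, \bar{n})$ for $n \in \Gamma(\ms{B}, F_c j_{!*}\gamma)$ to a local integral of the form
\[
\Res_{s=0} s^{-1} \int_{\tilde Q} |f|^{2s} S_Q(m, \bar{m}) \, \eta',
\]
and then argue that this is manifestly positive provided the integral at $s = 0$ converges (so that no analytic continuation is needed and the residue simply picks off the value at $s=0$). The sign discussion you flag at the end is a real but secondary issue; the paper dispatches it in one line by noting that $\eta' = (-1)^{c(c-1)/2}(2\pi i)^c (\xi \wedge \bar\xi) \contract \eta|_{\tilde Q}$ is a positive real top form on $\tilde Q$.

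The genuine gap is in your second step. You assert that ``the sections in question extend smoothly across the boundary $\partial Q$ (owing to their origin in the closed pushforward)'' and conclude that the integrand is analytic at $s = 0$. This is not justified and is in fact the heart of the matter. The closed pushforward $i_*$ in \eqref{eq:polarization closed pushforward} is along the inclusion of $Q$ into an open set where $Q$ is \emph{closed}; it says nothing about behaviour near $\partial Q$. A section $m$ of the twisted local system $\gamma$ need not extend smoothly to $\bar Q$, and $S_Q(m, \bar m)$ can blow up along $\partial Q$. What controls this growth is precisely the condition $n \in F_c j_{!*}\gamma$, which via the $V$-filtration constrains the order of pole of $m$ near $\partial Q$; but turning this into convergence of the integral is non-trivial when $\bar Q$ is singular. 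The paper handles this by passing to a resolution of singularities $X \to \bar Q$ with normal crossings, invoking a result of Budur--Musta\c{t}\u{a}--Saito to identify $F_c j_{!*}\gamma$ with a pushforward from $X$, and then checking convergence directly in the normal crossings local model. Your argument as written would only go through if $\bar Q$ were already smooth with normal crossings boundary, which is not the case in general.
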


\begin{rmk}
Although we will only discuss it in our context, Proposition \ref{prop:hodge positivity} and its proof generalize easily to polarized Hodge modules on arbitrary varieties. We leave the task of spelling out the precise statement to the interested reader.
\end{rmk}

Combining Proposition~\ref{prop:hodge positivity} with Theorem~\ref{thm:main theorem 3} we conclude:
\begin{cor}
\label{forms coincide}
Let $\gamma$ be a relevant local system, and let $S$ be the polarization on $j_{!*}\gamma$. Then the $c$-form $S_\gamma^c$ and the form $\Gamma(S)$ coincide up to a positive scalar. 
\end{cor}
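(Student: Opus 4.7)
The plan is to combine the two ingredients stated immediately above the corollary -- Theorem \ref{thm:main theorem 3} and Proposition \ref{prop:hodge positivity} -- with the uniqueness of $\mf{u}_\mb{R}$-invariant Hermitian forms on an irreducible Harish-Chandra module.

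First I would establish that any $\mf{u}_\mb{R}$-invariant Hermitian form on the irreducible module $\Gamma(j_{!*}\gamma)$ is unique up to a real scalar. Such a form amounts to a nonzero $(\mf{g}, K)$-equivariant map from $\Gamma(j_{!*}\gamma)$ to its Hermitian dual. Because $\gamma$ is regular, $\Gamma(j_{!*}\gamma)$ is irreducible with real infinitesimal character, hence its Hermitian dual is again irreducible with the same infinitesimal character, and Schur's lemma pins the intertwiner down up to a nonzero complex scalar; the Hermiticity condition forces this scalar to be real. Applying this principle to the two forms in play -- $S_\gamma^c$ constructed in \cite{ALTV} and $\Gamma(S)$ provided by Proposition \ref{sv-form} -- yields $\Gamma(S) = \kappa \cdot S_\gamma^c$ for some $\kappa \in \mb{R}$.

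It then remains to show that $\kappa > 0$. Theorem \ref{thm:main theorem 3} locates every minimal $K$-type of $\Gamma(j_{!*}\gamma)$ inside the subspace $\Gamma(\ms{B}, F_c j_{!*}\gamma)$ with $c = \codim Q$, and Proposition \ref{prop:hodge positivity} asserts that $\Gamma(S)$ is positive definite on this subspace, which is nonzero by the last clause of Theorem \ref{thm:main theorem 3}. On the other hand, the $c$-form $S_\gamma^c$ is positive definite on minimal $K$-types by its normalization \cite[Proposition 10.7]{ALTV}. Evaluating both forms on a nonzero vector in a minimal $K$-type produces two positive real numbers related by the factor $\kappa$, forcing $\kappa > 0$.

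No serious obstacle arises at this stage: the substantive work -- placing the minimal $K$-types in the lowest Hodge piece, and proving Hodge-theoretic positivity there -- has already been absorbed into the cited theorem and proposition. The corollary reduces to a short assembly, provided one is comfortable with the standard uniqueness argument for invariant Hermitian forms and tracks the sign of $\kappa$ by testing against a minimal $K$-type.
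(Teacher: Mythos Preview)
Your proposal is correct and takes essentially the same approach as the paper: the corollary is stated there as an immediate consequence of combining Theorem~\ref{thm:main theorem 3} with Proposition~\ref{prop:hodge positivity}, together with the uniqueness (up to positive scalar) of the $\mf{u}_\mb{R}$-invariant form normalized on minimal $K$-types, which the paper records in the discussion preceding the corollary. You have simply spelled out the Schur-type uniqueness step and the sign check on a minimal $K$-type more explicitly than the paper does.
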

This was stated as Corollary \ref{cor:intro 1} in the introduction. This corollary allows us to immediately deduce results about $c$-forms from our results on mixed Hodge modules. 

Just as in \S\ref{sec:main theorem 2}, the deformation $\gamma_{s\varphi}$ of \S\ref{subsec:mhm jantzen} induces a Jantzen filtration $J_{\bigcdot}$ on $\Gamma(j_!\gamma)$, and the $c$-form $S_\gamma^c$ induces Jantzen $c$-forms $s^{-n}\mrm{Gr}^J_{-n}(S_\gamma^c)$ on $\mrm{Gr}^J_{-n}\Gamma(j_!\gamma)$ (cf., \cite[Propositions 14.2 and 14.6]{ALTV}). The signature multiplicity polynomial \cite[Definition 20.2]{ALTV} is defined by
\[ \LVM_{\gamma', \gamma}^c(u, \zeta) = u^{\frac{1}{2}(\ell_I(\gamma) - \ell_I(\gamma'))}\sum_n(m^{c, n, +}_{\gamma', \gamma} + \zeta m^{c, n, -}_{\gamma', \gamma})u^{-n/2} \]
where
\[ (\mrm{Gr}_{-n}^J(\Gamma(j_!\gamma)), s^{-n}\mrm{Gr}_{-n}^J(S_\gamma^c)) \cong \bigoplus_{\gamma'} \left((\Gamma(j_{!*}\gamma'), S_{\gamma'}^c)^{m^{c, n, +}_{\gamma', \gamma}} \oplus (\Gamma(j_{!*}\gamma'), -S_{\gamma'}^c)^{m^{c, n, -}_{\gamma', \gamma}}\right).\]

We may now deduce Corollary \ref{cor:intro 2} from the introduction.

\begin{cor} \label{cor:hodge signature}
Assume $\gamma$ and $\gamma'$ are relevant. Then we have
\[ \LVM^c_{\gamma', \gamma}(u, \zeta) = u^{\frac{1}{2}(\ell(\gamma') - \ell_I(\gamma') - (\ell(\gamma) - \ell_I(\gamma)))} \LVM_{\gamma', \gamma}^h(u^{\frac{1}{2}}, \zeta u^{\frac{1}{2}}).\]
\end{cor}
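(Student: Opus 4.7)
The plan is to derive both $\LVM^h$ and $\LVM^c$ from a common decomposition of the Jantzen graded pieces $\mrm{Gr}_{-n}^J j_!\gamma$, using Theorem \ref{thm:main theorem 2} on the Hodge side and Corollary \ref{forms coincide} to pass to Harish-Chandra modules. The first step is that, by Theorem \ref{thm:main theorem 2}, $\mrm{Gr}_{-n}^J j_!\gamma$ is a pure polarized complex Hodge module of weight $d - n$, where $d = \dim Q + \dim H$. Semisimplicity of the category of pure polarized $\lambda$-twisted Hodge modules then lets one write
\[ \mrm{Gr}_{-n}^J j_!\gamma \;\cong\; \bigoplus_{\gamma'} j_{!*}\gamma' \otimes N^n_{\gamma',\gamma} \]
as polarized Hodge modules, where $N^n_{\gamma',\gamma}$ is a pure polarized complex Hodge structure of weight $d - n - d(\gamma')$ with $d(\gamma') := \dim Q' + \dim H$, and the polarization on the left splits as the tensor product of the canonical polarization on $j_{!*}\gamma'$ with an induced polarization $T^n_{\gamma'}$ on $N^n_{\gamma',\gamma}$.

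Setting $h^{n,p,q}_{\gamma',\gamma} := \dim (N^n_{\gamma',\gamma})_{p,q}$ (so that automatically $p + q = n + d(\gamma') - d$), the identity $[\mb{C}_{p,q}] = t_1^{-p} t_2^{-q}$ and summing over $n$ give
\[ \LVM^h_{\gamma',\gamma}(t_1, t_2) \;=\; \sum_{n,p,q} h^{n,p,q}_{\gamma',\gamma}\, t_1^{-p} t_2^{-q}, \]
so that $\LVM^h_{\gamma',\gamma}(u^{1/2}, \zeta u^{1/2}) = \sum_{n,p,q} h^{n,p,q}_{\gamma',\gamma}\, u^{-(p+q)/2}\zeta^{q}$ after using $\zeta^{-q} = \zeta^q$.

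To compare with $\LVM^c$, I would next apply $\Gamma$ and invoke Corollary \ref{forms coincide}, which identifies $\Gamma(S_{\gamma'})$ with the $c$-form $S^c_{\gamma'}$ up to a positive scalar. This yields
\[ \bigl(\mrm{Gr}_{-n}^J\Gamma(j_!\gamma),\, s^{-n}\mrm{Gr}_{-n}^J(S^c_\gamma)\bigr) \;\cong\; \bigoplus_{\gamma'} \bigl(\Gamma(j_{!*}\gamma'), S^c_{\gamma'}\bigr) \otimes \bigl(N^n_{\gamma',\gamma}, T^n_{\gamma'}\bigr), \]
up to positive rescalings on each summand. The normalization \eqref{eq:polarization normalization} then implies that $T^n_{\gamma'}$ restricted to $(N^n_{\gamma',\gamma})_{p,q}$ has sign $(-1)^q$, giving
\[ m^{c,n,+}_{\gamma',\gamma} = \sum_{q \text{ even}} h^{n,p,q}_{\gamma',\gamma}, \qquad m^{c,n,-}_{\gamma',\gamma} = \sum_{q \text{ odd}} h^{n,p,q}_{\gamma',\gamma}. \]

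The final step is to substitute these into the definition of $\LVM^c$ and reindex via $n = p + q + d - d(\gamma')$, which converts $\sum_n(m^{c,n,+}_{\gamma',\gamma} + \zeta m^{c,n,-}_{\gamma',\gamma})\, u^{-n/2}$ into $u^{(d(\gamma') - d)/2}\, \LVM^h_{\gamma',\gamma}(u^{1/2}, \zeta u^{1/2})$. Combining this with the prefactor $u^{(\ell_I(\gamma) - \ell_I(\gamma'))/2}$ and the identity $d(\gamma') - d = \ell(\gamma') - \ell(\gamma)$ gives the claimed formula. All the real substance is carried by Theorem \ref{thm:main theorem 2} and Corollary \ref{forms coincide}; the expected obstacle is the careful bookkeeping of weight conventions (Remark \ref{rmk:weights}), Tate twists, and the sign rule \eqref{eq:polarization normalization}.
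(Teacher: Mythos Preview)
Your proposal is correct and follows essentially the same approach as the paper's proof: decompose $\mrm{Gr}_{-n}^J j_!\gamma$ into irreducible polarized Hodge modules using Theorem \ref{thm:main theorem 2}, read off the sign on each summand from the polarization normalization \eqref{eq:polarization normalization}, transport to Harish-Chandra modules via Corollary \ref{forms coincide} together with exactness of $\Gamma$, and then match the resulting Hodge-number count to the definition of $\LVM^c$. The only cosmetic difference is that the paper works summand-by-summand with $\mb{C}_{-p,-q}\otimes j_{!*}\gamma'$ and writes $\LVM^h=\sum a_{p,q}t_1^p t_2^q$, whereas you package the multiplicities into a Hodge structure $N^n_{\gamma',\gamma}$; your $(p,q)$ are the negatives of the paper's, which is why you see $\zeta^{-q}=\zeta^q$ at the end.
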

\begin{proof}
To compute the coefficients
\[ w^{c, n}_{\gamma', \gamma}:= m^{c, n, +}_{\gamma', \gamma} + \zeta m^{c, n, -}_{\gamma', \gamma},\]
consider a summand $\mb{C}_{-p, -q} \otimes j_{!*}\gamma'$ of $\mrm{Gr}_{-n}^Jj_!\gamma$, which necessarily satisfies $p + q = \ell(\gamma) - \ell(\gamma') - n$. By Theorem \ref{thm:main theorem 2}, the Jantzen form $\mrm{Gr}_{-n}^J(S_\gamma)$ is a polarization on $\mrm{Gr}_{-n}^Jj_!\gamma$, so its restriction to the summand is given (up to scale) by $(-1)^{q}S_{\gamma'}$, where $S_\gamma$ and $S_{\gamma'}$ are the polarizations on $\gamma$ and $\gamma'$. So if $\LVM_{\gamma', \gamma}^h = \sum_{p, q} a_{p, q}t_1^pt_2^q$, then by Corollary \ref{forms coincide} and the exactness of global sections,
\[ w_{\gamma', \gamma}^{c, n} = \sum_{p + q = \ell(\gamma) - \ell(\gamma') - n} a_{p, q}\zeta^q. \]
So by Theorem \ref{thm:main theorem 1},
\begin{align*}
 \LVM^c_{\gamma', \gamma}(u, \zeta) &= u^{\frac{1}{2}(\ell_I(\gamma) - \ell_I(\gamma'))}\sum_{p, q} a_{p, q}\zeta^q u^{\frac{1}{2}(\ell(\gamma') - \ell(\gamma) + p + q)} \\ 
&= u^{\frac{1}{2}(\ell(\gamma') - \ell_I(\gamma') - (\ell(\gamma) - \ell_I(\gamma)))}\LVM^h_{\gamma', \gamma}(u^{\frac{1}{2}}, \zeta u^{\frac{1}{2}})
\end{align*}
as claimed.
\end{proof}

Combining with Theorem \ref{thm:main theorem 1}, we deduce Corollary \ref{cor:intro 3}, which is the key result in~\cite{ALTV} used to compute signatures of $c$-forms.

\begin{cor}[{\cite[Theorem 20.6]{ALTV}}]
\label{ALTV}
The signature multiplicity polynomial is given by 
\[
\LVM^c_{\gamma', \gamma}(u, \zeta) = \zeta^{\frac{1}{2}(\ell_o(\gamma) - \ell_o(\gamma'))}\LVM_{\gamma', \gamma}(\zeta u)\,,
\]
where $\ell_o$ is the orientation number of Definition \ref{defn:hodge shift}.
\end{cor}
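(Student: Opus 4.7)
The plan is to chain together the three identities already at hand: Corollary \ref{cor:hodge signature} expresses $\LVM^c$ in terms of $\LVM^h$, Theorem \ref{thm:main theorem 1} expresses $\LVM^h$ in terms of $\LVM^m$, and Proposition \ref{prop:mixed to LV} expresses $\LVM^m$ in terms of $\LVM$. Substituting in order should collapse everything, with Proposition \ref{prop:orientation vs hodge} doing the final bookkeeping to turn $\ell_H$-differences into $\ell_o$-differences.

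Concretely, I would begin from the formula
\[
\LVM^c_{\gamma', \gamma}(u, \zeta) = u^{\frac{1}{2}(\ell(\gamma') - \ell_I(\gamma') - (\ell(\gamma) - \ell_I(\gamma)))} \LVM^h_{\gamma', \gamma}(u^{\frac{1}{2}}, \zeta u^{\frac{1}{2}})
\]
and plug in $t_1 = u^{1/2}$, $t_2 = \zeta u^{1/2}$ into Theorem \ref{thm:main theorem 1}. Since $t_1 t_2^{-1} = \zeta^{-1}$ and $t_1 t_2 = \zeta u$, this yields
\[
\LVM^h_{\gamma', \gamma}(u^{1/2}, \zeta u^{1/2}) = \zeta^{\frac{1}{2}(\ell_H(\gamma) - \ell_H(\gamma'))}\LVM^m_{\gamma', \gamma}(\zeta u).
\]
Then I substitute Proposition \ref{prop:mixed to LV} evaluated at $\zeta u$, namely
\[
\LVM^m_{\gamma', \gamma}(\zeta u) = (\zeta u)^{\frac{1}{2}(\ell_I(\gamma') - \ell(\gamma') - (\ell_I(\gamma) - \ell(\gamma)))}\LVM_{\gamma', \gamma}(\zeta u),
\]
and observe that the two $u$-exponents appearing (the prefactor from Corollary \ref{cor:hodge signature} and the $u$-part of the exponent above) are exact negatives of each other, so all $u$-powers cancel. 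What remains is a pure power of $\zeta$ times $\LVM_{\gamma',\gamma}(\zeta u)$, with exponent
\[
\tfrac{1}{2}\bigl(\ell_H(\gamma) - \ell_H(\gamma') + \ell_I(\gamma') - \ell(\gamma') - \ell_I(\gamma) + \ell(\gamma)\bigr) = \tfrac{1}{2}\bigl((\ell_H + \ell - \ell_I)(\gamma) - (\ell_H + \ell - \ell_I)(\gamma')\bigr).
\]

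Finally, Proposition \ref{prop:orientation vs hodge} gives $\ell_H(\gamma) + \ell(\gamma) - \ell_I(\gamma) = \ell_o(\gamma) + \tfrac{1}{2}\#\{\alpha \in \Phi_+ \text{ non-integral}\}$. Since $\gamma$ and $\gamma'$ share the same infinitesimal character $\lambda$ (both sides of the matrix equation live in the same $\LVM^c$-block indexed by $\lambda$), the count of non-integral positive roots is identical for $\gamma$ and $\gamma'$ and cancels in the difference. This reduces the exponent to $\tfrac{1}{2}(\ell_o(\gamma) - \ell_o(\gamma'))$, yielding exactly the claimed formula.

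There is no real obstacle here: the statement is a formal consequence of the previous results, and the only non-trivial step is recognizing that the non-integral root count in Proposition \ref{prop:orientation vs hodge} depends only on $\lambda$ and therefore drops out. All the substantive content sits in Theorems \ref{thm:main theorem 1} and \ref{thm:main theorem 2} (and Proposition \ref{prop:mixed to LV}), whose corollary this is.
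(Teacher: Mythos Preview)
Your proof is correct and follows exactly the same approach as the paper: start from Corollary \ref{cor:hodge signature}, apply Theorem \ref{thm:main theorem 1} and Proposition \ref{prop:mixed to LV} in turn, observe the $u$-powers cancel, and finish with Proposition \ref{prop:orientation vs hodge}. Your explicit remark that the non-integral root count depends only on $\lambda$ (and so cancels in the difference) is a helpful detail the paper leaves implicit.
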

\begin{proof}
By Theorem \ref{thm:main theorem 1},
\begin{align*}
\LVM^c_{\gamma', \gamma}(u, \zeta) &= u^{\frac{1}{2}(\ell(\gamma') - \ell_I(\gamma') - (\ell(\gamma) - \ell_I(\gamma)))}\LVM^h_{\gamma', \gamma}(u^{\frac{1}{2}}, \zeta u^{\frac{1}{2}})\\
&= \zeta^{\frac{1}{2}(\ell_H(\gamma) - \ell_H(\gamma'))}u^{\frac{1}{2}(\ell(\gamma') - \ell_I(\gamma') - (\ell(\gamma) - \ell_I(\gamma)))}\LVM^m_{\gamma', \gamma}(\zeta u).
\end{align*}
Applying Proposition \ref{prop:mixed to LV}, we obtain
\[ \LVM^c_{\gamma', \gamma}(u, \zeta) = \zeta^{\frac{1}{2}(\ell_H(\gamma) + \ell(\gamma) - \ell_I(\gamma) - (\ell_H(\gamma') + \ell(\gamma') - \ell_I(\gamma')))}\LVM_{\gamma', \gamma}(\zeta u).\]
We conclude by applying Proposition \ref{prop:orientation vs hodge}.
\end{proof}

\section{Proof of Theorem \ref{thm:main theorem 1}}
\label{proof:main theorem 1}

In this section, we give the proof of Theorem \ref{thm:main theorem 1}. In \S\ref{subsec:convolution}, we introduce the Hecke operators on $\oK(\mhm_\lambda(K \bslash \ms{B}))$. We then recall how the Hodge filtration on a pushforward of mixed Hodge modules is defined in \S\ref{subsec:hodge filtration}, and use this in \S\ref{subsec:hecke action} to compute the Hecke operators in the $!$-basis explicitly. The key calculation is Lemma \ref{lem:real hodge calculation}, which gives the Hodge structure for a real non-integral reflection: this is the only place where interesting Hodge structures appear. With the aid of these calculations, we give a version of the results of Lusztig and Vogan on the mixed change of basis matrix $\LVM^m$ in \S\ref{subsec:lusztig-vogan}, and explain how to incorporate Hodge structures into their arguments in \S\ref{subsec:hodge combinatorics}.

In addition to the appearance of interesting Hodge structures and non-integral $\lambda$, the astute reader will notice that, in contrast to \cite{LV}, we work in terms of the matrix of multiplicity polynomials $\LVM$ rather than its inverse $\mbf{P}$ and we use the Hermitian dual $(-)^h$ in place of the standard dual $\mb{D}$ when running the Kazhdan-Lusztig algorithm in \S\ref{subsec:lusztig-vogan}. The first point is a matter of taste, as the matrix $\LVM$ appears slightly more natural to us from the point of view of $\ms{D}$-modules, and the second a matter of convenience, as the Hermitian dual preserves the (real) twisting $\lambda$ while $\mb{D}$ negates it.

\subsection{Convolution functors} \label{subsec:convolution}

In this subsection, we recall the construction of convolution functors, and important special cases, the Hecke and translation functors.

First, we remark that twisted mixed Hodge modules have a good theory of equivariant derived categories. More precisely, if an algebraic group $L$ acts on an $H$-torsor $\tilde{X} \to X$, then we may define the equivariant derived category
\[ \oD_L^b(\mhm_\lambda(X)) \]
to be the derived category of complexes of $\lambda$-twisted mixed Hodge modules on the simplicial space $L^{\bigcdot} \times X$ with bounded and Cartesian cohomology. The equivariant derived categories have the same functoriality as the usual derived categories with respect to equivariant morphisms, as well as functors of restriction of structure group.

Consider the $G$-equivariant $H \times H$-torsor
\[ \tilde{\ms{B}} \times \tilde{\ms{B}} \to \ms{B} \times \ms{B} \]
and the associated equivariant derived categories $\oD^b_G(\mhm_{-\lambda, \mu}(\ms{B} \times \ms{B}))$ of twisted mixed Hodge modules, for $(-\lambda, \mu) \in \mrm{Lie}(H \times H)^*_\mb{R} = (\mf{h}^*_\mb{R})^2$. The diagonal on the first and second factors
\[ \Delta_{12} \colon \tilde{\ms{B}} \times \tilde{\ms{B}} \to \tilde{\ms{B}} \times \tilde{\ms{B}} \times \tilde{\ms{B}} \]
and the restriction from $G$ to $K$ provide a functor
\begin{align*}
\Delta_{12}^\circ := \Delta_{12}^*[-\dim \tilde{\ms{B}}] \colon & \oD^b_K(\mhm_\lambda(\tilde{\ms{B}})) \times \oD^b_G(\mhm_{-\lambda, \mu}(\tilde{\ms{B}} \times \tilde{\ms{B}})) \\
& \qquad \to \oD^b_K(\mhm_{0, \mu}(\tilde{\ms{B}} \times \tilde{\ms{B}})),
\end{align*}
where for the sake of clarity we have written the total spaces of the monodromic torsors instead of the bases. The morphism $\pi_1 \colon \tilde{\ms{B}} \times \tilde{\ms{B}} \to \ms{B} \times \tilde{\ms{B}}$ induces an equivalence
\[ \pi_1^\circ := \pi_1^*[\dim H] \colon \oD^b_K(\mhm_{\mu}(\ms{B} \times \tilde{\ms{B}})) \to \oD^b_K(\mhm_{0, \mu}(\tilde{\ms{B}} \times \tilde{\ms{B}})). \]

\begin{defn}
The \emph{convolution action} is
\begin{align*}
-\star- = \mrm{pr}_{2*}(\pi_1^\circ)^{-1}\Delta_{12}^\circ \colon & \oD^b_K(\mhm_\lambda(\tilde{\ms{B}})) \times \oD^b_G(\mhm_{-\lambda, \mu}(\tilde{\ms{B}} \times \tilde{\ms{B}})) \\
& \qquad \to \oD^b_K(\mhm_{\mu}(\tilde{\ms{B}})).
\end{align*}
\end{defn}

\begin{rmk}
The cohomological shifts in $\Delta_{12}^\circ$ and $\pi_1^\circ$ are chosen so that the functors are t-exact, i.e., they send mixed Hodge modules to mixed Hodge modules.
\end{rmk}

Let $w \in W$ be an element of the Weyl group of $G$, and let
\[ X_w = G \cdot (x, wx) \subset \ms{B} \times \ms{B} \]
be the corresponding $G$-orbit, where $x \in \ms{B}$ is an arbitrary element and the notation $wx$ is defined using any maximal torus $T$ fixing $x$ and the corresponding isomorphism $N_G(T)/T \cong W$. Via the construction of \S\ref{subsec:parametrization} applied to $G \times G$, one easily sees that there is a unique rank $1$ $G$-equivariant $(\lambda, \mu)$-twisted local system on $X_w$ if and only if $\lambda + w\mu \in \mb{X}^*(H)$; we will denote this by $\mc{O}_{X_w}(\lambda, \mu)$.

\begin{defn}
The \emph{Hecke functor} at $\lambda$ is
\[ \mb{T}_{w}^\lambda = - \star j_{w !} \mc{O}_{X_{w}}(-\lambda, w \lambda) \colon \oD^b_K(\mhm_\lambda(\ms{B})) \to \oD^b_K(\mhm_{w \lambda}(\ms{B})),\]
where $j_w \colon X_w \to \ms{B} \times \ms{B}$ is the inclusion. We will write $\mb{T}_{w} = \mb{T}_{w}^\lambda$ when $\lambda$ is clear from context.
\end{defn}

Now let $\alpha \in \Phi_+$ be a simple root, and $s_\alpha \in W$ the corresponding simple reflection. In this case, we may describe the Hecke functor $\mb{T}_{s_\alpha}$ explicitly as follows. Let $\ms{P}_\alpha$ be the partial flag variety parametrizing parabolic subgroups of type $\alpha$ and $\pi_\alpha \colon \ms{B} \to \ms{P}_\alpha$ the canonical $\mb{P}^1$-fibration. The orbit $X_{s_\alpha}$ is the complement of the diagonal in the fiber product
\[ \ms{B} \xleftarrow{p_1} \ms{B} \times_{\ms{P}_\alpha} \ms{B} \xrightarrow{p_2} \ms{B}.\]
The $H$-torsors $p_1^{-1}(\tilde{\ms{B}})$ and $p_2^{-1}(\tilde{\ms{B}})$ give rise to categories
\[ \oD^b_K(\mhm_\lambda^{(1)}(\ms{B} \times_{\ms{P}_\alpha} \ms{B})) \quad \text{and} \quad \oD^b_K(\mhm_\lambda^{(2)}(\ms{B} \times_{\ms{P}_\alpha} \ms{B})).\]
There is a unique isomorphism of $H$-torsors
\begin{equation} \label{eq:untwisting iso}
p_1^{-1}(\tilde{\ms{B}})|_{\ms{B} \times_{\ms{P}_\alpha} \ms{B} - \ms{B}} \cong s_\alpha^*(p_2^{-1}(\tilde{\ms{B}}))|_{\ms{B} \times_{\ms{P}_\alpha} \ms{B} - \ms{B}}
\end{equation}
over the complement of the diagonal, where the $H$-action on the right hand side is twisted by the homomorphism $s_\alpha \colon H \to H$, which induces an equivalence of categories
\[ \oD^b_K(\mhm_\lambda^{(1)}(\ms{B} \times_{\ms{P}_\alpha} \ms{B} - \ms{B})) \cong \oD^b_K(\mhm_{s_\alpha \lambda}^{(2)}(\ms{B} \times_{\ms{P}_\alpha} \ms{B} - \ms{B})).\]
The Hecke functor $\mb{T}_{s_\alpha}^\lambda$ is the composition
\begin{align*}
\oD^b_K(\mhm_\lambda(\ms{B})) &\xrightarrow{p_1^\circ} \oD^b_K(\mhm_\lambda^{(1)}(\ms{B} \times_{\ms{P}_\alpha} \ms{B} - \ms{B})) \\
&\cong \oD^b_K(\mhm_{s_\alpha\lambda}^{(2)}(\ms{B} \times_{\ms{P}_\alpha} \ms{B} - \ms{B})) \\
&\xrightarrow{p_{2!}} \oD_K^b(\mhm_{s_\alpha\lambda}(\ms{B})),
\end{align*}
where $p_1^\circ = p_1^*[1]$.

\begin{rmk}
When $\lambda = 0$, the Hecke functors $\mb{T}_{s_\alpha}$ agree with the operators of \cite{LV} up to a cohomological shift by $1$ (after passage from mixed Hodge modules to mixed sheaves over a finite field).
\end{rmk}

\begin{defn}
Let $\lambda \in \mf{h}^*_\mb{R}$ and $\mu \in \mb{X}^*(H)$. The \emph{translation functor}
\[ t_\mu \colon \mhm_\lambda(K \bslash \ms{B}) \to \mhm_{\lambda + \mu}(K \bslash \ms{B}) \]
is given by
\[ t_\mu(\ms{M}) = \mc{O}_{\ms{B}}(\mu) \otimes \ms{M} = \ms{M} \star j_{1!}\mc{O}_{X_1}(-\lambda, \lambda + \mu).\]
\end{defn}

\begin{rmk} \label{rmk:intertwining functors}
From the perspective of representation theory, it is often natural to work with the \emph{intertwining functors}
\begin{equation} \label{eq:intertwining functor}
 \mb{I}_{w} = t_{w \rho - \rho} \mb{T}_{w}
\end{equation}
in place of the Hecke functors $\mb{T}_{w}$ (see \S\ref{subsec:intertwining} for a related discussion). All the results stated in this section for the $\mb{T}_w$ can easily be translated into results for the $\mb{I}_w$ using \eqref{eq:intertwining functor}.
\end{rmk}

The Hecke functors induce operators
\[ \mb{T}_w \colon \oK(\mhm_\lambda(K \bslash \ms{B})) \to \oK(\mhm_{w \lambda}(K \bslash \ms{B})).\]
We conclude this subsection by recording some key relations they satisfy.

\begin{prop} \label{prop:hecke relations}
The Hecke operators $\mb{T}_{s_\alpha}$ satisfy the relations
\begin{equation} \label{eq:hecke relation 1}
\mb{T}_{s_\alpha}^2 := \mb{T}_{s_\alpha}^{s_\alpha \lambda}\mb{T}_{s_\alpha}^\lambda = \begin{cases} u + (1 - u)t_{\lambda - s_\alpha\lambda}\mb{T}_{s_\alpha}, & \mbox{if $\alpha$ is integral,} \\ u, & \mbox{otherwise,}\end{cases}
\end{equation}
\begin{equation} \label{eq:hecke relation 2}
\mb{T}_{w}\mb{T}_{w'} = \mb{T}_{ww'} \quad \mbox{if $\ell(ww') = \ell(w) + \ell(w')$},
\end{equation}
and
\begin{equation} \label{eq:hecke relation 3}
(\mb{T}_{s_\alpha}[\ms{M}])^h = \begin{cases} u^{-1}(\mb{T}_{s_\alpha} + (u - 1)t_{s_\alpha \lambda - \lambda})[\ms{M}^h],& \mbox{if $\alpha$ is integral,} \\ u^{-1}\mb{T}_{s_\alpha}[\ms{M}^h], &\mbox{otherwise}\end{cases}
\end{equation}
for a simple root $\alpha$ and $w, w' \in W$.
\end{prop}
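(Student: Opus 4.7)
I would prove the three relations in the order (2), (1), (3), building from the most structural to the most computational.

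For the length-additive braid relation (2), the plan is to show directly at the level of $G$-equivariant objects that
\[ j_{w!}\mc{O}_{X_w}(-\lambda, w\lambda) \star j_{w'!}\mc{O}_{X_{w'}}(-w\lambda, ww'\lambda) \cong j_{ww'!}\mc{O}_{X_{ww'}}(-\lambda, ww'\lambda), \]
from which $\mb{T}_w \mb{T}_{w'} = \mb{T}_{ww'}$ follows by associativity of convolution. When $\ell(ww') = \ell(w) + \ell(w')$, the Bruhat product $X_w \cdot X_{w'}$ maps isomorphically (modulo a dense open) to $X_{ww'}$, so the convolution is a $!$-pushforward along an affine locally closed embedding; one checks that the residual $H \times H$-torsor data match by tracking the characters $-\lambda, w\lambda, ww'\lambda$ through the construction of \S\ref{subsec:parametrization} applied to $G \times G$.

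For the quadratic relation (1), the calculation takes place on the $\mb{P}^1$-bundle $\pi_\alpha \colon \ms{B} \to \ms{P}_\alpha$. Writing $\mb{T}_{s_\alpha}^\lambda = p_{2!}p_1^\circ$ on $\ms{B} \times_{\ms{P}_\alpha} \ms{B} - \Delta_{\ms{B}}$, the iterated composition $\mb{T}_{s_\alpha}^{s_\alpha\lambda}\mb{T}_{s_\alpha}^\lambda$ is convolution with $j_{s_\alpha!}\mc{O}(-\lambda, s_\alpha\lambda) \star j_{s_\alpha !}\mc{O}(-s_\alpha\lambda, \lambda)$. I would compute this via base change on the triple fiber product $\ms{B} \times_{\ms{P}_\alpha} \ms{B} \times_{\ms{P}_\alpha} \ms{B}$, whose fibers over $\ms{P}_\alpha$ are $\mb{P}^1 \times \mb{P}^1$. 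Decomposing the image into the two $G$-orbits (the diagonal $X_1$ and the open orbit $X_{s_\alpha}$), at the $\oK$-theoretic level one obtains
\[ [j_{s_\alpha !}\mc{O}(-\lambda, s_\alpha\lambda) \star j_{s_\alpha !}\mc{O}(-s_\alpha\lambda, \lambda)] = a\,[j_{1!}\mc{O}(-\lambda, \lambda)] + b\,[j_{s_\alpha !}\mc{O}(-\lambda, s_\alpha\lambda)] \]
for coefficients $a, b \in \oK(\mhm(\spec \mb{C}))$ that can be read off from the restriction to a fiber $\mb{P}^1$ of $\pi_\alpha$, using a push-pull formula. The diagonal restriction is precisely the $\alpha$-twisted cohomology of $\mb{P}^1$, for which Lemma \ref{lem:real hodge calculation} supplies the answer: in the non-integral case the twisted $\oH^*(\mb{P}^1)$ is a single pure Hodge structure contributing $a = u$, $b = 0$; in the integral case one recovers the full contribution $a = u$, $b = (1-u)$, together with the translation $t_{\lambda - s_\alpha\lambda}$ that accounts for the difference between $\mc{O}(-\lambda, s_\alpha\lambda)$ and the (isomorphic) convolved local system. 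Rearranging yields the stated formula.

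For the Hermitian dual relation (3), the strategy is to exploit that Hermitian duality swaps the $!$- and $*$-pushforwards and negates weights, so
\[ (\mb{T}_{s_\alpha}[\ms{M}])^h = [\ms{M}^h] \star [j_{s_\alpha *}\mc{O}(-\lambda, s_\alpha\lambda)](\text{weight shift}). \]
I would then express $[j_{s_\alpha *}\mc{O}(-\lambda, s_\alpha\lambda)]$ in terms of $[j_{s_\alpha !}\mc{O}(-\lambda, s_\alpha\lambda)]$ and a boundary contribution from the diagonal, using the same $\mb{P}^1$-calculation as in the quadratic relation. In the integral case one gets the term $(u-1)t_{s_\alpha\lambda - \lambda}$; in the non-integral case the boundary term vanishes and only the Tate-twisted $\mb{T}_{s_\alpha}$ survives. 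A short bookkeeping check of the weight/Tate shifts arising from converting $j_!$ to $j_*$ gives the overall factor $u^{-1}$.

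The main obstacle will be the Hodge-theoretic step in (1): one must carefully keep track not only of the multiplicities but of the precise Hodge structures on the fiberwise cohomology $\oH^*(\mb{P}^1, \mc{L})$ for each $\lambda$-regime (integral versus non-integral, and on either side of the wall where a root becomes integral). This is exactly where the single non-trivial Hodge calculation on $\mb{P}^1$ referenced in the introduction comes in, and it propagates automatically into (3) through the swap of $j_!$ and $j_*$. Everything else is formal manipulation of convolutions, base change, and orbit combinatorics.
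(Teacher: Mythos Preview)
Your overall strategy matches the paper's: relation \eqref{eq:hecke relation 2} is the standard convolution-of-kernels argument, relation \eqref{eq:hecke relation 1} is a $\mb{P}^1$-fiber convolution computation, and relation \eqref{eq:hecke relation 3} follows from Hermitian duality swapping $j_!$ and $j_*$ together with a short computation expressing $[j_{s_\alpha*}\mc{O}_{X_{s_\alpha}}(-\lambda, s_\alpha\lambda)]$ in terms of $[j_{s_\alpha!}\mc{O}_{X_{s_\alpha}}(-\lambda, s_\alpha\lambda)]$ and the diagonal class. The paper in fact points to the \emph{complex} cases of Proposition~\ref{prop:hecke action}, parts \eqref{itm:hecke action 2} and \eqref{itm:hecke action 3}, as the model for \eqref{eq:hecke relation 1}.

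There is one genuine misconception in your plan, however. You claim that Lemma~\ref{lem:real hodge calculation} supplies the Hodge structure in the non-integral case of \eqref{eq:hecke relation 1}, and that this is ``exactly where the single non-trivial Hodge calculation on $\mb{P}^1$ referenced in the introduction comes in.'' This is not right. The relations \eqref{eq:hecke relation 1}--\eqref{eq:hecke relation 3} involve only the Tate class $u = t_1t_2$, never $t_1$ and $t_2$ separately, so the underlying cohomology is entirely Hodge--Tate. Concretely, when you restrict the convolution kernel to a fiber of $\pi_\alpha$, the local system on $\mb{P}^1 - \{x\} \cong \mb{A}^1$ (diagonal contribution) is trivial, and the local system on $\mb{P}^1 - \{x, z\} \cong \mb{G}_m$ (open-orbit contribution) is either trivial (integral case) or has non-trivial monodromy at both punctures (non-integral case) and hence vanishing compactly supported cohomology. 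These are the elementary computations behind Proposition~\ref{prop:hecke action} \eqref{itm:hecke action 2}--\eqref{itm:hecke action 3}. Lemma~\ref{lem:real hodge calculation}, by contrast, produces a genuinely non-Hodge--Tate answer ($\mb{C}_{-1,0}$ or $\mb{C}_{0,-1}$) and enters only in the \emph{real} non-integral case of the Hecke action on $K$-orbits, Proposition~\ref{prop:hecke action} \eqref{itm:hecke action 7}, not in the Hecke relations themselves. Once you replace the appeal to Lemma~\ref{lem:real hodge calculation} with these elementary $\mb{A}^1$ and $\mb{G}_m$ computations, your argument goes through and coincides with the paper's.
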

\begin{proof}
Relation \eqref{eq:hecke relation 1} follows by a similar calculation to Proposition \ref{prop:hecke action} \eqref{itm:hecke action 2} and \eqref{itm:hecke action 3}. The proof of relation \eqref{eq:hecke relation 2} is straightforward and well known. To prove \eqref{eq:hecke relation 3}, observe that we have
\begin{align*}
(\mb{T}_{s_\alpha}\ms{M})^h &= (\ms{M} \star j_{s_\alpha !} \mc{O}_{X_{s_\alpha}}(-\lambda, s_\alpha \lambda))^h \\
&= \ms{M}^h \star (j_{s_\alpha !} \mc{O}_{X_{s_\alpha}}(-\lambda, s_\alpha \lambda))^h (-\dim \tilde{\ms{B}} - \dim H) \\
&= \ms{M}^h \star j_{s_\alpha *} \mc{O}_{X_{s_\alpha}}(-\lambda, s_\alpha \lambda) (1),
\end{align*}
where in the passage from the first line to the second we have used the well known identifications $\pi_1^! = \pi_1^*(\dim H)[2 \dim H]$ and $\Delta_{12}^! = \Delta_{12}^* (-\dim \tilde{\ms{B}})[-2 \dim \tilde{\ms{B}}]$ (the latter holds equivariantly because the map of stacks
\[ \Delta_{12} \colon K \bslash (\tilde{\ms{B}} \times \tilde{\ms{B}}) \to K \bslash \tilde{\ms{B}} \times G \bslash (\tilde{\ms{B}} \times \tilde{\ms{B}})\]
is smooth.) The claimed relation now follows from the straightforward calculation
\[ [j_{s_\alpha *}\mc{O}_{X_{s_\alpha}}(-\lambda, s_\alpha \lambda) ] = [j_{s_\alpha !} \mc{O}_{X_{s_\alpha}}(-\lambda, s_\alpha \lambda)] + (u - 1)[j_{1!}\mc{O}_{X_1}(- \lambda, s_\alpha \lambda)],\]
if $\alpha$ is integral, and
\[  [j_{s_\alpha *}\mc{O}_{X_{s_\alpha}}(-\lambda, s_\alpha \lambda)] =  [j_{s_\alpha !} \mc{O}_{X_{s_\alpha}}(-\lambda, s_\alpha \lambda)],\]
otherwise.
\end{proof}

\subsection{The Hodge filtration of a pushforward} \label{subsec:hodge filtration}

Let $f \colon X \to Y$ be a morphism of smooth varieties. In this subsection, we recall how the Hodge filtration of a mixed Hodge module behaves under pushforward by $f$.

If $f \colon X \to Y$ is proper then the Hodge filtrations are pushed forward using a canonically defined functor $f_* \colon \od(\cD_X)_{frh}\to \od(\cD_Y)_{frh}$; here we have written $\od(\cD_X)_{frh}$ for the derived category consisting of complexes of filtered $\cD_X$-modules whose cohomologies are filtered regular holonomic $\cD_X$-modules. Let us briefly recall this general construction. For a detailed treatment, see \cite{laumon1983}. As we can decompose $f$ into a closed embedding of $X$ in $X\times Y$ via its graph followed by a projection $X\times Y \to Y$, it suffices to consider these two special cases. 

Let $i\colon X \to Y$ be closed embedding and let $\cM$ be a filtered $\cD_X$-module. By definition, we have
\[ i_*\ms{M} = i_{\bigcdot}(\ms{D}_{Y \leftarrow X} \otimes_{\ms{D}_X} \ms{M}),\]
as in \S\ref{subsec: phm}.
If we work in local coordinates, we may assume that $X$ is cut out by the equations $y_1=0, \dots ,y_k=0$ and write
\begin{equation}
i_*\cM \ = \  \cM[\partial_{y_1},\dots, \partial_{y_k}]\partial_{y_1} \wedge \cdots \wedge \partial_{y_k}\,,
\end{equation}
where we have inserted $\partial_{y_1} \wedge \cdots \wedge \partial_{y_k}$ as a reminder that the pushforward involves a twist by the top exterior power of the normal bundle. 
We filter it as follows:
\begin{equation}
\label{closed embedding formula}
F_p(i_*\cM) \ = \  \sum_{q+|\alpha|\leq p-k}F_q\cM\, \partial_{y_1}^{\alpha_1}\dots \partial_{y_k}^{\alpha_k}\, \ \ \ |\alpha| = \alpha_1+\cdots + \alpha_k\,.
\end{equation}
Note that if $F_{\bigcdot}\mc{M}$ has lowest non-zero piece $F_p\mc{M}$, then $F_{\bigcdot} i_*\mc{M}$ has lowest non-zero piece $F_{p + k}\mc{M}$.

Let us now consider a projection $\pi \colon X \times Y \to Y$ and let $\cM$ be a filtered $\cD_{X\times Y}$-module such that $\pi$ is proper on the support of $\mc{M}$. Recall that 
\begin{equation}
\pi_*\cM \ = \  R\pi_{\bigcdot} (\cM\otimes\Omega_X^{\bigcdot})[n]\,
\end{equation}
where $n$ is the dimension of $X$, $R\pi_{\bigcdot}$ is the derived push-forward in the sense of sheaves of complex vector spaces, and the shift by $n$ places the complex in degrees $-n, \ldots, 0$. The filtration is given as follows:
\begin{equation}
F_p(\pi_*\cM) \ = \  R\pi_{\bigcdot}(F_p \cM \to F_{p+1}\cM\otimes \Omega_X^{1} \to \dots \to  F_{p+n}\cM\otimes \Omega_X^{n})[n]\,.
\end{equation}

To specify the Hodge filtration of a general pushforward of a mixed Hodge module, it therefore remains to do so for open embeddings $j \colon U \to X$. The question is local, so it suffices to assume that the complement $X-U$ is a principal divisor given by $g=0$. Saito's theory implies that in this case the filtration is given (not surprisingly) by a $\mc{D}$-module version of ``order of pole'', supplied by the Kashiwara-Malgrange $V$-filtration.\footnote{This is not to be confused with more naive notions of pole order such as \cite[(0.8)]{Saito93}, whose relation to the Hodge filtration is much more subtle.}

Let $\cM$ be a mixed Hodge module on $U$. Let us write $i\colon X \hookrightarrow X \times \bC $ for the graph of $g$ and $\tilde i\colon U \to X \times \bC^\times$ for the closed embedding given by restriction of $i$. We write $\tilde j\colon X \times \bC^\times \hookrightarrow X \times \bC$ for the open inclusion and $t$ for the local coordinate on $\bC$. Since $i_*j_*\ms{M}$ is a mixed Hodge module on $X \times \bC$, it has a well-defined decreasing $V$-filtration, indexed by $\mb{R}$, with respect to the divisor given by $t = 0$ (see \S\ref{subsec:nearby cycles}). The Hodge filtration on $j_*\ms{M}$ is determined by
\begin{equation}
 \label{open embedding formula}
F_p i_*j_*\cM \ = \ \sum \partial_t^k (V^{\geq -1}i_*j_*\cM \cap \tilde j_{\bigcdot} F_{p-k}\tilde i_*\cM)
\end{equation}
together with \eqref{closed embedding formula}. This is \cite[Definitions 11.3.1]{SS}. Dually, the mixed Hodge module $i_*j_!\ms{M}$ has a $V$-filtration such that the canonical map
\[ V^{>-1}i_*j_!\ms{M} \to V^{>-1}i_*j_*\ms{M}\]
is an isomorphism. The Hodge filtration on $j_!\ms{M}$ is determined by
\begin{equation}
\label{dual open embedding formula}
F_p i_*j_!\cM \ = \ \sum \partial_t^k (V^{> -1}i_*j_!\cM \cap \tilde j_{\bigcdot} F_{p-k}\tilde i_*\cM).
\end{equation}
This is \cite[Definition 11.4.1]{SS}. This implies in particular that if $F_{\bigcdot} \mc{M}$ has lowest non-zero piece $F_p\mc{M}$, then $F_{\bigcdot} j_!\mc{M}$, $F_{\bigcdot} j_*\mc{M}$ and hence $F_{\bigcdot} j_{!*}\mc{M}$ have lowest non-zero pieces $F_p$ as well.

\subsection{The Hecke functors in the standard basis} \label{subsec:hecke action}

In this subsection, we write down the action of the Hecke functors in the $\{[j_!\gamma]\}$ basis for the $\oK$-group.

In what follows, for any $x \in \ms{B}$, we choose a $\theta$-stable maximal torus $T_x$ fixing $x$. For $w \in W$, we write $wx = \bar{w}x$ for any $\bar{w} \in N_G(T_x)$ lifting $w$; this may depend on the choice of $T_x$. We also recall the notation of \S\ref{subsec:parametrization}.

Fix a simple root $\alpha$ and a line $Z \cong \mb{P}^1 \subset \ms{B}$ of type $\alpha$ (i.e., a fiber of $\pi_\alpha \colon \ms{B} \to \ms{P}_\alpha$).

\begin{prop} \label{prop:orbit geometries}
There is exactly one $K$-orbit $K x$, $x \in Z$, such that $Kx \cap Z \subset Z$ is open. Moreover, one of the following holds.
\begin{enumerate}
\item \label{itm:orbit geometries 1} $Kx \cap Z = Z$ and $\alpha$ is $Kx$-compact imaginary.
\item \label{itm:orbit geometries 2} $Kx \cap Z = Z - \{y\}$, $\alpha$ is both $Kx$-complex and $Ky$-complex, $\theta_y\alpha \in \Phi_+$, $\theta_x \alpha \in \Phi_-$, and $\theta_x = s_\alpha\theta_ys_\alpha$.
\item \label{itm:orbit geometries 3} $Kx \cap Z = Z - \{y_+, y_-\}$ for $y_+ \neq y_-$, $\alpha$ is $Kx$-real and $Ky_\pm$-non-compact imaginary, and we have $\theta_{y_+} = \theta_{y_-} = \theta_xs_\alpha$. Moreover,
\begin{enumerate}
\item \label{itm:orbit geometries 3:I} if $Ky_+ \neq Ky_-$, then $H^{\theta_x} \subset H^{\theta_{y_\pm}}$, and $H^{\theta_{y_\pm}}/H^{\theta_x} \cong \mb{G}_m$, and
\item \label{itm:orbit geometries 3:II} if $Ky_+ = Ky_-$, then $H^{\theta_x} \cap H^{\theta_{y_{\pm}}}$ has index $2$ in $H^{\theta_x}$ and $H^{\theta_{y_\pm}}/H^{\theta_x} \cap H^{\theta_{y_\pm}} \cong \mb{G}_m$.
\end{enumerate}
\end{enumerate}
\end{prop}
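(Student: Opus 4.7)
The plan is to reduce the classification of $Kx \cap Z$ to a rank-one calculation on $\mb{P}^1$. Since $K$ has finitely many orbits on $\ms{B}$ and $Z$ is irreducible, exactly one $K$-orbit meets $Z$ in a non-empty open subset, which gives the uniqueness statement. The $K$-orbits intersect $Z$ in orbits of $\mrm{Stab}_K(Z) = K \cap P$, where $P$ is the parabolic of type $\alpha$ stabilizing $Z$; since the unipotent radical $R_u(P)$ acts trivially on $Z$, these are determined by the image of $K \cap P$ in the Levi $L := P/R_u(P)$. The semisimple part $L^{\mrm{der}} \cong \mrm{SL}_2$ or $\mrm{PGL}_2$ acts transitively on $Z \cong \mb{P}^1$, so the $K$-orbit structure on $Z$ is determined by the orbits of the relevant subgroup of $L^{\mrm{der}}$ on $\mb{P}^1$.

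The three cases correspond to the type of $\alpha$ with respect to the open orbit $Kx$, read off from a $\theta$-stable maximal torus $T \subset B_x$. When $\alpha$ is imaginary or real, $P$ and $L$ are $\theta$-stable and $\theta$ restricts to an involution of $L^{\mrm{der}}$. In the compact imaginary case, $\theta$ acts trivially on $L^{\mrm{der}}$, so $(L^{\mrm{der}})^\theta = L^{\mrm{der}}$ acts transitively, giving case (\ref{itm:orbit geometries 1}). In the real case, $\theta|_{L^{\mrm{der}}}$ is the Cartan involution of a split rank-one form, whose fixed subgroup acts on $\mb{P}^1$ with one open orbit and two fixed points $y_\pm$, giving case (\ref{itm:orbit geometries 3}); the $Kx$-non-compact imaginary type cannot occur for the open orbit (its fixed subgroup is a compact torus, whose open orbit on $\mb{P}^1$ contains no torus-fixed point, hence no $x$ with the corresponding root type), but appears as the type of $\alpha$ at the two boundary points $y_\pm$, related to the real case by Cayley transform. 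When $\alpha$ is complex, $P$ is not $\theta$-stable, but a direct analysis of $(K \cap P)$-orbits on $Z$ shows there is one open orbit and one fixed point $y$, giving case (\ref{itm:orbit geometries 2}). The relation $\theta_y = s_\alpha \theta_x s_\alpha$ follows because $B_y$ is the translate of $B_x$ by a reflection along $Z$, which conjugates the involution; the sign conditions $\theta_x \alpha \in \Phi_-$ and $\theta_y \alpha \in \Phi_+$ pin down which of the two Borels corresponds to the open orbit.

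For case (\ref{itm:orbit geometries 3}), the identification of $\theta_{y_\pm}$ and the comparison of $H^{\theta_{y_\pm}}$ with $H^{\theta_x}$ come from the Cayley transform along $\alpha$: the $\theta$-stable torus at $y_\pm$ differs from $T_x$ by replacing the split one-parameter subgroup $\alpha^\vee(\mb{G}_m)$ with a compact one, which enlarges the fixed-point subtorus by a $\mb{G}_m$. The two subcases are distinguished by whether some element of $K \cap P$ swaps $y_+$ and $y_-$: if not, the Cayley transform can be effected inside $K$ and yields $H^{\theta_x} \subset H^{\theta_{y_\pm}}$, as in (\ref{itm:orbit geometries 3:I}); if so, the swap acts on $T_x$ by a lift of $s_\alpha$ and forces the index-two relation of (\ref{itm:orbit geometries 3:II}). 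The main obstacle is the bookkeeping for the Cayley transform in case (\ref{itm:orbit geometries 3}) and the careful handling of the failure of $\theta$-stability of $P$ in the complex case (\ref{itm:orbit geometries 2}); the remaining steps are routine rank-one calculations.
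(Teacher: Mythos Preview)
Your reduction to a rank-one analysis on $Z \cong \mb{P}^1$ is exactly the paper's strategy; the paper in fact outsources the trichotomy itself to \cite[Lemma 5.1]{ic3} and only supplies the additional claims (the relations among $\theta_x$, $\theta_y$, $\theta_{y_\pm}$ and the comparison of fixed-point groups). Your direct sketch of the classification is fine in outline, though ``compact torus'' in the non-compact imaginary exclusion is a misnomer: in the algebraic picture $(L^{\mrm{der}})^\theta$ is just the maximal torus, and the relevant point is that $x$ is one of its fixed points on $Z$, hence not in its open orbit. For $\theta_{y_\pm} = \theta_x s_\alpha$ the paper writes down an explicit Cayley element $g$ with $\theta(g)^{-1}g \in s_\alpha T_x$, which is the precise version of what you describe.

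The part of your sketch that does not hold up is the treatment of \eqref{itm:orbit geometries 3:I}--\eqref{itm:orbit geometries 3:II}. Your claim that in type~I ``the Cayley transform can be effected inside $K$'' is at best unclear: the points $y_\pm$ lie in a different $K$-orbit from $x$, so no element of $K$ carries $x$ to $y_\pm$, and it is not evident what being ``inside $K$'' should mean here or why it would force $H^{\theta_x} \subset H^{\theta_{y_\pm}}$. The paper's argument is both shorter and more robust: one checks that $\tau_x(\mrm{Stab}_K(x) \cap \mrm{Stab}_K(y_\pm)) = H^{\theta_x} \cap H^{\theta_{y_\pm}}$ (the nontrivial inclusion uses that $T_{y_\pm}^\theta \cap T_{y_\pm}^{s_\alpha\theta} \subset T_{y_\pm}^{s_\alpha}$ acts trivially on $Z$), and then orbit--stabilizer gives
\[
\frac{\mrm{Stab}_K(y_\pm)}{\mrm{Stab}_K(x) \cap \mrm{Stab}_K(y_\pm)} \cong Kx \cap Z \cong \mb{G}_m,
\qquad
\frac{\mrm{Stab}_K(x)}{\mrm{Stab}_K(x) \cap \mrm{Stab}_K(y_\pm)} \cong Ky_\pm \cap Z,
\]
the latter having one or two points according to whether $Ky_+ \neq Ky_-$ or $Ky_+ = Ky_-$. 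This immediately yields both the $\mb{G}_m$ quotient and the index-$1$-or-$2$ statement, with no Cayley bookkeeping required.
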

Cases \eqref{itm:orbit geometries 3:I} and \eqref{itm:orbit geometries 3:II} are called ``type I'' and ``type II'' in \cite{ic3}.
\begin{proof}
The possible geometries and the corresponding root types are given in \cite[Lemma 5.1]{ic3}. In \eqref{itm:orbit geometries 2}, the relation $\theta_x = s_\alpha\theta_ys_\alpha$ holds because $y = s_\alpha x$. To see the relation $\theta_x = \theta_{y_\pm}s_\alpha$ in \eqref{itm:orbit geometries 3}, we may assume without loss of generality that $y_\pm = gx$ and $T_{y_\pm} = gT_xg^{-1}$ for
\[ g = \phi_{s_\alpha, x}\left(\frac{1}{\sqrt{2}}\left(\begin{matrix} 1 & i \\ i & 1 \end{matrix}\right)\right),\]
where
\[ \phi_{s_\alpha, x} \colon (SL_2, ((-)^t)^{-1}) \to (G, \theta) \]
is the root subgroup at $x$. So $\theta(g)^{-1}g \in s_\alpha T_x$, whence the desired relation follows.

Finally, to see \eqref{itm:orbit geometries 3:I} and \eqref{itm:orbit geometries 3:II}, note that
\[ \tau_x(\mrm{Stab}_K(x) \cap \mrm{Stab}_K(y_\pm)) = \tau_{y_\pm}(\mrm{Stab}_K(x) \cap \mrm{Stab}_K(y_\pm)) = H^{\theta_x} \cap H^{\theta_{y_\pm}}.\]
(The inclusion $\subset$ is obvious, and the inclusion $\supset$ follows from the fact that $T_{y_\pm}^{\theta} \cap T_{y_\pm}^{s_\alpha\theta} \subset T_{y_\pm}^{s_\alpha}$ acts trivially on $Z$.) The desired statements now follow, since
\[ \frac{\mrm{Stab}_K(x)}{\mrm{Stab}_K(x) \cap \mrm{Stab}_K(y_\pm)} \cong Ky_\pm \cap Z \quad \text{and} \quad \frac{\mrm{Stab}_K(y_\pm)}{\mrm{Stab}_K(x) \cap \mrm{Stab}_K(y_\pm)} \cong Kx \cap Z\]
by the orbit-stabilizer theorem.
\end{proof}

Recall for the following proposition that we write $u = t_1t_2 \in \oK(\mhm(\mrm{pt}))$, and that $\Lambda$ is regarded as a character of $H^{\theta_x} \subset H$ in the notation $\mc{O}(\lambda, \Lambda, x)$ for an irreducible twisted local system on $Kx$.

\begin{prop} \label{prop:hecke action}
Suppose we are in the setting of Proposition \ref{prop:orbit geometries}, and fix $\lambda \in \mf{h}^*_\mb{R}$. Then we have the following.
\begin{enumerate}
\item \label{itm:hecke action 1} If $Z$, $\alpha$ are as in Proposition \ref{prop:orbit geometries}, \eqref{itm:orbit geometries 1} then there are no $\lambda$-twisted local systems $\gamma$ on $Kx$ unless $\alpha$ is integral. In this case, we have
\[ \mb{T}_{s_\alpha}[j_!\gamma] = -ut_{s_\alpha \lambda - \lambda}[j_!\gamma]\]
for all such $\gamma$.
\item \label{itm:hecke action 2} If $Z$, $\alpha$ are as in Proposition \ref{prop:orbit geometries}, \eqref{itm:orbit geometries 2} and $\alpha$ is integral, then
\[ \mb{T}_{s_\alpha}[j_!\gamma] = (1 - u)t_{s_\alpha \lambda - \lambda}[j_!\gamma] + u[j_!\gamma'] \quad \text{and} \quad \mb{T}_{s_\alpha}[j_!\gamma'] = [j_!\gamma],\]
for $\gamma = \mc{O}(\lambda, \Lambda, x)$ and $\gamma' = \mc{O}(s_\alpha\lambda, s_\alpha\Lambda, y)$.
\item \label{itm:hecke action 3} If $Z$, $\alpha$ are as in Proposition \ref{prop:orbit geometries}, \eqref{itm:orbit geometries 2} and $\alpha$ is non-integral, then
\[ \mb{T}_{s_\alpha}[j_!\gamma] = u[j_!\gamma'] \quad \text{and} \quad \mb{T}_{s_\alpha}[j_!\gamma'] = [j_!\gamma],\]
for $\gamma = \mc{O}(\lambda, \Lambda, x)$ and $\gamma' = \mc{O}(s_\alpha\lambda, s_\alpha\Lambda, y)$.
\item \label{itm:hecke action 4} If $Z$, $\alpha$ are as in Proposition \ref{prop:orbit geometries}, \eqref{itm:orbit geometries 3:I} and $\alpha$ is integral, then
\[ \mb{T}_{s_\alpha}[j_!\gamma] = (2 - u)t_{s_\alpha\lambda - \lambda}[j_!\gamma] + (u - 1)([j_!\gamma_+'] + [j_!\gamma_-'])\]
and
\[ \mb{T}_{s_\alpha}[j_!\gamma_{\pm}'] = [j_!\gamma] - t_{\lambda - s_\alpha\lambda}[j_!\gamma_{\mp}'],\]
for
\[ \gamma_{\pm}' = \mc{O}(s_\alpha\lambda, \Lambda', y_{\pm}), \quad \text{and} \quad \gamma = \mc{O}(\lambda, \Lambda'|_{H^{\theta_x}}, x).\]
\item \label{itm:hecke action 5} If $Z$, $\alpha$ are as in Proposition \ref{prop:orbit geometries}, \eqref{itm:orbit geometries 3:II} and $\alpha$ is integral, then
\[ \mb{T}_{s_\alpha}[j_!\gamma_\pm] = t_{s_\alpha\lambda - \lambda}((1 - u)[j_!\gamma_\pm] + [j_!\gamma_{\mp}]) + (u - 1)[j_!\gamma'] \]
and
\[ \mb{T}_{s_\alpha}[j_!\gamma'] = [j_!\gamma_+] + [j_!\gamma_-] - t_{\lambda - s_\alpha\lambda}[j_!\gamma']\]
for
\[ \gamma' = \mc{O}(s_\alpha\lambda, \Lambda', y_+) \quad \text{and} \quad \gamma_\pm = \mc{O}(\lambda, \Lambda_\pm, x)\]
where
\[ \Lambda_+ \oplus \Lambda_- = \mrm{Ind}_{H^{\theta_{y_+}} \cap H^{\theta_x}}^{H^{\theta_x}}\left(\Lambda'|_{H^{\theta_{y_+}} \cap H^{\theta_x}}\right).\]
\item \label{itm:hecke action 6} If $Z$, $\alpha$ are as in Proposition \ref{prop:orbit geometries}, \eqref{itm:orbit geometries 3}, $\alpha$ is integral, and $\gamma = \mc{O}(\lambda, \Lambda, x)$ is such that $\Lambda(m_\alpha) \neq (-1)^{\langle \lambda, \check\alpha\rangle}$, then
\[ \mb{T}_{s_\alpha}[j_!\gamma] = t_{s_\alpha \lambda - \lambda}[j_!\gamma].\]
\item \label{itm:hecke action 7} If $Z$, $\alpha$ are as in Proposition \ref{prop:orbit geometries}, \eqref{itm:orbit geometries 3} and $\alpha$ is non-integral, then there are no $\lambda$-twisted local systems on $Ky_\pm$ and
\[ \mb{T}_{s_\alpha}[j_!\gamma] = \begin{cases} t_1[j_!\gamma'], & \mbox{if $(-1)^{\lfloor \langle \lambda, \check\alpha\rangle\rfloor}\Lambda(m_\alpha) = 1$,} \\ t_2[j_!\gamma'], &\mbox{if $(-1)^{\lfloor \langle \lambda, \check\alpha\rangle\rfloor}\Lambda(m_\alpha) = -1$},\end{cases}\]
where
\[ \gamma = \mc{O}(\lambda, \Lambda, x) \quad \text{and} \quad \gamma' = \mc{O}(s_\alpha\lambda, s_\alpha\Lambda \otimes \alpha, x).\]
\end{enumerate}
Any $\lambda$-twisted local system on $Kx$ and any $s_\alpha\lambda$-twisted local system on another $K$-orbit meeting $Z$ appears exactly once in the above list.
\end{prop}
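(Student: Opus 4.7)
The plan is to reduce the computation to the $\mb{P}^1$-bundle $\pi_\alpha \colon \ms{B} \to \ms{P}_\alpha$. Using the correspondence $\ms{B} \xleftarrow{p_1} \ms{B} \times_{\ms{P}_\alpha} \ms{B} \xrightarrow{p_2} \ms{B}$ introduced in \S\ref{subsec:convolution} together with base change, $\mb{T}_{s_\alpha}[j_!\gamma]$ is given by $\pi_\alpha^\circ \pi_{\alpha !}[j_!\gamma]$, adjusted by the untwisting equivalence \eqref{eq:untwisting iso} that accounts for the shift in twisting from $\lambda$ to $s_\alpha \lambda$. By $K$-equivariance, the class of $\mb{T}_{s_\alpha}[j_!\gamma]$ is pinned down by restricting to any generic line $Z \cong \mb{P}^1$ of type $\alpha$ meeting the open part of $Q = Kx$: the multiplicities of the various $[j_!\gamma']$ can be read off from the class in $\oK(\mhm(\mrm{pt}))$ of the proper pushforward of $j_!\gamma|_{\tilde Z}$ to a point, which is the cohomology of a twisted local system on a punctured $\mb{P}^1$ in the geometry classified by Proposition \ref{prop:orbit geometries}.

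For cases \eqref{itm:hecke action 1}--\eqref{itm:hecke action 6}, the underlying $\ms{D}$-module multiplicity formulas are the classical ones of \cite[\S 5]{ic3}, with \eqref{itm:hecke action 6} reflecting the degenerate sub-case in which no compatible local systems exist on the accompanying orbits. The only additional work is to track the Hodge structure. Since each standard $j_!\gamma$ is pure of weight $d = \dim \tilde Q$ and $j_!\gamma|_{\tilde Z}$ carries no non-trivial Hodge twist (it is a rank-one $(0,0)$-variation on an open subset of the punctured line), the extra factors of $u = t_1 t_2$ in the formulas must be pure Tate twists, pinned down uniquely by comparing weights on each side of the standard distinguished triangles relating $j_!$, $j_{!*}$ and $j_*$ on $Z$; the translations $t_{s_\alpha\lambda - \lambda}$ record the change of twist supplied by \eqref{eq:untwisting iso}. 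The Hecke square identity of Proposition \ref{prop:hecke relations} then serves as a consistency check on signs.

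The genuinely new ingredient is case \eqref{itm:hecke action 7}, the only one involving a non-integral real root and hence the only one producing a non-trivial Hodge (rather than purely Tate) structure. Here the restriction $j_!\gamma|_{\tilde Z}$ is the $!$-extension of a rank-one $\lambda$-twisted local system on $\tilde Z$ whose underlying real monodromy around each of the two punctures $y_+, y_-$ is $\pm 1$. Its cohomology, viewed as a complex Hodge structure, is computed by Lemma \ref{lem:real hodge calculation} to be one-dimensional and of type $(-1, 0)$ or $(0, -1)$ according to the sign of that monodromy, and a short analysis identifies this sign with $(-1)^{\lfloor \langle \lambda, \check\alpha\rangle\rfloor}\Lambda(m_\alpha)$; this is precisely the dichotomy between $t_1$ and $t_2$ in the stated formula. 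Once this is established, the concluding completeness assertion follows from the exhaustiveness of Proposition \ref{prop:orbit geometries} combined with Proposition \ref{prop:orbit Hodge modules}, which enumerates all $K$-equivariant twisted local systems on each orbit. The main obstacle is thus the Hodge calculation underlying case \eqref{itm:hecke action 7} encapsulated by Lemma \ref{lem:real hodge calculation}; the rest is careful bookkeeping with Tate twists, translations, and the geometric data of Proposition \ref{prop:orbit geometries}.
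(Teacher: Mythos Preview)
Your overall strategy matches the paper's: reduce to the line $Z \cong \mb{P}^1$, defer the integral cases to \cite{LV, ic3} after noting that the relevant cohomologies are Hodge--Tate, and invoke Lemma~\ref{lem:real hodge calculation} for the non-integral real case. However, there are two genuine gaps.

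First, the Hecke operator $\mb{T}_{s_\alpha}$ is \emph{not} $\pi_\alpha^\circ \pi_{\alpha!}$. By definition it is $p_{2!}\xi p_1^\circ$ taken over the \emph{complement of the diagonal} $\ms{B}\times_{\ms{P}_\alpha}\ms{B}-\ms{B}$; the untwisting equivalence \eqref{eq:untwisting iso} lives only there. The pull--push along $\pi_\alpha$ includes the diagonal contribution and differs from $\mb{T}_{s_\alpha}$ by a term proportional to the identity (visibly so in the integral cases). The paper's base-change formula \eqref{eq:hecke action 2} makes this explicit: each contribution is a pushforward from $Q\cap Z - \{z\}$, not from $Q\cap Z$, and the tensor twist by $\mc{O}(-\lambda z)$ records the untwisting at the chosen point $z$. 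Getting this right is what produces, for instance, the term $(1-u)t_{s_\alpha\lambda-\lambda}[j_!\gamma]$ in case~\eqref{itm:hecke action 2} rather than the pure pull--push answer.

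Second, in case~\eqref{itm:hecke action 7} you have identified only half of the computation. The monodromy of the local system on $Z - \{y_+, y_-, s_\alpha x\}$ is not $\pm 1$ at $y_\pm$; by Lemma~\ref{lem:real local system} it is $e^{2\pi i\mu_j}$ with $\mu_1+\mu_2=\langle\lambda,\check\alpha\rangle\notin\mb{Z}$, and it is the \emph{combination} $\lfloor\mu_1+\mu_2\rfloor$ that governs the Hodge type in Lemma~\ref{lem:real hodge calculation}. More importantly, Lemma~\ref{lem:real hodge calculation} only gives the Hodge structure on the cohomology as an abstract vector space; you still have to compute the action of $T_{s_\alpha x}^\theta$ on that one-dimensional space to determine which twisted local system $\gamma'$ appears. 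The paper does this by exhibiting an explicit generator $(z-a)^{\mu_1}(z-b)^{\mu_2}\,dz$ and tracking the action of the kernel of $\alpha\tau_x$ and of an element swapping $y_+$ and $y_-$; this is how the factor $\alpha$ in $\gamma'=\mc{O}(s_\alpha\lambda, s_\alpha\Lambda\otimes\alpha, x)$ is produced. Your ``short analysis identifies this sign'' does not cover this step, and without it the formula in \eqref{itm:hecke action 7} is not established.
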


\begin{rmk}
Cases \eqref{itm:hecke action 4} and \eqref{itm:hecke action 5} exactly describe those twisted local systems that extend to twisted local systems on $Z$ in the case when $\alpha$ is real.
\end{rmk}

\begin{proof}
For a $\lambda$-twisted local system $\delta$ on an orbit $Q$ meeting $Z$, we may write
\[ \mb{T}_{s_\alpha}[j_!\delta] = \sum_{Q' \cap Z \neq \emptyset} [j_{Q'!}j_{Q'}^*\mb{T}_{s_\alpha}j_!\delta]. \]
By base change, we may rewrite this as
\begin{equation} \label{eq:hecke action 1}
 \mb{T}_{s_\alpha}[j_!\delta] = \sum_{Q' \cap Z \neq \emptyset} -[j_{Q'!} q_{2!}\xi q_{2}^*\delta],
\end{equation}
where $q_i$ are the projections
\[ Q \xleftarrow{q_1} Q \times_{\ms{P}_\alpha} Q' - Q \cap Q' \xrightarrow{q_2} Q'\]
and
\[ \xi \colon \mhm_\lambda^{(1)}(K \bslash (Q \times_{\ms{P}_\alpha} Q' - Q \cap Q')) \overset{\sim}\to \mhm_{s_\alpha \lambda}^{(2)}(K \bslash (Q \times_{\ms{P}_\alpha} Q' - Q \cap Q')) \]
is the equivalence of categories induced by \eqref{eq:untwisting iso}. Finally, choosing a point $z \in Q'$ for each orbit, we may simplify \eqref{eq:hecke action 1} to
\begin{equation} \label{eq:hecke action 2}
\mb{T}_{s_\alpha} [j_!\delta] = \sum_{Kz \cap Z \neq \emptyset} (-1)^{\dim (Kz \cap Z) - 1} [j_{z!}\pi_{z!}(\delta \otimes \mc{O}(-\lambda z))],
\end{equation}
where $\pi_z \colon Q \cap Z - \{z\} \to \mrm{pt}$ is the projection to a point, we interpret the complex $\pi_{z!}$ as the Harish-Chandra module at $z$ defining an $s_\alpha\lambda$-twisted local system on $Kz$, and we have written the equivalence $\xi$ explicitly as the tensor product with the (unique) rank $1$ $(-\lambda)$-twisted local system $\mc{O}(-\lambda z)$ on $Z - \{z\}$.

With these preliminaries, we observe that the integral cases \eqref{itm:hecke action 1}, \eqref{itm:hecke action 2}, \eqref{itm:hecke action 4}, \eqref{itm:hecke action 5} and \eqref{itm:hecke action 6} are essentially the same calculations as \cite[Lemma 3.5]{LV}, with appropriate modifications to take into account translations and our different sign conventions. (Recall that our bases and Hecke operators are defined using $\ms{D}$-modules and intermediate pullback, which are cohomologically shifted compared with the constructible sheaves and $*$-pullback used in \cite{LV}.) Note that in these cases, the cohomology groups appearing in \eqref{eq:hecke action 2} are either zero, the cohomology of the trivial local system on $Z = \mb{P}^1$ minus $1$ or $2$ points, or the cohomology of a non-trivial local system on $\mb{P}^1$ minus $3$ points with trivial monodromy around one of the punctures; these are all Hodge-Tate, hence the coefficients are all powers of $u$.

For case \eqref{itm:hecke action 3}, we have since $Ky \cap Z - \{y\} = \emptyset$,
\[ \mb{T}_{s_\alpha}[j_!\gamma'] = [j_{x!}\pi_{x!} (\gamma \otimes \mc{O}(-s_\alpha\lambda x))]\]
with $\pi_x \colon \{y\} \to \mrm{pt}$. If we write $y = s_\alpha x$, then $T_x^\theta = T_y^\theta$ acts on this $1$-dimensional vector space through the character $(s_\alpha\Lambda) \tau_y = \Lambda\tau_x$, so $\mb{T}_{s_\alpha}[j_!\gamma'] = [j_!\gamma]$ as claimed. Similarly,
\[ \mb{T}_{s_\alpha}[j_!\gamma] = [j_{x!}\pi_{x!} (\gamma \otimes \mc{O}(-\lambda x))] - [j_{y!}\pi_{y!}(\gamma \otimes \mc{O}(-\lambda y))].\]
The local system $\gamma \otimes \mc{O}(-\lambda x)$ on $Kx \cap Z - \{x\} \cong \mb{G}_m$ is non-trivial, so it has vanishing cohomology. The local system $\gamma \otimes \mc{O}(-\lambda y)$ on $Kx \cap Z - \{y\} = Kx \cap Z \cong \mb{A}^1$ is trivial, so its $\pi_!$ is $\mb{C}(-1)[-1]$. Computing the $H^{\theta_y}$-action as above gives $\mb{T}_{s_\alpha}[j_!\gamma] = u[j_!\gamma']$ as claimed.

For case \eqref{itm:hecke action 7}, the non-existence of $\lambda$-twisted local systems on $Ky_\pm$ is obvious since $\check\alpha(\mb{G}_m) \subset H^{\theta_{y_\pm}}$. To compute the Hecke action, we write
\[ \mb{T}_{s_\alpha}[j_!\gamma] = [j_{s_\alpha x!} \pi_{s_\alpha x !}(\gamma \otimes \mc{O}(-\lambda s_\alpha x))],\]
where we note that $Kx = Ks_\alpha x$. Choose a coordinate $z$ on $Z - \{s_\alpha x\} \cong \mb{A}^1$, and write
\[ \gamma \otimes \mc{O}(-\lambda s_\alpha x) = \mb{C}[z, (z - a)^{-1}, (z - b)^{-1}](z - a)^{\mu_1}(z - b)^{\mu_2}\]
as in Lemma \ref{lem:real local system} below. By Lemma \ref{lem:real hodge calculation}, we obtain that
\begin{align*}
 \pi_{s_\alpha x !}(\gamma \otimes \mc{O}(-\lambda s_\alpha x)) &= \pi_{s_\alpha x *}(\gamma \otimes \mc{O}(-\lambda s_\alpha x)) \\
&= \begin{cases} \mb{C}_{-1, 0}, & \mbox{if $(-1)^{\lfloor \langle \lambda, \check\alpha \rangle \rfloor} \Lambda(m_\alpha) = 1$,} \\ \mb{C}_{0, -1}, & \mbox{if $(-1)^{\lfloor \langle \lambda, \check\alpha \rfloor} \Lambda(m_\alpha) = -1$},\end{cases}
\end{align*}
as Hodge structures.

To compute the action of $T_{s_\alpha x}^\theta = T_x^\theta$, we observe that
\begin{equation} \label{eq:hecke action 3}
(z - a)^{\mu_1}(z - b)^{\mu_2}dz \in H^0(Kx \cap Z - \{s_\alpha \lambda\}, \gamma \otimes \mc{O}(-\lambda s_\alpha x) \otimes \Omega^1_Z)
\end{equation} 
is a generator for the non-zero cohomology group of $\pi_{s_\alpha x *}$. The character $\alpha\tau_x \colon T^\theta_x \to \mb{G}_m$ factors through $\{\pm 1\}$ (since $\theta_x \alpha = -\alpha$), and its kernel is $T^\theta_x \cap \mrm{Stab}(y_{\pm})$. This kernel acts trivially on $Z$ (and hence on $dz$), so must act on the above generator \eqref{eq:hecke action 3} by the character $\Lambda\tau_x$. On the other hand, if $g \in T^\theta_x$ and $\alpha \tau_x(g) = -1$, then $gy_+ = y_-$, so $g \cdot dz = - dz$ and $g \cdot (z - a)^{\mu_1}(z - b)^{\mu_2} = A (z - a)^{\mu_2}(z - b)^{\mu_1}$ for some $A \in \mb{C}$. We may choose the coordinate $z$ so that $a = 1$, $b = -1$ and $z = 0$ is the coordinate of $x$; with this choice, evaluating at $z = 0$ gives $A = (-1)^{\mu_1 - \mu_2}\Lambda(g)$. Finally, we have
\[ [(z - a)^{\mu_2}(z - b)^{\mu_1}dz] = (-1)^{\mu_1 - \mu_2}[(z - a)^{\mu_1}(z - b)^{\mu_2}dz] \]
in cohomology. So $g$ acts by $-\Lambda\tau_x(g) = (\Lambda \otimes \alpha)\tau_x(g)$. Combining the above, we have that $T^\theta_x$ acts by $(\Lambda \otimes \alpha)\tau_x = (s_\alpha \Lambda \otimes \alpha)\tau_{s_\alpha x}$, which completes the proof.
\end{proof}

\begin{lem} \label{lem:real local system}
In the setting of Proposition \ref{prop:hecke action} \eqref{itm:hecke action 7}, if we choose a coordinate $z$ on $Z - \{s_\alpha x\} \cong \mb{A}^1$, then
\[ \gamma \otimes \mc{O}(-\lambda s_\alpha x) \cong \mb{C}[z, (z - a)^{-1}, (z - b)^{-1}](z - a)^{\mu_1}(z - b)^{\mu_2}\]
where $a$ and $b$ are the $z$-coordinates of $y_{\pm}$, $\mu_i \in \mb{R}$ satisfy $\mu_1 + \mu_2 = \langle \lambda, \check\alpha\rangle$, $\mu_1 - \mu_2 \in \mb{Z}$, and $(-1)^{\mu_1 - \mu_2} = \Lambda(m_\alpha)$.
\end{lem}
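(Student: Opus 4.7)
Since the statement is local on the line $Z$ and concerns only the restriction of $\gamma$ together with the chosen trivialization of $\mc{O}(-\lambda s_\alpha x)$, I would reduce everything to an explicit computation inside the root $SL_2 \subset G$ at $\alpha$ acting on $Z \cong \mb{P}^1$. The first observation is that the tensor product $\gamma \otimes \mc{O}(-\lambda s_\alpha x)$ has trivial total twist, hence is an ordinary rank one regular holonomic $\ms{D}$-module on $Kx \cap Z - \{s_\alpha x\} \cong \mb{A}^1 \setminus \{a,b\}$. Every such $\ms{D}$-module is cyclic of the asserted form $\mb{C}[z, (z-a)^{-1}, (z-b)^{-1}](z-a)^{\mu_1}(z-b)^{\mu_2}$ for some $\mu_1, \mu_2 \in \mb{C}$, determined up to simultaneous integer shifts by the monodromies around $a$ and $b$.

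\textbf{Sum condition.} To derive $\mu_1+\mu_2 = \langle \lambda, \check\alpha \rangle$, I would lift to the $\mb{G}_m$-torsor $\tilde Z = \mb{A}^2 \setminus \{0\} \to Z = \mb{P}^1$ with homogeneous coordinates $[z_0:z_1]$ chosen so that $s_\alpha x$ sits at $z_0 = 0$ and $y_\pm$ at $z_1 = az_0, bz_0$. Under the canonical trivialization $z_0^{\langle \lambda, \check\alpha \rangle}$ of $\mc{L}_\lambda|_{Z-\{s_\alpha x\}}$ encoded by $\mc{O}(-\lambda s_\alpha x)$, the generator $(z-a)^{\mu_1}(z-b)^{\mu_2}$ corresponds to the semi-invariant expression $(z_1-az_0)^{\mu_1}(z_1-bz_0)^{\mu_2}$ on $\tilde Z$, whose $\mb{G}_m$-weight is exactly $\mu_1 + \mu_2$. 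Comparing this with the $\mb{G}_m$-weight $\langle \lambda, \check\alpha \rangle$ dictated by the $\lambda$-twisting of $\gamma$ gives the claimed identity.

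\textbf{Parity condition and main obstacle.} The condition $\mu_1 - \mu_2 \in \mb{Z}$ together with the sign formula $(-1)^{\mu_1 - \mu_2} = \Lambda(m_\alpha)$ encodes the $K$-equivariance. The element $m_\alpha = \check\alpha(-1) \in T_x^\theta$ is central in the root $SL_2$, hence acts trivially on $Z$ and therefore on the ordinary local system $\gamma \otimes \mc{O}(-\lambda s_\alpha x)$ by a single global scalar, which I would compute in two ways. On one side, the scalar equals $\Lambda(m_\alpha)$ times the $m_\alpha$-action on the trivialization $z_0^{\langle \lambda, \check\alpha \rangle}$; analytically continuing $\check\alpha(e^{i\theta}) \cdot z_0 = e^{-i\theta} z_0$ for $\theta \in [0,\pi]$ shows this trivialization picks up the factor $e^{i\pi \langle \lambda, \check\alpha \rangle}$. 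On the other side, $\check\alpha(e^{i\theta})$ rotates the $T_x$-semi-invariant coordinate $w = (z-a)/(z-b)$ by $e^{2i\theta}$, so applied to the equivalent generator $w^{\mu_1}(w-1)^{-\mu_1-\mu_2}$ it returns the scalar $e^{-2\pi i \mu_1}$ at $\theta = \pi$. Equating the two expressions and invoking $\mu_1 + \mu_2 = \langle \lambda, \check\alpha \rangle$ causes the $\lambda$-dependent exponentials to cancel, forcing $\mu_1 - \mu_2 \in \mb{Z}$ and leaving $(-1)^{\mu_1 - \mu_2} = \Lambda(m_\alpha)$. The main technical obstacle is precisely this juggling of competing $\lambda$-dependent phase factors and sign conventions for the $\check\alpha$-action; once they are all traced through consistently, the non-integral contributions cancel and the stated parity identity drops out cleanly.
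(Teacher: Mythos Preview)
Your overall plan for the sum condition $\mu_1+\mu_2=\langle\lambda,\check\alpha\rangle$ is fine and matches the paper's in spirit: both read this off from the $\mb{G}_m$-weight on the line bundle torsor $\tilde Z\to Z$. The parity argument, however, contains a genuine geometric error.

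You work with the cocharacter $\check\alpha$ valued in $T_x$. Since $\alpha$ is $Kx$-real, $\theta_x\check\alpha=-\check\alpha$, so $\check\alpha(\mb{G}_m)\subset T_x$ is \emph{not} contained in $K$; only the single element $m_\alpha=\check\alpha(-1)$ is. Hence for generic $\theta$ the element $\check\alpha(e^{i\theta})$ has no action on the $K$-equivariant local system $\gamma$, and your ``analytic continuation'' of the $K$-action along this path is undefined. Moreover, the fixed points of $\check\alpha(\mb{G}_m)\subset T_x$ on $Z$ are $x$ and $s_\alpha x$, not $y_\pm$, so it moves the punctures $a,b$ and does \emph{not} scale $w=(z-a)/(z-b)$; your claim that $w$ is $T_x$-semi-invariant is false. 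Both issues break the second computation of the scalar.

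The paper avoids this by working at $y_+$ rather than $x$. There $\alpha$ is non-compact imaginary, so $\check\alpha(\mb{G}_m)\subset T_{y_+}^\theta\subset K$ genuinely acts on $\gamma$; pulling back along $SL_2/N=\mb{C}^2\setminus\{0\}\to\tilde{\ms{B}}$ via the root embedding at $y_+$, the Lie algebra operators $u\partial_u\pm v\partial_v$ read off $\mu_1\pm\mu_2$ directly as $\langle\lambda,\check\alpha\rangle$ and $\langle\Theta,\tau_{y_+}^{-1}\check\alpha\rangle\in\mb{Z}$, where $\Theta$ is the $T_{y_+}^\theta$-character on a chosen generator. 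The final step is then a short character comparison showing $\Theta|_{(T_{y_+}^\theta)^{s_\alpha}}=\Lambda\tau_{y_+}|_{(T_{y_+}^\theta)^{s_\alpha}}$, which evaluates at $m_\alpha$ to give $(-1)^{\mu_1-\mu_2}=\Lambda(m_\alpha)$. If you want to salvage your group-theoretic approach, you must switch to the cocharacter through $T_{y_+}$ (where it lies in $K$ and fixes $y_\pm$) and then confront exactly this comparison of characters at $x$ versus $y_+$; at that point you have essentially reproduced the paper's proof.
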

\begin{proof}
Consider the pullback $\ms{M}$ of $\gamma$ along the morphism
\[ \mb{C}^2 - \{0\} = SL_2/N \to G/N = \tilde{\ms{B}}\]
given by the $\alpha$-root subgroup at $y_+$. (Note that $Z$ is the image of $\mb{C}^2 - \{0\}$ in $\ms{B}$.) Choose a section $m$ of $\ms{M}$ such that $m$ (resp., the line $\mb{C}m$) is invariant under the weak $H$-action (resp., the $T_{y_+}^\theta$-action); let $\Theta$ be the character of $T_{y_+}^\theta$ acting on $m$. We have
\[ \ms{M} = \mb{C}[u, v, u^{-1}, v^{-1}]m \]
where $u$ and $v$ are coordinates on $\mb{C}^2$ such that $y_+$ is the image of $u = 0$, $y_-$ is the image of $v = 0$ and $s_\alpha x$ is the image of $u = v$. The conditions on the weak $H$-action and strong $K$-action imply
\[ (u \partial_u + v \partial_v) m = \langle \lambda, \check\alpha \rangle m \quad \text{and} \quad (u \partial_u - v \partial_v) m = \langle \Theta, \tau_{y_+}^{-1}\check\alpha\rangle m.\]
Tensoring with $\mc{O}(-\lambda s_\alpha x)$ is equivalent to pulling back along the map
\begin{align*}
Z - \{s_\alpha x\} = \mb{A}^1 &\to \mb{C}^2 - \{0\} \\
z &\mapsto (z - a, z - b),
\end{align*}
so $\gamma \otimes \mc{O}(-s_\alpha\lambda) = \mb{C}[z, (z - a)^{-1}, (z - b)^{-1}]m$, where $m$ satisfies
\[
\partial_z m = \left(\frac{1}{z - a} u\partial_u + \frac{1}{z - b}v\partial_v\right) m = \left(\frac{\mu_1}{z - a} +  \frac{\mu_2}{z - b}\right)m,
\]
where
\[ \mu_1 + \mu_2 = \langle \lambda, \check\alpha\rangle \quad \text{and} \quad \mu_1 - \mu_2 = \langle \Theta, \tau_{y_+}^{-1}\check\alpha\rangle \in \mb{Z}.\]
So the expression in the statement holds, with $m$ identified with $(z - a)^{\mu_1}(z - b)^{\mu_2}$. To deduce that $\Lambda(m_\alpha) = (-1)^{\mu_1 - \mu_2}$, note that $T_{y_+}^\theta \cap \mrm{Stab}_K(x) = (T_{y_+}^\theta)^{s_\alpha}$ acts on the fiber at $x$ by the restriction of $\Theta$ on the one hand, and by the restriction of $\Lambda \tau_x$ on the other. Hence
\[ \Theta|_{(T_{y_+}^\theta)^{s_\alpha}} = \Lambda\tau_x|_{(T_{y_+}^\theta)^{s_\alpha}} = \Lambda s_\alpha \tau_{y_+}|_{(T_{y_+}^\theta)^{s_\alpha}} = \Lambda \tau_{y_+}|_{(T_{y_+}^\theta)^{s_\alpha}},\]
where we have used the fact that $\tau_x = s_\alpha \tau_{y_+}$ on the intersection $B_x \cap B_{y_+}$. The desired claim now follows since $m_\alpha \in (H^\theta_{y_+})^{s_\alpha}$.
\end{proof}

\begin{lem} \label{lem:real hodge calculation}
Let $a \neq b \in \mb{A}^1$ and let $\ms{M}$ be the $\ms{D}$-module on $\mb{A}^1 - \{a, b\}$ given by $\ms{M} = \mb{C}[z, (z - a)^{-1}, (z - b)^{-1}](z - a)^{\mu_1}(z - b)^{\mu_2}$ equipped with its standard Hodge structure, for $\mu_1, \mu_2 \in \mb{R}$ with $\mu_1 + \mu_2 \not\in \mb{Z}$ and $\mu_1 - \mu_2 \in \mb{Z}$. Then we have $j_!\ms{M} = j_*\ms{M} = j_{!*}\ms{M}$, and
\[ H^i(\pi_*j_!\ms{M}) = \begin{cases} 0, & \mbox{if $i \neq 0$}, \\ \mb{C}_{-1, 0}, & \mbox{if $i = 0$, $(-1)^{\lfloor \mu_1 + \mu_2 \rfloor} = (-1)^{\mu_1 - \mu_2}$}, \\ \mb{C}_{0, -1}, &\mbox{otherwise,} \end{cases} \]
where $j \colon \mb{A}^1 - \{a, b\} \to \mb{P}^1$ is the inclusion and $\pi \colon \mb{P}^1 \to \mrm{pt}$ the projection.
\end{lem}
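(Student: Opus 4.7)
From the hypotheses $\mu_1 - \mu_2 \in \mb{Z}$ and $\mu_1 + \mu_2 \notin \mb{Z}$ we have $\mu_1, \mu_2 \notin \mb{Z}$, while the monodromy of $\ms{M}$ at $\infty$ is $e^{-2\pi i(\mu_1+\mu_2)} \neq 1$. Hence $\ms{M}$ has non-trivial monodromy around each of $a, b, \infty$, which gives $j_!\ms{M} = j_{!*}\ms{M} = j_*\ms{M}$. Setting $U = \mb{P}^1 - \{a, b, \infty\}$, we compute $\chi(U) = -1$ and $H^0_{dR}(U, \ms{M}) = H^2_{dR}(U, \ms{M}) = 0$ (the former from the absence of flat sections, the latter by Poincar\'e duality using $j_! = j_*$), so $H^1_{dR}(U, \ms{M}) \cong \mb{C}$. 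Under the convention $\pi_* \ms{N} = R\pi_\bullet(\ms{N} \otimes \Omega^\bullet_{\mb{P}^1})[1]$ of \S\ref{subsec:hodge filtration}, this yields the vanishing $H^i(\pi_* j_!\ms{M}) = 0$ for $i \neq 0$ and $\dim H^0(\pi_* j_!\ms{M}) = 1$.

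To pin down the Hodge structure, we first reduce to a canonical case. For $k, l \in \mb{Z}$, multiplication by $(z - a)^k (z - b)^l$ gives an isomorphism of filtered $\ms{D}$-modules between the versions of $\ms{M}$ with parameters $(\mu_1, \mu_2)$ and $(\mu_1 + k, \mu_2 + l)$; both describe the same rank-one VHS of type $(0, 0)$ on $U$, only with a rescaled distinguished generator. The parity of $\lfloor \mu_1 + \mu_2\rfloor + (\mu_1 - \mu_2)$ is invariant under such shifts. So, using $\mu_1 - \mu_2 \in \mb{Z}$ and translating by $(1, 1)$ if needed, we may assume $\mu_1 = \mu_2 = \mu \in (0, 1) \setminus \{1/2\}$.

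Since $j_!\ms{M}$ is a pure Hodge module of weight $1$, $H^0(\pi_* j_!\ms{M})$ is a one-dimensional pure Hodge structure of weight $1$, hence equal to either $\mb{C}_{-1, 0}$ or $\mb{C}_{0, -1}$; these are distinguished by whether the lowest piece $F_{-1} H^0(\pi_* j_!\ms{M})$ equals $H^0$ or is zero. Saito's formula (\S\ref{subsec:hodge filtration}) for the Hodge filtration on a pushforward, combined with the $V$-filtration description of $F_0 j_*\ms{M}$ at each point of $D = \{a, b, \infty\}$, identifies
\[
F_{-1} H^0(\pi_* j_!\ms{M}) \cong H^0(\mb{P}^1, \mc{E} \otimes \Omega^1_{\mb{P}^1}(\log D)),
\]
where $\mc{E}$ is the Deligne extension of $\ms{M}$ with residue eigenvalues in $[0, 1)$. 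The residues are $(\mu, \mu, 1 - 2\mu)$ for $\mu < 1/2$ and $(\mu, \mu, 2 - 2\mu)$ for $\mu > 1/2$, so $\deg \mc{E} = -1$ or $-2$ respectively; since $\Omega^1_{\mb{P}^1}(\log D)$ has degree $1$ on $\mb{P}^1$, we obtain $h^0 = 1$ if $\mu < 1/2$ and $h^0 = 0$ if $\mu > 1/2$. On the other hand, the condition $(-1)^{\lfloor 2\mu\rfloor} = (-1)^{\mu_1 - \mu_2}$ (where $\mu_1 - \mu_2 = 0$) holds iff $\mu < 1/2$, so the two dichotomies match.

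The main technical obstacle will be the identification of $F_{-1}H^0(\pi_* j_!\ms{M})$ with the stated global sections group, which amounts to unpacking the $V$-filtration conventions in Saito's formula \eqref{open embedding formula} in this rank-one case.
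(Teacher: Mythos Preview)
Your proof is correct and follows essentially the same approach as the paper: reduce to the case $\mu_1 = \mu_2$, identify $F_{-1}H^0(\pi_*j_!\ms{M}) = H^0(\mb{P}^1, F_0 j_*\ms{M} \otimes \Omega^1_{\mb{P}^1})$ as sections of an explicit line bundle on $\mb{P}^1$, and read off the answer from its degree. The only cosmetic differences are that the paper normalizes to $\mu_i \in (-1,0)$ rather than $(0,1)$ and writes $F_0 j_*\ms{M}$ directly as $\mc{O}(-\lfloor \mu_1+\mu_2\rfloor\,\infty)\cdot m$ via the $V$-filtration instead of passing through the Deligne extension and log forms (your identification $F_0 j_*\ms{M}\otimes\Omega^1 \cong \mc{E}\otimes\Omega^1(\log D)$ is exactly the statement that $F_0 j_*\ms{M} = V^{>-1}j_*\ms{M} = \mc{E}(D)$, which holds here since the monodromy is nontrivial at every point of $D$).
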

\begin{proof}
The conditions on $\mu_1, \mu_2$ imply that $\ms{M}$ has non-trivial monodromy around each puncture $a, b, \infty$, so $j_!\ms{M} = j_*\ms{M}$ as claimed.

Next, observe that since the local system $\ms{M}$ is non-trivial, the space $H^{-1}(\pi_* j_*\ms{M})$ of flat sections is zero. Since $j_*\ms{M} = j_{!}\ms{M}$, we have $(j_*\ms{M})^h \cong j_*\ms{M}$ as $\ms{D}$-modules, so we have
\[ H^1(\pi_* j_*\ms{M}) = H^{-1}(\pi_*j_*\ms{M})^h = 0\]
also. Moreover, since $j_*\ms{M}$ is a pure Hodge module of weight $1$, the mixed Hodge structure $H^0(\pi_*j_*\ms{M})$ is also pure of weight $1$, so it remains to compute its Hodge filtration.

We can assume without loss of generality that $-1 < \mu_i < 0$, since modifying either of the $\mu_i$ by an integer does not change the $\ms{D}$-module $\ms{M}$ or any of the conditions in the statement. Note that this assumption implies $\mu_1 = \mu_2$. By \eqref{open embedding formula}, the Hodge filtration $F_{\bigcdot}$ on $j_*\ms{M}$ is given in terms of the $V$-filtration by
\[ F_p j_*\ms{M} = \begin{cases} 0, & \mbox{if $p < 0$}, \\ V^{>-p -1} j_*\ms{M}, & \mbox{if $p \geq 0$}.\end{cases}\]
Writing $w = z^{-1}$, we have $(z - a)^{\mu_1}(z - b)^{\mu_2} = (1 - aw)^{\mu_1}(1 - bw)^{\mu_2}w^{-\mu_1 - \mu_2}$, so we deduce
\[ F_p j_*\ms{M} = \mc{O}(pa + pb + (p - \lfloor \mu_1 + \mu_2 \rfloor)\infty)(z - a)^{\mu_1}(z - b)^{\mu_2}  \quad \mbox{for $p \geq 0$}. \]
The Hodge filtration on $H^0(\pi_* j_*\ms{M})$ is given by
\[
 F_p H^0(\pi_* j_*\ms{M}) = \mb{H}^0(\mb{P}^1, [F_p j_*\ms{M} \to F_{p + 1} j_*\ms{M} \otimes \Omega_{\mb{P}^1}^1]).
\]
For $p \geq 0$, this is 
\begin{align*}
F_pH^0(\pi_*j_*\ms{M})&= \coker(H^0(\mb{P}^1, \mc{O}(3p - \lfloor \mu_1 + \mu_2 \rfloor)) \\
& \qquad \qquad \to H^0(\mb{P}^1, \mc{O}(3p - \lfloor \mu_1 + \mu_2 \rfloor + 1)) \cong \mb{C},
\end{align*}
since $\lfloor \mu_1 + \mu_2 \rfloor \in \{-1, -2\}$. For $p = -1$, we have
\[ F_{-1} H^0(\pi_*j_*\ms{M}) = H^0(\mb{P}^1, \mc{O}(- \lfloor \mu_1 + \mu_2\rfloor - 2)) = \begin{cases} \mb{C}, & \mbox{if $-2 < \mu_1 + \mu_2 < -1$,} \\ 0, & \mbox{if $-1 < \mu_1 + \mu_2 < 0$,} \end{cases}\]
and for $p < -1$ clearly $F_pH^0(\pi_*j_*\ms{M}) = 0$. Since $H^0(\pi_*j_*\ms{M})$ is a Hodge structure of weight $1$, the conclusion of the lemma now follows.
\end{proof}

\subsection{The mixed change of basis matrix} \label{subsec:lusztig-vogan}

In this subsection, we present a small tweak of the results of \cite{LV} and \cite{ic3} adapted to our context, which give an algorithm for computing the mixed change of basis matrix $\LVM^m$.

Consider the mixed $\oK$-groups
\[ \oK^m(\mhm_\lambda(K \bslash \ms{B})) := \mb{Z}[u^{\pm \frac{1}{2}}] \otimes_{\mb{Z}[t_1^{\pm 1}, t_2^{\pm 1}]} \oK(\mhm_\lambda(K \bslash \ms{B})) \]
with $t_1$ and $t_2$ sent to $u^{\frac{1}{2}}$, and the Hecke functors
\[ \mb{T}_{s_\alpha} = \mb{T}_{s_\alpha}^\lambda \colon \oK^m(\mhm_\lambda(K \bslash \ms{B})) \to \oK^m(\mhm_{s_\alpha \lambda}(K \bslash \ms{B})) \]
for each simple root $\alpha$. Suppose we are given an abelian group $A$ with an element $u^{\frac{1}{2}}$ of infinite order and an involution $D \colon A \to A$ sending $u^{\frac{1}{2}}$ to $u^{-\frac{1}{2}}$. Set
\[ M_\lambda = \mb{Z}[A] \otimes_{\mb{Z}[u^{\pm \frac{1}{2}}]} \oK^m(\mhm_\lambda(K \bslash \ms{B}))\]
and write
\[ B_\gamma = u^{\frac{1}{2}(\ell_I(\gamma) - \ell(\gamma) - \dim H)}[j_!\gamma] \in M_\lambda,\]
where we recall from Remark \ref{rmk:weights} that the Hodge module $\gamma$ has weight $\ell(\gamma) + \dim H$ according to our conventions. We also consider the closure partial order $\gamma' = (Q', \gamma') < \gamma = (Q, \gamma)$ if $Q' \subset \bar{Q} - Q$.

\begin{prop} \label{prop:our duality}
In the setup above, there exists a unique system of $\mb{Z}$-linear maps $D \colon M_\lambda \to M_\lambda$ with matrix
\[ D(B_\gamma) = u^{-\ell_I(\gamma)}\sum_{\gamma'} \mbf{R}_{\gamma', \gamma} B_{\gamma'}, \quad \mbf{R}_{\gamma', \gamma} \in \mb{Z}[A] \]
such that
\begin{enumerate}
\item \label{itm:our duality 1} $D(a m) = D(a) D(m)$ for $a \in A$, $m \in M_\lambda$,
\item \label{itm:our duality 2} if $\alpha$ is a simple root then
\[ D(\mb{T}_{s_\alpha}m) = \begin{cases} u^{-1}(\mb{T}_{s_\alpha} + (u - 1)t_{s_\alpha \lambda - \lambda})D(m), & \mbox{if $\alpha$ is integral}, \\ u^{-1}\mb{T}_{s_\alpha}D(m), &\mbox{otherwise,}\end{cases} \]
\item \label{itm:our duality 2a} if $\mu \in \mb{X}^*(H)$, then $D(t_\mu m) = t_\mu D(m)$,
\item \label{itm:our duality 3} $\mbf{R}_{\gamma, \gamma} = 1$, and
\item \label{itm:our duality 4} $\mbf{R}_{\gamma', \gamma} = 0$ unless $\gamma' \leq \gamma$ in the closure partial order.
\end{enumerate}
Moreover, the $\mbf{R}_{\gamma', \gamma}$ are polynomials in $u$ of degree at most $\ell_I(\gamma) - \ell_I(\gamma')$.
\end{prop}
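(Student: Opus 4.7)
The plan is to adapt the classical Kazhdan--Lusztig--Vogan bar involution construction \cite{LV, ic3} to the present setting, the novelty being that we must accommodate translations, non-integral roots, and the Hermitian dual in place of the Verdier dual.

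For uniqueness, I would argue by induction on the closure partial order on pairs $(Q, \gamma)$. Conditions \eqref{itm:our duality 3} and \eqref{itm:our duality 4} give
\[ D(B_\gamma) = u^{-\ell_I(\gamma)}\Bigl(B_\gamma + \sum_{\gamma' < \gamma} \mbf{R}_{\gamma', \gamma} B_{\gamma'}\Bigr), \]
so for $\gamma$ supported on a closed $K$-orbit this already forces $D(B_\gamma) = u^{-\ell_I(\gamma)} B_\gamma$. For a non-closed orbit $Q$, pick a simple root $\alpha$ such that some $\mb{P}^1$-line $Z$ of type $\alpha$ meets $Q$ in an open subset of $Z$; by Proposition \ref{prop:hecke action}, in cases \eqref{itm:hecke action 2}--\eqref{itm:hecke action 5} the basis element $[j_!\gamma]$ appears with a nonzero coefficient in $\mb{T}_{s_\alpha}[j_!\gamma_0]$ for some $\gamma_0$ supported on a strictly smaller orbit. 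Condition \eqref{itm:our duality 2}, together with the inductively known value of $D(B_{\gamma_0})$ and the $\mb{Z}[A]$-linearity \eqref{itm:our duality 1}, \eqref{itm:our duality 2a}, then pins down $D(B_\gamma)$ up to the lower-order terms, which are in turn fixed by \eqref{itm:our duality 3} and \eqref{itm:our duality 4}.

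For existence, I would construct $D$ explicitly via the inductive recipe above. The nontrivial content is the independence of the result from the choice of $\alpha$ and $\gamma_0$: this amounts to the statement that the ``dual'' operators $u^{-1}(\mb{T}_{s_\alpha} + (u-1)t_{s_\alpha \lambda - \lambda})$ (respectively $u^{-1}\mb{T}_{s_\alpha}$ in the non-integral case) satisfy the same quadratic and braid relations as $\mb{T}_{s_\alpha}$ itself. This is a formal consequence of Proposition \ref{prop:hecke relations}, but verifying it case-by-case is the essential combinatorial content, essentially repeating the calculations of \cite[\S 3]{LV} and \cite[\S 5--6]{ic3} with appropriate adjustments for translations and non-integral roots. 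The polynomiality of $\mbf{R}_{\gamma', \gamma}$ and the degree bound $\deg \mbf{R}_{\gamma', \gamma} \leq \ell_I(\gamma) - \ell_I(\gamma')$ then follow by tracking degrees through the recursion using Proposition \ref{prop:hecke action} and the definition of $B_\gamma$.

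The main obstacle will be handling the type II configuration (Proposition \ref{prop:orbit geometries} \eqref{itm:orbit geometries 3:II}), where distinct local systems on the same orbit mix under the Hecke action, as well as the non-integral cases \eqref{itm:hecke action 3} and \eqref{itm:hecke action 7}, which have no analogue in the original \cite{LV} treatment. These are the places where the bookkeeping is most delicate; the rest of the argument is essentially a mechanical translation from the Kazhdan--Lusztig--Vogan template.
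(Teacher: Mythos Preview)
Your existence argument is much harder than it needs to be. The paper simply observes that Hermitian duality $(-)^h$ on $\oK^m(\mhm_\lambda(K \backslash \ms{B}))$, extended $\mb{Z}[A]$-semilinearly via condition \eqref{itm:our duality 1}, satisfies all the required properties: condition \eqref{itm:our duality 2} is Proposition \ref{prop:hecke relations} \eqref{eq:hecke relation 3}, and conditions \eqref{itm:our duality 3}--\eqref{itm:our duality 4} follow from the fact that $(j_!\gamma)^h = j_*\gamma$ together with the standard stratification argument. There is no need to construct $D$ inductively or to verify independence of choices via braid relations; existence is essentially free.

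Your uniqueness argument has a genuine gap. You want, for every $\gamma$ on a non-closed orbit $Q$, a simple root $\alpha$ and a local system $\gamma_0$ on a strictly smaller orbit with $[j_!\gamma]$ appearing in $\mb{T}_{s_\alpha}[j_!\gamma_0]$. But this fails precisely when every simple root $\alpha$ for which $Q$ is open in an $\alpha$-line is either real non-integral (Proposition \ref{prop:hecke action} \eqref{itm:hecke action 7}, where there are no local systems on the closed orbit at all) or real integral with $\Lambda(m_\alpha) \neq (-1)^{\langle \lambda, \check\alpha\rangle}$ (case \eqref{itm:hecke action 6}, where $\gamma$ does not appear in the image of any smaller-orbit class). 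Already for $G = SL_2$, $K = SO_2$, $\lambda = 0$, the sign local system on the open orbit is such an example.

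The paper handles this by a different inductive scheme. When no complex simple root with $\theta_Q\alpha \in \Phi_-$ exists, one does \emph{not} attempt to express $B_\gamma$ via a smaller orbit. Instead, one fixes $\gamma$ and runs a \emph{descending} induction on $\ell(\gamma')$: given $\gamma' < \gamma$, one finds a simple root $\alpha$ that is real for $Q$ but non-compact imaginary or complex (with $\theta_{Q'}\alpha \in \Phi_+$) for $Q'$, and uses the Hecke relation at $\alpha$ to determine $\mbf{R}_{\gamma', \gamma}$ from the already-known $\mbf{R}_{\delta', \delta}$ with $\ell(\delta') > \ell(\gamma')$. In the non-integral case this reduces to $\mbf{R}_{\gamma', \gamma} = \mbf{R}_{\delta', \delta}$ for a pair at the same $\gamma$-level but higher $\gamma'$-level; in the integral case one invokes the argument of \cite[Lemma 6.8]{ic3}. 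The degree bound then follows from Lemma \ref{lem:our duality} and the grading introduced just before it.
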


We give a proof of Proposition \ref{prop:our duality} at the end of the subsection. We simply note here that the map $D$ is given by Hermitian duality
\[ (-)^h \colon \oK^m(\mhm_\lambda(K \bslash \ms{B})) \to \oK^m(\mhm_{\lambda}(K \bslash \ms{B}))\]
extended via \eqref{itm:our duality 1}.

Given Proposition \ref{prop:our duality}, we deduce the following by the usual Kazhdan-Lusztig method. Recall that $\LVM^h_{\gamma', \gamma}(t_1, t_2) \in \mb{Z}[t_1^{\pm 1}, t_2^{\pm 1}]$ is the change of basis matrix defined by the relation
\[ [j_!\gamma] = \sum_{\gamma'} \LVM_{\gamma', \gamma}^h[j_{!*}\gamma'] \quad \mbox{in $\oK(\mhm_0(K \bslash \ms{B}))$}\]
and
\[ \LVM^m(u) := \LVM^h(u^{\frac{1}{2}}, u^{\frac{1}{2}}) \in \mb{Z}[u^{\pm \frac{1}{2}}].\]

\begin{prop} \label{prop:our poly}
Assume in the setting of Proposition \ref{prop:our duality} that we are given a multiplicative norm
\[ | \cdot | \colon A \to \mb{R}_{> 0} \]
satisfying $|u| > 1$ and $|D(a)| = |a|^{-1}$. Then there exists a unique system of bases $\{C_\gamma\}$ for $M_\lambda$, with
\[ B_\gamma = \sum_{\gamma'} \LVM_{\gamma', \gamma} C_{\gamma'}, \quad \LVM_{\gamma', \gamma} \in \mb{Z}[A], \]
such that
\begin{enumerate}
\item $D(C_\gamma) = u^{-\ell_I(\gamma)}C_{\gamma'}$,
\item $\LVM_{\gamma, \gamma} = 1$,
\item $\LVM_{\gamma', \gamma} = 0$ unless $\gamma' \leq \gamma$ in the closure partial order, and
\item \label{itm:our poly 4} if $\gamma' \neq \gamma$, then $\LVM_{\gamma', \gamma}$ is a linear combination of elements of $A$ of norms at most $|u|^{(\ell_I(\gamma) - \ell_I(\gamma') - 1)/2}$.
\end{enumerate}
Moreover, the elements $\LVM_{\gamma', \gamma}$ are computable polynomials in $u$, of degree at most $(\ell_I(\gamma) - \ell_I(\gamma') - 1)/2$ for $\gamma \neq \gamma'$, satisfying
\begin{equation} \label{eq:non-integral mixed}
 \LVM^m_{\gamma', \gamma} = u^{(\ell(\gamma) - \ell_I(\gamma)) - (\ell(\gamma') - \ell_I(\gamma'))/2}\LVM_{\gamma', \gamma}.
\end{equation}
\end{prop}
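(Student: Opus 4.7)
The plan is a standard Kazhdan--Lusztig-style induction along the closure partial order, with Proposition~\ref{prop:our duality} as input. Base case: for a closed orbit $\gamma$, Proposition~\ref{prop:our duality}(\ref{itm:our duality 3})--(\ref{itm:our duality 4}) give $D(B_\gamma) = u^{-\ell_I(\gamma)} B_\gamma$, so $C_\gamma := B_\gamma$ works. Inductive step: assume $C_{\gamma''}$ constructed for all $\gamma'' < \gamma$. Using the induction hypothesis to expand $u^{\ell_I(\gamma)} D(B_\gamma)$ in the $C$-basis, I obtain
\[ u^{\ell_I(\gamma)} D(B_\gamma) = B_\gamma + \sum_{\gamma' < \gamma} q_{\gamma', \gamma}\, C_{\gamma'}, \qquad q_{\gamma', \gamma} \in \mb{Z}[A]. \]
A candidate $C_\gamma := B_\gamma - \sum_{\gamma' < \gamma} p_{\gamma', \gamma}\, C_{\gamma'}$ satisfies $D(C_\gamma) = u^{-\ell_I(\gamma)} C_\gamma$ if and only if each $p_{\gamma', \gamma}$ solves the functional equation $p - u^{d}\, D(p) = -q_{\gamma', \gamma}$ with $d := \ell_I(\gamma) - \ell_I(\gamma')$, and the consistency condition $q_{\gamma', \gamma} + u^{d} D(q_{\gamma', \gamma}) = 0$ needed for solvability follows automatically from $D^2 = \id$ applied to the displayed expansion.

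The crux is then a version of the classical Kazhdan--Lusztig uniqueness lemma in the abstract $(A, D, |\cdot|)$ setting: for any $q \in \mb{Z}[A]$ satisfying $q + u^d D(q) = 0$, there is a unique $p \in \mb{Z}[A]$ with $|p| \leq |u|^{(d-1)/2}$ such that $p - u^d D(p) = -q$. This is proved by splitting $A$ into orbits under $a \mapsto u^d D(a)$: non-fixed orbits contain exactly one element of norm strictly less than $|u|^{d/2}$, while fixed orbits force the corresponding coefficient in $q$ to vanish by $\mb{Z}$-linearity. Applying this to each $\gamma' < \gamma$ in turn uniquely produces the $p_{\gamma', \gamma} =: \LVM_{\gamma', \gamma}$ and completes the induction. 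The polynomiality of $\LVM_{\gamma', \gamma}$ and the degree bound $(\ell_I(\gamma) - \ell_I(\gamma') - 1)/2$ follow by routine tracking through the recursion, using the degree bound on $\mbf{R}$ from Proposition~\ref{prop:our duality}.

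To identify the $\LVM_{\gamma', \gamma}$ so produced with the classical Lusztig--Vogan polynomials of \cite{LV, ic3}, I would specialize to $A = \mb{Z}[u^{\pm 1/2}]$ and observe that the class $u^{(\ell_I(\gamma) - \ell(\gamma) - \dim H)/2}\,[j_{!*}\gamma]$ is self-dual for $D$. Indeed, since $j_{!*}\gamma$ is a pure polarized Hodge module of weight $\ell(\gamma) + \dim H$ (Remark~\ref{rmk:weights}), Hermitian duality gives $D([j_{!*}\gamma]) = u^{-\ell(\gamma) - \dim H}\,[j_{!*}\gamma]$, and the chosen power of $u$ is calibrated precisely so that $D$ multiplies the whole class by $u^{-\ell_I(\gamma)}$. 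The required norm bound on the coefficients in its $B$-basis expansion follows from Hodge-theoretic purity (multiplicities of $j_{!*}\gamma'$ in $j_!\gamma$ are controlled by the weight filtration). By the uniqueness established above, this class must equal $C_\gamma$, and substituting into $[j_!\gamma] = u^{-(\ell_I(\gamma) - \ell(\gamma) - \dim H)/2} \sum \LVM_{\gamma', \gamma}\, C_{\gamma'}$ gives (\ref{eq:non-integral mixed}). Independently, Proposition~\ref{prop:hecke action}(\ref{itm:hecke action 3}), (\ref{itm:hecke action 7}) show the non-integral Hecke operators act on $\{B_\gamma\}$ by simple permutations and powers of $u$, so our recursion reduces, up to the same explicit shift, to the Kazhdan--Lusztig recursion for the $\lambda$-integral endoscopic group, whose solution is the classical $\LVM$.

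The main obstacle lies in the identification step: several sign and shift conventions must be tracked carefully, as we use Hermitian duality rather than Verdier duality, intermediate rather than $*$-extensions for the standard basis, and consider the matrix of multiplicity polynomials rather than its inverse. Once the conventions are aligned, both the Kazhdan--Lusztig uniqueness lemma and the reduction to the integral endoscopic case are entirely standard given Propositions \ref{prop:our duality} and \ref{prop:hecke action}.
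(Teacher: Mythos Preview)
Your approach is essentially the same as the paper's: both invoke the standard Kazhdan--Lusztig uniqueness argument for the recursion, and both establish existence (and hence \eqref{eq:non-integral mixed}) by exhibiting $C_\gamma = u^{\frac{1}{2}(\ell_I(\gamma) - \ell(\gamma) - \dim H)}[j_{!*}\gamma]$ and checking self-duality plus the norm bound via purity of weights. The only difference is that you spell out the functional-equation mechanics of the KL yoga explicitly, whereas the paper simply cites \cite{KL} and goes directly to the geometric candidate for $C_\gamma$.
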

\begin{proof}
Given the defining properties of $D$ from Proposition \ref{prop:our duality}, uniqueness and polynomiality follow by the usual Kazhdan-Lusztig yoga \cite{KL}. For existence, observe that the elements
\[ C_\gamma = u^{\frac{1}{2}(\ell_I(\gamma) - \ell(\gamma) - \dim H)}[j_{!*}\gamma] \]
have the desired properties; the only difficult property \eqref{itm:our poly 4} is just the statement that $j_!$ decreases weights and that $j_{!*}$ is its top weight quotient. This also proves \eqref{eq:non-integral mixed}.
\end{proof} 

A posteriori, given \eqref{eq:non-integral mixed} and Proposition \ref{prop:mixed to LV}, the polynomials $\LVM_{\gamma', \gamma}$ are precisely the Lusztig-Vogan multiplicity polynomials.

We now turn to the proof of Proposition \ref{prop:our duality}. To clarify the argument, let us endow the $\mb{Z}[A]$-module $M_\lambda$ with the $A$-grading given by
\[ \deg(a) = a, \; a \in A \quad \text{and} \quad \deg B_\gamma = u^{\ell_I(\gamma)}.\]
The degree bound on $\mbf{R}_{\gamma', \gamma}$ in the statement is equivalent to the claim that, for all $\gamma$, $D(B_\gamma)$ is a linear combination of terms of degree $u^n$ for $n \leq 0$ with respect to this grading. We also make a note of the following lemma, which follows immediately from the explicit equations of Proposition \ref{prop:hecke action}.

\begin{lem} \label{lem:our duality}
Fix $\lambda \in \mf{h}^*_\mb{R}$ and let $\alpha$ be a $\lambda$-integral (resp., non-integral) simple root. Then the operator $\mb{T}_{s_\alpha}^\lambda$ (resp., $u^{-\frac{1}{2}}\mb{T}_{s_\alpha}^\lambda$) from $M_\lambda$ to $M_{s_\alpha \lambda}$ is a linear combination of terms of degrees $u^n$ for integers $n \leq 1$ (resp., homogeneous of degree $0$) preserving the $\mb{Z}[u]$ span of the $B_\gamma$.
\end{lem}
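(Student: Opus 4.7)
The plan is a direct case-by-case verification using the explicit formulas of Proposition~\ref{prop:hecke action}. The first step is to rewrite each of the seven identities in Proposition~\ref{prop:hecke action} in the $B$-basis rather than the $[j_!\gamma]$-basis. Setting $d(\gamma) = \tfrac{1}{2}(\ell(\gamma) + \dim H - \ell_I(\gamma))$ so that $[j_!\gamma] = u^{d(\gamma)}B_\gamma$, each identity $\mb{T}_{s_\alpha}[j_!\gamma] = \sum c_i [j_!\gamma_i']$ becomes
\[ \mb{T}_{s_\alpha}^\lambda B_\gamma \;=\; \sum_i c_i \, u^{d(\gamma_i') - d(\gamma)} \, B_{\gamma_i'}. \]
The task is to check, in each case, that (a) the products $c_i u^{d(\gamma_i') - d(\gamma)}$ lie in $\mb{Z}[u]$ (preservation of the $\mb{Z}[u]$-span) and (b) each such product, viewed as an element of $A$, is a power $u^{n_i}$ of $u$ with $n_i \leq \ell_I(\gamma_i') + 1 - \ell_I(\gamma) + \ell_I(\gamma)$ (equivalently, the operator shifts the $A$-degree by $u^{n}$ for $n \le 1$); for non-integral $\alpha$, the analogous degree shift must be exactly $u^{1/2}$ so that $u^{-1/2}\mb{T}_{s_\alpha}^\lambda$ is homogeneous of degree $0$.

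For (a), the coefficients appearing in Proposition~\ref{prop:hecke action} are, in the integral cases (1), (2), (4), (5), (6), polynomials in $u$ of degree at most $1$ (with possibly a translation functor $t_{s_\alpha\lambda-\lambda}$, which does not change $\ell_I$), while in the non-integral case (3) the coefficients are $1$ or $u$ and in (7) they are $t_1$ or $t_2$, which in the mixed Grothendieck group $M_\lambda$ become $u^{1/2}$. So the coefficients are manifestly powers of $u^{1/2}$. The content of (a) is then that the conversion factor $u^{d(\gamma) - d(\gamma_i')}$ has the right parity. This is a bookkeeping exercise using Proposition~\ref{prop:orbit geometries}: in cases (2), (4), (5) one has $\ell(\gamma) - \ell(\gamma_i') = \pm 1$ and $\ell_I(\gamma) - \ell_I(\gamma_i') = \pm 1$ as well, after one tracks the contributions of $\alpha$ (now integral and either complex or real) to the defining sum for $\ell_I$; in the on-orbit cases (1), (6), the differences vanish altogether; and in case (7), the single factor $t_i = u^{1/2}$ precisely absorbs the half-integer shift coming from the fact that the $\lambda$-integral and $s_\alpha\lambda$-integral root systems differ by the reflection $s_\alpha$.

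The degree bound (b) is then immediate: in the integral cases, the coefficients have degree at most $1$ as polynomials in $u$, so combined with the integer shifts from (a), the overall $A$-shift is bounded by $u^1$; in the non-integral cases, the shift is exactly $u^{1/2}$, matching the statement for $u^{-1/2}\mb{T}_{s_\alpha}^\lambda$. The main obstacle, if any, is the parity check in step (a): one needs to carefully track the contributions of $\alpha$, $\theta_Q\alpha$, and (in case (7)) $\theta_{Kx}\alpha$ to the integral length formula, using the explicit relations $\theta_{Ky} = s_\alpha\theta_{Kx}s_\alpha$ or $\theta_{y_\pm} = \theta_x s_\alpha$ from Proposition~\ref{prop:orbit geometries}. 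Once these parities are verified, the conclusion is mechanical.
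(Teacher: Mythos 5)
Your approach is exactly what the paper intends: the paper gives no proof at all beyond asserting that the lemma ``follows immediately from the explicit equations of Proposition~\ref{prop:hecke action},'' and your proposal simply carries out that verification. The change-of-basis reduction (writing $[j_!\gamma] = u^{d(\gamma)} B_\gamma$ with $d(\gamma) = \tfrac{1}{2}(\ell(\gamma) + \dim H - \ell_I(\gamma))$ and then reading off the coefficients and degree shifts from Proposition~\ref{prop:hecke action}) is the right computation, and the outcome in every case is that the $B$-coefficient lies in $\mathbb{Z}[u]$ and the $A$-degree shift is $u^n$ with $n\le 1$ (respectively, a pure $u^{1/2}$ in the non-integral cases, so that $u^{-1/2}\mathbb{T}_{s_\alpha}$ is homogeneous of degree $0$ and preserves the $\mathbb{Z}[u]$-span). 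The one blemish is the displayed inequality in item~(b), which as written reads $n_i \leq \ell_I(\gamma_i') + 1$; what you clearly mean, and what the parenthetical says, is $n_i + \ell_I(\gamma_i') - \ell_I(\gamma) \leq 1$. You also compress the $\ell_I$ bookkeeping into a claim that should be stated a bit more explicitly (for an integral simple $\alpha$, crossing a wall changes $\ell$ and $\ell_I$ by the same amount; for a non-integral $\alpha$, $\ell_I$ is unchanged while $\ell$ may change, which is precisely where the factor of $u^{1/2}$ is absorbed), but this is standard and your sketch is correct.
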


\begin{proof}[Proof of Proposition \ref{prop:our duality}]
Existence is clear, since we can take $D$ to be given by Hermitian duality on $K^m$ and extend by linearity and \eqref{itm:our duality 1}. The proof of uniqueness and the polynomiality of the $\mbf{R}_{\gamma', \gamma}$ is a slight variation on \cite[Lemma 6.8]{ic3}.

Fix a local system $\gamma$ on a $K$-orbit $Q$, and assume by induction that the $\mbf{R}_{\delta', \delta}$ are known polynomials satisfying the conditions of the proposition for $\ell(\delta) < \ell(\gamma)$. Assume first that there exists a positive simple complex root $\alpha \in \Phi_+$ such that $\theta_Q\alpha \in \Phi_-$. Then we are in the setting of Proposition \ref{prop:orbit geometries} \eqref{itm:orbit geometries 2}, so we may write
\[ B_\gamma = \begin{cases} \mb{T}_{s_\alpha}B_\delta, & \mbox{if $\alpha$ is integral}, \\ u^{-\frac{1}{2}}\mb{T}_{s_\alpha}B_\delta, & \mbox{if $\alpha$ is non-integral}\end{cases} \]
using Proposition \ref{prop:hecke action} \eqref{itm:hecke action 2} and \eqref{itm:hecke action 3}, where $\ell(\delta) = \ell(\gamma) - 1$.
We may therefore use relation \eqref{itm:our duality 2} to compute $D(B_\gamma)$ in terms of $D(B_\delta)$. The desired properties for $D(B_\gamma)$ clearly follow from those for $D(B_\delta)$ by Lemma \ref{lem:our duality}.

We may therefore assume that $\theta_Q\alpha \in \Phi_+$ for all complex simple roots $\alpha$. In this case, the set of $Q$-real roots determines a standard Levi subgroup, with associated partial flag variety $\ms{P}_Q$, such that the image $\pi_Q(Q) \subset \ms{P}_Q$ under the canonical fibration $\pi_Q \colon \ms{B} \to \ms{P}_{Q}$ is closed. Fix $\gamma' = (Q', \gamma') < \gamma$, and assume by descending induction on $\ell(\gamma')$ that the $\mbf{R}_{\delta', \delta}$ are known and satisfy the required conditions for $\ell(\delta) = \ell(\gamma)$ and $\ell(\delta') > \ell(\gamma')$. Then, since $Q' \subset \bar{Q} - Q$, there must therefore exist a simple root $\alpha$ such that $\alpha$ is real for $Q$ and either non-compact imaginary or complex for $Q'$ with $\theta_{Q'}\alpha \in \Phi_+$.

If $\alpha$ is non-integral, then $\alpha$ is necessarily complex for $Q'$, and we may use Proposition \ref{prop:hecke action} \eqref{itm:hecke action 3} and \eqref{itm:hecke action 7} to write
\[ B_{\gamma} = u^{-\frac{1}{2}}\mb{T}_{s_\alpha}B_\delta \quad \text{and} \quad B_{\gamma'} = u^{-\frac{1}{2}}\mb{T}_{s_\alpha}B_{\delta'} \]
where $\ell_I(\delta) = \ell_I(\gamma)$, $\ell(\delta) = \ell(\gamma)$, $\ell_I(\gamma') = \ell_I(\delta')$ and $\ell(\delta') = \ell(\gamma') + 1$. Moreover, $B_{\gamma'}$ does not appear in $\mb{T}_{s_\alpha} B_{\delta''}$ for any $\delta'' \neq \delta'$, so we have $\mbf{R}_{\gamma', \gamma} = \mbf{R}_{\delta', \delta}$, which satisfies the required conditions by induction.

If $\alpha$ is integral, then the proof of \cite[Lemma 6.8]{ic3} applies, so we are done.

\end{proof}

\subsection{Combinatorics for the Hodge filtration} \label{subsec:hodge combinatorics}

In this subsection, we complete the proof of Theorem \ref{thm:main theorem 1}. The key additional ingredient is the following.

\begin{prop} \label{prop:mixed to hodge}
The isomorphisms
\begin{align}
\mb{Z}[t_1^{\pm\frac{1}{4}}, t_2^{\pm\frac{1}{4}}] \otimes_{\mb{Z}[u^{\pm\frac{1}{2}}]} \oK^m(\mhm_\lambda(K \bslash \ms{B})) &\to \mb{Z}[t_1^{\pm\frac{1}{4}}, t_2^{\pm\frac{1}{4}}] \otimes_{\mb{Z}[t_1^{\pm 1}, t_2^{\pm 1}]} \oK(\mhm_\lambda(K \bslash \ms{B}))  \nonumber \\
[j_!\gamma] &\mapsto (t_1t_2^{-1})^{\frac{1}{2}\ell_H(\gamma)}[j_!\gamma] \label{eq:mixed to hodge}
\end{align}
for $\lambda \in \mf{h}^*_\mb{R}$ commute with the action of the Hecke functors $\mb{T}_{s_\alpha}^\lambda$ on either side, where $\ell_H(\gamma)$ is the Hodge shift of Definition \ref{defn:hodge shift}.
\end{prop}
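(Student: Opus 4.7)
The proof proceeds by direct case-by-case verification using the explicit Hecke formulas of Proposition~\ref{prop:hecke action}. Decomposing $\mb{T}_{s_\alpha}[j_!\gamma] = \sum_i c_i(t_1, t_2)\,a_i\,[j_!\gamma_i]$, with $a_i$ a translation character, the commutativity $\Phi_{s_\alpha\lambda}\circ\mb{T}_{s_\alpha} = \mb{T}_{s_\alpha}\circ\Phi_\lambda$ reduces, summand by summand, to the identity
\[ c_i^{h}(t_1, t_2) = c_i^{m}(t_1 t_2)\cdot (t_1 t_2^{-1})^{(\ell_H(a_i\gamma_i)-\ell_H(\gamma))/2}. \]
Inspection of Proposition~\ref{prop:hecke action} shows that in cases \eqref{itm:hecke action 1}--\eqref{itm:hecke action 6} the coefficients $c_i^{h}$ are Laurent polynomials in $u = t_1 t_2$ alone, so the required identity reduces to $\ell_H(a_i\gamma_i) = \ell_H(\gamma)$; only case~\eqref{itm:hecke action 7} contains a bare $t_1$ or $t_2$, and there $\ell_H$ must shift by exactly $\pm 1$.

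The simplest observation is that $\ell_H$ is invariant under the integral translation functor $t_\mu$ for $\mu \in \mb{X}^*(H)$: for each real non-integral $\beta$, the two sign changes $(-1)^{\langle\mu,\check\beta\rangle}$, one from the shift of $\lfloor\langle\lambda+\mu+\rho_\mb{R},\check\beta\rangle\rfloor$ by the integer $\langle\mu,\check\beta\rangle$ and the other from $\mu|_{H^{\theta_Q}}(m_\beta) = (-1)^{\langle\mu,\check\beta\rangle}$, cancel exactly. This handles every $t_{s_\alpha\lambda - \lambda}$ factor in the integral cases, including case~\eqref{itm:hecke action 6}. In cases~\eqref{itm:hecke action 2} and \eqref{itm:hecke action 3}, where the orbit changes from $Kx$ to $Ky = Ks_\alpha x$ with $\theta_y = s_\alpha \theta_x s_\alpha$ (and $\alpha$ complex for both), the reflection $\beta \mapsto s_\alpha\beta$ bijects positive $x$-real non-integral roots with positive $y$-real non-integral roots; using $\rho_\mb{R}^y = s_\alpha\rho_\mb{R}^x$, $\langle s_\alpha\lambda + \rho_\mb{R}^y, \check\beta\rangle = \langle\lambda+\rho_\mb{R}^x,\check{s_\alpha\beta}\rangle$ and $(s_\alpha\Lambda)(m_\beta) = \Lambda(m_{s_\alpha\beta})$, one matches summands to obtain $\ell_H(\gamma) = \ell_H(\gamma')$.

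Cases~\eqref{itm:hecke action 4} and \eqref{itm:hecke action 5} are more delicate: $\alpha$ is $x$-real integral and $y_\pm$-noncompact imaginary, and the real root systems of $\theta_x$ and $\theta_{y_\pm} = \theta_x s_\alpha$ differ genuinely. Since $\alpha$ is integral it contributes to neither sum. Splitting positive roots into those perpendicular to $\alpha$ (simultaneously $x$-real and $y_\pm$-real, contributing identically) and the $s_\alpha$-pairs $\{\beta, s_\alpha\beta\}$, the extension condition $\Lambda(m_\alpha) = (-1)^{\langle\lambda,\check\alpha\rangle}$ characterising cases~4 and 5 forces the pair contributions on the $x$-side and on the $y_\pm$-side (involving the ``type C'' roots, complex for $x$ but real for $y_\pm$, isolated in the proof of Proposition~\ref{prop:orbit geometries}) to match, giving $\ell_H(\gamma) = \ell_H(\gamma'_\pm)$. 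Finally, in case~\eqref{itm:hecke action 7} both $\gamma$ and $\gamma' = \mc{O}(s_\alpha\lambda, s_\alpha\Lambda\otimes\alpha, x)$ lie on $Kx$; the $s_\alpha$-bijection of positive $x$-real non-integral roots $\beta \neq \alpha$ pairs the $\beta$-summand in $\ell_H(\gamma)$ with the $s_\alpha\beta$-summand in $\ell_H(\gamma')$ (using $\alpha(m_{s_\alpha\beta}) = (-1)^{\langle\alpha,\check\beta\rangle}$ to absorb the twist by $\alpha$), so only the $\alpha$-contribution can change. Using the identity $\lfloor -r + 1 \rfloor = -\lfloor r \rfloor$ for $r = \langle\lambda,\check\alpha\rangle \notin \mb{Z}$, together with $(s_\alpha\Lambda\otimes\alpha)(m_\alpha) = \Lambda(m_\alpha)$, the $\alpha$-contribution flips sign, yielding $\ell_H(\gamma')-\ell_H(\gamma) = (-1)^{\lfloor r \rfloor}\Lambda(m_\alpha) = \pm 1$, with the sign exactly matching the $t_1$ or $t_2$ factor in Proposition~\ref{prop:hecke action} case~\eqref{itm:hecke action 7}.

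The principal obstacle is the real-to-noncompact-imaginary transition in cases~\eqref{itm:hecke action 4} and \eqref{itm:hecke action 5}: matching the a priori different real-root sums requires careful use of the extension condition on $\Lambda(m_\alpha)$ together with the pair structure of roots $\beta$ with $\langle\beta,\check\alpha\rangle \neq 0$. Once this is established, all remaining cases follow from either integral translation invariance, the complex-$\alpha$ bijection via $s_\alpha$, or the direct single-root computation of case~\eqref{itm:hecke action 7}.
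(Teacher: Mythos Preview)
Your overall strategy matches the paper's: reduce to checking, case by case via Proposition~\ref{prop:hecke action}, that each coefficient $a_{\gamma',\gamma}$ becomes a Laurent polynomial in $u$ after multiplication by $(t_1t_2^{-1})^{\frac{1}{2}(\ell_H(\gamma)-\ell_H(\gamma'))}$. Your treatment of the translation functors, of cases \eqref{itm:hecke action 1}--\eqref{itm:hecke action 3} and \eqref{itm:hecke action 6}, and your computation in case \eqref{itm:hecke action 7} are all correct and essentially coincide with the paper's.

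The genuine gap is in cases \eqref{itm:hecke action 4} and \eqref{itm:hecke action 5}. Two points fail. First, your claim that there are roots ``complex for $x$ but real for $y_\pm$'' is backwards: since $\theta_{y_\pm}=\theta_x s_\alpha$ and $\theta_x$, $s_\alpha$ commute, a root $\beta$ is $\theta_{y_\pm}$-real if and only if it is $\theta_x$-real \emph{and} $\langle\alpha,\check\beta\rangle=0$; so the $y_\pm$-real roots are a proper subset of the $x$-real roots, with no new ones arising from $x$-complex roots. Second, even for the roots $\beta$ perpendicular to $\alpha$, the summands do not obviously agree, because $\rho_\mb{R}(x)\neq\rho_\mb{R}(y_\pm)$: one has $\rho_\mb{R}(x)-\rho_\mb{R}(y_\pm)=\tfrac{1}{2}\sum_{\gamma\text{ $\theta_x$-real},\,\langle\alpha,\check\gamma\rangle\neq 0}\gamma$, and $\langle\rho_\mb{R}(x)-\rho_\mb{R}(y_\pm),\check\beta\rangle$ need not be an even integer a priori. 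The paper isolates exactly this point as Lemma~\ref{lem:mixed to hodge}, whose proof groups $\theta_x$-real non-integral positive coroots into equivalence classes modulo $\mb{Z}\check\alpha$: classes of size $2$ or $4$ contribute zero to $\ell_H(\gamma)$ (here the extension condition $\Lambda(m_\alpha)=(-1)^{\langle\lambda,\check\alpha\rangle}$ is used to make the $s_\alpha$-paired terms cancel), while classes of size $1$ or $3$ contain a unique $\theta_{y_\pm}$-real coroot whose contribution to $\ell_H(\gamma')$ is then matched by analysing $\langle\rho_\mb{R}(x)-\rho_\mb{R}(y_\pm),\check\beta\rangle\bmod 2$ via the rank-$2$ root subsystem spanned by $\alpha$ and $\beta$ (necessarily of type $A_1\times A_1$, $B_2$ or $G_2$). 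Your sketch does not supply this argument.
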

\begin{proof}
Fix a simple root $\alpha$ and a twisted local system $\gamma$, and write
\[ \mb{T}_{s_\alpha}[j_!\gamma] = \sum_{\gamma'} a_{\gamma', \gamma}[j_!\gamma'].\]
We need to show that
\begin{equation} \label{eq:mixed to hodge 2}
(t_1t_2^{-1})^{\frac{1}{2}(\ell_H(\gamma) - \ell_H(\gamma'))}a_{\gamma', \gamma} \in \mb{Z}[u^{\pm \frac{1}{2}}]
\end{equation}
for all $\gamma'$. This can be checked case-by-case from the formulas in Proposition \ref{prop:hecke action}. Cases \eqref{itm:hecke action 1}, \eqref{itm:hecke action 2}, \eqref{itm:hecke action 3} and \eqref{itm:hecke action 6} are straightforward, so we omit them.

Case \eqref{itm:hecke action 7} is also straightforward, but informative. We may suppose that $\gamma = \mc{O}(\lambda, \Lambda, x)$ and $\gamma' = \mc{O}(s_\alpha \lambda, s_\alpha \Lambda \otimes \alpha, x)$. Then
\begin{align*}
 \ell_H(\gamma') &= \frac{1}{2}\sum_{\substack{\beta \in \Phi_+ \text{ real}\\ s_\alpha\lambda\text{-non-integral}}} (-1)^{\lfloor \langle s_\alpha\lambda + \rho_\mb{R}, \check\beta\rangle\rfloor}(s_\alpha\Lambda \otimes \alpha)(m_\beta) \\
&= \frac{1}{2}\sum_{\substack{\beta \in \Phi_+ \text{ real}\\ s_\alpha\lambda\text{-non-integral}}} (-1)^{\lfloor \langle \lambda + \rho_\mb{R}, s_\alpha \check\beta \rangle \rfloor}\Lambda(m_{s_\alpha \beta}) \\
&= \frac{1}{2}\sum_{\substack{\beta \in \Phi_+ - \{\alpha\} \text{ real} \\ \lambda\text{-non-integral}}} (-1)^{\lfloor \langle \lambda + \rho_\mb{R}, \check\beta\rangle \rfloor}\Lambda(m_\beta) - \frac{1}{2}(-1)^{\lfloor \langle \lambda + \rho_\mb{R}, \check\alpha\rangle\rfloor} \Lambda(m_\alpha) \\
&= \ell_H(\gamma) + (-1)^{\lfloor \langle \lambda, \check\alpha\rangle\rfloor}\Lambda(m_\alpha),
\end{align*}
where in passing from the second line to the third we have used that $s_\alpha$ permutes the real roots in $\Phi_+ - \{\alpha\}$ and for non-integers $x$, $\lfloor -x \rfloor = -\lfloor x \rfloor - 1$. So
\[ \ell_H(\gamma) - \ell_H(\gamma') = \begin{cases} -1, & \mbox{if $(-1)^{\lfloor \langle \lambda, \check\alpha \rangle \rfloor}\Lambda(m_\alpha) = 1$,} \\ 1, & \mbox{if $(-1)^{\lfloor \langle \lambda, \check\alpha \rangle \rfloor} \Lambda(m_\alpha) = -1$} \end{cases} \]
and
\[ a_{\gamma', \gamma} = \begin{cases} t_1, & \mbox{if $(-1)^{\lfloor \langle \lambda, \check\alpha \rangle \rfloor}\Lambda(m_\alpha) = 1$,} \\ t_2, &\mbox{if $(-1)^{\lfloor \langle \lambda, \check\alpha \rangle\rfloor} \Lambda(m_\alpha) = -1$,} \end{cases} \]
by Proposition \ref{prop:hecke action}, so
\[ (t_1t_2^{-1})^{\frac{1}{2}(\ell_H(\gamma) - \ell_H(\gamma'))}a_{\gamma', \gamma} = t_1^{\frac{1}{2}}t_2^{\frac{1}{2}} = u^{\frac{1}{2}} \]
in either case.

Finally, for cases \eqref{itm:hecke action 4} and \eqref{itm:hecke action 5}, the required statement is Lemma \ref{lem:mixed to hodge} below.
\end{proof}

\begin{lem} \label{lem:mixed to hodge}
In the setting of Proposition \ref{prop:orbit geometries} \eqref{itm:orbit geometries 3}, suppose that $\alpha$ is $\lambda$-integral and that $\gamma = \mc{O}(\lambda, \Lambda, x)$ and $\gamma' = \mc{O}(s_\alpha \lambda, \Lambda', y)$, for $y = y_+$ or $y_-$, are twisted local systems satisfying
\[ \Lambda|_{H^{\theta_x} \cap H^{\theta_{y}}} = \Lambda'|_{H^{\theta_x} \cap H^{\theta_{y}}}.\]
Then $\ell_H(\gamma) = \ell_H(\gamma')$.
\end{lem}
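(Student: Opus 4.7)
The argument is a direct root-by-root computation exploiting the decomposition $\Phi_+ = \{\alpha\} \sqcup R_\perp^+ \sqcup R_\alpha^+$, where $R_\perp^+$ is the set of positive roots orthogonal to $\alpha$ and $R_\alpha^+ = \Phi_+ \setminus(\{\alpha\} \cup R_\perp^+)$. The simple reflection $s_\alpha$ fixes $R_\perp^+$ pointwise and acts freely on $R_\alpha^+$, pairing each $\beta$ with $s_\alpha \beta$. The crucial structural input is the relation $\theta_y = \theta_x s_\alpha = s_\alpha \theta_x$ from Proposition \ref{prop:orbit geometries} \eqref{itm:orbit geometries 3}, which (together with the commutation $s_\alpha\theta_x = \theta_x s_\alpha$) implies that $\beta$ and $s_\alpha \beta$ share both their $Q_x$- and $Q_y$-types.

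The root $\alpha$ itself is $\lambda$-integral and contributes to neither $\ell_H(\gamma)$ nor $\ell_H(\gamma')$ (real but integral on the $Q_x$ side, non-compact imaginary on the $Q_y$ side). For $\beta \in R_\perp^+$, the fixed-point property $s_\alpha \beta = \beta$ immediately gives that $\beta$ is $Q_x$-real iff $Q_y$-real, that $\langle s_\alpha \lambda, \check\beta\rangle = \langle \lambda, \check\beta\rangle$, and that $m_\beta$ lies in $H^{\theta_x} \cap H^{\theta_y}$; so $\Lambda(m_\beta) = \Lambda'(m_\beta)$ by hypothesis. To compare the sign factors in this case, use the identity
\[ 2(\rho_{x,\mb{R}} - \rho_{y,\mb{R}}) = \alpha + S_x - S_y, \]
where $S_x$ (resp., $S_y$) is the sum of the positive $Q_x$-real (resp., $Q_y$-real) roots lying in $R_\alpha^+$. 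Since $S_x$ and $S_y$ are $s_\alpha$-invariant and $\beta \perp \alpha$, pairing with $\check\beta$ gives an integer whose parity is to be matched against the contributions coming from $R_\alpha^+$.

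For pairs $\{\beta, s_\alpha \beta\} \subset R_\alpha^+$, I combine the two terms using
\[ m_{s_\alpha \beta} = m_\beta \, m_\alpha^{\langle \beta, \check\alpha \rangle}, \quad s_\alpha \rho_{x, \mb{R}} = \rho_{x, \mb{R}} - \alpha, \quad s_\alpha \rho_{y, \mb{R}} = \rho_{y, \mb{R}}, \]
and, crucially, $\Lambda(m_\alpha) = \Lambda'(m_\alpha) = (-1)^{\langle \lambda, \check\alpha \rangle}$, which holds because, by the remark following Proposition \ref{prop:hecke action}, cases \eqref{itm:hecke action 4} and \eqref{itm:hecke action 5} are precisely those in which the local systems extend across $Z \cong \mb{P}^1$. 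This yields an explicit closed-form expression for the paired contribution as a product of $(-1)^{\lfloor \langle \lambda + \rho_{x, \mb{R}}, \check\beta\rangle\rfloor}\Lambda(m_\beta)$ with a factor of shape $1 + (-1)^N$, where $N$ is a $\mb{Z}$-linear combination of $\langle \lambda, \check\alpha\rangle$, $\langle \alpha, \check\beta\rangle$, and $\langle \beta, \check\alpha\rangle$, and an analogous expression on the $Q_y$ side. A parity analysis matching these with the potential $R_\perp^+$ sign mismatch completes the proof.

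The main obstacle is the sign bookkeeping. In the simply-laced case $\langle \alpha, \check\beta\rangle = \langle \beta, \check\alpha\rangle \in \{\pm 1\}$, so $N$ works out to be odd for each pair in $R_\alpha^+ \cap \{Q_x\text{-real}\}$ and these pairs contribute zero individually, while the parity of the $R_\perp^+$ sign mismatch reduces to a clean counting statement about the other $R_\alpha^+$ pairs. In the non-simply-laced case, $\langle \alpha, \check\beta\rangle$ and $\langle \beta, \check\alpha\rangle$ need not agree modulo $2$, so $R_\alpha^+$ pair contributions no longer automatically vanish; one must instead partition $R_\alpha^+$ into its $\langle \theta_x, s_\alpha\rangle$-orbits and compare the three sources of signs (the floor function, the character $\Lambda$, and the $s_\alpha$-twist) type-by-type.
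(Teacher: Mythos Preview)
Your plan is correct and is essentially the paper's own argument, organized a bit differently. The paper groups the $\theta_x$-real non-integral positive coroots into equivalence classes modulo $\check\alpha$ (rather than your $R_\perp^+ \sqcup R_\alpha^+$ split), proves that each class of even size contributes zero to $\ell_H(\gamma)$ via exactly your $s_\alpha$-pairing cancellation, and for each class of odd size shows that its total contribution equals the contribution of its unique $\theta_y$-real member to $\ell_H(\gamma')$. The latter step requires computing $\langle \rho_{x,\mb{R}} - \rho_{y,\mb{R}}, \check\beta\rangle \bmod 2$, which the paper reduces to the rank-$2$ subsystem $\Phi \cap \mb{Q}\text{-span}\{\alpha,\beta\}$; since this subsystem contains two orthogonal roots it is of type $A_1 \times A_1$, $B_2$, or $G_2$, and each is checked by hand. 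This is precisely the ``type-by-type'' endgame you anticipate for the non-simply-laced case, so your plan and the paper's proof converge.

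Two small corrections. First, the exponent in your identity for $m_{s_\alpha\beta}$ should be $\langle\alpha,\check\beta\rangle$, not $\langle\beta,\check\alpha\rangle$: since $(s_\alpha\beta)^\vee = s_\alpha\check\beta = \check\beta - \langle\alpha,\check\beta\rangle\check\alpha$, one gets $m_{s_\alpha\beta} = m_\beta \, m_\alpha^{\langle\alpha,\check\beta\rangle}$. This matters for the parity of $N$ in the non-simply-laced case. Second, in your $s_\alpha$-pairing argument you should note that an $s_\alpha$-pair $\{\beta, s_\alpha\beta\}$ of $\theta_x$-real roots in $R_\alpha^+$ need \emph{not} cancel individually when $\langle\alpha,\check\beta\rangle$ is even (e.g.\ inside a class of size $3$); this non-cancellation is exactly what compensates the $R_\perp^+$ sign discrepancy, and is what forces the reduction to rank $2$.
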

\begin{proof}
Write $\theta_x, \theta_y \colon H \to H$ for the involutions at $x$ and $y$ respectively. Then $\theta_y = \theta_x s_\alpha$. By definition
\begin{equation} \label{eq:simple reflection hodge 1}
\ell_H(\mc{O}(\lambda, \Lambda, x)) = \frac{1}{2}\sum_{\substack{\beta \in \Phi_+ \mbox{\tiny $\theta_x$-real}\\ \mbox{\tiny non-integral}}} (-1)^{\lfloor \langle \lambda + \rho_{\mb{R}}(x), \check\beta\rangle\rfloor}\Lambda(m_\beta)
\end{equation}
and
\begin{equation} \label{eq:simple reflection hodge 2}
\ell_H(\mc{O}(s_\alpha\lambda, \Lambda', y)) = \frac{1}{2}\sum_{\substack{\beta \in \Phi_+ \mbox{\tiny $\theta_y$-real}\\ \mbox{\tiny non-integral}}} (-1)^{\lfloor \langle s_\alpha\lambda + \rho_{\mb{R}}(y), \check\beta\rangle\rfloor}\Lambda'(m_\beta)
\end{equation}
where $\rho_\mb{R}(x)$ and $\rho_\mb{R}(y)$ are half the sum of the positive $\theta_x$-real and $\theta_y$-real roots respectively.

First, we claim that $\beta \in \Phi_+$ is $\theta_y$-real if and only if $\beta$ is $\theta_x$-real and $\langle \alpha, \check\beta \rangle = 0$. To see this, note that if $\beta$ is $\theta_y$-real then
\[ \beta + \theta_x \beta = \beta + s_\alpha \theta_y \beta = \beta - s_\alpha \beta = \langle \beta, \check\alpha\rangle \alpha.\]
Since the left hand side is $\theta_x$-invariant and the right hand side is $\theta_x$-anti-invariant, both sides must be zero, hence $\beta$ is also $\theta_x$-real and $\langle \beta, \check\alpha\rangle = \langle \alpha, \check\beta \rangle = 0$. This proves one direction of the claim, and the converse is clear.

Next, write the sum \eqref{eq:simple reflection hodge 1} in the form
\begin{equation} \label{eq:simple reflection hodge 3}
\ell_H(\mc{O}(\lambda, \Lambda, x)) = \frac{1}{2}\sum_{C} \sum_{\check\beta \in C} (-1)^{\lfloor \langle \lambda + \rho_{\mb{R}}(x), \check\beta\rangle\rfloor}\Lambda(m_\beta),
\end{equation}
where the outer sum is over equivalence classes $C$ of $\theta_x$-real non-integral positive coroots modulo multiples of $\check\alpha$. By the above classification of $\theta_y$-real roots, each equivalence class $C$ contains either a unique $\theta_y$-real coroot (if $|C| = 1$ or $3$) or no $\theta_y$-real coroots (if $|C| = 2$ or $4$). We will show that the contribution of $C$ to \eqref{eq:simple reflection hodge 3} is equal to the contibution of its unique $\theta_y$-real coroot to \eqref{eq:simple reflection hodge 2} if it exists, and zero otherwise.

First suppose that $C$ contains no $\theta_y$-real coroots. In this case, $\langle \alpha, \check\beta \rangle$ is odd for all $\check\beta \in C$, and $s_\alpha$ partitions $C$ into orbits of size two. For all $\check\beta \in C$, we therefore have
\begin{align*}
(-1)^{\lfloor \langle \lambda + \rho_{\mb{R}}(x), s_\alpha\check\beta\rangle \rfloor}\Lambda(m_{s_\alpha \beta}) &= \left((-1)^{\langle \lambda + \rho_\mb{R}(x), \check\alpha\rangle} \Lambda(m_\alpha)\right)^{\langle \alpha, \check\beta\rangle}(-1)^{\lfloor \langle \lambda + \rho_{\mb{R}}(x), \check\beta\rangle \rfloor}\Lambda(m_\beta) \\
&= (-1)^{\langle \alpha, \check\beta\rangle}(-1)^{\lfloor \langle \lambda + \rho_{\mb{R}}(x), \check\beta\rangle \rfloor}\Lambda(m_\beta) \\
&= - (-1)^{\lfloor \langle \lambda + \rho_{\mb{R}}(x), \check\beta\rangle \rfloor}\Lambda(m_\beta),
\end{align*}
so
\[ \sum_{\check\beta \in C} (-1)^{\lfloor \langle \lambda + \rho_{\mb{R}}(x), \check\beta\rangle\rfloor}\Lambda(m_\beta) = 0\]
as claimed.

Suppose on the other hand that $C$ contains a $\theta_y$-real coroot $\check\beta$, and consider the difference
\[ \rho_\mb{R}(x) - \rho_\mb{R}(y) = \frac{1}{2}\sum_{\substack{\mbox{\tiny $\gamma \in \Phi_+$ $\theta_x$-real} \\ \langle \alpha, \check\gamma \rangle \neq 0}} \gamma.\]
We have
\begin{align*}
\langle \rho_\mb{R}(x) - \rho_\mb{R}(y), \check\beta\rangle &= \frac{1}{2} \langle (\rho_\mb{R}(x) - \rho_\mb{R}(y)) - s_\beta(\rho_\mb{R}(x) - \rho_\mb{R}(y)), \check\beta \rangle \\ 
&= \frac{1}{2}\sum_{\gamma \in S} \langle \gamma, \check \beta \rangle,
\end{align*}
where the sum is over the set $S$ of $\gamma \in \Phi_+$ such that $\gamma$ is $\theta_x$-real, $\langle \gamma, \check\alpha\rangle \neq 0$ and $s_\beta \gamma \in \Phi_-$. Note that the abelian group $\{1, s_\alpha, -s_\beta, -s_\alpha s_\beta\}$ acts on $S$ and preserves the function $\langle -, \check\beta\rangle$. The orbit of $\gamma$ is of size $2$ if $\gamma \in \mb{Q}\mbox{-span}\{\alpha, \beta\}$ and of size $4$ otherwise, so
\begin{equation} \label{eq:simple reflection hodge 4}
\langle \rho_\mb{R}(x) - \rho_\mb{R}(y), \check\beta \rangle \equiv \frac{1}{2}\sum_{\gamma \in S'} \langle \gamma, \check\beta\rangle \mod 2
\end{equation}
where
\[ S' = S \cap \{\gamma \in S \cap \mb{Q}\mbox{-span}\{\alpha, \beta\} \mid \langle \gamma, \check\beta \rangle \equiv 1 \mod 2\}.\]
The root system $\Phi_{\alpha, \beta} = \Phi \cap \mb{Q}\mbox{-span}\{\alpha, \beta\}$ is of rank $2$ and contains two orthogonal roots, so it is of type $A_1 \times A_1$, $B_2$ or $G_2$. The claim can now be checked directly in each of these cases using \eqref{eq:simple reflection hodge 4}.
\end{proof}

\begin{proof}[Proof of Theorem \ref{thm:main theorem 1}]
In the context of Propositions \ref{prop:our duality} and \ref{prop:our poly}, let $A$ be the abelian group $t_1^{\frac{1}{4}\mb{Z}}t_2^{\frac{1}{4}\mb{Z}}$ with $u^{\frac{1}{2}} = (t_1t_2)^{\frac{1}{2}}$, duality $D(t_1) = t_2^{-1}$, $D(t_2) = t_1^{-1}$ and norm given by $|t_1| = |t_2| = 2$. Then we may identify
\[ M_\lambda \cong \mb{Z}[t_1^{\pm \frac{1}{4}}, t_2^{\pm \frac{1}{4}}] \otimes_{\mb{Z}[t_1^{\pm 1}, t_2^{\pm 1}]} \oK(\mhm_\lambda(K \bslash \ms{B})) \]
via the isomorphism \eqref{eq:mixed to hodge}, so that
\[ B_\gamma = u^{\frac{1}{2}(\ell_I(\gamma) - \ell(\gamma) - \dim H)}(t_1t_2^{-1})^{\frac{1}{2}\ell_H(\gamma)}[j_!\gamma].\]
By Proposition \ref{prop:mixed to hodge}, this respects the action of the Hecke operators on either side, so the Hermitian duality $(-)^h$ on $\oK(\mhm_\lambda(K \bslash \ms{B}))$ agrees with the map $D$ of Proposition \ref{prop:our duality}. (Note that the pre-factors $t_1t_2^{-1}$ are self-dual under $D$, so $(-)^h$ still satisfies the conditions of Proposition \ref{prop:our duality} under this identification.) The basis 
\[ C_\gamma = u^{\frac{1}{2}(\ell_I(\gamma) - \ell(\gamma) - \dim H)}(t_1t_2^{-1})^{\ell_H(\gamma)/2}[j_{!*}\gamma]\]
therefore satisfies the conditions of Proposition \ref{prop:our poly}, so we have
\[ (t_1t_2^{-1})^{\ell_H(\gamma)/2}[j_!\gamma] = \sum_{\gamma'} \LVM^m_{\gamma', \gamma}(u)(t_1t_2^{-1})^{\ell_H(\gamma')/2}[j_{!*}\gamma']\]
from which we deduce the formula
\[ \LVM^h_{\gamma', \gamma} = (t_1t_2^{-1})^{\frac{1}{2}(\ell_H(\gamma') - \ell_H(\gamma))}\LVM^m_{\gamma', \gamma}.\]
\end{proof}

\section{Proof of Theorem \ref{thm:main theorem 3}}
\label{mk}
\label{proof:main theorem 3}

In this section, we recall some necessary ingredients and give the proofs of Theorem \ref{thm:main theorem 3} and Proposition \ref{prop:hodge positivity}. In \S\ref{subsec:nearby cycles}, we recall two equivalent definitions of the functor of nearby cycles on holonomic $\ms{D}$-modules and the isomorphism between them. In \S\ref{subsec:intertwining}, we recall the intertwining functors and a fundamental exact sequence. In \S\ref{subsec:proof of main theorem 3}, we give the proof of Theorem \ref{thm:main theorem 3} by combining these ingredients with the properties of minimal $K$-types discussed in \S\ref{subsec:mk statement}. Finally, in \S\ref{subsec:proof of hodge positivity} we give the proof of Proposition \ref{prop:hodge positivity} on positivity of polarizations on the lowest piece of the Hodge filtration.

\subsection{Nearby cycles} \label{subsec:nearby cycles}

In this subsection, we recall the construction of unipotent nearby cycles for holonomic $\ms{D}$-modules.

We will work first in the following setting. Let $X$ be a smooth variety and $f \colon X \to \mb{C}$ a regular function. We set $U = f^{-1}(\mb{C}^\times)$ and let $j \colon U \to X$ be the inclusion.

We first give Beilinson's construction of nearby cycles \cite{beilinson}. Given a holonomic $\ms{D}$-module $\ms{M}$ on $U$, we have the $\ms{D}_U$-modules
\[ f^s\ms{M}[[s]] = \varprojlim_n \frac{f^s\ms{M}[s]}{s^nf^s\ms{M}[s]} \quad \text{and} \quad f^s\ms{M}((s)) = \varinjlim_m \varprojlim_n \frac{s^{-m} f^s\ms{M}[s]}{s^nf^s\ms{M}[s]},\]
where $f^s\ms{M}[s] = \ms{M} \otimes \mb{C}[s]$ as $\mc{O}_U$-modules, with the usual twisted action of differential operators. The terms in the above limits are holonomic $\ms{D}_U$-modules (regular if $\mc{M}$ is so); we extend standard operations to such objects in the natural way by setting
\[ j_*f^s\ms{M}[[s]] = \varprojlim_n j_*\left(\frac{f^s\ms{M}[s]}{s^nf^s\ms{M}[s]}\right), \quad j_*f^s\ms{M}((s)) = \varinjlim_m \varprojlim_n j_*\left(\frac{s^{-m}f^s\ms{M}[s]}{s^nf^s\ms{M}[s]}\right).\]
Note that the $b$-function lemma implies that $j_!f^s\ms{M}((s)) \to j_*f^s\ms{M}((s))$ is an isomorphism.

\begin{defn} \label{defn:beilinson functor}
Let $a \in \mb{Z}_{\geq 0}$. The \emph{Beilinson functor $\pi_f^a$} is defined by
\[ \pi_f^a(\ms{M}) = \coker\left(j_!f^s\ms{M}[[s]] \xrightarrow{s^a} j_*f^s\ms{M}[[s]]\right)\]
for $\ms{M}$ a holonomic $\ms{D}_U$-module. The functor $\pi_f^0$ is \emph{Beilinson's unipotent nearby cycles functor}.
\end{defn}

It follows from the $b$-function lemma that the sequence
\[ \coker\left(j_!\frac{f^s\ms{M}[s]}{s^nf^s\ms{M}[s]} \xrightarrow{s^a} j_*\frac{f^s\ms{M}[s]}{s^nf^s\ms{M}[s]} \right) \]
stabilizes to $\pi_f^a(\ms{M})$ for $n \gg 0$. Hence $\pi_f^a(\ms{M})$ is always a holonomic $\ms{D}_X$-module, regular if $\mc{M}$ is so, and the endomorphism $s$ is nilpotent.

We next recall the standard definition of nearby cycles in terms of the $V$-filtration and relate it to the Beilinson construction. This is essentially due to Kashiwara~\cite{kashiwara1}, and is described explicitly (albeit in different notation to ours) in \cite[Th\'eor\`eme 4.7-2]{maisonbe-mebkhout}. 

We will start with the case of a smooth divisor. Let us consider $X'=X\times \bC$. We have natural inclusions $i \colon X=X \times \{0\} \to X'$ and $j'\colon U' = X\times \bC^\times \to X'$. Let $t$ be the coordinate on the $\bC$-factor. Then for any holonomic $\ms{D}$-module $\ms{N}$ on $X'$, we have the decreasing Kashiwara-Malgrange $V$-filtration on $\ms{N}$ with respect to the divisor $t = 0$, indexed so that
\[ t \partial_t - \alpha \colon \mrm{Gr}_V^\alpha\ms{N} \to \mrm{Gr}_V^\alpha \ms{N} \]
is nilpotent. For general holonomic $\mc{D}$-modules, the $V$-filtration is indexed by $\mb{C}$ with, say, the lexicographic ordering. All our examples, however, will underlie mixed Hodge modules, in which case the $V$-filtration can be taken to be indexed by $\alpha \in \mb{R}$. For all $\alpha$, the sheaves $\mrm{Gr}_V^\alpha\ms{N}$ are holonomic $\ms{D}$-modules on $X = X \times \{0\}$. The $V$-filtration construction of unipotent nearby cycles with respect to $t$ is the functor $\mrm{Gr}_V^0$.

Recall that the pushforward of a holonomic $\ms{D}$-module along a closed immersion includes a twist by the determinant of the normal bundle. In the lemma below, $\partial_t$ denotes the vector field generating the normal bundle of $X$ in $X'$ in this twist.

\begin{lem} \label{lem:nearby smooth divisor}
Let $\ms{N}$ be a holonomic $\ms{D}_{U'}$-module. Then
\[ i_*\mrm{Gr}_V^0j'_*\ms{N} \cong \pi_t^0(\ms{\ms{N}})\qquad n \otimes \partial_t \mapsto t^{s-1}n\]
and the operator $-t \partial_t$ on the left is sent to multiplication by $s$ on the right.
\end{lem}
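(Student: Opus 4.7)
The plan is to construct the claimed isomorphism $\Phi \colon i_*\mrm{Gr}_V^0 j'_*\ms{N} \to \pi_t^0(\ms{N})$ directly via the stated formula and then check both operator compatibility and bijectivity. For a representative $\tilde n \in V^0 j'_*\ms{N}$ of a class $n \in \mrm{Gr}_V^0 j'_*\ms{N}$, the section $t^{s-1}\tilde n = t^{-1} \cdot t^s \tilde n$ lies in $j'_* t^s \ms{N}[[s]]$ (the localization at $t$ being absorbed into $j'_*$), and we send $n \otimes \partial_t$ to its image in the cokernel $\pi_t^0(\ms{N})$, extending $\ms{D}_{X'}$-linearly. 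For independence of the choice of representative, I would invoke the $b$-function lemma: for $\tilde n \in V^{>0}$, there is a polynomial $b(s)$ with strictly positive roots satisfying a relation of the form $b(t\partial_t)\tilde n \in \ms{D}_{X'}\cdot t\tilde n$, which translates into a relation in the formal family $t^s\ms{N}[[s]]$ exhibiting $t^{s-1}\tilde n$ in the image of $j'_! t^s\ms{N}[[s]]$ modulo $s^N$ for all $N$, and hence as zero in the cokernel.

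Next I would verify the operator compatibility. A direct computation in $j'_*t^s\ms{N}[[s]]$ gives
\[
t\partial_t(t^{s-1}\tilde n) \;=\; (s-1)\, t^{s-1}\tilde n \;+\; t^{s-1}(t\partial_t\,\tilde n).
\]
The $(s-1)$ term combines with the $+1$ shift in the $t\partial_t$-eigenvalue introduced by the normal-bundle twist $\omega_{X/X'}$ implicit in $i_*$, while the residual $t\partial_t\,\tilde n$ corresponds to the nilpotent part of $t\partial_t$ on $\mrm{Gr}_V^0$; these are absorbed on the right under the identification of $s$ with $-t\partial_t$ (both being nilpotent on their respective modules), yielding the asserted correspondence $-t\partial_t \leftrightarrow s$.

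Finally, to show $\Phi$ is an isomorphism, I would observe that both sides are regular holonomic $\ms{D}_{X'}$-modules set-theoretically supported on $X \times \{0\}$, so Kashiwara's equivalence reduces the check to an equality of $\ms{D}_X$-modules. By Beilinson's construction (see \cite{beilinson}), $\pi_t^0(\ms{N})$ realizes unipotent nearby cycles and so agrees with $\mrm{Gr}_V^0 j'_*\ms{N}$ in the Kashiwara--Malgrange picture; alternatively, one can construct an explicit inverse to $\Phi$ using the decomposition of $t^s\ms{N}[[s]]$ along the $V$-filtration. The main obstacle I anticipate is the careful sign and shift bookkeeping in the second step: the $-1$ from $t^{s-1}$, the normal-bundle twist in $i_*$, and the nilpotency of $t\partial_t$ on $\mrm{Gr}_V^0$ must combine exactly to produce $-t\partial_t \leftrightarrow s$ rather than some variant like $1 - t\partial_t \leftrightarrow s$.
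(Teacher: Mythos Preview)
Your approach --- define the map by the explicit formula, then check well-definedness, $\ms{D}_{X'}$-linearity, operator compatibility and bijectivity --- is a reasonable outline, but as written the argument has a genuine gap at the final step and some confusion in the middle.

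The bijectivity step is where the content of the lemma actually lives, and you have essentially deferred it. Your option (a), appealing to Beilinson's theorem that $\pi_t^0$ computes unipotent nearby cycles, is circular: that comparison \emph{is} what the lemma establishes in the $V$-filtration language. Your option (b), ``construct an explicit inverse using the decomposition of $t^s\ms{N}[[s]]$ along the $V$-filtration'', is pointing in the right direction but is not an argument. The paper's proof does exactly this, and the key observation you are missing is the identification
\[
\bigl(\mrm{Gr}_V^0 j'_*t^s\ms{N}[[s]],\; t\partial_t\bigr) \;\cong\; \bigl((\mrm{Gr}_V^0 j'_*\ms{N})[[s]],\; t\partial_t + s\bigr),
\]
which follows because twisting by $t^s$ shifts the $V$-indexing by $s$. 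With this in hand, the paper takes $\mrm{Gr}_V^{-1}$ of the defining exact sequence $0 \to j'_!t^s\ms{N}[[s]] \to j'_*t^s\ms{N}[[s]] \to \pi_t^0(\ms{N}) \to 0$, uses the standard facts $\mrm{Gr}_V^{-1} j'_! \cong \partial_t \mrm{Gr}_V^0 j'_! = \partial_t \mrm{Gr}_V^0 j'_*$ and $t \colon \mrm{Gr}_V^{-1} j'_* \xrightarrow{\sim} \mrm{Gr}_V^0 j'_*$, and arrives at
\[
\pi_t^0(\ms{N}) \;\cong\; i_*\coker\bigl(t\partial_t + s \colon (\mrm{Gr}_V^0 j'_*\ms{N})[[s]] \to (\mrm{Gr}_V^0 j'_*\ms{N})[[s]]\bigr).
\]
Since $t\partial_t$ is nilpotent on $\mrm{Gr}_V^0 j'_*\ms{N}$, this cokernel is canonically $\mrm{Gr}_V^0 j'_*\ms{N}$, with $s$ acting as $-t\partial_t$. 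This yields the isomorphism and the operator compatibility simultaneously, and the explicit formula $n \otimes \partial_t \mapsto t^{s-1}n$ can then be read off from the construction.

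Your operator-compatibility paragraph is also muddled: you are computing the $\ms{D}_{X'}$-action of $t\partial_t$ on $t^{s-1}\tilde n$ on the right, and attempting to match it with something on the left involving the normal-bundle shift, but the statement of the lemma concerns the \emph{nilpotent endomorphism} $\mrm{N} = -t\partial_t$ on $\mrm{Gr}_V^0$ (extended $\ms{D}_{X'}$-linearly to $i_*$), not the $\ms{D}_{X'}$-operator $t\partial_t$. These differ, and the sentence ``both being nilpotent on their respective modules'' does not establish the required identity. In the paper's approach this issue never arises, because the identification of $s$ with $-t\partial_t$ falls out of the cokernel computation above rather than from a direct differential calculation.
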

\begin{proof}
By definition we have an exact sequence 
\[ 0 \to  j'_!t^s\ms{N}[[s]] \rightarrow j'_*t^s\ms{N}[[s]]\to \pi_t^0(\ms{N})\to 0\]
and the cokernel is supported on $X$. Thus, $\pi_t^0(\ms{\ms{N}})= i_*\Gr_V^{-1} (\pi_t^0(\ms{N})) \otimes \partial_t^{-1}$ and we have an exact sequence
\[ 0 \to \Gr_V^{-1}  (j'_!t^s\ms{N}[[s]]) \rightarrow \Gr_V^{-1} (j'_*t^s\ms{N}[[s]])\to \Gr_V^{-1} (\pi_t^0(\ms{N}))\to 0.\]
Here $V^{\bigcdot} j'_!t^s\ms{N}[[s]] := \varprojlim_n V^{\bigcdot} j'_!t^s\ms{N}[s]/(s^n)$ etc.
Now,
\[\Gr_V^{-1}  (j'_!t^s\ms{N}[[s]]) \cong \partial_t \Gr_V^{0}(j'_!t^s\ms{N}[[s]]) =  \partial_t \Gr_V^{0}(j'_*t^s\ms{N}[[s]])\]
and  $t \Gr_V^{-1} (j'_*t^s\ms{N}[[s]) = \Gr_V^{0} (j'_*t^s\ms{N}[[s])$. Hence 
\[
 \pi_t^0(\ms{\ms{N}}) = i_*\coker(t\partial_t \colon \mrm{Gr}_V^0(j'_*t^s\ms{N}[[s]]) \to \mrm{Gr}_V^0(j'_*t^s\ms{N}[[s]])) \cong i_*\Gr_V^0 j'_*\cN,
\]
where the last isomorphism is obtained by observing
\[ (\mrm{Gr}_V^0j'_*t^s\ms{N}[[s]], t \partial_t)  = ((\mrm{Gr}_V^0j'_*\ms{N})[[s]], t\partial_t + s).\]
By construction the map $i_*\Gr_V^0 j'_*\cN \to \pi_t^0(\cN)$ is multiplication by $t^{-1} \otimes \partial_t^{-1}$ and sends $-t\partial_t$ to $s$.
\end{proof}

To treat the case of a function $f \colon X\to \bC$ we let $i_f \colon X \to X' = X \times \mb{C}$ be the inclusion of the graph of $f$ and $i_f|_U \colon U \to U'$ its restriction. The nearby cycles are defined by applying the $V$-filtration construction to the pushforward.

\begin{defn}
For $\ms{M}$ a holonomic $\ms{D}_U$-module, the \emph{unipotent nearby cycles} are
\[ \psi_f^{un}\ms{M} := \mrm{Gr}_V^0j'_*(i_{f}|_U)_*\ms{M}.\]
\end{defn}

\begin{rmk}
It may be helpful to note that the pushforward $j'_*(i_{f}|_U)_*\ms{M}$ above is given very explicitly by
\begin{align*}
j'_*(i_f|_U)_*\ms{M} &\cong \frac{\mc{O}_{X'}[(t - f)^{-1}]}{\mc{O}_{X'}} \otimes_{\mc{O}_X} j_*\ms{M} \\
m \otimes \partial_t &\mapsto \frac{m}{t - f}.
\end{align*}
\end{rmk}

We deduce the following from Lemma \ref{lem:nearby smooth divisor}

\begin{prop} \label{prop:beilinson nearby cycles}
In the setting above, we have a natural isomorphism
\[ \psi_f^{un}\ms{M} \cong \pi_f^0(\ms{M}),\]
of $\ms{D}$-modules on $X$ such that
\begin{enumerate}
\item the class of $m \otimes \partial_t \in V^0j'_*(i_f|_U)_*\ms{M}$ on the left is sent to the class of $f^{s - 1}m$ on the right, and
\item the operator $\mrm{N} = -t\partial_t$ on the left is sent to multiplication by $s$ on the right.
\end{enumerate}
\end{prop}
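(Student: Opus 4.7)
My plan is to apply Lemma \ref{lem:nearby smooth divisor} to the $\ms{D}_{U'}$-module $\ms{N} = (i_f|_U)_*\ms{M}$ on $U' = X \times \mb{C}^\times$, which yields a canonical isomorphism
\[ i_*\psi_f^{un}\ms{M} = i_*\mrm{Gr}_V^0 j'_*(i_f|_U)_*\ms{M} \;\cong\; \pi_t^0\bigl((i_f|_U)_*\ms{M}\bigr), \]
sending $n \otimes \partial_t \mapsto t^{s-1}n$ and $-t\partial_t \mapsto s$. It then suffices to identify the right-hand side with $i_*\pi_f^0(\ms{M})$ in a way compatible with these formulas.

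For this identification, I would construct a natural isomorphism of $\ms{D}_{U'}[[s]]$-modules
\[ \Phi \colon (i_f|_U)_*\bigl(f^s\ms{M}[[s]]\bigr) \;\overset{\sim}{\to}\; t^s (i_f|_U)_*\ms{M}[[s]] \]
compatible with multiplication by $s$. The key point is that the closed embedding $i_f|_U$ identifies $f$ on $U$ with the pullback of $t$ along the graph, so the generators $f^s m$ correspond to $t^s \cdot m$ once pushed forward; in local coordinates this is a bookkeeping exercise using $[\partial_t, t - f] = 1$ and the fact that the pushforward is generated as an $\ms{O}$-module by powers of $\partial_t$ applied to $\ms{M}$. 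Since the square
\[ \begin{CD} U @>i_f|_U>> U' \\ @VjVV @VVj'V \\ X @>i_f>> X' \end{CD} \]
is Cartesian with horizontal maps closed and vertical maps open, base change gives $j'_*(i_f|_U)_* \cong (i_f)_*j_*$ and similarly for the shriek pushforwards. Applying $j'_!$ and $j'_*$ to $\Phi$ and taking the cokernel of multiplication by $s$ therefore yields
\[ \pi_t^0\bigl((i_f|_U)_*\ms{M}\bigr) \;\cong\; (i_f)_*\pi_f^0(\ms{M}). \]

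Finally, $\pi_f^0(\ms{M})$ is supported set-theoretically on $\{f = 0\} \subset X$, where $i_f$ and the zero-section embedding $i$ agree pointwise, and a direct check shows that the induced $\ms{D}$-module pushforwards coincide, giving a canonical identification $(i_f)_*\pi_f^0(\ms{M}) = i_*\pi_f^0(\ms{M})$. Composing the isomorphisms produces the claimed $\psi_f^{un}\ms{M} \cong \pi_f^0(\ms{M})$, and tracking the formulas of Lemma \ref{lem:nearby smooth divisor} through the chain (using that $\Phi$ sends $f^{s-1}m$ to $t^{s-1} \cdot m \otimes \partial_t^0$) confirms that $m \otimes \partial_t \mapsto f^{s-1}m$ and $\mrm{N} = -t\partial_t$ maps to multiplication by $s$.

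The main obstacle will be the construction of $\Phi$ and the verification that it is $\ms{D}_{U'}[[s]]$-linear and coordinate-independent; while the underlying $\mc{O}$-module identification is immediate, one must check that the action of vector fields tangent to the graph is correctly matched on both sides and that the resulting isomorphism descends cleanly through the $s$-adic completion. Everything else is a formal consequence of Lemma \ref{lem:nearby smooth divisor} and base change.
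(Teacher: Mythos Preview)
Your proposal is correct and follows essentially the same route as the paper: apply Lemma~\ref{lem:nearby smooth divisor} to $\ms{N} = (i_f|_U)_*\ms{M}$, then identify $\pi_t^0((i_f|_U)_*\ms{M}) \cong i_{f*}\pi_f^0(\ms{M}) = i_*\pi_f^0(\ms{M})$ and descend via Kashiwara's equivalence. The paper dispatches your isomorphism $\Phi$ and the base-change step as an ``obvious isomorphism'' $i_*\pi_f^0(\ms{M}) = i_{f*}\pi_f^0(\ms{M}) = \pi_t^0((i_f|_U)_*\ms{M})$, so your version is simply a more careful unpacking of that one line; the anticipated ``main obstacle'' is not really there, since $(i_f|_U)_*$ is exact and commutes on the nose with the $f^s$-twist (because $t = f$ on the graph).
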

\begin{proof}
We have an obvious isomorphism $i_*\pi_f^0(\ms{M}) = i_{f*}\pi_f^0(\ms{M}) = \pi_t^0((i_{f}|_U)_*\ms{M})$. But $\pi_t^0((i_{f}|_U)_*\ms{M})) = i_*\psi_f^{un}\ms{M}$ by Lemma \ref{lem:nearby smooth divisor}, so we obtain the desired isomorphism by Kashiwara's equivalence.
\end{proof}

Suppose now that $j \colon Q \to X$ is a locally closed immersion of smooth varieties and $f \colon \bar{Q} \to \mb{C}$ is a regular function such that $Q = f^{-1}(\mb{C}^\times)$. (Note that we do not assume that the closure $\bar{Q}$ is smooth.) Then, for $\ms{M}$ a holonomic $\ms{D}$-module on $Q$, we may define $\pi_f^0\ms{M}$ as in Definition \ref{defn:beilinson functor} and
\[ \psi_f^{un}\ms{M} = \mrm{Gr}_V^0j_{f*}\ms{M},\]
where $j_f = (j, f) \colon Q \to X \times \mb{C}$. Locally on $\bar{Q}$, we may write $f$ as the restriction of a regular function $g$ on $X$; in this case, we have
\[ \psi_f^{un}\ms{M} = \psi_g^{un}i_*\ms{M} \qquad \text{and} \qquad \pi_f^0\ms{M} = \pi_g^0i_*\ms{M},\]
where $i \colon Q \to g^{-1}(0)$ is the (closed) inclusion. We deduce the following from Proposition \ref{prop:beilinson nearby cycles}.

\begin{cor} \label{cor:locally closed nearby cycles}
Given $j \colon Q \to X$ and $\ms{M}$ as above, we have a canonical isomorphism
\[ \psi_f^{un}\ms{M} \cong \pi_f^0(\ms{M}) \]
of $\ms{D}$-modules on $X$, sending the class of
\[ m \otimes \xi \partial_t \in V^0j_{f*}\ms{M} \cap j_{f\bigcdot}(\ms{M} \otimes \omega_{Q/X \times \mb{C}}) \]
on the left to the class of $f^{s - 1}m \otimes \xi$ on the right.
\end{cor}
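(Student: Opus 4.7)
The plan is to reduce to Proposition \ref{prop:beilinson nearby cycles} by working locally on $\bar{Q}$. Both $\psi_f^{un}\ms{M}$ and $\pi_f^0\ms{M}$ are constructed directly as pushforwards from $Q$ (the former via $\mrm{Gr}_V^0$ applied to $j_{f*}\ms{M}$, the latter via Beilinson's cokernel on $j_*$), so their formation commutes with restriction to open subsets of $X$. First I would cover $\bar{Q}$ by opens $V \subset X$ on which the restriction $f|_{V \cap \bar{Q}}$ lifts to a regular function $g \colon V \to \mb{C}$. Setting $U_V := V \cap g^{-1}(\mb{C}^\times)$, the equality $g|_{V \cap \bar{Q}} = f|_{V \cap \bar{Q}}$ implies $Q \cap V = U_V \cap \bar{Q}$, so $i \colon Q \cap V \hookrightarrow U_V$ is a closed embedding.

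With these choices, the two equalities of functors recalled just before the statement of the corollary give
\[ \psi_f^{un}(\ms{M})|_V = \psi_g^{un}(i_*(\ms{M}|_{Q \cap V})) \quad \text{and} \quad \pi_f^0(\ms{M})|_V = \pi_g^0(i_*(\ms{M}|_{Q \cap V})), \]
and Proposition \ref{prop:beilinson nearby cycles} applied to $g$ on $V$ produces a natural isomorphism between them. To compute its action on sections, I would write a local section of $i_*(\ms{M}|_{Q \cap V})$ as $m \otimes \xi$ with $m$ a section of $\ms{M}$ and $\xi$ a section of $\omega_{Q \cap V / U_V}$; the corresponding section $m \otimes \xi \otimes \partial_t$ then lies in $V^0 j_{f*}\ms{M}|_V \cap j_{f \bigcdot}(\ms{M} \otimes \omega_{Q/X \times \mb{C}})|_V$ under the identification $\omega_{Q \cap V/U_V} \otimes \omega_{U_V/V \times \mb{C}} = \omega_{Q/X \times \mb{C}}|_V$, and is sent by Proposition \ref{prop:beilinson nearby cycles} to $g^{s - 1}(m \otimes \xi) = f^{s - 1}m \otimes \xi$, using only $g|_{Q \cap V} = f$.

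Since the formula $f^{s - 1}m \otimes \xi$ depends only on $f$ and not on the chosen extension $g$, and since sections of the form $m \otimes \xi \partial_t$ $\ms{D}_X$-generate $V^0 j_{f*}\ms{M}$ (which follows from the $b$-function description of the $V$-filtration), the local isomorphisms on overlapping opens $V$ must coincide and glue to a canonical global isomorphism on $X$ with the stated description. The main (modest) obstacle is bookkeeping: one must correctly track the normal-bundle twist through the graph embedding $j_f = (j, f)$ in order to identify $m \otimes \xi \otimes \partial_t$ with $m \otimes \xi \partial_t$, and verify the two functorial equalities displayed above, each of which is immediate from the definitions via Kashiwara's equivalence for $\psi_f^{un}$ and the commutation of Beilinson's construction with closed pushforward for $\pi_f^0$.
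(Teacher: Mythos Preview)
Your proposal is correct and follows essentially the same route as the paper: reduce locally on $\bar{Q}$ to Proposition~\ref{prop:beilinson nearby cycles} by choosing a lift $g$ of $f$ to an open $V \subset X$, use the identities $\psi_f^{un}\ms{M}|_V = \psi_g^{un}i_*\ms{M}$ and $\pi_f^0\ms{M}|_V = \pi_g^0 i_*\ms{M}$, and read off the formula on sections. The paper is terser and does not spell out the gluing argument, but your treatment of it (via $g$-independence of the formula on a generating set) is fine; one could also argue gluing directly from the naturality of the isomorphism in Proposition~\ref{prop:beilinson nearby cycles}.
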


Finally, suppose that $Q$ and $X$ are equipped with commuting actions of algebraic groups $K$ and $H$ such that $f$ is $K$-equivariant and scaled under the action of $H$. Then, for a holonomic $\ms{D}$-module $\ms{M}$ on $Q$ strongly $K$-equivariant and weakly $H$-equivariant, the $\ms{D}_X$-modules $\pi_f^0(\ms{M})$ and $\psi_f^{un}\ms{M}$ have natural actions of $K$ and $H$. We conclude this subsection with the following observation.

\begin{cor} \label{cor:equivariant nearby cycles}
In the equivariant setting above, the isomorphism of Corollary \ref{cor:locally closed nearby cycles} is $K \times H$-equivariant.
\end{cor}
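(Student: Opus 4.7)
My plan is to deduce the equivariance from the canonicity of the two constructions and the naturality of the comparison isomorphism, treating the $K$- and $H$-actions separately.

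First I would handle the $K$-action. Both $\pi_f^0(-)$ and $\psi_f^{un}(-)$ are manifestly functorial in the $\ms{D}_Q$-module $\ms{M}$: a morphism $\phi \colon \ms{M} \to \ms{M}'$ induces morphisms of the outputs that intertwine the isomorphism of Corollary \ref{cor:locally closed nearby cycles}. Since $f$ is $K$-invariant (the $K$-action on the target $\mb{C}$ is trivial), the graph $j_f \colon Q \to X \times \mb{C}$ is $K$-equivariant, and the divisor $t = 0$ is $K$-stable. The strong $K$-equivariance of $\ms{M}$---i.e., the descent datum on $K \times Q$---therefore propagates through both constructions to give compatible strong $K$-equivariant structures on $\psi_f^{un}\ms{M}$ and $\pi_f^0\ms{M}$, and the comparison isomorphism is $K$-equivariant by naturality.

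Next I would tackle the weak $H$-action, which is subtler because $f$ is only scaled under $H$. Fix a character $\chi \colon H \to \mb{G}_m$ with $h \cdot f = \chi(h) f$, so that $j_f$ becomes $H$-equivariant once we let $H$ act on $X \times \mb{C}$ via $\chi$ on the second factor. The coordinate $t$ then has $H$-weight $\chi$, which makes the divisor $t = 0$ stable and the $V$-filtration $H$-equivariant; on the Beilinson side, the $\ms{D}_U$-module $f^s\ms{M}$ acquires a weak $H$-action in which $h$ sends $f^s m$ to $\chi(h)^s f^s \otimes (h \cdot m)$, and this factor $\chi(h)^s = \exp(s \log \chi(h))$ makes sense on $\pi_f^0\ms{M}$ because $s$ is nilpotent there.

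The key verification is that the explicit isomorphism $m \otimes \xi \partial_t \mapsto f^{s-1}m \otimes \xi$ intertwines these two $H$-actions. This is a weight computation: on the left, $\partial_t$ contributes a factor $\chi(h)^{-1}$; on the right, $f^{s-1}$ contributes $\chi(h)^{s-1}$. The discrepancy $\chi(h)^s$ is absorbed by the identification of $s$ with $-t\partial_t$ provided by Proposition \ref{prop:beilinson nearby cycles}: writing $\chi(h) = \exp(d\chi(X_h))$ infinitesimally, one has $\chi(h)^s \cdot f^{s-1} m$ matching the formal expansion $\exp(-t\partial_t \cdot d\chi(X_h)) (f^{s-1} m)$ on the $V$-filtered side, which is exactly the action inherited from the $H$-action on $t$. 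With this bookkeeping in place, the $H$-equivariance in general reduces to the smooth-divisor case of Lemma \ref{lem:nearby smooth divisor}, where it is immediate from the canonicity of the $V$-filtration.

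The only real obstacle is keeping the weight accounting straight, since the Beilinson construction mixes orders in the nilpotent $s$ with weights of $\chi$; beyond this, the argument is formal.
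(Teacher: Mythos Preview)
The paper states this corollary without proof, as an observation following immediately from the canonical nature of the constructions. Your overall strategy---treat $K$ and $H$ separately, use functoriality for $K$, and check weights for $H$---is sound, and the $K$-part is fine.

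The $H$-part, however, contains a genuine error. Your proposed weak $H$-action on $f^s\ms{M}[[s]]$, in which $h$ acts by an extra factor $\chi(h)^s$, is not a weak $H$-equivariant structure in the algebraic sense: $\chi(h)^s = \exp(s\log\chi(h))$ involves $\log\chi(h)$, which is not a regular function on $H$, so this does not define an algebraic coaction. Your subsequent attempt to absorb this factor via the identification $s = -t\partial_t$ is correspondingly confused.

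The correct observation is simpler. As a $\ms{D}_Q$-module, $(cf)^s\ms{M}[[s]]$ is literally equal to $f^s\ms{M}[[s]]$ for any nonzero constant $c$, since $\partial(cf)/(cf) = \partial f/f$ in the twisted action \eqref{eq:f to the s action}. Hence the functor $\ms{M}\mapsto f^s\ms{M}[[s]]$ depends only on the divisor of $f$, which is $H$-stable, and the weak $H$-action on $f^s\ms{M}[[s]]$ is simply the one inherited from $\ms{M}$, extended $\mb{C}[[s]]$-linearly---no $\chi(h)^s$. With this in hand, the weight check on the explicit formula $m\otimes\xi\partial_t \mapsto f^{s-1}m\otimes\xi$ goes through directly: $f^{s-1}m$ means $f^{-1}m$ in the underlying $\mc{O}$-module, and $f^{-1}$ and $\partial_t$ carry the same $H$-weight (since $t$ and $f$ have the same weight by construction of the graph embedding $j_f$), while $m$ and $\xi$ transform identically on both sides.
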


\subsection{Intertwining functors} \label{subsec:intertwining}

In this subsection, we briefly recall the intertwining functors associated with simple reflections and some of their properties.

Let $\lambda \in \mf{h}^*_\mb{R}$, $\alpha \in \Phi_+$ a simple root, and recall from \S\ref{subsec:convolution} the $G$-orbit $X_{s_\alpha} \subset \ms{B} \times \ms{B}$. Let $\mc{O}_{X_{s_\alpha}}(-\lambda, s_\alpha \cdot \lambda)$ be the unique rank $1$ $G$-equivariant $(-\lambda, s_\alpha \cdot \lambda)$-twisted local system on $X_{s_\alpha}$, where the dot action is defined by
\[ s_\alpha \cdot \lambda = s_\alpha(\lambda + \rho) - \rho = s_\alpha \lambda -\alpha.\]
The (dual) intertwining functor is
\[ \mb{I}_{s_\alpha}^* = - \star j_{s_\alpha *}\mc{O}_{X_{s_\alpha}}^{-\lambda, s_\alpha \cdot \lambda} \colon \oD^b_K(\mhm_\lambda(\ms{B})) \to \oD^b_K(\mhm_{s_\alpha \cdot \lambda}(\ms{B})).\]

The key property of the intertwining functors, due to Beilinson and Bernstein, is that $R\Gamma \circ \mb{I}^*_{s_\alpha}(\ms{M}) \cong \Gamma(\ms{M})$ as $(\mf{g}, K)$-modules for $\lambda + \rho$ regular and integrally dominant. This statement can be extended with some care to singular $\lambda$. We will have need of the following very special case, where the isomorphism is realized by a map of mixed Hodge modules.

\begin{lem} \label{lem:intertwining}
Let $Q \subset \ms{B}$ be a $K$-orbit, and let $\alpha \in \Phi_+$ be a $Q$-complex simple root such that $\theta_Q\alpha \in \Phi_+$. Assume $\gamma = \mc{O}_Q(\lambda, \Lambda)$ is such that $\langle \lambda, \check\alpha \rangle = -1$. Then there exists $\gamma' = \mc{O}_{Q'}(\lambda, \Lambda')$ such that
\begin{enumerate}
\item \label{itm:intertwining 1} $\mb{I}_{s_\alpha}^*j_*\gamma = j_*\gamma'$,
\item \label{itm:intertwining 2} $\dim Q' = \dim Q + 1$,
\item \label{itm:intertwining 3} there is a surjection $j_*\gamma' \to j_*\gamma(-1)$ of twisted mixed Hodge modules whose kernel is sent to zero under $R\Gamma$.
\end{enumerate}
\end{lem}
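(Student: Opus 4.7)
My plan is to identify $Q'$ geometrically via the $\alpha$-fibration $\pi_\alpha \colon \ms{B} \to \ms{P}_\alpha$, factor the intertwining through $\pi_\alpha$, and extract the surjection in (3) from a natural short exact sequence on $\ms{B} \times_{\ms{P}_\alpha} \ms{B}$ that becomes available precisely at the wall $\langle \lambda + \rho, \check\alpha\rangle = 0$. By Proposition \ref{prop:orbit geometries}, case (2), the hypothesis on $\alpha$ forces each $\alpha$-line $Z$ meeting $Q$ to do so at a single point $y$, with $Z \setminus \{y\}$ contained in a unique larger $K$-orbit $Q'$ satisfying $\theta_{Q'} = s_\alpha \theta_Q s_\alpha$, $\dim Q' = \dim Q + 1$, and $\pi_\alpha^{-1}(\pi_\alpha(\bar Q)) = \bar{Q'}$; moreover, $\pi_\alpha|_Q$ is an isomorphism onto $\pi_\alpha(Q)$. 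Since $\langle \lambda, \check\alpha\rangle = -1$ we have $s_\alpha \cdot \lambda = \lambda$, and transporting $\Lambda$ across the isomorphism $H^{\theta_Q} \to H^{\theta_{Q'}}$ induced by $s_\alpha$ produces a character $\Lambda'$ whose differential is $\lambda|_{\mf{h}^{\theta_{Q'}}}$, defining $\gamma' := \mc{O}_{Q'}(\lambda, \Lambda')$ (in the spirit of Proposition \ref{prop:hecke action}, case (2)).

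To prove (1) and (2), I would factor the convolution defining $\mb{I}^*_{s_\alpha}$ through the flag correspondence and use base change along $\pi_\alpha$, reducing $\mb{I}^*_{s_\alpha} j_*\gamma$ to $\pi_\alpha^\circ \pi_{\alpha *} j_*\gamma$ (up to the built-in twist). Since $\pi_\alpha|_Q$ is an isomorphism, the pushforward $\pi_{\alpha *} j_*\gamma$ is the $*$-extension of a rank-one equivariant twisted local system on $\pi_\alpha(Q)$, and its pullback $\pi_\alpha^\circ$ is supported on $\bar{Q'}$. A direct computation along a single $\alpha$-line $\mb{P}^1$ identifies its restriction to $Q'$ with $\gamma'$ and verifies that the resulting object is the full $*$-extension $j_*\gamma'$, rather than a smaller extension, because the fibers of $\pi_\alpha$ over $\overline{\pi_\alpha(Q)} \setminus \pi_\alpha(Q)$ lie outside the support of $\pi_{\alpha *} j_*\gamma$.

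For (3), the wall condition $\langle \lambda + \rho, \check\alpha\rangle = 0$ produces a short exact sequence on $\ms{B} \times_{\ms{P}_\alpha} \ms{B}$,
\[ 0 \to \mc{K} \to j_{s_\alpha *} \mc{O}_{X_{s_\alpha}}(-\lambda, s_\alpha \cdot \lambda) \to \Delta_* \mc{O}_\Delta(-\lambda, \lambda)(-1) \to 0, \]
globalizing the fiberwise exact sequence $0 \to \mc{O}_{\mb{P}^1}(-1) \to j_*\mc{O}_{\mb{A}^1} \to \delta_\infty(-1) \to 0$ of twisted $\ms{D}$-modules on $\mb{P}^1$. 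Convolving with $j_*\gamma$ and using step (1) to identify the middle and right terms as $j_*\gamma'$ and $j_*\gamma(-1)$ respectively (the latter because convolution with the $(-\lambda,\lambda)$-twisted diagonal delta is the identity), one obtains a distinguished triangle which degenerates to a short exact sequence $0 \to \mc{K} \star j_*\gamma \to j_*\gamma' \to j_*\gamma(-1) \to 0$, the outer terms being honest mixed Hodge modules. The main obstacle, and the heart of the argument, is showing that $R\Gamma(\mc{K} \star j_*\gamma) = 0$. My plan is to prove $\pi_{\alpha *} \mc{K} = 0$ (essentially the fiberwise statement $H^*(\mb{P}^1, \mc{O}(-1)) = 0$), and then descend via a projection-formula argument along the proper map $\pi_\alpha$; the subtle point requiring care is verifying that the factorization of convolutions through $\pi_\alpha$ respects the $*$-extension structure and the Tate-twist conventions at each step.
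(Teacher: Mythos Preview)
Your proposal is correct and follows essentially the same route as the paper: you use the same short exact sequence on $\ms{B}\times_{\ms{P}_\alpha}\ms{B}$ (the paper writes your kernel $\mc{K}$ explicitly as the clean extension $\bar{j}_{s_\alpha *}\mc{O}_{\bar{X}_{s_\alpha}}(-\lambda,\lambda)$, available precisely because $\langle\lambda,\check\alpha\rangle=-1$), convolve with $j_*\gamma$, and kill the kernel under $R\Gamma$ via the degree $-1$ line bundle on the $\mb{P}^1$ fibers. For (1) the paper simply cites an easy calculation along the lines of Proposition~\ref{prop:hecke action}\,\eqref{itm:hecke action 2}, giving $\gamma' = \mc{O}_{Q'}(\lambda, s_\alpha\Lambda\otimes(-\alpha))$; your proposed factorization through $\pi_\alpha^\circ\pi_{\alpha*}$ is not literally the intertwining functor (that would be convolution with the closed pushforward from $\bar{X}_{s_\alpha}$, i.e., your $\mc{K}$, not with $j_{s_\alpha*}$), but this does not affect the argument since you recover (1) anyway from the exact sequence in (3) together with the identification of the kernel.
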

\begin{proof}
Consider the $\mb{P}^1$-fibration $\pi_\alpha \colon \ms{B} \to \ms{P}_\alpha$, where $\ms{P}_\alpha$ is the partial flag variety of parabolics of type $\alpha$. By our assumptions on $\alpha$, Proposition \ref{prop:orbit geometries} shows that $Q \to \pi_\alpha(Q)$ is an isomorphism and $\pi_\alpha^{-1}\pi_\alpha Q = Q \cup Q'$ where $Q'$ is another $K$-orbit with $\dim Q' = \dim Q + 1$. An easy calculation, similar to Proposition \ref{prop:hecke action} \eqref{itm:hecke action 2}, shows \eqref{itm:intertwining 1} with $\gamma' = \mc{O}_{Q'}(\lambda, s_\alpha \Lambda \otimes (-\alpha))$, so this proves \eqref{itm:intertwining 1} and \eqref{itm:intertwining 2}. 

To prove \eqref{itm:intertwining 3}, consider the $G$-orbit closure
\[ \bar{X}_{s_\alpha} = \ms{B} \times_{\ms{P}_\alpha}\ms{B} \subset \ms{B} \times \ms{B}.\]
The twisted local system $\mc{O}_{X_{s_\alpha}}(-\lambda, s_\alpha \cdot \lambda) = \mc{O}_{X_{s_\alpha}}(-\lambda, \lambda)$ extends to a $(-\lambda, \lambda)$-twisted local system $\mc{O}_{\bar{X}_{s_\alpha}}(-\lambda, \lambda)$ on $\bar{X}_{s_\alpha}$, with an exact sequence
\[ 0 \to \bar{j}_{s_\alpha *}\mc{O}_{\bar{X}_{s_\alpha}}(-\lambda, \lambda) \to j_{s_\alpha *} \mc{O}_{X_{s_\alpha}}(-\lambda, \lambda) \to j_{1*} \mc{O}_{X_1}(-\lambda, \lambda)(-1) \to 0,\]
where $\bar{j}_{s_\alpha} \colon \bar{X}_{s_\alpha} \to \ms{B} \times \ms{B}$ is the inclusion. So we have an exact sequence
\[ 0 \to j_*\gamma \star \left(\bar{j}_{s_\alpha *} \mc{O}_{\bar{X}_{s_\alpha}}(-\lambda, \lambda)\right) \to j_*\gamma' \to j_*\gamma(-1) \to 0\]
of the corresponding convolutions. The kernel is the pushforward from $\pi_\alpha^{-1}\pi_\alpha Q$ of a line bundle with degree $-1$ on the fibres of $\pi_\alpha$, so it is sent to zero under $R\Gamma$ as claimed.
\end{proof}

\subsection{Proof of Theorem \ref{thm:main theorem 3}}
\label{subsec:proof of main theorem 3}

We now give the proof of Theorem \ref{thm:main theorem 3}. We will show that the minimal $K$-types of $\Gamma(\ms{B}, j_*\gamma)$ lie in $\Gamma(\ms{B}, F_c j_*\gamma)$ whenever $\lambda + \rho$ is dominant and $\langle \lambda + \rho, \check\alpha \rangle > 0$ for all $Q$-compact imaginary simple roots $\alpha$. This includes all relevant $\gamma$ as in the statement of the theorem, so the result follows.

Let $\mu \subset \Gamma(\ms{B}, j_*\gamma)$ be a minimal $K$-type. According to Proposition \ref{prop:minimal K-type support}, we have
\begin{equation} \label{eq:minimal hodge 1}
\mu \subset \Gamma(\ms{B}, j_{\bigcdot}(\gamma \otimes \omega_{Q/\ms{B}})).
\end{equation}
If $Q$ is closed, then $F_c j_*\gamma = j_{\bigcdot}(\gamma \otimes \omega_{Q/\ms{B}})$ by definition, so we are done. So assume from now on that $Q$ is not closed. Then we can (and will) choose $\varphi \in \mf{h}^*$ and $f_\varphi \in H^0(\bar{Q}, \ms{L}_\varphi)^K$ such that $Q = f_{\varphi}^{-1}(\mb{C}^\times)$ as in \S\ref{subsec:mhm jantzen}. By \eqref{eq:minimal hodge 1}, we have a well-defined $K$-submodule
\[ f_\varphi^a \mu \subset \Gamma(\ms{B}, j_{\bigcdot}(f_\varphi^a \gamma \otimes \omega_{Q/\ms{B}})) \subset \Gamma(\ms{B}, j_*f_\varphi^a\gamma),\]
for all $a \in \mb{R}$, which is also a minimal $K$-type.

Consider the quotient map
\begin{equation} \label{eq:minimal hodge 2}
 j_*f_{\varphi}^{s + a}\gamma[[s]] \to  \pi_{f_\varphi}^0(f_{\varphi}^a\gamma) = \coker (j_!f_{\varphi}^{s + a} \gamma[[s]] \to j_*f_{\varphi}^{s + a} \gamma[[s]])
\end{equation}
and the set
\[
J_\gamma = \{\mbox{$a > 0 \mid f_{\varphi}^{s + a}\mu$ has non-zero image under \eqref{eq:minimal hodge 2}}\}.
\]

\begin{lem} \label{lem:minimal K-types 1}
The $K$-type $\mu$ lies in $\Gamma(\ms{B}, F_cj_*\gamma)$ if and only if $J_\gamma = \emptyset$.
\end{lem}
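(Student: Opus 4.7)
The plan is to show that both sides of the claimed equivalence are two different encodings of a single Kashiwara-Malgrange $V$-filtration condition on $\mu$, accessed through Saito's description of the Hodge filtration on an open $*$-pushforward and Beilinson's identification of unipotent nearby cycles with $\pi^0_{f_\varphi}$.

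First I would work locally near $\partial Q$. By Proposition \ref{prop:minimal K-type support}, the elements of $\mu$ may be written locally as $m \otimes \xi$ with $\xi$ a local section of $\omega_{Q/\ms{B}}$, so they lie in $j_\bigcdot(\gamma \otimes \omega_{Q/\ms{B}})$. Factoring the inclusion $j\colon Q \to \ms{B}$ locally through the graph embedding $i_{f_\varphi}\colon \ms{B} \to \ms{B} \times \mb{C}$ of $f_\varphi$, Saito's formula \eqref{open embedding formula} characterizes the lowest nonzero piece of the Hodge filtration: $\mu \subset \Gamma(F_c j_*\gamma)$ is equivalent to the class of each such $m \otimes \xi$ in $i_{f_\varphi *}j_*\gamma$ lying in $V^{\geq -1}$, once the $\omega_{Q/\ms{B}}$-twist absorbs the codimension-$c$ shift coming from pushing forward to $\ms{B}$.

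Next I would use Corollary \ref{cor:locally closed nearby cycles}, which sends the class of $m \otimes \xi \partial_t$ in $\mrm{Gr}_V^0 j_{f_\varphi *}\gamma$ to the class of $f_\varphi^{s-1}m \otimes \xi$ in $\pi^0_{f_\varphi}(\gamma)$, and more generally identifies the $\mrm{Gr}_V^{-a}$-components of the class with the classes of $f_\varphi^{s+a-1}(m \otimes \xi)$ in $\pi^0_{f_\varphi}(f_\varphi^{a-1}\gamma)$ for $a \in (0,1]$; by periodicity in the monodromy eigenvalue $\exp(-2\pi i a)$, this extends to all $a > 0$. Under this dictionary, the assertion ``the class of $m \otimes \xi$ lies in $V^{\geq -1}$'' is exactly the assertion that for every $a > 0$ the class of $f_\varphi^{s+a}\mu$ in $\pi^0_{f_\varphi}(f_\varphi^a\gamma)$ vanishes, i.e., $J_\gamma = \emptyset$. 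Combining the two steps yields the equivalence.

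The main obstacle will be the careful bookkeeping of shifts: the codimension-$c$ shift of the Hodge filtration coming from $\omega_{Q/\ms{B}}$, the $s-1$ shift built into Beilinson's identification in Corollary \ref{cor:locally closed nearby cycles}, and the convention distinguishing $V^{\geq -1}$ from $V^{>-1}$ in Saito's formula. Verifying that the range ``for all $a > 0$'' in the definition of $J_\gamma$ exactly matches the threshold $V^{\geq -1}$ in \eqref{open embedding formula}, across all monodromy eigenvalues, is where the real work lies; possible singularities of $\bar Q$ are sidestepped by working on a local smooth ambient neighborhood of $\partial Q$ inside $\ms{B}$ via the graph of $f_\varphi$.
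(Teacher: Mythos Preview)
Your approach is essentially the paper's own: translate $\mu \subset F_c j_*\gamma$ into a $V^{\geq -1}$ condition via Saito's formula \eqref{open embedding formula}, then use the Beilinson identification of Corollary~\ref{cor:equivariant nearby cycles} to read off that condition in terms of the images of $f_\varphi^{s+a}\mu$ in $\pi_{f_\varphi}^0(f_\varphi^a\gamma)$.

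Two bookkeeping points are worth noting. First, the paper avoids your shift gymnastics entirely by multiplying by $t$: since $t\colon V^\alpha \to V^{\alpha+1}$ is an isomorphism on $j'_*i_*\gamma$, the condition $\mu \subset V^{\geq -1}$ becomes $t\mu \subset V^{\geq 0}$, which fails exactly when $t\mu$ has nonzero image in $\mrm{Gr}_V^{-a}$ for some $a>0$. The range $a>0$ in the definition of $J_\gamma$ then falls out immediately, with no need for your ``periodicity'' step or the $a-1$ offset. Second, your appeal to periodicity in the monodromy eigenvalue is not the right mechanism. What the paper actually uses is the equality $\mrm{Gr}_V^{-a}j'_*i_*\gamma = \mrm{Gr}_V^0 j'_*i_*(f_\varphi^a\gamma)$, valid for \emph{every} $a$ at once as a direct consequence of the definition of $f_\varphi^a\gamma$; one then applies Corollary~\ref{cor:equivariant nearby cycles} to $f_\varphi^a\gamma$ for each $a$. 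Your restriction to $a\in(0,1]$ followed by extension is unnecessary and, as stated, would also target the wrong range (the $V^{\geq -1}$ condition concerns $\mrm{Gr}_V^{-b}$ for $b>1$, not $b\in(0,1]$). That said, your final substitution $a'=a-1$ does land on the correct statement, so the argument can be made to work as written.
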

\begin{proof}
By \eqref{open embedding formula}, $\mu \subset F_c j_*\gamma$ if and only if $\mu \subset V^{\geq -1}j'_*i_*\gamma$, which holds if and only if $t \mu \subset V^{\geq 0}j'_*i_*\gamma$, where $i \colon \tilde{Q} \to \tilde{\ms{B}} \times \mb{C}^\times$ is the graph of $f_\varphi$, $j' \colon \tilde{\ms{B}} \times \mb{C}^\times \to \tilde{\ms{B}} \times \mb{C}$ is the open inclusion, and $t$ is the coordinate on $\mb{C}$. This fails if and only if $t \mu $ has non-zero image in $\mrm{Gr}_V^{-a}j'_*i_*\gamma$ for some $a > 0$. But by Corollary \ref{cor:equivariant nearby cycles}, we have an equivariant isomorphism
\[ 
\mrm{Gr}_V^{- a}j'_*i_*\gamma = \mrm{Gr}_V^0j'_*i_*(f_{\varphi}^a \gamma) = \psi_{f_\varphi}^{un}(f_\varphi^a \gamma) \cong \pi_{f_\varphi}^0(f_{\varphi}^a \gamma)
 \]
sending $t \mu$ to $f_{\varphi}^{s + a}\mu$. This proves the lemma.
\end{proof}

Let us now consider the interval
\[
I= \{\mbox{$a'\geq 0 \mid \lambda + a'\varphi + \rho$ is dominant}\}.
\]

\begin{lem} \label{lem:minimal K-types 2}
We have $I \cap J_{\gamma} = \emptyset$. In particular, for $a \in I$, $\mu \subset \Gamma(\ms{B}, F_cj_*\gamma)$ if and only if $f_{\varphi}^a\mu \subset \Gamma(\ms{B}, F_cj_*f_{\varphi}^a\gamma)$.
\end{lem}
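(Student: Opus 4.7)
My plan is to reduce the vanishing $I \cap J_\gamma = \emptyset$ to Theorem~\ref{thm:minimal K-type in irreducible} applied to the deformed local system $f_\varphi^a\gamma$, and to deduce the ``in particular'' statement from Lemma~\ref{lem:minimal K-types 1} by an elementary convexity argument.

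The first step is an easy observation that transfers the hypotheses across the deformation. Since $\varphi \in \mf{a}^* \cap \mb{X}^*(H)$ and every imaginary coroot $\check\alpha$ is $\theta$-fixed while $\varphi$ is $\theta$-anti-invariant, $\langle \varphi, \check\alpha\rangle = 0$ for all imaginary $\alpha$. Consequently, for any $a \in I$ the hypothesis under which we are proving Theorem~\ref{thm:main theorem 3} (dominance of $\lambda + a\varphi + \rho$ together with strict positivity on $Q$-compact imaginary simple coroots) transfers verbatim to $f_\varphi^a\gamma$. Moreover $f_\varphi^a\mu$ is a minimal $K$-type of $\Gamma(j_*f_\varphi^a\gamma)$, because multiplication by $f_\varphi^a$ is a $K$-equivariant bijection on sections that preserves the set of $K$-types.

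To prove $I \cap J_\gamma = \emptyset$ itself, I would fix $a \in I$ and show that the image of $f_\varphi^{s+a}\mu$ in $\Gamma(\pi_{f_\varphi}^0(f_\varphi^a\gamma))$ vanishes. Exactness of $\Gamma$ at the dominant parameter preserves the short exact sequence $0 \to j_!f_\varphi^{s+a}\gamma[[s]] \to j_*f_\varphi^{s+a}\gamma[[s]] \to \pi_{f_\varphi}^0(f_\varphi^a\gamma) \to 0$ after taking global sections, and $\Gamma(j_!f_\varphi^{s+a}\gamma[[s]])$ is $\mb{C}[[s]]$-flat with reduction $\Gamma(j_!f_\varphi^a\gamma)$. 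Applying Theorem~\ref{thm:minimal K-type in irreducible} to $f_\varphi^a\gamma$ places $f_\varphi^a\mu$ inside $\Gamma(j_{!*}f_\varphi^a\gamma)$, which by exactness coincides with the image of $\Gamma(j_!) \to \Gamma(j_*)$ at $s = 0$; this gives the vanishing of the image of $f_\varphi^{s+a}\mu$ in $\Gamma(\pi_{f_\varphi}^0(f_\varphi^a\gamma))$ modulo $s$. Upgrading this to all orders would proceed by lifting the mod-$s$ lift of $f_\varphi^a\mu$ to a formal section of $\Gamma(j_!f_\varphi^{s+a}\gamma[[s]])$ via flatness, observing the difference $f_\varphi^{s+a}\mu - \text{(lift)}$ is divisible by $s$, and iterating on the quotient, using the nilpotence of $s$ on $\pi_{f_\varphi}^0(f_\varphi^a\gamma)$ together with the canonical formal structure of the family $f_\varphi^{s+a}\mu$ to ensure the minimal $K$-type criterion remains available at each step.

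The ``in particular'' clause is then immediate. Since the dominant chamber is cut out by linear inequalities it is convex, so $0, a \in I$ forces $[0, a] \subset I$, and the first part yields $J_\gamma \cap [0, a] = \emptyset$. A change of variables identifies $J_{f_\varphi^a\gamma} = J_\gamma \cap (a, \infty)$, so $J_\gamma = J_{f_\varphi^a\gamma}$. Lemma~\ref{lem:minimal K-types 1} applied both to $\gamma$ and to $f_\varphi^a\gamma$ now expresses each side of the claimed equivalence as the emptiness of the same set.

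The main obstacle is the descending induction propagating mod-$s$ vanishing to all higher orders in the formal parameter. Theorem~\ref{thm:minimal K-type in irreducible} delivers only the statement modulo $s$, and converting this into identical vanishing requires verifying that successive $s$-residues of $f_\varphi^{s+a}\mu$, relative to a chosen lift to $\Gamma(j_!f_\varphi^{s+a}\gamma[[s]])$, remain in a form to which the minimal $K$-type criterion still applies; this compatibility between the canonical family, the Jantzen filtration and the minimality condition is where the proof requires genuine care rather than formal manipulation.
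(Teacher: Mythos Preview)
Your overall strategy---reduce to Theorem~\ref{thm:minimal K-type in irreducible} applied to the deformed local system $f_\varphi^a\gamma$, then use Lemma~\ref{lem:minimal K-types 1} for the ``in particular''---is exactly the paper's. However, the ``main obstacle'' you identify is not a real obstacle, and your iterative attack on it is where the argument goes astray.

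The problem is that you are tracking the specific element $f_\varphi^{s+a}\mu$ and trying to show \emph{it} vanishes in $\Gamma(\pi_{f_\varphi}^0(f_\varphi^a\gamma))$ by successive $s$-divisibility. After one step, the residue is no longer of the canonical form $f_\varphi^{s}\cdot(\text{minimal $K$-type})$, and you have no way to reapply the minimal $K$-type criterion to it; the ``compatibility'' you hope for is simply not there. The fix is to argue at the level of $K$-types rather than elements: if the image of $f_\varphi^{s+a}\mu$ were nonzero, then the $K$-type $\mu$ would occur in $\Gamma(\pi_{f_\varphi}^0(f_\varphi^a\gamma))$. Since $s$ is a $K$-equivariant nilpotent endomorphism, any $K$-type occurring in this module also occurs in
\[
\coker\bigl(s\colon \Gamma(\pi_{f_\varphi}^0(f_\varphi^a\gamma)) \to \Gamma(\pi_{f_\varphi}^0(f_\varphi^a\gamma))\bigr)
= \Gamma\bigl(\coker(j_!f_\varphi^a\gamma \to j_*f_\varphi^a\gamma)\bigr),
\]
the equality using exactness of $\Gamma$ at dominant parameter. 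Theorem~\ref{thm:minimal K-type in irreducible} (in its standard strong form: the minimal $K$-type has the same multiplicity in $\Gamma(j_*)$ as in $\Gamma(j_{!*})$) then gives a contradiction, since the right-hand side has no $\mu$-isotypic component. This is the paper's argument; no induction over powers of $s$ is required.

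Two smaller points. First, to apply Theorem~\ref{thm:minimal K-type in irreducible} you need $f_\varphi^a\gamma$ to be \emph{regular}, not merely to satisfy the weaker running hypotheses of the proof of Theorem~\ref{thm:main theorem 3}; this is exactly where the condition $a>0$ (automatic from $a\in J_\gamma$) enters, via the positivity of $\langle\varphi,\check\alpha\rangle$ on the simple roots relevant to the regularity criterion. Second, your change-of-variables formula is off by a shift: one has $J_{f_\varphi^a\gamma} = (J_\gamma - a)\cap\mb{R}_{>0}$, not $J_\gamma\cap(a,\infty)$, so $J_\gamma \neq J_{f_\varphi^a\gamma}$ as sets. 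What you actually need, and what does follow, is that $J_\gamma=\emptyset \Leftrightarrow J_{f_\varphi^a\gamma}=\emptyset$, since $J_\gamma\cap(0,a]\subset J_\gamma\cap I=\emptyset$.
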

\begin{proof}
The ``in particular'' follows immediately from Lemma \ref{lem:minimal K-types 1} and the observation that $J_{f_{\varphi}^a \gamma} = (J_{\gamma} - a) \cap \mb{R}_{> 0}$. To prove $I \cap J_\gamma = \emptyset$, assume to the contrary that there is an $a\in I$ such that $a > 0$ and $f_{\varphi}^{s + a} \mu$ is non-zero in $\Gamma(\ms{B}, \pi_{f_\varphi}^0f_{\varphi}^a\gamma)$. Since the endomorphism given by multiplication by $s$ is nilpotent and equivariant this means that the $K$-type $\mu$ appears in
\[ \coker(s \colon \Gamma(\ms{B}, \pi_{f_{\varphi}}^0f_{\varphi}^a\gamma) \to \Gamma(\ms{B}, \pi_{f_\varphi}^0f_{\varphi}^a\gamma)) = \Gamma(\ms{B}, \coker(s \colon \pi_{f_\varphi}^0f_{\varphi}^a\gamma \to \pi_{f_\varphi}^0f_{\varphi}^a\gamma)).
 \]
 Here we have made use of the fact that $\lambda + a\varphi + \rho$ is dominant, which guarantees exactness of $\Gamma$. The additional assumption $a > 0$ ensures that $f_\varphi^a\gamma$ is relevant, so by Theorem \ref{thm:minimal K-type in irreducible}, the minimal $K$-type $\mu$ can only appear in $\Gamma(\cB,j_{!*}f_{\varphi}^a\gamma)$. However we have just shown that it also appears in 
 \[
 \coker(s \colon \pi_{f_\varphi}^0 f_{\varphi}^a\gamma \to \pi_{f_\varphi}^0f_{\varphi}^a\gamma) = \coker(j_!f_{\varphi}^a \gamma \to j_*f_{\varphi}^a\gamma)
 \]
which is a contradiction.
\end{proof}

Continuing with the proof of Theorem \ref{thm:main theorem 3}, if our chosen $\varphi$ is dominant, then $I = \mb{R}_{\geq 0}$, so by Lemmas \ref{lem:minimal K-types 1} and \ref{lem:minimal K-types 2} we are done. So it remains to consider the case where $\varphi$ is not dominant. In this case the interval $I$ is finite, so by Lemma \ref{lem:minimal K-types 2} again we may assume without loss of generality that $I = \{0\}$.

Since $I = \{0\}$, there must exist a simple coroot $\check\alpha$ with $\langle \varphi, \check\alpha\rangle < 0$ and $\langle \lambda, \check\alpha \rangle = -1$. Since $f_\varphi$ is a $K$-invariant boundary equation for $Q$, we have necessarily $\langle \varphi, \check\beta \rangle = 0$ if $\beta$ is imaginary and $\langle \varphi, \check\beta \rangle > 0$ if $\beta$ is a positive simple root with $\theta_Q\beta \in \Phi_-$. So the root $\alpha$ must be complex with $\theta_Q \alpha \in \Phi_+$. We are therefore in the setting of Lemma \ref{lem:intertwining}, so there is a $\lambda$-twisted local system $\gamma'$ on a larger orbit $Q'$ and a surjection
\[j_*\gamma' \to j_*\gamma(-1) \]
of twisted mixed Hodge modules, whose kernel is sent to zero by $R\Gamma$. Now
\[ F_{c - 1}j_*\gamma(-1) = F_cj_*\gamma \]
and we have a commutative diagram
\[
\begin{CD}
\Gamma(\ms{B}, j_*\gamma') @= \Gamma(\ms{B}, j_*\gamma) 
\\
@AAA  @AAA
\\ \Gamma(\ms{B}, F_{c - 1}j_*\gamma') @>>>  \Gamma(\ms{B}, F_cj_*\gamma).
\end{CD}
\]
By descending induction on $\dim Q$, we have $\mu \subset \Gamma(\ms{B}, F_{c - 1}j_*\gamma')$ and hence $\mu \subset \Gamma(\ms{B}, F_{c }j_*\gamma)$ as claimed.

This completes the proof of Theorem \ref{thm:main theorem 3}.

\subsection{Proof of Proposition \ref{prop:hodge positivity}} \label{subsec:proof of hodge positivity}

In this subsection, we give the proof of Proposition \ref{prop:hodge positivity}.

Let $\gamma$ be a twisted local system on a $K$-orbit $Q$ as in the statement of the proposition, and let $n \in \Gamma(\ms{B}, F_cj_{!*}\gamma)$ be in the lowest piece of the Hodge filtration. Observe that the integral over $U_\mb{R}$ defining $\Gamma(S)$ (see \S \ref{global_polar}) may be rewritten as the integral
\[ \Gamma(S)(n, \bar{n}) = \langle \eta, S(n, \bar{n})\rangle\]
over $\tilde{\ms{B}}$, where $\eta$ is a compactly supported real top form on $\tilde{\ms{B}}$ positive with respect to the orientation. To show that $\Gamma(S)(n, \bar{n})$ is positive, it suffices via a partition of unity to show that $\langle \eta, S(n, \bar{n})\rangle > 0$ for positive $\eta$ with sufficiently small compact support. We may therefore work locally on $\tilde{\ms{B}}$, and assume that $n = m \otimes \xi$ for some $m \in \Gamma(Q, \gamma)$ and $\xi \in \omega_{Q/\ms{B}}$. As in \eqref{eq:intermediate polarization}, we may therefore write
\begin{equation} \label{eq:hodge positivity 1}
 \langle \eta, S(n, \bar{n}) \rangle =  \Res_{s = 0} s^{-1}\int_{\tilde{Q}} |f|^{2s} S_Q(m, \bar{m}) \eta',
\end{equation}
where
\[ \eta' = (-1)^{c(c - 1)/2} (2 \pi i)^c (\xi \wedge \bar{\xi}) \contract \eta|_{\tilde{Q}}\]
is a positive real top form on $\tilde{Q}$, $S_Q(m, \bar{m})$ is a positive real function on $Q$, $f \colon \tilde{Q} \to \mb{C}$ is a boundary equation, and the right hand side of \eqref{eq:hodge positivity 1} is defined by analytic continuation from $\operatorname{Re} s \gg 0$. It therefore suffices to show that the integral
\begin{equation} \label{eq:hodge positivity 2}
\int_{\tilde{Q}} S_Q(m, \bar{m}) \eta'
\end{equation}
converges (in which case no analytic continuation is required, so \eqref{eq:hodge positivity 1} is manifestly positive).

The convergence of \eqref{eq:hodge positivity 2} follows directly from results of Saito and is most conveniently derived from the formulation given in~\cite[Proposition 3.2]{budur-mustata-saito}. We will briefly indicate the argument here. Let us write $X$ for a resolution of singularities of $\bar Q$ with normal crossings and $\tilde{X} = X \times_{\ms{B}} \tilde{\ms{B}}$. We write, as before, $j\colon \tilde{Q} \to \tilde{\ms{B}}$ and also $\tilde j \colon \tilde{Q} \to \tilde{X}$ and $\pi\colon \tilde{X} \to \tilde{\ms{B}}$ for the obvious maps. As the morphisms $j$ and $\tilde j$ are affine, we see that $j_* = \pi_* \tilde j_*$ and all the functors are exact. Then, by~\cite[Proposition 3.2]{budur-mustata-saito} and a short calculation we conclude that 
\[
V^{>-1} F_c j_*\gamma \ = \ \pi_{\bigcdot} (V^{>-1}F_0 (\tilde j_*\gamma) \otimes \omega_{\tilde X/\tilde{\ms{B}}})\,;
\]
here we recall that $\pi_{\bigcdot}$ stands for push-forward as a $\bC$-sheaf, and we have written $V$ for the filtration induced by the $V$-filtration on the pushforward via the graph of the boundary equation $f$. By~\eqref{open embedding formula} and \eqref{dual open embedding formula} we have that $V^{>-1} F_c j_*\gamma= F_c j_{!*}\gamma$ and similarly $V^{>-1}F_0 (\tilde j_*\gamma) = F_0 (\tilde j_{!*}\gamma)$.

Locally on $\tilde{\ms{B}}$, we may therefore write $n = m \otimes \xi$ where $m \in F_0 \tilde{j}_{!*}\gamma$ and $\xi \in \omega_{X/\ms{B}}$. In the normal crossings case, an easy computation of the Hodge filtration and the polarization $S_Q$ in a local model shows that \eqref{eq:hodge positivity 2} converges as claimed.

This completes the proof of Proposition \ref{prop:hodge positivity}.

\section{Proof of Theorem \ref{thm:main theorem 2}}
\label{proof:main theorem 2}

The main purpose of this section is to give the proof of Theorem \ref{thm:main theorem 2}. The argument, which follows Beilinson and Bernstein's proof of Theorem \ref{thm:beilinson-bernstein} \cite{beilinson-bernstein} that the Jantzen filtration coincides with the weight filtration, works by relating the Jantzen forms to polarizations on nearby cycles via the intermediary of Beilinson's maximal extension. To this end, in \S\ref{subsec:beilinson pairing} we give a construction for induced pairings on Beilinson functors, which, in the case of nearby cycles, we relate to a construction of a polarization by Sabbah and Schnell in \S\ref{subsec:SS polarization}. We complete the proof of Theorem \ref{thm:main theorem 2} in \S\ref{subsec:proof of theorem 2}.

Since we will need to delve a little deeper into the theory of complex mixed Hodge modules in this section, we start off in \S\ref{subsec:SS MHM} with a summary of further details of the theory, following the treatment in the book project \cite{SS} of Sabbah and Schnell. For an alternative route to complex mixed Hodge modules, via Saito's theory \cite{S1, S2}, see Appendix \ref{appendix}, and \S\ref{subsec:Saito polarization} in particular for the proof that Sabbah and Schnell's polarization on nearby cycles used here agrees with Saito's.

\subsection{Complex mixed Hodge modules following Sabbah and Schnell}
 \label{subsec:SS MHM}

In this subsection, we recall the setup for the theory of complex mixed Hodge modules following \cite{SS}. For the sake of convenience, we will restrict to the algebraic case (in particular, all Hodge modules, polarizations, etc., are assumed to extend to projective compactifications). So we will consider quasi-projective complex algebraic varieties and algebraic regular holonomic left $\ms{D}$-modules on them. 

The data defining a mixed Hodge module on a variety $X$ are a regular holonomic $\ms{D}_X$-module $\ms{M}$, a weight filtration $W_{\bigcdot}\ms{M}$ by $\ms{D}_X$-submodules, and Hodge and conjugate Hodge filtrations $F_{\bigcdot}$ and $\bar{F}_{\bigcdot}$ specified as follows. The Hodge filtration, which varies holomorphically, is given by a filtration $F_{\bigcdot}\ms{M}$ on the $\ms{O}_X$-module $\ms{M}$ such that the $\ms{D}_X$-action is compatible with the order filtration on $\ms{D}_X$. The conjugate Hodge filtration, which varies anti-holomorphically, may be specified either by a similar filtration $\bar{F}_{\bigcdot} \ms{M}^c$ on the conjugate $\ms{D}$-module $\ms{M}^c$ determined by
\[ 
(\ms{M}^c)^{an} = \shom_{\ms{D}_{\bar{X}}}(\mb{D}(\overline{\ms{M}}), \Db_{X}) 
\]
or by its dual $F_{\bigcdot} \ms{M}^h$ on the Hermitian dual $\ms{M}^h$ to $\ms{M}$ determined by
\[
(\ms{M}^h)^{an} = \shom_{\ms{D}_{\bar{X}}}(\overline{\ms{M}}, \ms{D}\mit{b}_{X}) = (\bD \cM^c)^{an}.
\]
From the latter perspective, these data may be expressed explicitly (cf., \cite[\S 5.2.b]{SS}) by a triple
\[ (\ms{M}, \ms{M}', \mf{s}) = ((\ms{M}, F_{\bigcdot}\ms{M}, W_{\bigcdot}\ms{M}), (\ms{M}', F_{\bigcdot}\ms{M}', W_{\bigcdot}\ms{M}'), \mf{s}) \]
where
\begin{itemize}
\item $\ms{M}$ and $\ms{M}'$ are regular holonomic $\ms{D}_X$-modules,
\item $W_{\bigcdot} \ms{M}$ and $W_{\bigcdot}\ms{M}'$ are filtrations by $\ms{D}_X$-submodules,
\item $F_{\bigcdot}\ms{M}$ and $F_{\bigcdot} \ms{M}'$ are $\ms{D}_X$-filtrations,
\item $\mf{s} \colon \ms{M} \otimes \overline{\ms{M}'} \to \Db_X$ is a perfect sesquilinear pairing (i.e., it induces an isomorphism $\ms{M} \cong (\ms{M}')^h$ of the underlying $\ms{D}$-modules) compatible with $W_{\bigcdot}$.
\end{itemize}
Morphisms of triples are defined covariantly in $\ms{M}$ and contravariantly in $\ms{M}'$. We will realize mixed Hodge modules as triples in this way for the rest of this section.

For example \cite[12.7.8]{SS}, the trivial variation of Hodge structure on $X$ of dimension $n$ corresponds to the triple $(\ms{M}, \ms{M}', \mf{s})$, where $\ms{M} = \mc{O}_X$ (resp., $\ms{M}' = \mc{O}_X$) with Hodge and weight filtrations jumping in degrees $0$ and $n$ (resp., $n$ and $-n$) and $\mf{s}$ is given by
\[ \mf{s}(f, \bar{g}) = f\bar{g} \in \mc{C}^\infty_X \subset \Db_X.\]

In order to define a mixed Hodge module, the data $(\ms{M}, \ms{M}', \mf{s})$ are required to satisfy a complicated set of axioms. Since, at the time of writing, \cite{SS} does not contain the definition of \emph{mixed} Hodge modules or a discussion of the algebraic case, let us recall briefly from \cite[\S 4]{S2} how to build this theory from the notion of polarizable Hodge module \cite[14.2]{SS}. The full subcategory $\mhm(X)$ is defined for all $X$ as the largest system of categories satisfying the following conditions for $\ms{T} = (\ms{M}, \ms{M}', \mf{s}) \in \mhm(X)$:
\begin{enumerate}
\item  $\mrm{Gr}^W_w \ms{T}$ is a polarizable Hodge module of weight $w$ for all $w \in \mb{Z}$,
\item if $Y$ is a smooth variety, then $\ms{T} \boxtimes \mc{O}_Y \in \mhm(X \times Y)$,
\item if $U \subset X$ is open, then $\ms{T}|_U \in \mhm(U)$,
\item if $j \colon X \to Y$ is the inclusion of the complement of a weakly locally principal divisor, then there exist weight filtrations on the triples $j_!\ms{T}$ and $j_*\ms{T}$ so that these lie in $\mhm(Y)$, and
\item \label{itm:vanishing cycles exist} if $f \colon X \to \mb{C}$ is a regular function, then the nearby and vanishing cycles for $\ms{T}$ along $f$ are well-defined (i.e., the Hodge, weight and $V$-filtrations on $\ms{M}$ and $\ms{M}'$ are compatible, and the relative monodromy weight filtrations exist on $\mrm{Gr}_V^\alpha$ \cite[\S 2.2]{S2}), and are objects in $\mhm(X)$.
\end{enumerate}
Given this definition, it is a theorem (cf., \cite[Theorem 3.9]{S2}) that $\mhm(\mrm{pt})$ is the whole category of complex mixed Hodge structures, and more generally, that any graded-polarizable admissible variation of mixed Hodge structure defines a mixed Hodge module (cf., \cite[Theorem 3.27]{S2}).\footnote{Strictly speaking, there is not yet a proof of these theorems for complex mixed Hodge modules in the literature. The concerned reader may choose to work instead in the version of complex Hodge modules outlined in Appendix \ref{appendix}, in which the necessary statements follow from Saito's theorems.}

One of the main points of the theory is that mixed Hodge modules come with functors
\[ f_!, f_* \colon\oD^b(\mhm(X)) \to \oD^b(\mhm(Y)) \]
and
\[ f^!, f^* \colon \oD^b(\mhm(Y)) \to \oD^b(\mhm(X)) \]
for $f \colon X \to Y$ a morphism of smooth varieties, compatible with the same operations for regular holonomic $\ms{D}$-modules. They also admit a lift of Hermitian duality
\[ (-)^h \colon \mhm(X)^{op} \to \mhm(X), \]
and Hodge twist functors $\otimes \mb{C}_{p, q}$. Let us briefly recall how the most important of these (for our purposes) are defined in terms of triples.

The Hermitian duality and Hodge twists are simple: they are defined by the formulas
\[ (\ms{M}, \ms{M}', \mf{s})^h = (\ms{M}', \ms{M}, \mf{s}^h) \]
and
\[ (\ms{M}, \ms{M}', \mf{s}) \otimes \mb{C}_{p, q} = ((\ms{M}, F_{{\bigcdot} - p}\ms{M}, W_{{\bigcdot} + p + q}\ms{M}), (\ms{M}', F_{{\bigcdot} + q}\ms{M}', W_{{\bigcdot} - p - q}\ms{M}')),\]
where
\[ \mf{s}^h(m', \bar{m}) := \overline{\mf{s}(m, \bar{m}')}.\]
Let us also remark here that if $\ms{T} = (\ms{M}, \ms{M}', \mf{s})$ and $S \colon \ms{T} \to \ms{T}^h(-w)$ is a map determined by $S \colon \ms{M} \to \ms{M}'$, then the associated form on the underlying $\ms{D}$-module $\ms{M}$ is given by
\begin{align*}
S \colon \ms{M} \otimes \overline{\ms{M}} &\to \Db_X \\
m \otimes \overline{m'} &\mapsto \overline{\mf{s}(m', \overline{S(m)})}.
\end{align*}
The complex conjugates are arranged so that the formula above defines $\mb{C}$-linear map from $\hom(\ms{M}, \ms{M}')$ to sesquilinear forms on $\ms{M}$.

For closed immersions $i \colon X \to Y$, the pushforward of a triple $\ms{T} = (\ms{M}, \ms{M}', \mf{s})$ is defined by
\[ i_*\ms{T} = (i_*\ms{M}, i_*\ms{M}', i_*\mf{s}),\]
where $i_*\mf{s}$ is defined as in \eqref{eq:polarization closed pushforward} (cf., \cite[\S\S 0.2, 12.3.3, 12.4.a]{SS}), and $i_*\ms{M}$ and $i_*\ms{M}'$ are endowed with the Hodge filtrations of \S\ref{subsec:hodge filtration}. The weight filtration is defined by naively pushing forward the weight filtration for $\ms{T}$ in the obvious way. More general proper pushforwards are similarly explicit, with a little delicacy required in passing to the derived category (cf., \cite[Theorem 4.3]{S2}).

For the complement $j \colon U \to X$ of a principal divisor $f^{-1}(0)$, the $!$ and $*$ pushforwards are given by
\[ j_*\ms{T} = (j_*\ms{M}, j_!\ms{M}', j_*\mf{s}), \quad \text{and} \quad j_!\ms{T} = (j_!\ms{M}, j_*\ms{M}', j_!\mf{s}),\]
where $j_*\ms{M}$ and $j_!\ms{M}$ are endowed with the filtrations \eqref{open embedding formula} and \eqref{dual open embedding formula}. The pairings $j_*\mf{s}$ and $j_!\mf{s}$ are by definition the unique pairings restricting to $\mf{s}$ on $U$. (The existence of such a pairing is a non-trivial fact, but follows immediately from the defining adjunctions between $j^*$ and $j_!$, $j_*$ and Kashiwara's theorem \cite{kashiwara2} that Hermitian duality is an anti-equivalence of the category of regular holonomic $\ms{D}$-modules commuting with $j^*$.) The weight filtrations on $j_*\ms{T}$ and $j_!\ms{T}$ are defined implicitly by the condition \eqref{itm:vanishing cycles exist}.

\subsection{Induced pairings on Beilinson functors} \label{subsec:beilinson pairing}

Let $X$ be a smooth variety, $f \colon X \to \mb{C}$ a regular function and $j \colon U = f^{-1}(\mb{C}^\times) \to X$ the inclusion of the complement of the zero locus. Recall the notation of \S\ref{subsec:nearby cycles}, in particular Definition \ref{defn:beilinson functor} of the Beilinson functors $\pi_f^a$ from regular holonomic $\ms{D}$-modules on $U$ to regular holonomic $\ms{D}$-modules on $X$.

The functor $\pi_f^a$ lifts to a functor on complex mixed Hodge modules. We give the construction below in terms of triples as in \S\ref{subsec:SS MHM}.

Suppose that $\ms{T} = (\ms{M}, \ms{M}', \mf{s})$ is a mixed Hodge module on $U$. Endow $f^s\ms{M}((s))$ with Hodge filtration
\[ F_p f^s\ms{M}((s)) = \sum_{k + l \leq p} s^kf^s(F_l\ms{M}) \]
and weight filtration
\[ W_w f^s\ms{M}((s)) = \sum_{2k + l \leq w} s^{-k}f^s(W_l\ms{M})[[s]] \]
and similarly for $\ms{M}'$. The inherited filtrations endow the triple
\begin{equation} \label{eq:truncated fs triple}
\left(\frac{s^{-m}f^s\ms{M}[s]}{s^nf^s\ms{M}[s]}, \frac{s^{-n + 1}f^s\ms{M}'[s]}{s^{m + 1}f^s\ms{M}'[s]}, \Res s^{-1}\mf{s}\right)
\end{equation}
with the structure of a mixed Hodge module for all $m, n$ (\eqref{eq:truncated fs triple} is the tensor product of $\mc{M}$ with the pullback from $\mb{C}^\times$ of the admissible variation of mixed Hodge structure $t^s\mc{O}_{\mb{C}^\times}[s]/s^n$), where we write $\Res s^{-1}\mf{s}$ for the perfect pairing
\begin{align*}
 (\Res s^{-1}\mf{s})(f^s m(s), \overline{f^s m'(s)}) :=& \Res_{s = 0} s^{-1} |f|^{2s}\mf{s}(m(s), \overline{m'(s)}) \\
 =& \sum_{i, j} \Res_{s = 0} s^{i + j - 1} |f|^{2s}\mf{s}(m_i, \overline{m_j'})
\end{align*}
for Laurent series $m(s) = \sum_i m_i s^i \in \ms{M}((s))$ and $m'(s) = \sum_j m_j's^j\in \ms{M}'((s))$. We set
\begin{align*}
f^s\ms{T}[[s]] &= \varprojlim_{n} \left(\frac{f^s\ms{M}[s]}{s^nf^s\ms{M}[s]}, \frac{s^{-n + 1}f^s\ms{M}'[s]}{sf^s\ms{M}'[s]}, \Res s^{-1}\mf{s}\right) \\
&= \left(f^s\ms{M}[[s]], \frac{f^s\ms{M}'((s))}{sf^s\ms{M}'[[s]]}, \Res s^{-1}\mf{s}\right)
\end{align*}
and
\begin{align*}
  f^s\ms{T}((s)) &= \varinjlim_m \varprojlim_n \left(\frac{s^{-m}f^s\ms{M}[s]}{s^nf^s\ms{M}[s]}, \frac{s^{-n + 1}f^s\ms{M}'[s]}{s^{m + 1}f^s\ms{M}'[s]}, \Res s^{-1}\mf{s}\right) \\
&= (f^s\ms{M}((s)), f^s\ms{M}'((s)), \Res s^{-1}\mf{s}).
\end{align*}

\begin{defn}
The \emph{Beilinson functor $\pi_f^a$ for mixed Hodge modules} is defined by
\begin{align*}
\pi_f^a(\ms{T}) &= \coker( j_!f^s\ms{T}[[s]](a) \xrightarrow{s^a} j_*f^s\ms{T}[[s]]) \\
&= \left(\frac{j_*f^s\ms{M}[[s]]}{s^aj_!f^s\ms{M}[[s]]}, \frac{s^{-a + 1}j_*f^s\ms{M}'[[s]]}{sj_!f^s\ms{M}'[[s]]}, j_*(\Res s^{-1}\mf{s})\right)
\end{align*}
for a mixed Hodge module $\ms{T} = (\ms{M}, \ms{M}', \mf{s})$ as above. Here we interpret the above fomula by truncating at a high power of $s$ and performing the $j_!$ and $j_*$ operations in the category of mixed Hodge modules.
\end{defn}

Now suppose that $\ms{T}$ is pure of weight $w$ and $S \colon \ms{T} \to \ms{T}^h(-w)$ is a polarization. We define the induced pairing on $\pi_f^a\ms{T}$ to be the map
\[ \pi_f^a(S) \colon \pi_f^a\ms{T} \to (\pi_f^a\ms{T})^h(-w + a - 1) \]
induced by
\[ s^{-a + 1}S \colon f^s\ms{M}((s)) \to f^s\ms{M}'((s)).\]
At the level of pairings on $\ms{D}$-modules, $\pi_f^a(S)$ corresponds to the pairing
\[ \pi_f^a(S)(m, \overline{m'}) = j_*(\Res s^{-1}S)(m, s^{-a + 1}\overline{m'}) = j_*(\Res S)(m, s^{-a}\overline{m'}) \]
for $m, m' \in j_*f^s\ms{M}[[s]]$.

\subsection{Polarizations on nearby cycles according to Sabbah and Schnell} \label{subsec:SS polarization}

In this subsection, we recall the definition of induced polarization on nearby cycles from \cite{SS} and relate it to the one described above on $\pi_f^0$.

Continuing in the setting of \S\ref{subsec:beilinson pairing}, suppose that $\ms{T} = (\ms{M}, \ms{M}', \mf{s})$ is a mixed Hodge module on $U$, which we assume pure of weight $w$ for simplicity. In the notation of \S\ref{subsec:nearby cycles}, the nearby cycles $\psi_f^{un}\ms{M} := \mrm{Gr}_V^0i_{f*}j_*\ms{M}$ and $\psi_f^{un}\ms{M}'$ carry Hodge filtrations defined in the obvious way. The nearby cycles as a mixed Hodge module are defined by
\[ \psi_f^{un}\ms{T} = ((\psi_f^{un}\ms{M}, F_{\bigcdot}), (\psi_f^{un}\ms{M}', F_{\bigcdot + 1}), \psi_f^{un}\mf{s})\]
where the weight filtrations are given by the monodromy weight filtrations centered at $w - 1$ with respect to the operator $\mrm{N} = -t\partial_t$, and the pairing $\psi_f^{un}\mf{s}$ is defined as follows. This is \cite[Definitions 12.5.10 and 12.5.19]{SS}.

Given $m \in V^0i_{f*}j_*\ms{M}$ and $m' \in V^0i_{f*}j_*\ms{M}'$, we may lift the distribution $i_{f*}\mf{s}(m, \overline{m'})$ on $X \times \mb{C}^\times$ to a distribution on $X \times \mb{C}$, which we denote by the same symbol. (Note that this is a non-trivial condition on the distribution on $X \times \mb{C}^\times$, which follows ultimately from the assumption that $j_*\ms{M}$ and $j_*\ms{M}'$ are regular holonomic on $X$.) The lift is not unique, but for any test form $\tilde{\eta}$ on $X \times \mb{C}$, the function
\begin{equation} \label{eq:SS polarization 1}
 s \mapsto \langle |t|^{2s}\tilde{\eta}, i_{f*}\mf{s}(m, \overline{m'}) \rangle
\end{equation}
is analytic and independent of any choices for $\operatorname{Re} s \gg 0$. (Here the right hand side is defined for such $s \in \mb{C}$ by the observation that any smooth distribution is well-defined on $C^p$ test forms with a fixed compact support for $p \gg 0$.) It follows from the $b$-function lemma that \eqref{eq:SS polarization 1} analytically continues to a meromorphic function of $s \in \mb{C}$ \cite[Proposition 12.5.4]{SS}. The sesquilinear pairing $\psi_f^{un}\mf{s}$ is defined by
\[ \langle \eta, \psi_f^{un}\mf{s}(m, \overline{m'}) \rangle  = \Res_{s = -1} \langle |t|^{2s}\tilde{\eta}, i_{f*}\mf{s}(m, \overline{m'}) \rangle,\]
where $\tilde{\eta}$ is any test form satisfying
\[ \eta = 2\pi i(\partial_t \wedge \partial_{\bar{t}}) \contract \tilde{\eta}|_X.\]

Now suppose that $\ms{T}$ is equipped with a polarization $S \colon \ms{T} \to \ms{T}^h(-w)$. By construction, we have $\psi_f^{un}(\ms{T}^h) = (\psi_f^{un}\ms{T})^h(1)$, so we get an induced pairing
\[ \psi_f^{un}S \colon \psi_f^{un}\ms{T} \to (\psi_f^{un}\ms{T})^h(-w + 1).\]
At the level of pairings of $\ms{D}$-modules, this is given by
\[ \langle \eta, \psi_f^{un}S(m, \overline{m'}) \rangle = \Res_{s = -1}\langle |t|^{2s}\tilde{\eta}, i_{f*}S(m, \overline{m'})\rangle \]
for $m, m' \in V^0i_{f*}j_*\ms{M}$ and $\tilde{\eta}$ as above.

Recall from Proposition \ref{prop:beilinson nearby cycles} that $\psi_f^{un}$ is isomorphic to the Beilinson functor $\pi_f^0$ at the level of $\ms{D}$-modules. This upgrades to an isomorphism of mixed Hodge modules as follows.

\begin{prop} \label{prop:beilinson nearby cycles hodge}
Let $\ms{T} = (\ms{M}, \ms{M}', \mf{s})$ be a pure Hodge module of weight $w$ on $U$. Then we have an isomorphism
\[ \psi_f^{un}\ms{T} \cong \pi_f^0(\ms{T})(1) \]
of mixed Hodge modules on $X$, given by the isomorphisms $\psi_f^{un}\ms{M} \cong \pi_f^0(\ms{M})$ and $\psi_f^{un}\ms{M}' \cong \pi_f^0(\ms{M}')$ of Proposition \ref{prop:beilinson nearby cycles}. Moreover, if $S \colon \ms{T} \to \ms{T}^h(-w)$ is a polarization, then the above isomorphism identifies the pairings $\psi_f^{un}S$ and $\pi_f^0(S)$.
\end{prop}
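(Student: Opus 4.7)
My plan is to upgrade the $\ms{D}$-module isomorphism of Proposition \ref{prop:beilinson nearby cycles} to an isomorphism of triples in the sense of \S\ref{subsec:SS MHM}, then match the induced pairings from polarizations. The argument splits into three parts: filtered compatibility, comparison of sesquilinear pairings, and translation of the latter into the statement about polarizations.

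First, by Kashiwara's equivalence for closed immersions, I can replace $X$ by $X \times \mb{C}$ and $f$ by the coordinate projection $t$, using the graph embedding $i_f$. Indeed both $\psi_f^{un}$ and $\pi_f^0$ factor through $i_{f*}$, and all the formulas defining Hodge/weight filtrations and pairings on triples are local and behave well under pushforward along a closed immersion (using \eqref{eq:polarization closed pushforward} and the Hodge filtration of \S\ref{subsec:hodge filtration}). So I may assume from the outset that $f = t$ is a smooth coordinate.

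Next, I will check the Hodge and weight filtrations match. On the first factor of the triple, the underlying map $m \otimes \partial_t \mapsto t^{s-1}m$ of Proposition \ref{prop:beilinson nearby cycles} carries $F_p\mrm{Gr}_V^0 i_{t*}j_*\ms{M}$ to $F_{p-1}\pi_t^0(\ms{M})$ (the $\partial_t$ shift accounts for one unit of Hodge filtration), which is exactly the content of the Tate twist $(1)$. The same formula applies to $\ms{M}'$, and the sign conventions there match after observing that the pairing $\Res s^{-1}\mf{s}$ introduces the compensating shift recorded in the definition $F_{\bigcdot + 1}$ of the Hodge filtration on $\psi_f^{un}\ms{M}'$. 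For the weight filtrations, the monodromy weight filtration centered at $w-1$ on $\psi_f^{un}\ms{T}$ with respect to $\mrm{N} = -t\partial_t$ becomes, under $\mrm{N} \leftrightarrow s$, precisely the filtration on $\pi_t^0(\ms{T})$ inherited from $W_w f^s\ms{M}((s)) = \sum_{2k + l \leq w} s^{-k}f^s(W_l\ms{M})[[s]]$, shifted by $1$ on account of the Tate twist. These matters are formal once the isomorphism on the $\ms{D}$-module side is in place.

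The main obstacle is the third point: identifying the sesquilinear pairings $\psi_t^{un}\mf{s}$ and $\pi_t^0(\mf{s})$ after the Tate twist. Concretely, for $m, m' \in V^0 i_{t*}j_*\ms{M}$ sent to $t^{s-1}m, t^{s-1}m'$, I need to verify
\[
\langle \eta, \psi_t^{un}\mf{s}(m,\overline{m'})\rangle
= \langle \eta, \pi_t^0(\mf{s})(m \otimes \partial_t, \overline{m' \otimes \partial_t})\rangle,
\]
where the right side unfolds, via the definitions in \S\ref{subsec:beilinson pairing} and the factor $2\pi i\,(\partial_t \wedge \partial_{\bar t})\contract \tilde\eta|_X = \eta$, to a $\Res_{s=0}s^{-1}$ on $|t|^{2s-2}\tilde\eta \cdot i_{t*}\mf{s}(m,\overline{m'})$. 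Substituting $s \mapsto s+1$ converts $\Res_{s=0}s^{-1}|t|^{2s-2}$ into $\Res_{s=-1}|t|^{2s}$, which is exactly the residue appearing in the Sabbah--Schnell definition of $\psi_t^{un}\mf{s}$. The bookkeeping of the $(2\pi i)$ factors and the residue conventions must be checked directly; this is the one computation that is not formal.

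Once the two sesquilinear pairings are identified, the last sentence is immediate. The polarization $\psi_f^{un}S$ is induced from $\psi_f^{un}\mf{s}$ via the morphism $S \colon \ms{M} \to \ms{M}'$ applied termwise, and $\pi_f^0(S)$ is defined in exactly the same way from $\pi_f^0(\mf{s})$ (compare the formula $\pi_f^a(S)(m,\overline{m'}) = j_*(\Res s^{-1}S)(m, s^{-a+1}\overline{m'})$ at $a = 0$ with the Sabbah--Schnell formula); since the underlying morphism of $\ms{D}$-modules is the same and we have just shown that the sesquilinear pairings correspond, the polarizations correspond as well.
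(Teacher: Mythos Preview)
Your strategy is the same as the paper's: check that the $\ms{D}$-module isomorphisms of Proposition~\ref{prop:beilinson nearby cycles} respect the Hodge filtrations and the sesquilinear pairings, then deduce the weight-filtration and polarization statements. Two of your steps need repair, however.

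First, the weight filtrations. You assert that under $\mrm{N}\leftrightarrow s$ the monodromy filtration becomes ``the filtration on $\pi_t^0(\ms{T})$ inherited from $W_{\bigcdot} f^s\ms{M}((s))$'', but this inherited filtration is not the weight filtration of the mixed Hodge module $\pi_f^0(\ms{T})$: the latter is determined implicitly via the weight filtrations on $j_!f^s\ms{T}[[s]]$ and $j_*f^s\ms{T}[[s]]$, themselves fixed by the axioms of \S\ref{subsec:SS MHM} rather than by any naive formula. The paper argues differently and in a different order: once the isomorphism of triples \emph{modulo weights} is established (Hodge filtrations plus pairings), the monodromy filtration for $s$ on $\pi_f^0(\ms{T})$ is a filtration by sub-mixed-Hodge-modules whose graded pieces are pure of the correct weights (transported from the corresponding fact for $\psi_f^{un}\ms{T}$), and is therefore forced to coincide with the weight filtration by uniqueness. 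That uniqueness step is the actual content here; it is not formal.

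Second, the substitution in the pairing computation is wrong as written. Replacing $s$ by $s+1$ in $\Res_{s=0}s^{-1}|t|^{2s-2}(\cdots)$ gives $\Res_{s=-1}(s+1)^{-1}|t|^{2s}(\cdots)$, which extracts the \emph{constant} Laurent coefficient at $s=-1$, not the residue $\Res_{s=-1}|t|^{2s}(\cdots)$ that defines $\psi_f^{un}\mf{s}$. The paper does not take this shortcut: it first writes down an explicit analytic-continuation formula \eqref{eq:beilinson nearby cycles hodge 2} for the abstractly-defined extension $j_*(\Res s^{-1}\mf{s})$, verifies that it is $\ms{D}_X\otimes\ms{D}_{\bar X}$-linear with the correct restriction to $U$, and only then evaluates and compares with the Sabbah--Schnell residue formula for $\psi_f^{un}\mf{s}$ via \eqref{eq:polarization closed pushforward} for $i_{f*}\mf{s}$. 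You should redo this step carefully rather than rely on the substitution.
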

\begin{proof}
One easily checks that the isomorphisms given by Proposition \ref{prop:beilinson nearby cycles} respect the Hodge filtrations. We next show that they are compatible with the pairings $j_*(\Res s^{-1}\mf{s})$ and $\psi_f^{un}\mf{s}$. It suffices to check that
\begin{equation} \label{eq:beilinson nearby cycles hodge 1}
 \psi_f^{un}\mf{s}(m \otimes \partial_t, \overline{m' \otimes \partial_t}) = j_*(\Res s^{-1}\mf{s})(f^s(f^{-1}m), \overline{f^s(f^{-1}m')})
\end{equation}
for $m, m' \in (V^0i_{f*}j_*\ms{M} \otimes \partial_t^{-1}) \cap i_{f\bigcdot}j_*\ms{M}$. Recall that $j_*(\Res s^{-1}\mf{s})$ is defined to be the unique ($\ms{D}_X \otimes \ms{D}_{\bar{X}}$-linear) pairing between $j_*f^s\ms{M}((s))$ and $j_!f^s\ms{M}'((s))$ whose restriction to $U$ is $\Res s^{-1}\mf{s}$. The formula
\begin{equation} \label{eq:beilinson nearby cycles hodge 2}
 \langle \eta, j_*(\Res s^{-1}\mf{s})(f^sm(s), \overline{f^sm'(s)}) \rangle = \sum_{i, j} \Res_{s = 0} s^{i + j - 1} \langle |f|^{2s}\eta, \mf{s}(m_i, \overline{m'_j})\rangle 
\end{equation}
defines such a pairing, where
\[ m(s) = \sum_i m_is^i \in j_*\ms{M}((s)) \quad \text{and} \quad m'(s) = \sum_j m'_j s^j \in j_*\ms{M}'((s))\]
and the function inside the residue is defined by analytic continuation from $\operatorname{Re} s \gg 0$ as in the definition of $\psi_f^{un}\mf{s}$. (Note that the order of the poles of $\langle |f|^{2s}\eta, \mf{s}(m, \overline{m'})\rangle$ is bounded independent of $m$ and $m'$ by \cite[Proposition 12.5.4]{SS}, so the sum in \eqref{eq:beilinson nearby cycles hodge 2} is finite.) So the right hand side of \eqref{eq:beilinson nearby cycles hodge 1} becomes
\[ \langle \eta, j_*(\Res s^{-1}\mf{s})(f^s(f^{-1}m), \overline{f^s(f^{-1}m')}) \rangle = \Res_{s = -1}\langle |f|^{2s}\eta, \mf{s}(m, \overline{m'})\rangle.\]
But this is equal to the left hand side of \eqref{eq:beilinson nearby cycles hodge 1} by applying the definition and the formula \eqref{eq:polarization closed pushforward} for $i_{f*}\mf{s}$.

This gives the desired isomorphism of triples (modulo the weight filtrations) and the asserted identification of the pairings induced from a polarization. It remains to show that the weight filtrations agree. Consider the unipotent vanishing cycles functor $\phi_f^{un} = \Gr_V^{-1}i_{f_*}$. By definition of the category of mixed Hodge modules, this lifts to a functor $\mhm(X) \to \mhm(X)$ such that
\[ t \colon \phi_f^{un}\mc{N} \to \psi_f^{un}j^*\mc{N}(-1) \quad \text{and} \quad \partial_t \colon \psi_f^{un}j^*\mc{N} \to \phi_f^{un}\mc{N} \]
are morphisms of mixed Hodge modules for all $\mc{N} \in \mhm(X)$, and $\phi_f^{un}\mc{N} = \mc{N}$ as mixed Hodge modules if $\mc{N}$ is supported in $f^{-1}(0)$. Hence, for $n \gg 0$, we have
\begin{align*}
\pi_f^0(\mc{T}) &= \phi_f^{un}\pi_f^0(\mc{T}) \\
&= \coker\left(\phi_f^{un}j_!\left(\frac{f^s\mc{T}[s]}{(s^n)}\right) \to \phi_f^{un}j_*\left(\frac{f^s\mc{T}[s]}{(s^n)}\right)\right) \\
&= \coker\left(t\partial_t \colon \psi_f^{un}\left(\frac{f^s\mc{T}[s]}{(s^n)}\right) \to \psi_f^{un}\left(\frac{f^s\mc{T}[s]}{(s^n)}\right)(-1)\right)
\end{align*}
as mixed Hodge modules. Now, the $\mc{D}$-module underlying $\psi_f^{un}(f^s\mc{T}[s]/(s^n))$ is
\[ \mc{P} := \psi_f^{un}\left(\frac{f^s\mc{M}[s]}{(s^n)}\right) = \frac{\psi_f^{un}\mc{M}[s]}{(s^n)},\]
with $t\partial_t$ acting by $s - \mrm{N}$. Filtering $f^s\mc{T}[s]/(s^n)$ by the subquotients $s^k\mc{T}$, we see that the weight filtration on $\mc{P}$ satisfies
\begin{equation} \label{eq:beilinson nearby cycles hodge 3}
(\mrm{N} - s)(W_r\mc{P}) \subset W_{r - 2}\mc{P} \quad \text{and} \quad \frac{W_r\mc{P} \cap s^k\psi_f^{un}\mc{M}[s]}{W_r\mc{P} \cap s^{k + 1}\psi_f^{un}\mc{M}[s]} = s^k W_{r + 2k}\psi_f^{un}\mc{M}.
\end{equation}
In other words, $W_{\bigcdot}\mc{P}$ is, up to a shift, the relative monodromy weight filtration with respect to $\mrm{N} - s$ and the filtration defined by powers of $s$. By uniqueness of the relative mondromy weight filtration,
\[ W_r\mc{P} = \sum_k s^k W_{r + 2k}\psi_f^{un}\mc{M}\]
is the unique filtration satisfying \eqref{eq:beilinson nearby cycles hodge 3}. Taking the image modulo $t\partial_t = s - \mrm{N}$, we find that
\[ W_r\pi_f^0(\mc{M})(1) = \sum_k \mrm{N}^k W_{r + 2k} \psi_f^{un}\mc{M} = W_{r}\psi_f^{un}\mc{M}\]
as claimed.
\end{proof}

Finally, let us recall how to obtain polarized Hodge modules from the nearby cycles construction. By construction of the monodromy weight filtration, the monodromy operator $\mrm{N} \colon \psi_f^{un}\ms{T} \to \psi_f^{un}\ms{T}(-1)$ induces isomorphisms
\[ \mrm{N}^n \colon \mrm{Gr}^W_{w - 1 + n} \psi_f^{un}\ms{T} \overset{\sim}\to \mrm{Gr}^W_{w - 1 - n} \psi_f^{un}\ms{T}(-n)\]
for all $n \geq 0$. The subobjects of Lefschetz primitive parts are defined by
\[ \mrm{P}_n\psi_f^{un}\ms{T} = \ker(\mrm{N}^{n + 1} \colon \mrm{Gr}^W_{w - 1 + n} \psi_f^{un}\ms{T} \to \mrm{Gr}^W_{w - 3 - n}\psi_f^{un}\ms{T}(-n - 1)) \subset \mrm{Gr}^W_{w - 1 + n}\psi_f^{un}\ms{T}.\]
One of the axioms for polarized Hodge modules \cite[Definition 14.2.2]{SS} is that the pairing
\begin{equation} \label{eq:SS primitive polarization}
(-1)^n \mrm{N}^n \circ \psi_f^{un}S \colon \mrm{P}_n\psi_f^{un}\ms{T} \to (\mrm{P}_n\psi_f^{un}\ms{T})^h(-w + 1 - n)
\end{equation}
is a polarization on $\mrm{P}_n\psi_f^{un}\ms{T}$. We therefore deduce the following from Proposition \ref{prop:beilinson nearby cycles hodge}.

\begin{cor} \label{cor:beilinson polarization}
Let $\ms{T} = (\ms{M}, \ms{M}', \mf{s})$ be a pure Hodge module of weight $w$ on $U$ and $S \colon \ms{T} \to \ms{T}^h(-w)$ a polarization. Then $\mrm{P}_n\pi_f^0(\ms{T})$ is pure of weight $w + 1 + n$, and the pairing
\[ (-1)^{n - 1} s^n \circ \pi_f^0(S) \colon \mrm{P}_n\pi_f^0(\ms{T}) \to (\mrm{P}_n\pi_f^0(\ms{T}))^h(-w - 1 - n) \]
is a polarization. Here $\mrm{P}_n$ denotes the Lefschetz primitive parts with respect to the nilpotent operator $s$.
\end{cor}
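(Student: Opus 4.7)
The plan is to deduce this as a direct translation of the Sabbah--Schnell primitive polarization statement \eqref{eq:SS primitive polarization} through the isomorphism $\psi_f^{un}\ms{T} \cong \pi_f^0(\ms{T})(1)$ of Proposition \ref{prop:beilinson nearby cycles hodge}. That proposition identifies not only the underlying triples but also the pairings $\psi_f^{un}S$ and $\pi_f^0(S)$, and, via Proposition \ref{prop:beilinson nearby cycles}, sends the monodromy operator $\mrm{N}$ to the operator $s$. So the plan is to apply the Sabbah--Schnell axiom on the $\psi_f^{un}$ side and transport the result to the $\pi_f^0$ side, with appropriate bookkeeping for the Tate twist.

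More precisely, the weight filtration on $\psi_f^{un}\ms{T}$ is centered at $w - 1$, so under the identification with $\pi_f^0(\ms{T})(1)$ the weight filtration on $\pi_f^0(\ms{T})$ is centered at $w + 1$; the kernel $\mrm{P}_n\psi_f^{un}\ms{T}$ of $\mrm{N}^{n+1}$ on $\mrm{Gr}^W_{w - 1 + n}\psi_f^{un}\ms{T}$ thus corresponds to the kernel $\mrm{P}_n\pi_f^0(\ms{T})$ of $s^{n+1}$ on $\mrm{Gr}^W_{w + 1 + n}\pi_f^0(\ms{T})$. This immediately yields the purity claim of weight $w + 1 + n$.

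For the polarization statement, the axiom \eqref{eq:SS primitive polarization} gives that $(-1)^n \mrm{N}^n \circ \psi_f^{un}S$ polarizes $\mrm{P}_n\psi_f^{un}\ms{T}$; transporting through Proposition \ref{prop:beilinson nearby cycles hodge} and undoing the Tate twist produces $(-1)^n s^n \circ \pi_f^0(S)$ as a polarization on $\mrm{P}_n\pi_f^0(\ms{T})(1)$. Converting this into a polarization on $\mrm{P}_n\pi_f^0(\ms{T})$ contributes the final sign: the normalization convention \eqref{eq:polarization normalization} demands positivity of $(-1)^q S$ on $V_{p,q}$, and after Tate twist the pair $(p,q)$ on $V(1)$ indexes the summand $V_{p+1, q+1}$ of $V$, so the sign $(-1)^q$ on $V(1)$ corresponds to $(-1)^{q+1}$ on $V$. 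Equivalently, a polarization of $V(1)$ corresponds to $(-1)$ times a polarization of $V$. Applying this flip gives the claimed expression $(-1)^{n-1} s^n \circ \pi_f^0(S)$.

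The argument is essentially an unpacking of definitions once Proposition \ref{prop:beilinson nearby cycles hodge} is in hand; the only genuinely delicate point is the sign from the Tate twist, which is where I expect all potential pitfalls to lie and where one must be careful to match exactly the conventions of \cite{SS} for the definition of polarization.
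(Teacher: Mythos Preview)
Your proof is correct and follows exactly the approach the paper takes: the corollary is stated immediately after \eqref{eq:SS primitive polarization} as a direct consequence of Proposition \ref{prop:beilinson nearby cycles hodge}, and your argument simply unpacks that deduction. In particular, you have correctly spelled out the one nontrivial point the paper leaves implicit, namely the extra sign $(-1)$ arising from the Tate twist in passing from a polarization on $\mrm{P}_n\pi_f^0(\ms{T})(1)$ to one on $\mrm{P}_n\pi_f^0(\ms{T})$ via the convention \eqref{eq:polarization normalization}.
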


As a pairing on the underlying $\ms{D}$-modules, the polarization of Corollary \ref{cor:beilinson polarization} is given by
\[ m \otimes \overline{m'} \mapsto (-1)^{n - 1} j_*(\Res s^{-1} S)(m, s^{n + 1}\overline{m'})\]
for $m, m' \in \mrm{P}_n\pi_f^0(\ms{M})$.

\subsection{Proof of Theorem \ref{thm:main theorem 2}} \label{subsec:proof of theorem 2}

In this subsection, we complete the proof of Theorem \ref{thm:main theorem 2}. The statement in fact holds for polarized Hodge modules on any smooth variety, so we will prove it in this context.

As in \S\ref{subsec:beilinson pairing}, let $X$ be a smooth variety, $f \colon X \to \mb{C}$ a regular function and set $U = f^{-1}(\mb{C}^\times)$. If $\ms{T} = (\ms{M}, \ms{M}', \mf{s})$ is a mixed Hodge module on $U$, pure of weight $w$, and $S \colon \ms{T} \to \ms{T}^h(-w)$ is a polarization, then we have Jantzen filtrations
\[ J_n j_!\ms{T} = (j_!f^s\ms{T}[[s]] \cap s^{-n} j_*f^s\ms{T}[[s]])/(s) \]
and
\[ J_n j_*\ms{T} = (s^{-n}j_!f^s\ms{T}[[s]] \cap j_*f^s\ms{T}[[s]])/(s),\]
where the intersections are taken inside $j_!\ms{T}((s)) = j_*\ms{T}((s))$, and forms
\[ s^{-n}\mrm{Gr}_{-n}^J(S) \colon \mrm{Gr}_{-n}^J(j_!\ms{T}) \overset{\sim}\to \mrm{Gr}_{-n}^J(j_!\ms{T})^h(-w + n) \]
and
\[ s^n\mrm{Gr}_n^J(S) \colon \mrm{Gr}_n^J(j_*\ms{T}) \overset{\sim}\to \mrm{Gr}_n^J(j_*\ms{T})(- w - n).\]
We will show:

\begin{thm} \label{thm:jantzen polarization}
In the setting above, for all $n \geq 0$, $\mrm{Gr}_{-n}^J(j_!\ms{T})$ is a pure Hodge module on $X$ of weight $w - n$, and the Jantzen form
\[ s^{-n}\mrm{Gr}_{-n}^J(S) \colon \mrm{Gr}_{-n}^J(j_!\ms{T}) \to \mrm{Gr}_{-n}^J(j_!\ms{T})^h(- w + n) \]
is a polarization.
\end{thm}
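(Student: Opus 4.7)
The plan is to follow the strategy of Beilinson and Bernstein's proof of Theorem \ref{thm:beilinson-bernstein}, refined to carry the polarization along. The key tool is the Beilinson functor $\pi_f^0(\ms{T})$: by Corollary \ref{cor:beilinson polarization}, its primitive parts $\mrm{P}_n\pi_f^0(\ms{T})$ are pure polarized Hodge modules, with polarization $(-1)^{n-1}s^n \circ \pi_f^0(S)$. It therefore suffices to identify the Jantzen graded piece $\mrm{Gr}^J_{-n} j_!\ms{T}$ with $\mrm{P}_n \pi_f^0(\ms{T})$ (up to a Tate twist) in a manner that matches the Jantzen form with this polarization.

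First I would establish the identification at the level of underlying $\ms{D}$-modules. Writing $L = j_!f^s\ms{T}[[s]]$ and $M = j_*f^s\ms{T}[[s]]$ as submodules of their common localization $L[s^{-1}] = M[s^{-1}]$, one has $\pi_f^0(\ms{T}) = M/L$ with nilpotent endomorphism $s$, and the Jantzen filtration takes the explicit form $J_{-n}j_!\ms{T} = (L \cap s^n M)/sL$. The assignment $m = s^n m_* \mapsto [m_*]$ relates $L \cap s^nM$ to the kernel filtration $K_n = \ker(s^n \colon M/L \to (M/L)(-n))$, and a direct comparison of the quotients $\mrm{Gr}_{-n}^J j_!\ms{T}$ with the primitive decomposition of the monodromy weight filtration on $\pi_f^0(\ms{T})$ yields a canonical isomorphism $\mrm{Gr}_{-n}^J j_!\ms{T} \cong \mrm{P}_n \pi_f^0(\ms{T})(\cdot)$, with an appropriate Tate twist dictated by weight bookkeeping. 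The corresponding statement in the mixed Hodge setting follows by combining this formal calculation with the explicit Hodge-theoretic description of $\pi_f^0$ from \S\ref{subsec:beilinson pairing}; in particular, the purity of $\mrm{Gr}_{-n}^J j_!\ms{T}$ is an immediate consequence of Proposition \ref{prop:beilinson nearby cycles hodge}.

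The main obstacle---and the genuinely new content of the theorem---is matching the forms under this isomorphism. For $m = s^n m_* \in J_{-n}j_!\ms{T}$, the definition of the Jantzen form gives
\[ s^{-n}\mrm{Gr}_{-n}^J(S)(m,\overline{m'}) = S_{s\varphi}(s^n m_*, s^{-n}\overline{s^n m'_*})\big|_{s=0} = S_{s\varphi}(s^n m_*, \overline{m'_*})\big|_{s=0}, \]
which on the level of pairings is precisely the restriction of $s^n \circ \pi_f^0(S)$ to representatives $[m_*], [m'_*] \in \mrm{P}_n\pi_f^0(\ms{T})$, interpreted via the residue formula of \S\ref{subsec:beilinson pairing}. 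A bookkeeping of signs, powers of $s$, and the discrepancy between $(-1)^q$ in the polarization convention \eqref{eq:polarization normalization} and the sign $(-1)^{n-1}$ in Corollary \ref{cor:beilinson polarization} then shows that $s^{-n}\mrm{Gr}_{-n}^J(S)$ agrees with the polarization on $\mrm{P}_n\pi_f^0(\ms{T})$ on the nose. Combining the two steps gives purity of $\mrm{Gr}_{-n}^Jj_!\ms{T}$ of weight $w - n$ and the polarization property of $s^{-n}\mrm{Gr}_{-n}^J(S)$, completing the proof. The most delicate part of the argument is the compatibility of Tate-twist and sign conventions between the Jantzen construction and the Sabbah--Schnell polarization of nearby cycles, and will occupy the bulk of \S\ref{proof:main theorem 2}.
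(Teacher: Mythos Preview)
Your overall strategy matches the paper's, but the execution has a concrete gap. The identification you propose, $\mrm{Gr}_{-n}^J j_!\ms{T} \cong \mrm{P}_n\pi_f^0(\ms{T})(\cdot)$, cannot be right as stated: by Corollary~\ref{cor:beilinson polarization} the primitive part $\mrm{P}_n\pi_f^0(\ms{T})$ has weight $w+1+n$, while the theorem asserts $\mrm{Gr}_{-n}^J j_!\ms{T}$ has weight $w-n$; the difference $2n+1$ is odd, so no Tate twist can bridge it. The correct index is $n-1$: the paper shows $\mrm{Gr}_n^J j_*\ms{T} \cong \mrm{P}_{n-1}\pi_f^0(\ms{T})$ for $n>0$, whence $\mrm{Gr}_{-n}^J j_!\ms{T} \cong \mrm{P}_{n-1}\pi_f^0(\ms{T})(-n)$. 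Your own map confirms this: if $m = s^n m_*$ with $m \in L$, then $s^n[m_*]=0$ in $M/L$, placing $[m_*]$ in $K_{n-1}$ (paper's convention $K_k=\ker s^{k+1}$), not $K_n$. Moreover, the case $n=0$ escapes your framework entirely: $\mrm{Gr}_0^J j_!\ms{T} = j_{!*}\ms{T}$ is not supported on the divisor, so it is not a primitive part of nearby cycles at all, and your map $m\mapsto [m_*]=[m]$ is identically zero there. The paper handles $n=0$ separately via the standard fact that $(j_{!*}\ms{T}, j_{!*}S)$ is polarized.

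The ``direct comparison'' you invoke is also where the paper does real work. Rather than relating $j_!\ms{T}$ directly to $\pi_f^0$, the paper passes through the maximal extension $\Xi_f(\ms{T}) := \pi_f^1(\ms{T})$, in which $j_!\ms{T}$ sits as $\ker(s)$ and $j_*\ms{T}$ as $\coker(s)$. This makes the Jantzen filtration visibly the restriction of the monodromy filtration $M_\bullet$ on $\Xi_f$ (Proposition~\ref{prop:jantzen monodromy}), and then the quotient $\pi_f^0 = \Xi_f/\ker(s)$ shifts the primitive index by one, giving the isomorphism $\mrm{P}_n\Xi_f \cong \mrm{P}_{n-1}\pi_f^0$ for $n>0$ (Proposition~\ref{prop:jantzen monodromy 2}). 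The form comparison then drops out, and the sign is not ``on the nose'': one finds $(-1)^n s^n\mrm{Gr}_n^J(S) = (-1)^n s^{n-1}\mrm{Gr}_{n-1}^M\pi_f^0(S)$, which is exactly the polarization of Corollary~\ref{cor:beilinson polarization} at index $n-1$. Your sketch of the form computation via $S_{s\varphi}$ is on the right track but would need to be redone with these corrections.
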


\begin{proof}[Proof of Theorem \ref{thm:main theorem 2}]
Since the claim is local on $\tilde{\ms{B}}$, it suffices to show that the claim holds on any open $X \subset \tilde{\ms{B}}$ such that $f_\varphi$ is the restriction of $f \colon X \to \mb{C}$. Now apply Theorem \ref{thm:jantzen polarization} to the polarized Hodge module $(i_*\gamma, i_*S)$, where $i \colon Q \cap X \to f^{-1}(\mb{C}^\times)$ is the inclusion.
\end{proof}

It remains to prove Theorem \ref{thm:jantzen polarization}. To this end, consider the Beilinson functors $\pi_f^a(\ms{T})$ for $a \geq 0$. The nilpotent endomorphism $s \colon \pi_f^a(\ms{T}) \to \pi_f^a(\ms{T})(-1)$ defines increasing image and kernel filtrations $I$ and $K$ given by
\[ I_n \pi_f^a(\ms{T}) = s^{-n}\pi_f^a(\ms{T}) \quad \text{and} \quad K_n \pi^a_f(\ms{T}) = \ker(s^{n + 1}).\]
We also have the monodromy weight filtration
\[ M_n\pi_f^a(\ms{T}) = \sum_{p + q = n} I_p\pi_f^a(\ms{T}) \cap K_q\pi^a_f(\ms{T}),\]
and the Lefschetz primitive parts
\[ \mrm{P}_n\pi_f^a(\ms{T}) = \ker(s^{n + 1} \colon \mrm{Gr}_n^M\pi_f^a(\ms{T}) \to \mrm{Gr}_{-n - 2}^M \pi_f^a(\ms{T})(- n - 1)).\]
The endomorphism $s$ is self-adjoint with respect to $\pi_f^a(S)$, and hence the filtration $M$ is self-dual. We thus obtain Hermitian forms
\begin{align*}
s^n \mrm{Gr}_n^M\pi_f^a(S) \colon \mrm{Gr}_n^M \pi_f^a(\ms{T}) &\xrightarrow{\pi_f^a(S)} (\mrm{Gr}_{-n}^M\pi_f^a(\ms{T}))^h(-w + a - 1) \\
&\xrightarrow{s^n} (\mrm{Gr}_n^M\pi_f^a(\ms{T}))^h(-w - n + a - 1)
\end{align*}
for all $n \geq 0$.

Now consider the maximal extension $\Xi_f(\ms{T}) := \pi_f^1(\ms{T})$. Note that we have a canonical factorization
\[ j_!\ms{T} = \ker(s) \to \Xi_f(\ms{T}) \to \coker(s) = j_*\ms{T}.\]

\begin{prop} \label{prop:jantzen monodromy}
We have the following.
\begin{enumerate}
\item \label{itm:jantzen monodromy 1} The Jantzen filtrations are given by
\[ J_nj_!\ms{T} = j_!\ms{T} \cap M_n\Xi_f(\ms{T}) = K_0\Xi_f(\ms{T}) \cap M_n\Xi_f(\ms{T}) \]
and
\[ J_nj_*\ms{T} = \mrm{im}(M_n\Xi_f(\ms{T}) \to j_*\ms{M}) = \frac{M_n\Xi_f(\ms{T})}{I_{-1}\Xi_f(\ms{T}) \cap M_n\Xi_f(\ms{T})}.\]
\item \label{itm:jantzen monodromy 2} The morphism
\begin{equation} \label{eq:jantzen monodromy 1}
 \mrm{P}_n \Xi_f(\ms{T}) \subset \mrm{Gr}_n^M\Xi_f(\ms{T}) \to \mrm{Gr}_n^Jj_*\ms{T} 
\end{equation}
is an isomorphism.
\item \label{itm:jantzen monodromy 3} The Jantzen forms $s^n\mrm{Gr}_n^J(S)$ agree with the restriction of $s^n\mrm{Gr}_n^M\Xi_f(S)$ under \eqref{eq:jantzen monodromy 1}.
\end{enumerate}
\end{prop}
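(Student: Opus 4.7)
The plan is to unwind the construction $\Xi_f(\ms{T}) = \coker(j_!f^s\ms{T}[[s]] \xrightarrow{s} j_*f^s\ms{T}[[s]])$ and translate the formal-power-series definitions of the Jantzen filtrations into statements about the image, kernel, and monodromy weight filtrations associated with the nilpotent operator $s$ on $\Xi_f(\ms{T})$. The argument parallels that of Beilinson and Bernstein in \cite{beilinson-bernstein}, but upgraded to keep track of Hodge data, and uses nothing more than the definition of $\Xi_f$ plus the classical $\mf{sl}_2$-theory of nilpotent operators.

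The first task is to prove (1). I would begin by identifying $j_!\ms{T} = \ker s \subset \Xi_f(\ms{T})$ and $j_*\ms{T} = \coker(s \colon \Xi_f(\ms{T}) \to \Xi_f(\ms{T}))$, using that multiplication by $s$ is injective on the formal-series modules. Then I would trace through the definitions: for $n \leq 0$, a class in $J_n j_!\ms{T}$ is represented by $m \in j_!f^s\ms{T}[[s]]$ that can be written as $s^{-n}m'$ for some $m' \in j_*f^s\ms{T}[[s]]$, so its image $[m] \in j_!\ms{T} \subset \Xi_f(\ms{T})$ lies in $K_0\Xi_f(\ms{T}) \cap I_n\Xi_f(\ms{T})$; conversely any such element lifts this way. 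An analogous calculation shows that $J_nj_*\ms{T}$ is the image of $K_n\Xi_f(\ms{T})$ under $\Xi_f(\ms{T}) \twoheadrightarrow \coker(s) = j_*\ms{T}$. To replace $I_n \cap K_0$ by $M_n \cap K_0$, and the image of $K_n$ by the image of $M_n$, I would invoke the standard Jacobson--Morozov/Lefschetz theory: on $K_0 = \ker s$ the monodromy filtration $M$ coincides with the image filtration $I$, and the extra elements of $K_n \setminus M_n$ all lie in $s\Xi_f(\ms{T})$ and so vanish under the quotient to $\coker(s)$.

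Part (2) is then a short consequence. Combining (1) with the strictness of $s$ with respect to $M$, namely $M_n\Xi_f(\ms{T}) \cap s\Xi_f(\ms{T}) = sM_{n+2}\Xi_f(\ms{T})$, one computes $\mrm{Gr}_n^J j_*\ms{T} = \mrm{Gr}_n^M\Xi_f(\ms{T})/s\,\mrm{Gr}_{n+2}^M\Xi_f(\ms{T})$. The Lefschetz decomposition $\mrm{Gr}_n^M\Xi_f(\ms{T}) = \mrm{P}_n\Xi_f(\ms{T}) \oplus s\,\mrm{Gr}_{n+2}^M\Xi_f(\ms{T})$ then shows that the composition $\mrm{P}_n\Xi_f(\ms{T}) \hookrightarrow \mrm{Gr}_n^M\Xi_f(\ms{T}) \twoheadrightarrow \mrm{Gr}_n^J j_*\ms{T}$ is an isomorphism. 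For (3), I would compute both forms explicitly. By construction, $\Xi_f(S)(m,\overline{m'}) = j_*\bigl(\Res_{s=0} s^{-1}|f|^{2s}S(m,\overline{m'})\bigr)$, so the form $s^n\mrm{Gr}_n^M\Xi_f(S)$ obtained from the self-adjointness of $s$ is $(m,\overline{m'}) \mapsto \Res_{s=0} s^{n-1}|f|^{2s}S(m,\overline{m'})$. But this is exactly the residue formula extracting the coefficient of $s^{-n}$ from $S_{s\varphi}(m,\overline{m'}) = |f|^{2s}S(m,\overline{m'})$, which is by definition the Jantzen form $s^n\mrm{Gr}_n^J(S)$. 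Hence the two pairings coincide on primitives under the isomorphism of (2).

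The main obstacle will be careful bookkeeping. The formal-series modules $j_!f^s\ms{T}[[s]]$ and $s^{-n}j_*f^s\ms{T}[[s]]$ need to be handled as submodules of the common Laurent-series module $j_!f^s\ms{T}((s)) = j_*f^s\ms{T}((s))$, and the passage from the filtrations $I, K$ to the monodromy filtration $M$ requires correctly normalized applications of strictness and the Lefschetz decomposition. In (3), one must also reconcile the residue formula for $\Xi_f(S)$ with the formal $s$-deformation used to define the Jantzen form, checking that the $|f|^{2s}$-deformation of $S$ and the formal $s$-twist agree at the level of these residue computations.
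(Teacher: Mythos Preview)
Your proposal is correct and follows essentially the same route as the paper's proof: first identify $J_n j_!\ms{T} = K_0 \cap I_n$ and $J_n j_*\ms{T} = \mrm{im}(K_n)$ by unwinding the definitions, then use the linear algebra of the nilpotent operator $s$ (the Lefschetz decomposition) to pass from $I$ and $K$ to the monodromy filtration $M$, and deduce (2) from (1) via the primitive decomposition of $\mrm{Gr}_n^M$. For (3) the paper phrases things slightly more abstractly---it just observes that $\Xi_f(S)$ restricts to the pairing $j_*(S)$ between $j_*\ms{T}$ and $j_!\ms{T}$---whereas you compute both sides explicitly via the residue formula, but this amounts to the same check.
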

\begin{proof}
Unwinding the definitions, one finds that the Jantzen filtrations are given by
\[ J_n j_!\ms{T} = j_!\ms{T} \cap I_n\Xi_f(\ms{T}) \quad \text{and} \quad J_n j_*\ms{T} = \mrm{im}(K_n\Xi_f(\ms{T}) \to j_*\ms{T}).\]
Straightforward linear algebra now shows \eqref{itm:jantzen monodromy 1}. For \eqref{itm:jantzen monodromy 2}, note that
\[ \mrm{Gr}_n^M\Xi_f(\ms{T}) = \mrm{P}_n\Xi_f(\ms{T}) \oplus s\mrm{Gr}_{n + 2}^M\Xi_f(\ms{T}) \]
by the Lefschetz decomposition. Since the $J$ filtration on $j_*\ms{T}$ is the image of the $M$ filtration on $\Xi_f(\ms{T})$, the second term is the kernel of the surjection $\mrm{Gr}_n^M\Xi_f(\ms{T}) \to \mrm{Gr}_n^J j_*\ms{T}$, so the result follows. Finally, \eqref{itm:jantzen monodromy 3} follows easily from the claim that $\pi_f^0(S)$ restricts to the pairing $j_*(S)$ between $j_*\ms{T}$ and $j_!\ms{T}$, which is clear from the definition.
\end{proof}

\begin{prop} \label{prop:jantzen monodromy 2}
For $n > 0$, we have 
\[ \mrm{Gr}_n^J j_*\ms{T} \cong \mrm{P}_{n - 1} \pi_f^0(\ms{T}) \]
and the isomorphism identifies the Jantzen form on the left with the restriction of $s^{n - 1}\mrm{Gr}_{n - 1}^M\pi_f^0(S)$ on the right.
\end{prop}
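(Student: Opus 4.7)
The argument reduces to Proposition \ref{prop:jantzen monodromy}, which already identifies $\mrm{Gr}^J_n j_*\ms{T} \cong \mrm{P}_n \Xi_f(\ms{T})$ and realises the Jantzen form as the restriction of $s^n \mrm{Gr}^M_n \Xi_f(S)$ to the primitive part. It therefore suffices to produce, for $n > 0$, a canonical isomorphism $\mrm{P}_n \Xi_f(\ms{T}) \cong \mrm{P}_{n-1}\pi_f^0(\ms{T})$ matching this form with the restriction of $s^{n-1}\mrm{Gr}^M_{n-1}\pi_f^0(S)$.

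Directly from the definitions of $\Xi_f = \pi_f^1$ and $\pi_f^0$ one obtains a short exact sequence of mixed Hodge modules
\[ 0 \to j_!\ms{T} \to \Xi_f(\ms{T}) \xrightarrow{p} \pi_f^0(\ms{T}) \to 0, \]
in which $j_!\ms{T}$ coincides with $\ker(s)$ on $\Xi_f$, so multiplication by $s$ factors as $\Xi_f \xrightarrow{p} \pi_f^0 \xrightarrow{\iota} \Xi_f(-1)$, identifying $\pi_f^0(\ms{T})(1)$ with $I_{-1}\Xi_f(\ms{T}) = \mrm{im}(s)$. The main step is then to establish that $p$ carries $M_n \Xi_f$ onto $M_{n-1}\pi_f^0$ and induces an isomorphism on $\mrm{Gr}^M$-pieces at every weight except the bottom of the monodromy filtration on $\Xi_f$. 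The underlying geometric picture is that of Jordan blocks: quotienting $\Xi_f$ by $\ker(s)$ strips off the bottom-weight entry of each Jordan block for $s$, converting a block of length $k+1$ into one of length $k$, so that blocks of length $n+1$ on $\Xi_f$ (counted by $\mrm{P}_n \Xi_f$) match blocks of length $n$ on $\pi_f^0$ (counted by $\mrm{P}_{n-1}\pi_f^0$). I would verify the assertion about the monodromy filtration by invoking the uniqueness of the Jacobson--Morozov--Deligne filtration: the image $p(M_{\bullet+1}\Xi_f)$ satisfies all the defining properties of the monodromy filtration for $s$ on $\pi_f^0$ and therefore equals it. Combining with the Lefschetz decomposition $\mrm{Gr}^M_k = \bigoplus_{j \geq 0} s^j \mrm{P}_{k+2j}$ on each side yields the desired $\mrm{P}_n \Xi_f \xrightarrow{\sim} \mrm{P}_{n-1}\pi_f^0$ for $n > 0$ via $p$.

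For the agreement of forms, comparing the formulas in \S\ref{subsec:beilinson pairing} gives the identity
\[ \pi_f^0(S)(m, \overline{m'}) = j_*(\Res s^{-1}S)(m, s\overline{m'}) = \Xi_f(S)(m, s\overline{m'}), \]
and the self-adjointness of $s$ with respect to $\Xi_f(S)$ (standard for polarized maximal extensions) implies that $\Xi_f(S)(\cdot, s\cdot)$ vanishes on $\ker(s) = j_!\ms{T}$ in either argument, so this pairing descends along $p$ to $\pi_f^0 \times \pi_f^0$. It follows that for $x, y \in \mrm{P}_n \Xi_f$,
\[ s^{n-1}\mrm{Gr}^M_{n-1}\pi_f^0(S)(p(x), \overline{p(y)}) = \Xi_f(S)(x, s^n \overline{y}) = s^n \mrm{Gr}^M_n \Xi_f(S)(x, \overline{y}), \]
which matches the Jantzen form by Proposition \ref{prop:jantzen monodromy}(iii). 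The main obstacle is the combinatorial analysis in the middle step, namely verifying precisely how $p$ shifts the Jacobson--Morozov--Deligne filtration and induces the isomorphism on primitive parts. While the shift-by-one Jordan-block pattern is transparent in finite-dimensional toy models such as $\bC[s]/s^{N+1}$ with multiplication by $s$, carrying out the verification in the Hodge-theoretic setting with the correct Tate twists requires some care; the remainder is a direct manipulation of Beilinson's construction.
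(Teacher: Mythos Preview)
Your proposal is correct and follows essentially the same route as the paper: identify $\pi_f^0(\ms{T})$ with $\Xi_f(\ms{T})/\ker(s)$, observe that the monodromy filtration shifts by one under the quotient map so that $\mrm{P}_n\Xi_f(\ms{T}) \cong \mrm{P}_{n-1}\pi_f^0(\ms{T})$ for $n>0$, check that $\pi_f^0(S)$ and $\Xi_f(S)$ are related by an extra factor of $s$, and conclude via Proposition~\ref{prop:jantzen monodromy}. The paper's proof is terser, simply asserting that the filtration shift and primitive-part isomorphism ``follow formally'' and that the forms ``clearly'' match; your Jordan-block heuristic and the explicit comparison $\pi_f^0(S)(m,\overline{m'}) = \Xi_f(S)(m, s\overline{m'})$ are exactly what underlies those assertions.
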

\begin{proof}
From the definitions, we have
\[ \pi_f^0(\ms{T}) = \frac{\Xi_f(\ms{T})}{K_0\Xi_f(\ms{T})}. \]
It follows formally that, for $n > 0$, the image of $M_n\Xi_f(\ms{T})$ under the quotient map is $M_{n - 1}\pi_f^0(\ms{T})$, and that the induced maps
\[ \mrm{P}_n\Xi_f(\ms{T}) \to \mrm{P}_{n - 1}\pi_f^0(\ms{T}) \]
are isomorphisms. These isomorphisms clearly send $s^n\mrm{Gr}_n^M\Xi_f(S)$ to $s^{n - 1}\mrm{Gr}_{n - 1}^M\pi_f^0(S)$, so we conclude using Proposition \ref{prop:jantzen monodromy}.
\end{proof}

\begin{proof}[Proof of Theorem \ref{thm:jantzen polarization}]
By construction,
\[ s^n \colon \mrm{Gr}_n^Jj_*\ms{T} \to \mrm{Gr}_{-n}^Jj_!\ms{T}(-n) \]
is an isomorphism, so the statement is equivalent to showing that $(-1)^n s^n\mrm{Gr}_n^J(S)$ is a polarization. For $n = 0$, we have
\[ \mrm{Gr}_0^J(j_*\ms{T}) = j_{!*}\ms{T} \quad \text{and} \quad \mrm{Gr}_0^J(S) = j_{!*}S,\]
so the statement is standard in the theory of Hodge modules (cf., \cite[Theorem 15.3.1]{SS}). For $n > 0$, we have
\[ (-1)^n s^n \mrm{Gr}_n^J(S) = (-1)^n s^{n - 1}\mrm{Gr}_{n - 1}^M\pi_f^0(S)|_{\mrm{P}_{n - 1}\pi_f^0(S)}\]
by Proposition \ref{prop:jantzen monodromy 2}. But the latter is precisely the polarization given by Corollary \ref{cor:beilinson polarization} via nearby cycles, so we are done.
\end{proof}

\appendix
\section{Complex mixed Hodge modules via Saito's theory} \label{appendix}

In this appendix, we explain how to recover the theory of complex mixed Hodge modules from the theory of real mixed Hodge modules of Saito. This is an expanded version of the idea explained in \cite[Appendix A]{schmid-vilonen}. In \S\ref{subsec:Saito MHM}, we define the category of Saito complex mixed Hodge modules and the functor relating these to the category of triples discussed in \S\ref{subsec:SS MHM}. We discuss polarizations in this context in \S\ref{subsec:mhm polarization}, and the nearby cycles construction in \S\ref{subsec:Saito polarization}.

The key mixed Hodge-theoretic results we have used in this paper are:
\begin{enumerate}
\item \label{itm:hodge filtration} The Hodge filtration on a pushforward is given as in \S\ref{subsec:hodge filtration}.
\item \label{itm:vhs} Every polarized variation of Hodge structures defines a polarized Hodge module.
\item \label{itm:closed pushforward} If $(\ms{M}, S)$ is a polarized Hodge module on $X$ and $i \colon X \to Y$ is a closed embedding, then $(i_*\ms{M}, i_*S)$ is a polarized Hodge module on $Y$, where $i_*S$ is defined by \eqref{eq:polarization closed pushforward}.
\item \label{itm:ic pushforward} If $(\ms{M}, S)$ is a polarized Hodge module on $X$ and $j \colon X \to Y$ is an open embedding with complement a divisor, then $(j_{!*}\ms{M}, j_{!*}S)$ is a polarized Hodge module on $Y$, where $j_{!*}S$ is the unique pairing restricting to $S$ on $U$.
\item \label{itm:nearby cycles} In the setting of \S\ref{subsec:SS polarization}, if $(\ms{T}, S)$ is a polarized Hodge module, then so is $\mrm{P}_n\psi_f^{un}\ms{T}$ with the polarization \eqref{eq:SS primitive polarization}.
\end{enumerate}
Items \eqref{itm:hodge filtration} and \eqref{itm:ic pushforward} are obvious given the definitions in \S\ref{subsec:Saito MHM} and the analogous claims for real Hodge modules. Items \eqref{itm:vhs} and \eqref{itm:closed pushforward} require a simple check of the sign conventions in \cite{S1} and \S\ref{subsec:mhm polarization}. It turns out that \eqref{itm:nearby cycles} is non-trivial, however, so we provide a sketch of a proof in \S\ref{subsec:Saito polarization}. Since \eqref{itm:hodge filtration}--\eqref{itm:nearby cycles} all hold in the theory presented here, the results of this paper also hold in this context.

\subsection{Complex mixed Hodge modules via Saito's theory}
\label{subsec:Saito MHM}

In \cite{S1, S2}, Saito constructs a theory of rational mixed Hodge modules on complex analytic spaces, and on quasi-projective algebraic varieties in particular. Since we are interested only in the algebraic case, we will denote by $\mhm_{\mb{Q}}(X)$ the category of (algebraic) mixed Hodge modules defined in \cite[\S 4]{S2} for $X$ quasi-projective. If $X$ is smooth, the objects in this category are (certain very special) tuples
\[ ((\ms{M}, F_{\bigcdot}\ms{M}), (K, W_{\bigcdot}K), \alpha),\]
where
\begin{itemize}
\item $\ms{M}$ is a regular holonomic $\ms{D}_X$-module with quasi-unipotent monodromy around all hypersurfaces,
\item $F_{\bigcdot}$ is a $\ms{D}_X$-filtration on $\ms{M}$,
\item $K$ is a perverse sheaf of $\mb{Q}$-vector spaces on $X$,
\item $W_{\bigcdot}$ is a filtration on $K$ by perverse subsheaves, and
\item $\alpha \colon \DR(\ms{M}) \to K \otimes_{\mb{Q}} \mb{C}$ is an isomorphism of complex perverse sheaves.
\end{itemize}
One may modify Saito's definitions ever so slightly by replacing $\mb{Q}$ with $\mb{R}$ and the assumption of quasi-unipotence with $\mb{R}$-specializability (see \cite[Chapters 9 and 10]{SS}); the latter assumption amounts to allowing $V$-filtrations indexed by $\mb{R}$ instead of $\mb{Q}$.\footnote{For the purposes of this paper, this generalization is only necessary to work with irrational twistings $\lambda$, as for $\lambda \in \mf{h}^*_\mb{Q}$ all our $\ms{D}$-modules are quasi-unipotent and even defined over a number field.} These changes require generalizations of some foundational results on degenerations of Hodge structures such as the $SL(2)$-orbit theorem, but otherwise do not introduce any significant difficulties to the theory. The relevant results are proved for example in \cite{SS2}, so this yields categories $\mhm_{\mb{R}}(X)$ of real mixed Hodge modules for quasi-projective varieties $X$ with the same functoriality properties as $\mhm_{\mb{Q}}(X)$.

Given the $\mb{R}$-linear abelian category $\mhm_{\mb{R}}(X)$, we may now define a complex mixed Hodge module $\ms{T}$ to be a real mixed Hodge module $\ms{T} = (\ms{M}, K, \alpha) \in \mhm_{\mb{R}}(X)$ equipped with an action $\mb{C} \otimes_{\mb{R}} \ms{T} \to \ms{T}$ of the field of complex numbers. This defines a $\mb{C}$-linear category
\[ \mhm_{\mb{C}}(X) = \mhm_\mb{R}(X) \otimes_\mb{R} \mb{C} \]
of complex mixed Hodge modules on $X$.

The relation to the theory of Sabbah and Schnell described in \S\ref{subsec:SS MHM} is as follows. Given a triple $\ms{T} = (\ms{M}, \ms{M}', \mf{s}) \in \mhm(X)$ as in \S\ref{subsec:SS MHM}, we set
\[ \saito{\ms{T}} = (\ms{M} \oplus \mb{D}\ms{M}', \DR(\ms{M}), \alpha) \in \mhm_\mb{C}(X),\]
where the $\mb{C}$-structure is given by the usual complex multiplication on $\ms{M}$ and $\DR(\ms{M})$ and the conjugate one on $\mb{D}\ms{M}'$. Here the isomorphism
\[ \alpha \colon \DR(\ms{M}) \otimes_{\mb{R}} \mb{C} = \DR(\ms{M}) \oplus \overline{\DR(\ms{M})} \to \DR(\ms{M}) \oplus \mb{D}\DR(\ms{M}') = \DR(\ms{M} \oplus \mb{D}\ms{M}') \]
is given by the identity on the first summand and on the second by the map
\[ \overline{\DR(\ms{M})} \to \mb{D}\DR(\ms{M}') \]
classifying the pairing $\overline{\DR(\mf{s})}$, where $\DR(\mf{s})$ is defined by
\begin{align}
\DR(\mf{s}) \colon \DR(\ms{M}) \otimes \overline{\DR(\ms{M}')} &\to \Db_X \otimes \mc{E}^{\bigcdot}_X[2n] \cong \underline{\mb{C}}_X(n)[2n] \label{eq:hermitian de Rham}\\
(m \otimes \alpha) \otimes \overline{(m' \otimes \beta)} &\mapsto (-1)^{\frac{n(n - 1)}{2} + n \cdot \deg \alpha} \mf{s}(m, \overline{m'}) \otimes (\alpha \wedge \bar{\beta}), \nonumber
\end{align}
where $n = \dim X$. Here $\ms{E}_X^{\bigcdot}$ denote the sheaves of smooth differential forms on $X$ and the right hand side of \eqref{eq:hermitian de Rham} is equipped with the de Rham differential. This construction identifies $\mhm_\mb{C}(X)$ with a full subcategory of triples $(\ms{M}, \ms{M}', \mf{s})$. Allowing that this may a priori differ from the category $\mhm(X)$ defined via Sabbah and Schnell's theory, we will call the objects in this subcategory Saito mixed Hodge modules.

\subsection{Polarizations} \label{subsec:mhm polarization}

In this subsection, we explain how Saito's notion of polarized Hodge module is related to the $\ms{D}$-module notion used in this paper and \cite{SS}.

Suppose that $\ms{T} = (\ms{M}, \ms{M}', \mf{s}) \in \mhm(X)$ is a pure Hodge module of weight $w$, and let $S \colon \ms{T} \to \ms{T}^h(-w)$ be a morphism satisfying $S^h = S$. For simplicity, we will identify $S$ with the associated Hermitian form 
\begin{align*}
S \colon \ms{M} \otimes \overline{\ms{M}} &\to \Db_X \\
m \otimes \overline{m'} &\mapsto \mf{s}(m, \overline{S(m')}) = \overline{\mf{s}(m', \overline{S(m)})}
\end{align*}
on the underlying $\ms{D}$-module $\mc{M}$. Applying $\DR$, we obtain an induced pairing
\[ \DR(S) \colon \DR(\ms{M}) \otimes \overline{\DR(\ms{M})} \to \underline{\mb{C}}_X(n - w)[2n]\]
on perverse sheaves via \eqref{eq:hermitian de Rham}, which one easily checks is $(-1)^n$-Hermitian with respect to the usual sign rules for interchanging tensor products of complexes. The real bilinear form
\[ S_\mb{R} := \operatorname{Re}^{(n - w)}\DR(S) \colon \DR(\ms{M}) \otimes \DR(\ms{M}) \to \underline{\mb{R}}_X(n - w)[2n] \]
is therefore $(-1)^w$-symmetric, where
\[ \operatorname{Re}^{(n - w)} = (2 \pi i)^{n - w} \operatorname{Re}((2 \pi i)^{w - n} \cdot) \colon \mb{C}(n - w) \to \mb{R}(n - w) = (2 \pi i)^{n - w}\mb{R} \]
is the real part with respect to the real structure on the Tate module.

\begin{defn}
Let $\ms{T} = (\ms{M}, \ms{M}', \mf{s})$ be a triple in the sense of \S\ref{subsec:SS MHM} such that $\saito{\ms{T}} \in \mhm_\mb{C}(X)$ is pure of weight $w$. We say that $S \colon \ms{T} \to \ms{T}^h(-w)$ is a \emph{Saito polarization} if $S_\mb{R}$ is a polarization of $\saito{\ms{T}}$ as an algebraic real mixed Hodge module \cite[5.2.10]{S1}\cite[4.1]{S2}.
\end{defn}

We make the following claim, although this is not needed for our purposes. See also \cite[\S 13.5]{mochizuki}, especially \cite[Proposition 13.5.4]{mochizuki} for a similar result in the related setting of mixed twistor $\ms{D}$-modules.

\begin{prop} \label{prop:theories agree}
A pair $(\ms{T}, S)$ of $\ms{T} = (\ms{M}, \ms{M}', \mf{s})$ and $S \colon \ms{T} \to \ms{T}^h(-w)$ is a polarized Hodge module in the sense of \cite[Definition 14.2.2]{SS} if and only if $\ms{T}$ is a Saito mixed Hodge module and $S$ is a Saito polarization, i.e., the pair $(\saito{\ms{T}}, S_\mb{R}) \in \mhm_\mb{R}(X)$ is a polarized Hodge module in the sense of Saito.
\end{prop}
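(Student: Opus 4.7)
The plan is to proceed by the usual inductive structure that underlies both definitions of polarized Hodge module, matching the Sabbah--Schnell axioms against Saito's axioms step by step. Both \cite[Definition 14.2.2]{SS} and \cite[5.2.10]{S1} are inductive on $\dim \mrm{supp}\, \ms{M}$, with the same basic shape: a pure object on $X$ is polarized if its restriction to a generic open is a polarized variation of Hodge structure, and if on each hypersurface section the primitive parts of the nearby/vanishing cycles with the induced forms are again polarized Hodge modules on a smaller-dimensional variety. Since the forgetful functor from triples to $\mhm_\mb{R}$ used to define $\saito{(-)}$ intertwines all of the standard $\ms{D}$-module operations (pushforward, intermediate extension, nearby and vanishing cycles, Tate twist, and Hermitian duality up to the usual real-part conventions), the problem reduces to a purely local question about matching the two notions of polarization at each step of the induction.

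First I would reduce to the strict-support case using the canonical decomposition of polarizable Hodge modules in both theories, and then restrict to a Zariski open $U \subset \mrm{supp}\, \ms{M}$ over which $(\ms{M}, \ms{M}', \mf{s})$ is a variation $\ms{V}$ of Hodge structure. There, one has to check that the pair $(\ms{V}, S)$ satisfies the pointwise polarization condition \eqref{eq:polarization normalization} if and only if $S_\mb{R}$ is a polarization of $\saito{\ms{V}}$ as a variation in Saito's sense. This is a sign-bookkeeping exercise: unwinding the definition of $\DR(S)$ in \eqref{eq:hermitian de Rham}, the factors $(-1)^{n(n-1)/2}$ and $(2\pi i)^n$ appearing in the closed-pushforward formula \eqref{eq:polarization closed pushforward} and in $\operatorname{Re}^{(n-w)}$ conspire to convert the Hermitian normalization $(-1)^q S|_{V_{p,q}}>0$ into Saito's real-bilinear normalization $(2\pi i)^{w-n}(-1)^{n(n-1)/2}S_\mb{R}(v, C\bar v) > 0$ on the corresponding real Hodge structure (where $C$ is the Weil operator). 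Checking the base case is therefore routine, once the conventions are made explicit, and is essentially the content of \cite[12.7]{SS}.

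The inductive step requires, for every local equation $f$ of a hypersurface meeting the support properly, that the two theories produce the same polarized Hodge module structure on the Lefschetz primitives $\mrm{P}_n \psi_f^{un}\ms{T}$ and $\mrm{P}_n \phi_{f,1}\ms{T}$. For the nearby cycles in the Sabbah--Schnell picture the primitive polarization is $(-1)^n \mrm{N}^n \circ \psi_f^{un}S$ as in \eqref{eq:SS primitive polarization}; Saito's prescription is formally the same, but applied to $S_\mb{R}$ after pushing through $\DR$ and passing to the real part, with a Tate shift encoding the twist by the monodromy. The key step is to verify that the operations ``take $\DR$, form $\mrm{Re}^{(n-w+1)}$, apply Saito's primitive polarization'' and ``apply Sabbah--Schnell's primitive polarization, then take $\DR$ and $\mrm{Re}^{(n-w+1)}$'' produce the same real pairing on $\DR(\mrm{P}_n \psi_f^{un}\ms{M})$. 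Since $\psi_f^{un}\mf{s}$ is built by the residue procedure of \S\ref{subsec:SS polarization} and Saito's $\psi_f^{un}$ pairing is built from the same $V$-filtration data (cf.\ \cite[Proposition 12.5.11]{SS} and \cite[5.2.3]{S1}), the only thing to check is a compatibility of signs between the residues of $|t|^{2s}$ at $s = -1$ on one hand and Saito's variation-of-Hodge-structure polarization on the unipotent nearby cycles on the other. The analogous statement for vanishing cycles is parallel and one reduces it to the nearby cycles case by Beilinson's gluing, since both formalisms use the same identifications between $\phi_{f,1}$, $\psi_f^{un}$ and the canonical/variation morphisms.

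The main obstacle, as suggested by the separate subsection \S\ref{subsec:Saito polarization} that the paper devotes to this very point, is the nearby cycles compatibility. In both theories the signs in the primitive polarization are the result of a nontrivial residue calculation, and the two conventions differ by a shift of origin in the $V$-filtration and by the complex conjugate structure built into the triple $(\ms{M}, \ms{M}', \mf{s})$ versus Saito's $(\ms{M}, K, \alpha)$. Concretely, the hard part is to show that under the identification $\saito{\psi_f^{un}\ms{T}} = \psi_f^{un}\saito{\ms{T}}$, the real part of $(-1)^{n - 1}\mrm{N}^n \circ \DR(\psi_f^{un}S)$ agrees with Saito's primitive polarization on nearby cycles, including the Tate twist by $(n - 1)$ coming from our conventions and the factor of $(2\pi i)$ in passing from $\psi_f$ to $\mrm{Gr}_V^0$. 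Once this sign-matching lemma is established, the induction closes and Proposition~\ref{prop:theories agree} follows.
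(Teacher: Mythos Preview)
Your proposal is correct and follows essentially the same approach as the paper: both recognize that the two inductive definitions have the same shape, so the proposition reduces to checking that the functor $(\ms{T}, S) \mapsto (\saito{\ms{T}}, S_\mb{R})$ is compatible with the operations appearing in the induction (closed pushforward, nearby cycles, vanishing cycles), with the nearby cycles compatibility being the substantive step deferred to \S\ref{subsec:Saito polarization}. The paper's own proof is only a brief sketch saying exactly this; your version is more explicit about the strict-support reduction and the base case for variations, but the strategy is identical.
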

\begin{proof}[Proof sketch]
Since the definitions have a similar inductive form, one only has to check that the functor $(\ms{T}, S) \mapsto (\saito{\ms{T}}, S_\mb{R})$ is compatible with the operations of pushforward under closed embeddings and nearby and vanishing cycles defined in \cite{SS} and \cite{S1}. Compatibility with pushforwards is a simple calculation using \eqref{eq:hermitian de Rham} and compatibility with nearby cycles is discussed in \S\ref{subsec:Saito polarization}. We omit the discussion of vanishing cycles, which is not used directly in this paper.
\end{proof}

\subsection{Polarizations on nearby cycles according to Saito} \label{subsec:Saito polarization}

In this subsection, we recall Saito's construction of the polarization on the primitive parts of nearby cycles, and show that it is the real part of the polarization of \S\ref{subsec:SS polarization}. In particular, we conclude (Corollary \ref{cor:polarizations agree}) that Sabbah and Schnell's construction does indeed produce a Saito polarization, with correct signs.

Take $X$, $f$ and $U$ as in \S\ref{subsec:beilinson pairing}. We will assume for simplicity that the divisor $D = f^{-1}(0)$ is smooth: the general case is reduced to this one by the usual trick of embedding via the graph of $f$.

Let $K$ be a real perverse sheaf on $U$. The (perverse) nearby cycles of the perverse sheaf $K$ are defined by
\[ \psi_f K = i_*i^*j_*\pi_*\pi^* K[-1],\]
where
\[ \pi \colon \tilde{U} = U \times_{\mb{C}^\times} \widetilde{\mb{C}^\times} \to U \]
is the pullback of the universal cover of $\mb{C}^\times$ and $i \colon D = f^{-1}(0) \to X$ is the inclusion of the divisor defined by $f$. (Note: the perverse sheaf $\psi_f K$ is denoted by $\vphantom{\psi_f}^p\psi_f K$ in \cite{S1}.) The monodromy around an oriented loop in $\mb{C}^\times$ defines an operator $\mrm{T} \colon \psi_f K \to \psi_f K$. The unipotent nearby cycles are the sub-perverse sheaf
\[ \psi_f^{un} K = \bigcup_n \ker(\mrm{T} -  1)^n \subset \psi_f K.\]

\begin{defn}[{\cite[5.1.3.3]{S1}}]
Suppose that $\ms{T} = (\ms{M}, K, \alpha)$ is a real mixed Hodge module on $U$. The \emph{unipotent nearby cycles of $\ms{T}$} are
\[ \psi_f^{un}\ms{T} = (\psi_f^{un}\ms{M}, \psi_f^{un}K, \psi_f^{un}\alpha)\]
where $\psi_f^{un}\ms{M}$ is the functor defined in \S\ref{subsec:nearby cycles} and the isomorphism $\psi_f^{un}\alpha \colon \DR(\psi_f^{un}\ms{M}) \to \psi_f^{un}K \otimes_{\mb{R}} \mb{C}$ is defined by composing the isomorphism $\psi_f^{un}\DR(\ms{M}) \cong \psi_f^{un}K \otimes_{\mb{R}} \mb{C}$ coming from $\alpha$ with the natural isomorphism
\begin{equation} \label{eq:nearby cycles de rham}
\DR(\psi_f^{un}\ms{M}) \cong \psi_f^{un}\DR(\ms{M})
\end{equation}
defined below.
\end{defn}

The isomorphism \eqref{eq:nearby cycles de rham} is constructed as follows. First, make the identification
\[ \psi_f^{un}\ms{M} \cong \pi_f^0\ms{M} = \frac{j_*f^s\ms{M}[[s]]}{j_!f^s\ms{M}[[s]]} \]
via Proposition \ref{prop:beilinson nearby cycles}. By the $b$-function lemma, the middle term of the exact triangle
\begin{equation} \label{eq:beilinson triangle}
 f^s\ms{M}[[s]] \to f^s\ms{M}((s)) \to \frac{f^s\ms{M}((s))}{f^s\ms{M}[[s]]} \xrightarrow{\beta} f^s\ms{M}[[s]][1].
\end{equation}
is sent to zero under the functor $C = \operatorname{Cone}(j_! \to j_*)$, so $C(\beta)$ is an isomorphism.
We therefore have a quasi-isomorphism
\[ \DR(\psi_f^{un}\ms{M}) \cong \DR \circ C(f^s\ms{M}[[s]]) \xrightarrow{-\DR(C(\beta))^{-1}[-1]} \DR \circ C\left(\frac{f^s\ms{M}((s))}{f^s\ms{M}[[s]]}\right)[-1].\]
Composing with the quasi-isomorphisms (note that $\DR \circ j_! \cong j_! \circ \DR$ and $\DR \circ j_* \cong j_* \circ \DR$ for regular holonomic $\mc{D}$-modules and hence for inverse limits of such by the Riemann-Hilbert correspondence)
\[ \DR \circ C \cong \operatorname{Cone}(j_! \DR \to j_*\DR) \cong i_*i^*j_* \DR \]
and taking residues at $s = 0$ provides a map from this to $\psi_f\DR(\ms{M})$, which factors through the desired isomorphism to $\psi_f^{un}\DR(\ms{M})$.

Now suppose that the real mixed Hodge module $(\ms{M}, K, \alpha)$ is pure of weight $w$ and that $S \colon K \otimes K \to \underline{\mb{R}}_U(d - w)[2d]$ is a polarization, where $d = \dim X$. Saito \cite[5.2.3]{S1} defines an induced pairing
\[ \psi_f S \colon \psi_f K \otimes \psi_f K \to \underline{\mb{R}}_X(d - w + 1)[2d] \]
 as follows. First, we have the morphism
\begin{align}
  \psi_f K \otimes \psi_f K = i_*&i^*j_*\pi_*\pi^* K [-1] \otimes i_*i^*j_*\pi_*\pi^* K [-1]  \nonumber \\
&\to i_*i^*j_*\pi_*\pi^* (K \otimes K)[-2]\xrightarrow{-S} i_*i^*j_*\pi_*\pi^* \underline{\mb{R}}_U(d - w)[2d-2].\label{eq:saito polarization 1}
\end{align}
Note that the sign rules for interchanging shifts and tensor products produce an implicit sign $(-1)^{\deg}$ on the first tensor factor in \eqref{eq:saito polarization 1}. Note that we take $-S$ rather than $S$ above to ensure that the shift of \eqref{eq:saito polarization 1} by $[1]$ recovers the obvious pairing on $i_*i^*j_*\pi_*\pi^* K$. Next, we have a canonical isomorphism
\begin{equation} \label{eq:saito polarization 2}
 i_*i^*j_*\pi_*\pi^* \underline{\mb{R}}_U(d - w)[2d - 2] \cong i_*\underline{\mb{R}}_D(d - w)[2d - 2].
\end{equation}
Explicitly, \eqref{eq:saito polarization 2} may be realized as the pullback from germs of differential forms on a neighborhood of $D$ in $X$ (the right hand side) into germs on the covering space of a punctured neighborhood induced by $\pi \colon \tilde{U} \to U$ (the left hand side). The pairing $\psi_f S$ is given by composing \eqref{eq:saito polarization 1}, \eqref{eq:saito polarization 2} and the canonical trace morphism
\[ \mrm{Tr} \colon i_*\underline{\mb{R}}_D(d - w)[2d - 2] \to \underline{\mb{R}}_X(d - w + 1)[2d].\]

As in \S\ref{subsec:SS polarization}, the pairing $\psi_f^{un} S$ induces a polarization on primitive parts as follows. The nilpotent operators $\mrm{N} \colon \psi_f^{un}\ms{M} \to \psi_f^{un}\ms{M}$ and
\[ \mrm{N} = \frac{1}{2\pi i} \mrm{T} \colon \psi_f^{un} K \to \psi_f^{un} K(-1) \]
are compatible under the isomorphism $\psi_f^{un}\alpha$, so they define a nilpotent morphism $\mrm{N} \colon \psi_f^{un}\ms{T} \to \psi_f^{un}\ms{T}(-1)$ of real mixed Hodge module. The weight filtration on $\psi_f^{un}\ms{T}$ is defined to be the monodromy weight filtration with respect to $\mrm{N}$ centered at $w - 1$, and we obtain pure Lefschetz primitive parts
\[ \mrm{P}_n \psi_f^{un}\ms{T} = (\mrm{P}_n\psi_f^{un}\ms{M}, \mrm{P}_n \psi_f^{un} K, \mrm{P}_n\psi_f^{un}\alpha) \subset \mrm{Gr}_{w - 1 + n}^W\psi_f^{un}\ms{T}.\]
The pairing $\psi_f S$ restricts to a pairing $\psi_f^{un}S$ on $\psi_f^{un}K$, and the induced pairings
\begin{equation} \label{eq:saito primitive polarization}
\psi_f^{un} S \circ (\id \otimes \mrm{N}^n) \colon \mrm{P}_n \psi_f^{un} K \otimes \mrm{P}_n \psi_f^{un} K \to \underline{\mb{R}}_X(d - w + 1 - n)[2d]
\end{equation}
are polarizations on $\mrm{P}_n\psi_f^{un}\ms{T}$ by definition \cite[5.2.10.2]{S1}.

We now relate the above constructions for real Hodge modules to the complex constructions of \S\ref{subsec:beilinson pairing}-\ref{subsec:SS polarization}. The key result is the following.

\begin{prop} \label{prop:nearby cycles pairings}
Let $\ms{T} = (\ms{M}, \ms{M}', \mf{s})$ be a complex mixed Hodge module on $U$, pure of weight $w$, and let $S \colon \ms{T} \to \ms{T}^h(-w)$ be a polarization. Then the isomorphism \eqref{eq:nearby cycles de rham} provides an isomorphism $\psi_f^{un}(\saito{\ms{T}}) \cong \saito{(\psi_f^{un}\ms{T})}$ such that $(\psi_f^{un}S)_\mb{R} = \psi_f^{un}(S_\mb{R})$ in the notation of \S\ref{subsec:mhm polarization}.
\end{prop}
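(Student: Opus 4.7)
The plan is to verify the equality of the two pairings by computing them on explicit local sections. First, I would reduce to the case where $D = f^{-1}(0)$ is a smooth divisor in $X$ by replacing $f$ with the coordinate $t$ on $X \times \mb{C}$ and $\ms{T}$ with its pushforward $i_{f*}\ms{T}$ along the graph embedding $i_f \colon X \to X \times \mb{C}$. Both the Sabbah--Schnell and Saito constructions of nearby cycles commute with closed pushforward (the former essentially by definition, using~\eqref{eq:polarization closed pushforward}, and the latter by direct unwinding of $\psi_t \circ i_{f*}$), so the general case follows from the smooth divisor case.

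In the smooth divisor case, I would make the de Rham isomorphism~\eqref{eq:nearby cycles de rham} explicit on local sections. A class $[m \otimes \partial_t] \in \psi_f^{un}\ms{M} = \mrm{Gr}_V^0 i_{f*}j_*\ms{M}$ is sent, via the Beilinson triangle~\eqref{eq:beilinson triangle} and the residue at $s = 0$, to the flat section of $\DR(\ms{M})$ on the real blowup of $D$ obtained by horizontally extending $m$ along a radial path (modulo logarithmic corrections from $\mrm{N}$). With this identification, Saito's trace-based pairing $\psi_f S_\mb{R}$ on $\psi_f^{un} K \otimes \psi_f^{un} K$ is computed as follows: for sections $m, m'$ and a test top-form $\omega$ on $D$, $\psi_f S_\mb{R}(m, m')$ evaluated against $\omega$ equals $\int_D \omega \cdot \big(\tfrac{1}{2\pi}\oint_{|t|=\epsilon} S_\mb{R}(m, m')\, d\theta\big)$, where the inner circle integral realizes the trace isomorphism~\eqref{eq:saito polarization 2}.

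On the other side, Sabbah--Schnell's pairing is $\langle \eta, \psi_f^{un}\mf{s}(m, \overline{m'})\rangle = \Res_{s=-1}\langle |t|^{2s}\tilde\eta, i_{f*}\mf{s}(m, \overline{m'})\rangle$. Writing $t = r e^{i\theta}$ and $\tilde\eta = \omega \wedge dr \wedge d\theta + \cdots$ near $D$, the distribution $\mf{s}(m, \overline{m'})$ is locally $L^1$ (by regularity of the $V$-filtration classes), so we may compute the residue directly in polar coordinates: the meromorphic extension of $\int r^{2s+1}\, dr \cdot F(r)$ has a simple pole at $s = -1$ with residue $\tfrac{1}{2}F(0)$. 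This gives
\[
\Res_{s=-1}\int_X |t|^{2s}\tilde\eta \cdot \mf{s}(m,\overline{m'}) = \tfrac{1}{2}\int_D \omega \cdot \Big(\int_0^{2\pi} \mf{s}(m,\overline{m'})|_D\, d\theta\Big).
\]
Taking the real part as in \S\ref{subsec:mhm polarization} converts $\mf{s}$ to $S_\mb{R}$ and absorbs the remaining constants ($2\pi i$ from $\eta = 2\pi i(\partial_t \wedge \partial_{\bar t})\contract\tilde\eta|_X$, and the factor of $\tfrac{1}{2}$) into exactly the Tate normalization appearing in Saito's trace. Comparing gives $(\psi_f^{un} S)_\mb{R} = \psi_f^{un}(S_\mb{R})$ after checking that the isomorphism~\eqref{eq:nearby cycles de rham} is the one identifying the two source perverse sheaves.

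The main obstacle is bookkeeping the signs and the implicit powers of $2\pi i$: the SS construction is naturally Hermitian and takes its real part only at the end, while Saito's is naturally real, and the two use different normalizations of the nilpotent operator ($\mrm{N} = -t\partial_t$ versus $\mrm{N} = \tfrac{1}{2\pi i}\log T$). In particular, the implicit sign $(-1)^{\deg}$ in~\eqref{eq:saito polarization 1}, the normalizing factor in~\eqref{eq:hermitian de Rham}, and the Tate twist $(1)$ appearing in Proposition~\ref{prop:beilinson nearby cycles hodge} must all be reconciled with the Tate twist implicit in Saito's trace $\operatorname{Tr} \colon i_*\underline{\mb{R}}_D(-1)[-2] \to \underline{\mb{R}}_X$. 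Once this is done, the asserted identification $\psi_f^{un}\saito{\ms{T}} \cong \saito{\psi_f^{un}\ms{T}}$ follows from the agreement of pairings, since both sides have the same underlying $\ms{D}$-module and perverse sheaf, and the sesquilinear pairing determines the triple structure.
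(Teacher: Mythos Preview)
Your approach is genuinely different from the paper's, and it is worth comparing them. You propose a direct analytic computation: make the de Rham comparison~\eqref{eq:nearby cycles de rham} explicit on local sections via horizontal extension on the real blowup, compute Saito's pairing as a circle integral and Sabbah--Schnell's as a polar-coordinate residue at $s=-1$, and match the two. The paper instead works entirely in the derived category. It introduces an intermediate pairing (their~\eqref{eq:nearby cycles pairings 2}) built from the Beilinson triangle, proves two lemmas showing that $\DR(\psi_f^{un}S)$ and $\psi_f^{un}\DR(S)$ each factor through this intermediate composed with different maps to $\underline{\mb{C}}_X(d-w+1)[2d]$ (namely $-\gamma$ versus $i_*i^*\delta[-1]\circ\mrm{Tr}$), and then reduces the whole question to commutativity of a single square of constant sheaves, proved by an abstract cone lemma. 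The advantage of the paper's route is that the sign and $2\pi i$ bookkeeping---which you correctly identify as the ``main obstacle''---is absorbed into functorial triangulated-category identities rather than tracked by hand through integrals.

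The concern with your proposal is that the hard part is precisely the part you defer. You write that the normalizations ``must all be reconciled'' and list the relevant factors, but the entire content of this proposition \emph{is} that reconciliation; the paper even includes a remark inviting the anxious reader to sanity-check the sign in the toy case $X=\mb{C}^d$. Two specific points need more care. First, your description of~\eqref{eq:nearby cycles de rham} as ``horizontally extending $m$ along a radial path (modulo logarithmic corrections)'' is heuristic: the paper's definition runs through $-\DR(C(\beta))^{-1}[-1]$ and a residue, and identifying this with the real-blowup picture on the nose requires an argument. Second, your claim that $\mf{s}(m,\overline{m'})$ is locally $L^1$ for $m,m'\in V^0$ is not true in general (think of nilpotent monodromy producing $\log|t|$ terms), so the residue at $s=-1$ is not always a simple pole with the clean formula you wrote; the meromorphic continuation of~\cite[Proposition 12.5.4]{SS} handles this, but your explicit integral computation would need to as well. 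Your strategy is not wrong, but as written it is a plan whose execution is the proof.
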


\begin{rmk}
Given the importance of sign in the theory of polarizations, some readers may, like the authors, feel anxious about the many opportunities for sign errors in the following proof of Proposition \ref{prop:nearby cycles pairings}. As reassurance, one may check the statement for $X = \mb{C}^d$, $f$ a coordinate function, and $\ms{M} = \mc{O}_U$ with its usual Hodge structure and polarization. Since any sign error in the statement will be uniform across all examples, this is enough to rigorously ensure that none is present.
\end{rmk}

\begin{proof}
In light of the definitions, proving that $\psi_f^{un}(\saito{\ms{T}}) \cong \saito{(\psi_f^{un}\ms{T})}$ as Hodge modules amounts to checking that the Hodge filtrations on $\mb{D}\psi_f^{un}\ms{M}'$ and $\psi_f^{un}\mb{D}\ms{M}'$ agree under the isomorphism given implicitly by \eqref{eq:nearby cycles de rham} and $\psi_f^{un}\mf{s}$. This can be checked directly, or one can observe that since $(\psi_f^{un}S)_{\mb{R}}$ is compatible with the former Hodge filtration and $\psi_f^{un}(S_\mb{R})$ is compatible with the latter, it will follow once we have checked that these pairings agree.

To prove that the pairings agree, we first consider the complex pairing
\begin{align}
\DR\left(\frac{f^s\ms{M}((s))}{f^s\ms{M}[[s]]}\right) \otimes \overline{\DR\left(\frac{f^s\ms{M}((s))}{f^s\ms{M}[[s]]}\right)} &\xrightarrow{\DR(\beta) \otimes 1} \DR(f^s\ms{M}[[s]])[1] \otimes \overline{\DR\left(\frac{f^s\ms{M}((s))}{f^s\ms{M}[[s]]}\right)} \nonumber \\
&\xrightarrow{\DR(\Res S)} \underline{\mb{C}}_U[2d + 1], \label{eq:nearby cycles pairings 1}
\end{align}
where $\beta$ is the connecting homomorphism in \eqref{eq:beilinson triangle} and we write, as usual, $\overline{\,\cdot\,}$ for the operation of conjugating the scalar multiplication on a $\mb{C}$-sheaf. As in the construction of \eqref{eq:nearby cycles de rham}, we have a quasi-isomorphism
\[ \DR(\psi_f^{un}\ms{M}) \xrightarrow{-\DR(C(\beta))^{-1}[-1]} i_*i^*j_*\DR\left(\frac{f^s\ms{M}((s))}{f^s\ms{M}[[s]]}\right)[-1] \]
in the derived category, and hence a pairing
\begin{align} 
\DR(\psi_f^{un}\ms{M}) \otimes &\overline{\DR(\psi_f^{un}\ms{M})} \nonumber \\
& = i_*i^*j_*\DR\left(\frac{f^s\ms{M}((s))}{f^s\ms{M}[[s]]}\right)[-1] \otimes i_*i^*j_*\overline{\DR\left(\frac{f^s\ms{M}((s))}{f^s\ms{M}[[s]]}\right)}[-1] \nonumber \\
& \to i_*i^*j_*\left(\DR\left(\frac{f^s\ms{M}((s))}{f^s\ms{M}[[s]]}\right) \otimes \overline{\DR\left(\frac{f^s\ms{M}((s))}{f^s\ms{M}[[s]]}\right)}\right)[-2]  \label{eq:nearby cycles pairings 2}\\
& \xrightarrow{-\eqref{eq:nearby cycles pairings 1}} i_*i^*j_*\underline{\mb{C}}_U[2d - 1], \nonumber
\end{align}
where the sign in the last morphism is present for the same reason as in \eqref{eq:saito polarization 1}.

\begin{lem} \label{lem:nearby cycles pairings 1}
The pairing $\DR(\psi_f^{un}S)$ agrees with \eqref{eq:nearby cycles pairings 2} composed with
\[ i_*i^*j_*\underline{\mb{C}}_U[2d - 1] \xrightarrow{-\gamma} j_!\underline{\mb{C}}_U[2d] \rightarrow \underline{\mb{C}}_X[2d],\]
where $\gamma$ is the connecting homomorphism in the functorial exact triangle
\[ j_!j^* \to \mrm{id} \to i_*i^* \xrightarrow{\gamma} j_!j^*[1] \]
applied to $j_*\underline{\mb{C}}_U[2d - 1]$.
\end{lem}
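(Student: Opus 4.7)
The plan is to reduce the statement to a direct computation by transporting the identity $\psi_f^{un}\mf{s} = j_*(\Res s^{-1}\mf{s})$ from Proposition \ref{prop:beilinson nearby cycles hodge} through the de Rham functor, and then tracking carefully how the connecting maps $\beta$ and $\gamma$ implement the corresponding shifts on the perverse sheaf side. The starting observation is that Proposition \ref{prop:beilinson nearby cycles hodge} already identifies $\psi_f^{un}S$, as a sesquilinear pairing on the underlying $\ms{D}$-modules, with the pairing $j_*(\Res s^{-1}S)$ induced by the residue of $s^{-1}S$ across the Beilinson exact triangle \eqref{eq:beilinson triangle}. So the content of the lemma is strictly about sign and shift bookkeeping between the two realizations of $\DR(\psi_f^{un}\ms{M})$: the direct one (via $\pi \colon \tilde U \to U$ in the definition of $\psi_f$) and the Beilinson one (via $-\DR(C(\beta))^{-1}[-1]$).

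First I would apply $\DR$ to the composition $(-\beta)$ followed by $\Res_{s=0}s^{-1}S$. This produces a pairing
\[
\DR\!\left(\tfrac{f^s\ms{M}((s))}{f^s\ms{M}[[s]]}\right) \otimes \overline{\DR\!\left(\tfrac{f^s\ms{M}((s))}{f^s\ms{M}[[s]]}\right)} \longrightarrow \underline{\mb{C}}_U(d - w + 1)[2d + 1],
\]
which is exactly \eqref{eq:nearby cycles pairings 1}. Applying $i_*i^*j_*$ and incorporating the shift by $-1$ from the isomorphism $\DR(\psi_f^{un}\ms{M}) \cong i_*i^*j_*\DR(f^s\ms{M}((s))/f^s\ms{M}[[s]])[-1]$ (the signs from the sign rules for shifted tensor products producing the explicit minus in \eqref{eq:nearby cycles pairings 2}) yields a pairing valued in $i_*i^*j_*\underline{\mb{C}}_U(d-w+1)[2d-1]$. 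This is the first three arrows in the composition.

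Next I would identify the remaining step with the connecting map $-\gamma$. The pairing $\psi_f^{un}S$ on the perverse sheaf $\psi_f^{un}K$ is by construction $i^*$ of a pairing that lives on $j_*K$ and whose residue along $D$ is extracted via the trace map of \eqref{eq:saito polarization 2}. Concretely, the trace isomorphism $i_*i^*j_*\pi_*\pi^*\underline{\mb{R}}_U \cong i_*\underline{\mb{R}}_D[0]$ followed by the pushforward trace $i_*\underline{\mb{R}}_D(-1)[-2] \to \underline{\mb{R}}_X[0]$ is, under the identification of $i_*i^*j_*$ with the cone of $j_!j^* \to \id$, nothing but the connecting morphism $\gamma$ of that exact triangle applied to $j_!\underline{\mb{C}}_U$. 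The residue in Sabbah--Schnell's pairing $\Res_{s=-1}|t|^{2s}$ corresponds, after translation $s \mapsto s - 1$, to Beilinson's $\Res_{s=0}s^{-1}$; this is the same change of variables that was used in Proposition \ref{prop:beilinson nearby cycles hodge}, so the two pairings agree up to the shift implemented by $\gamma$.

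The main obstacle is the sign and Tate twist bookkeeping. Four sources of signs enter: (i) the explicit sign $(-1)^{n(n-1)/2 + n \cdot \deg \alpha}$ in \eqref{eq:hermitian de Rham} defining $\DR(S)$, (ii) the Koszul signs $(-1)^{\deg}$ from swapping tensor factors past shifts, (iii) the minus sign in $-\DR(C(\beta))^{-1}[-1]$ built into the definition of \eqref{eq:nearby cycles de rham}, and (iv) the minus in $-\gamma$. I would verify once and for all that these combine to give the correct total sign by specializing to $X = \mb{C}^d$, $f$ a coordinate, and $\ms{M} = \ms{O}_U$ with its tautological variation of Hodge structure and polarization $S_Q(f, \bar g) = f\bar g$; since the discrepancy would necessarily be a universal integer power of $-1$ (being natural in $(\ms{T}, S)$ and compatible with Tate twists), a single example pins it down. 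Once the signs are verified in this case, the general statement follows by the functoriality of all constructions involved.
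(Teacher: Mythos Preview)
Your overall strategy is correct and matches the paper's approach, which is simply ``a straightforward check, paying careful attention to signs''. You are right that Proposition~\ref{prop:beilinson nearby cycles hodge} identifies $\psi_f^{un}S$ with the Beilinson pairing $\pi_f^0(S)$ on the $\ms{D}$-module level, so that the lemma reduces to unwinding how $\DR$ interacts with the cone construction and the $j_*$-extended pairing $j_*(\Res S)$. Your final paragraph on pinning down the sign by the example $X = \mb{C}^d$, $f$ a coordinate, $\ms{M} = \mc{O}_U$ is exactly the sanity check the paper recommends in the remark preceding the proof of Proposition~\ref{prop:nearby cycles pairings}.

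There is, however, a genuine muddle in your middle paragraph. You justify the appearance of $-\gamma$ by invoking Saito's trace construction via $\pi_*\pi^*$ and the isomorphism \eqref{eq:saito polarization 2}. That is the wrong side of the comparison: Saito's pairing via the universal cover is the subject of Lemma~\ref{lem:nearby cycles pairings 2}, not this one. Lemma~\ref{lem:nearby cycles pairings 1} lives entirely on the Sabbah--Schnell/Beilinson side; the universal cover $\pi \colon \tilde{U} \to U$ plays no role here. The correct reason $-\gamma$ appears is internal to the Beilinson picture: the pairing $\pi_f^0(S)$ is induced from $j_*(\Res S)$, which is a $\Db_X$-valued pairing on $j_*f^s\ms{M}[[s]]$, so under $\DR$ it lands in $\underline{\mb{C}}_X$; identifying $\DR(\pi_f^0\ms{M})$ with $i_*i^*j_*\DR(\cdots)[-1]$ via the cone $C = \operatorname{Cone}(j_! \to j_*)$ then introduces the connecting map $\gamma$ of the triangle $j_!j^* \to \id \to i_*i^*$ on the target constant sheaf. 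Once you reroute the argument this way, the rest of your outline goes through.
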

\begin{proof}
A straightforward check, paying careful attention to signs.
\end{proof}

Next, observe that
\[ \psi_f^{un}(S_\mb{R}) = \mrm{Re}^{(d - w + 1)} \psi_f^{un}\DR(S),\]
where the $\mb{C}$-linear pairing
\[ \psi_f^{un}\DR(S) \colon \psi_f^{un}\DR(\ms{M}) \otimes \overline{\psi_f^{un}\DR(\ms{M})} \to \underline{\mb{C}}_X(d - w + 1)[2d]\]
is defined as in the real case.

\begin{lem} \label{lem:nearby cycles pairings 2}
Under the isomorphism \eqref{eq:nearby cycles de rham}, $\psi_f^{un}\DR(S)$ agrees with \eqref{eq:nearby cycles pairings 2} composed with
\[ i_*i^*j_*\underline{\mb{C}}_U[2d - 1] \xrightarrow{i_*i^*\delta[-1]} i_*i^!\underline{\mb{C}}_X[2d] \xrightarrow{\mrm{Tr}} \underline{\mb{C}}_X[2d],\]
where $\delta$ and $\mrm{Tr}$ are the morphisms in the canonical exact triangle
\begin{align*}
 i_*i^!\underline{\mb{C}}_X[2d] &\xrightarrow{\mrm{Tr}} \underline{\mb{C}}_X[2d] \\
& \to j_*\underline{\mb{C}}_U[2d] \xrightarrow{\delta} i_*i^!\underline{\mb{C}}_X[2d + 1].
\end{align*}
\end{lem}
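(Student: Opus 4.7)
The plan is to reduce, as in Lemma \ref{lem:nearby cycles pairings 1}, to a compatibility at the level of the intermediate Beilinson quotient $f^s\ms{M}((s))/f^s\ms{M}[[s]]$, after which everything becomes a statement about canonical trace/residue morphisms in the constructible derived category of $X$. Specifically, one first checks that the Saito isomorphism \eqref{eq:nearby cycles de rham} factors as the composition
\[ \DR(\psi_f^{un}\ms{M}) \xrightarrow{-\DR(C(\beta))^{-1}[-1]} i_*i^*j_*\DR\!\left(\tfrac{f^s\ms{M}((s))}{f^s\ms{M}[[s]]}\right)[-1] \hookrightarrow i_*i^*j_*\pi_*\pi^*\DR(\ms{M})[-1], \]
the last arrow being the natural inclusion that remembers only the nilpotent (i.e.\ unipotent-monodromy) part of the nearby fibre. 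Both pairings in the lemma are bilinear in their two arguments through this same intermediate perverse sheaf, so modulo signs it is enough to compare the ``trace'' maps one applies to the common two-variable complex $\DR\!\left(\tfrac{f^s\ms{M}((s))}{f^s\ms{M}[[s]]}\right)\otimes\overline{\DR\!\left(\tfrac{f^s\ms{M}((s))}{f^s\ms{M}[[s]]}\right)}$ after building the pairing from $S$.

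On the Saito side, the trace is the composition of the canonical isomorphism $i_*i^*j_*\pi_*\pi^*\underline{\mb{C}}_U\simeq i_*\underline{\mb{C}}_D$ (pullback of differential forms from a neighbourhood of $D$ into a neighbourhood of the infinite covering $\tilde U$) with the Gysin trace $i_*\underline{\mb{C}}_D(-1)[-2]\to\underline{\mb{C}}_X$ of the smooth closed embedding $i\colon D\to X$. On the RHS of the lemma, the trace is the composition $\mrm{Tr}\circ i_*i^*\delta[-1]$, where $\delta$ is the boundary morphism in the localization triangle $i_*i^!\to\id\to j_*j^*\to[1]$ applied to $\underline{\mb{C}}_X$. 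Under Poincar\'e--Lefschetz duality $i^!\underline{\mb{C}}_X[2d]\simeq\underline{\mb{C}}_D(-1)[2d-2]$, the map $\mrm{Tr}$ on the RHS is the same Gysin trace, so the identification reduces to checking that $\delta$ on $j_*\underline{\mb{C}}_U$ corresponds to the ``residue along $D$'' that figures in the Saito construction. This last point is classical: the connecting homomorphism $\delta$ extracts precisely the residue of a smooth form on a punctured tubular neighbourhood of $D$, which is dual to integration against a normal tangent vector $\partial_t\wedge\partial_{\bar t}$, matching \eqref{eq:saito polarization 2}.

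The hard part is, as the paper warns, the sign bookkeeping. The shifts by $[-1]$ on each de Rham nearby-cycles factor together with the signs appearing in \eqref{eq:hermitian de Rham} and in \eqref{eq:saito polarization 1} conspire so that the signs on both sides agree; one has to track the Koszul signs arising from (i) reordering de Rham and shift operations when forming the tensor square of $\DR(\psi_f^{un}\ms{M})$, (ii) the $(-1)^{n\deg\alpha}$-type twists in \eqref{eq:hermitian de Rham}, and (iii) the explicit minus signs in the Beilinson map $-\DR(C(\beta))^{-1}[-1]$ used to define \eqref{eq:nearby cycles de rham}. As in Lemma \ref{lem:nearby cycles pairings 1}, this is essentially a careful but mechanical diagram chase; performing it on the ``test case'' $X=\mb{C}^d$, $f=t$, $\ms{M}=\mc{O}_U$ with its tautological polarization pins the overall sign, after which the general case follows by linearity in $(\ms{T},S)$.
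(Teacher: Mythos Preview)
Your sketch is in the right spirit, and you correctly identify the second half of the argument: comparing the Saito trace (via the isomorphism \eqref{eq:saito polarization 2} and the Gysin map) with $\mrm{Tr}\circ i_*i^*\delta[-1]$, and reducing this to the classical fact that the boundary map $\delta$ extracts the residue along $D$. This is precisely the content of the paper's second commutative diagram.

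However, your claim that ``both pairings in the lemma are bilinear in their two arguments through this same intermediate perverse sheaf, so modulo signs it is enough to compare the `trace' maps'' skips a genuine step. The Saito pairing is built from $\pi_*\pi^*\DR(S)$ acting on $\pi_*\pi^*\DR(\ms{M})$, whereas the pairing \eqref{eq:nearby cycles pairings 2} is built from $\DR(\Res S)\circ(\DR(\beta)\otimes 1)$ acting on $\DR\!\left(f^s\ms{M}((s))/f^s\ms{M}[[s]]\right)$. Even after embedding the latter into the former via the residue map $\mrm{Res}$, it is not automatic that these two sesquilinear pairings agree (before any trace is applied). The paper supplies the missing bridge: the exact triangle
\[
\underline{\mb{C}}_U \to \pi_*\pi^*\underline{\mb{C}}_U \xrightarrow{\frac{1}{2\pi i}(\mrm{T}-1)} \pi_*\pi^*\underline{\mb{C}}_U(-1) \xrightarrow{\beta'} \underline{\mb{C}}_U[1],
\]
whose connecting morphism $\beta'$ links the universal-cover model to the Beilinson model. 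The first commutative diagram in the paper's proof then asserts that \eqref{eq:nearby cycles pairings 1} equals $\beta'\circ\pi_*\pi^*\DR(S)\circ(\mrm{Res}\otimes\mrm{Res})$; this is what justifies your implicit identification of the two ``pairings from $S$''. Without introducing $\beta'$ (or an equivalent device) your reduction to comparing traces is not yet grounded. Once you insert this triangle and check the first diagram, your sketch aligns with the paper's argument; the second diagram is exactly your residue-versus-$\delta$ comparison, and your remark about pinning the sign on the test case $X=\mb{C}^d$, $f=t$, $\ms{M}=\mc{O}_U$ is entirely reasonable (and is in fact endorsed by the paper in the remark preceding the proof of Proposition \ref{prop:nearby cycles pairings}).
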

\begin{proof}
Consider the exact triangle
\begin{align}
 \underline{\mb{C}}_U[2d] \to &\pi_*\pi^*\underline{\mb{C}}_U[2d] \nonumber \\
& \xrightarrow{\frac{1}{2\pi i}(\mrm{T} - 1)} \pi_*\pi^*\underline{\mb{C}}_U[2d] \xrightarrow{\beta'} \underline{\mb{C}}_U[2d + 1], \label{eq:monodromy triangle}
\end{align}
where $\mrm{T}$ is the monodromy operator. The statement of the lemma follows from commutativity of the diagrams
\[
\begin{tikzcd}
\DR\left(\frac{f^s\ms{M}((s))}{f^s\ms{M}[[s]]}\right) \otimes \overline{\DR\left(\frac{f^s\ms{M}((s))}{f^s\ms{M}[[s]]}\right)} \ar[r, "\Res"] \ar[d, "\eqref{eq:nearby cycles pairings 1}"] & \pi_*\pi^*\DR(\ms{M}) \otimes \pi_*\pi^*\overline{\DR(\ms{M})} \ar[d, "\pi_*\pi^*\DR(S)"] \\
\underline{\mb{C}}_U[2d + 1] & \ar[l, "\beta'"'] \pi_*\pi^*\underline{\mb{C}}_U[2d]
\end{tikzcd}
\]
and
\[
\begin{tikzcd}
i_*i^*j_*\pi_*\pi^*\underline{\mb{C}}_U[2d - 2] \ar[r, "\eqref{eq:saito polarization 2}"] \ar[d, "{\beta'[-2]}"] & i_*\underline{\mb{C}}_D[2d - 2] \ar[d, equal] \\
i_*i^*j_*\underline{\mb{C}}_U[2d - 1] \ar[r, "{i_*i^*\delta[-1]}"] & i_*i^!\underline{\mb{C}}_X[2d], 
\end{tikzcd}
\]
which may be checked by direct (if not completely trivial) calculation.
\end{proof}

Continuing with the proof of Proposition \ref{prop:nearby cycles pairings}, by Lemmas \ref{lem:nearby cycles pairings 1} and \ref{lem:nearby cycles pairings 2}, it remains to prove commutativity of the diagram
\begin{equation} \label{eq:nearby cycles pairings 5}
\begin{tikzcd}
i_*i^*j_*\underline{\mb{C}}_U[2d - 1] \ar[r, "-\gamma"] \ar[d, "{i_*i^*\delta[-1]}"] & j_!\underline{\mb{C}}_U[2d] \ar[d] \\
i_*i^!\underline{\mb{C}}_X[2d] \ar[r, "\mrm{Tr}"] & \underline{\mb{C}}_X[2d].
\end{tikzcd}
\end{equation}
To see this, consider the exact triangle
\begin{align*}
\underline{\mb{C}}_X[2d - 1] &\to j_*\underline{\mb{C}}_U[2d - 1] \\
&\xrightarrow{\delta[-1]} i_*i^!\underline{\mb{C}}_X[2d] \xrightarrow{\mrm{Tr}} \underline{\mb{C}}_X[2d].
\end{align*}
Applying the functorial exact triangle
\[ \id \to i_*i^* \xrightarrow{-\gamma} j_!j^*[1] \to \id[1] \]
to each term provides a diagram
\[
\begin{tikzcd}
\underline{\mb{C}}_X[2d - 1] \ar[r] \ar[d] & j_*\underline{\mb{C}}_U[2d - 1] \ar[r, "{\delta[-1]}"] \ar[d] & i_*i^!\underline{\mb{C}}_X[2d] \ar[r, "\mrm{Tr}"] \ar[d] & \underline{\mb{C}}_X[2d] \ar[d] \\
i_*i^*\underline{\mb{C}}_X[2d - 1] \ar[r] \ar[d, "-\gamma"] & i_*i^*j_*\underline{\mb{C}}_U[2d - 1] \ar[r, "{i_*i^*\delta[-1]}"] \ar[d, "-\gamma"] & i_*i^!\underline{\mb{C}}_X[2d] \ar[r, "i_*i^*\mrm{Tr}"] \ar[d, "-\gamma"] & i_*i^*\underline{\mb{C}}_X[2d] \ar[d, "-\gamma"] \\
j_!\underline{\mb{C}}_U[2d] \ar[r] \ar[d] & j_!\underline{\mb{C}}_U[2d] \ar[r] \ar[d] & 0 \ar[r] \ar[d] & j_!\underline{\mb{C}}_U[2d + 1] \ar[d] \\
\underline{\mb{C}}_X[2d] \ar[r] & j_*\underline{\mb{C}}_U[2d] \ar[r, "{\delta}"] & i_*i^!\underline{\mb{C}}_X[2d + 1] \ar[r, "{\mrm{Tr}[1]}"] & \underline{\mb{C}}_X[2d + 1]
\end{tikzcd}
\]
in which each row and column is an exact triangle. Since there is a functorial quasi-isomorphism $\operatorname{Cone}(\id \to i_*i^*) \to j_!j^*[1]$ at the level of complexes, the diagram above may be identified with a diagram as in Lemma \ref{lem:nearby cycles pairings 3} below, so \eqref{eq:nearby cycles pairings 5} commutes and we are done.
\end{proof}

\begin{lem}\label{lem:nearby cycles pairings 3}
Let
\[
\begin{tikzcd}
A \ar[r, "f"] \ar[d, "p"] & B \ar[d, "p'"] \\
A' \ar[r, "f'"] & B'
\end{tikzcd}
\]
be a commutative square of complexes in an abelian category $\mc{A}$, and let
\[
\begin{tikzcd}
A \ar[r, "f"] \ar[d, "p"] & B \ar[r, "g"] \ar[d, "p'"] & C \ar[r, "h"] \ar[d, "p''"] & A[1] \ar[d, "{p[1]}"] \\
A' \ar[r, "f'"] \ar[d, "q"] & B' \ar[r, "g'"] \ar[d, "q'"] & C' \ar[r, "h'"] \ar[d, "q''"] & A'[1] \ar[d, "{q[1]}"] \\
A'' \ar[r, "f''"] \ar[d, "r"] & B'' \ar[r, "g''"] \ar[d, "r'"] & C'' \ar[r, "h''"] \ar[d, "r''"] & A''[1] \ar[d, "{r[1]}"] \\
A[1] \ar[r, "{f[1]}"] & B[1] \ar[r, "{g[1]}"] & C[1] \ar[r, "{h[1]}"] & A[2]
\end{tikzcd}
\]
be the diagram obtained by taking cones horizontally then vertically, i.e., $C := \operatorname{Cone}(g)$, $C'' := \operatorname{Cone}(p'')$ etc. Assume that $C''$ is acyclic, so that $p''$ and $g''$ are quasi-isomorphisms. Then we have
\[ h \circ (p'')^{-1} \circ g' = r \circ (f'')^{-1} \circ q'  \]
as morphisms $B' \to A[1]$ in the derived category $\oD(\mc{A})$.
\end{lem}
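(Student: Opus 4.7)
\textbf{Proof plan for Lemma \ref{lem:nearby cycles pairings 3}.}

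The plan is to introduce a ``diagonal'' cone $Y = \operatorname{Cone}(A \to B')$ for the common composite $f'p = p'f$, apply the octahedral axiom to both factorisations of this composite, and use the acyclicity of $C''$ to split the resulting triangles into a form that makes the desired equality manifest. Specifically, the octahedron applied to $A \xrightarrow{p} A' \xrightarrow{f'} B'$ produces a distinguished triangle
\[ A'' \xrightarrow{\alpha_1} Y \xrightarrow{\beta_1} C' \xrightarrow{\tau_1} A''[1] \]
with $\beta_1 \iota = g'$, $\partial \alpha_1 = r$ and $\tau_1 = q[1] \circ h'$, where $\iota: B' \to Y$ and $\partial: Y \to A[1]$ are the canonical maps of the defining triangle $A \to B' \to Y \to A[1]$. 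The octahedron for $A \xrightarrow{f} B \xrightarrow{p'} B'$ analogously produces
\[ C \xrightarrow{\alpha_2} Y \xrightarrow{\beta_2} B'' \xrightarrow{\tau_2} C[1] \]
with $\beta_2 \iota = q'$, $\partial \alpha_2 = h$ and $\tau_2 = g[1] \circ r'$. A direct check of the relevant squares in the $4 \times 4$ diagram shows that $\tau_1$ factors (up to sign) as $C' \xrightarrow{q''} C'' \xrightarrow{h''} A''[1]$ and $\tau_2$ factors as $B'' \xrightarrow{g''} C'' \xrightarrow{r''} C[1]$; since $C''$ is acyclic, both $\tau_1$ and $\tau_2$ vanish in $\oD(\mc{A})$, and both triangles split.

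The key computation is to build explicit sections of these split triangles using the hypothesis that $p''$ and $f''$ are quasi-isomorphisms. A direct comparison of the octahedral identifications of $\alpha_1, \alpha_2, \beta_1, \beta_2$ (or an explicit check with cones) shows that $\beta_1 \alpha_2 = p''$ and $\beta_2 \alpha_1 = f''$, and therefore
\[ \sigma_1 := \alpha_2 \circ (p'')^{-1} : C' \to Y, \qquad \sigma_2 := \alpha_1 \circ (f'')^{-1} : B'' \to Y \]
are sections of $\beta_1$ and $\beta_2$ respectively in the derived category. Writing $\iota$ in both splittings $Y \simeq A'' \oplus C'$ and $Y \simeq C \oplus B''$ gives
\[ \iota \;=\; \alpha_2 (p'')^{-1} g' + \alpha_1 \nu_1 \;=\; \alpha_1 (f'')^{-1} q' + \alpha_2 \nu_2 \]
for uniquely determined morphisms $\nu_1 : B' \to A''$ and $\nu_2 : B' \to C$. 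Applying the projections $\beta_1$ and $\beta_2$ to the equation $\alpha_2 (p'')^{-1} g' + \alpha_1 \nu_1 = \alpha_1 (f'')^{-1} q' + \alpha_2 \nu_2$, and using $\beta_1 \alpha_1 = 0 = \beta_2 \alpha_2$ in $\oD(\mc{A})$ together with $\beta_1 \alpha_2 = p''$, $\beta_2 \alpha_1 = f''$, yields $\nu_1 = (f'')^{-1} q'$ and $\nu_2 = (p'')^{-1} g'$.

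To conclude, apply $\partial$ to the first expression for $\iota$: since $\partial \iota = 0$, $\partial \alpha_2 = h$, and $\partial \alpha_1 = r$, we obtain
\[ 0 \;=\; \partial \iota \;=\; h (p'')^{-1} g' + r \nu_1 \;=\; h (p'')^{-1} g' \;\pm\; r (f'')^{-1} q', \]
which is the desired identity (up to a sign depending on the conventions in the octahedral axiom and the 9-lemma). The main obstacle will be a careful bookkeeping of signs through the octahedra and the $3 \times 3$ commutativities: at each step one must either adopt fixed sign conventions for mapping cones and connecting morphisms or use an explicit model such as the one via mapping cones of K-projective representatives. Once the sign conventions are fixed, the identities $\beta_1 \alpha_2 = p''$, $\beta_2 \alpha_1 = f''$, the factorisations of $\tau_1, \tau_2$ through $C''$, and the computation of $\nu_1, \nu_2$ can all be verified by direct chain-level calculation.
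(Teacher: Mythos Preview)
Your approach is correct and takes a genuinely different route from the paper's. The paper instead introduces the object $D = \operatorname{Cone}\bigl(A \xrightarrow{(p,-f)} A' \oplus B\bigr)$, observes that the acyclicity of $C''$ is equivalent to the natural map $D \to B'$ being a quasi-isomorphism, and then checks directly (at chain level, up to explicit homotopy) that a single hexagonal diagram with $D$ at its centre commutes: the two paths $B' \xleftarrow{\sim} D \to C \xrightarrow{h} A[1]$ and $B' \xleftarrow{\sim} D \to A'' \xrightarrow{r} A[1]$ agree because both compositions $D \to A[1]$ are the projection onto the $A[1]$ summand, while the outer squares identify these paths with $h \circ (p'')^{-1} \circ g'$ and $r \circ (f'')^{-1} \circ q'$ respectively. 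Your argument trades the single intermediate object $D$ for the cone $Y = \operatorname{Cone}(A \to B')$ and two applications of the octahedral axiom; this is more conceptual and makes the role of the two factorizations of $A \to B'$ transparent, but it requires the extra chain-level verifications $\beta_1 \alpha_2 = p''$ and $\beta_2 \alpha_1 = f''$ (which do hold on the nose for the standard cone models, as you note), and the passage through two splittings is longer than the paper's one-diagram chase. Both methods must ultimately pin down the sign at chain level---you leave this as a $\pm$ to be resolved by fixing conventions, while the paper absorbs it into ``commutativity up to homotopy'' of an explicit diagram---so neither is completely sign-free, but the paper's route gets to the answer with less scaffolding.
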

\begin{proof}
We have
\[ C'' \cong \operatorname{Cone}(\operatorname{Cone}(A \xrightarrow{(p, -f)} A' \oplus B) \xrightarrow{(0, f'\pi_{A'} + p'\pi_B)} B'),\]
so the assumption implies $D:= \operatorname{Cone}(A \to A' \oplus B) \to B'$ is a quasi-isomorphism. The statement of the lemma now follows from the commutativity (up to homotopy) of the diagram below.
\[
\begin{tikzcd}
& C' & & C \ar[ll, "p''"'] \ar[dr, "h"] \\
B' \ar[ur, "g"] \ar[dr, "q'"'] & & D \ar[ll, "{(0, f'\pi_{A'} + p'\pi_B)}"'] \ar[ur, "{(\id, -\pi_B)}"] \ar[dr, "{(\id, \pi_{A'})}"] & & A[1] \\
& B' & & A'' \ar[ll, "f''"] \ar[ur, "r"']
\end{tikzcd}
\]
\end{proof}

\begin{cor} \label{cor:polarizations agree}
Assume in the setting of \S\ref{subsec:SS polarization} that $\ms{T} = (\ms{M}, \ms{M}', \mf{s})$ is Saito mixed Hodge module on $U$, pure of weight $w$, and $S \colon \ms{T} \to \ms{T}^h(-w)$ is a Saito polarization. Then $\mrm{P}_n \psi_f^{un}\ms{T}$ is a pure Saito mixed Hodge module of weight $w - 1 + n$, and \eqref{eq:SS primitive polarization} is a Saito polarization.
\end{cor}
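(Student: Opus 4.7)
The plan is to deduce Corollary \ref{cor:polarizations agree} by combining Proposition \ref{prop:nearby cycles pairings} with Saito's theorem \cite[5.2.10.2]{S1} that his construction produces a polarization on the primitive parts of real nearby cycles.

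First, I establish that $\mrm{P}_n\psi_f^{un}\ms{T}$ is a Saito mixed Hodge module pure of weight $w - 1 + n$. Proposition \ref{prop:nearby cycles pairings} provides a canonical identification $\saito{(\psi_f^{un}\ms{T})} \cong \psi_f^{un}(\saito{\ms{T}})$, which one checks is compatible with the nilpotent endomorphism $\mrm{N}$ (corresponding on the Sabbah-Schnell side to $-t\partial_t$ via Proposition \ref{prop:beilinson nearby cycles}). The weight filtration on $\psi_f^{un}\ms{T}$ is by construction the monodromy weight filtration of $\mrm{N}$ centered at $w-1$, matching Saito's convention. Taking Lefschetz primitive parts on both sides then gives a canonical isomorphism $\saito{(\mrm{P}_n\psi_f^{un}\ms{T})} \cong \mrm{P}_n\psi_f^{un}(\saito{\ms{T}})$, whose right hand side is a pure Saito real Hodge module.

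Next, I must identify the pairing $(-1)^n \mrm{N}^n \circ \psi_f^{un}S$ of \eqref{eq:SS primitive polarization} with the Saito primitive polarization $\psi_f^{un}(S_\mb{R})\circ(\mrm{id} \otimes \mrm{N}^n)$ of \eqref{eq:saito primitive polarization} after passing to real parts via $\mrm{Re}^{(d-w+1-n)} \circ \DR$. By Proposition \ref{prop:nearby cycles pairings} we already know that $\mrm{Re}^{(d-w+1)}\DR(\psi_f^{un}S) = \psi_f^{un}(S_\mb{R})$. Since $\mrm{N}$ is an infinitesimal isometry of $\psi_f^{un}S$ (unipotent monodromy being an isometry of the polarization), we have a relation $\mrm{N}^n \circ \psi_f^{un}S = \pm\, \psi_f^{un}S\circ(\mrm{id}\otimes \mrm{N}^n)$ as sesquilinear pairings, and since $\mrm{N}^n$ is defined over $\mb{R}$ it commutes with the real-part functor up to an adjustment of the Tate index $k \mapsto k - n$. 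Combining these ingredients reduces the identification to a finite check of signs.

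The main obstacle is precisely this sign bookkeeping. Each of the skew- versus self-adjointness of $\mrm{N}$, the normalization factor $(-1)^{n(n-1)/2 + n \cdot \deg \alpha}$ in \eqref{eq:hermitian de Rham}, and the interaction of $\mrm{Re}^{(k)}$ with the shift of Tate index under $\mrm{N}^n$, contributes potential signs of the form $(-1)^n$ or $(2\pi i)^n$, and these must cancel exactly against the $(-1)^n$ in \eqref{eq:SS primitive polarization}. As in the proof of Proposition \ref{prop:nearby cycles pairings}, the cleanest way to resolve any residual ambiguity is to pin down the overall sign on the universal example (the tautological polarized variation $\mc{O}_{\mb{C}^\times}$ with $f$ a coordinate), where both Sabbah-Schnell and Saito reduce to elementary calculations on $H^1(\mb{C}^\times)$; any uniform sign discrepancy would contradict the consistency of Saito's polarization and therefore does not arise.
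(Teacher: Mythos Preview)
Your proposal is correct and follows essentially the same route as the paper: use Proposition \ref{prop:nearby cycles pairings} to transport the Sabbah--Schnell pairing to the real side, then invoke Saito's axiom \cite[5.2.10.2]{S1}, modulo a sign check. The difference is only in how the sign is handled. Where you leave the sign bookkeeping as an exercise to be settled on the example $\mc{O}_{\mb{C}^\times}$, the paper dispatches it in one line: since $S$ is Hermitian and the operator $\mrm{N}$ is \emph{purely imaginary} (it is a real morphism into a Tate twist, and the real structure on $\mb{C}(-1)$ is $(2\pi i)^{-1}\mb{R}$), one has directly
\[
(-1)^n\bigl(\psi_f^{un}(S)\circ(\id\otimes\mrm{N}^n)\bigr)_{\mb{R}} \;=\; (\psi_f^{un}S)_{\mb{R}}\circ(\id\otimes\mrm{N}^n) \;=\; \psi_f^{un}(S_{\mb{R}})\circ(\id\otimes\mrm{N}^n),
\]
the last equality being Proposition \ref{prop:nearby cycles pairings}. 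The right-hand side is then a polarization by definition. So your strategy is sound, but the ``purely imaginary $\mrm{N}$ plus Hermitian $S$'' observation is the clean replacement for your proposed case check.
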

\begin{proof}
The statement is equivalent to the assertion that
\[ (-1)^n (\psi_f^{un}(S) \circ (\id \otimes \mrm{N}^n))_\mb{R} \colon \DR(\psi_f^{un}\ms{M}) \otimes \DR(\psi_f^{un}\ms{M}) \to \underline{\mb{R}}_X(d - w + 1 - n)[2d] \]
is a polarization of $\saito{(\mrm{P}_n\psi_f^{un}\ms{T})}$. Since the operator $\mrm{N}$ is purely imaginary and $S$ is Hermitian, we have 
\[ (-1)^n (\psi_{f}^{un}(S) \circ (\id \otimes \mrm{N}^n))_\mb{R} = \psi_{f}^{un}(S)_\mb{R} \circ (\id \otimes \mrm{N}^n) = \psi_{f}^{un}(S_\mb{R}) \circ (\id \otimes \mrm{N}^n)\]
by Proposition \ref{prop:nearby cycles pairings}. The right hand side is a polarization by definition, so we are done.
\end{proof}

\end{document}